\documentclass[a4paper,10pt]{amsart}
\usepackage{amsfonts,epsfig}
\usepackage{color}
\usepackage[colorlinks=true, citecolor=red]{hyperref}  %needs to come before amsrefs package to work with \cite{}*{}
\usepackage{amssymb,latexsym}
\usepackage[mathscr]{eucal}
\usepackage{graphicx}
\usepackage{epstopdf}
\DeclareGraphicsRule{.tif}{png}{.png}{`convert #1 `dirname #1`/`basename #1 .tif`.png}
\usepackage[all]{xy}
\usepackage{titletoc}
\usepackage{pdfsync}
%\usepackage{showkeys}  %options notref  notcite 

%\renewcommand{\baselinestretch}{1.2}

%%My a4 set up

%\makeatletter

%\newdimen\mainfontsize \mainfontsize=1\@ptsize pt
%\setlength{\oddsidemargin}{.25 true in}
%\setlength{\evensidemargin}{.25 true in}
%\setlength{\textwidth}{14.85 true cm}

%\topskip=\mainfontsize
%\maxdepth=.4\mainfontsize
%  \@maxdepth=\maxdepth
 
%  \setlength{\headheight}{0pt}
%\dimen0=2\mainfontsize
%\setlength{\headsep}{0pt}
%\setlength{\topmargin}{\mainfontsize}
%%\setlength{\footheight}{\mainfontsize}
%\dimen0=2.5\mainfontsize
%\setlength{\footskip}{\dimen0}

%\setlength{\textheight}{22.39 true cm} %% 

%\makeatother

%some theorem definitions

\newtheorem{thm}{Theorem}[section]

\newtheorem{cor}[thm]{Corollary}

\newtheorem{lem}[thm]{Lemma}
\newtheorem{propn}[thm]{Proposition}

\theoremstyle{definition}
\newtheorem{defn}[thm]{Definition}
\newtheorem{rem}[thm]{Remark}

\newtheorem{constr}[thm]{Construction}
\newtheorem{notn}[thm]{Notation}
\newtheorem{asm}[thm]{Assumption}

%some commands

\newcommand{\bfi}{{\boldsymbol{i}}}
\newcommand{\bfj}{{\boldsymbol{j}}}
\newcommand{\bfk}{{\boldsymbol{k}}}

\newcommand{\be}{\mbox{$\mathbb{E}$}}
\newcommand{\bp}{\mbox{$\mathbb{P}$}}
\newcommand{\ce}{\mbox{$\mathcal E$}}
\newcommand{\cf}{\mbox{$\mathcal F$}}

\newcommand{\bff}{{\boldsymbol{F}}}

\newcommand{\hmu}{\hat {\mu}}
\newcommand{\heta}{\hat {\eta}}

\newcommand{\br}{\mbox{$\mathbb R$}}

%\makeatletter
%\@addtoreset{equation}{section}
%\renewcommand{\theequation}{\thesection.\arabic{equation}}
%\makeatother

 \DeclareMathOperator{\diam}{diam}

\begin{document}
\bibliographystyle{plain}
%\begin{center}
%{\LARGE Spectral asymptotics for $V$-variable Sierpinski gaskets}
%\end{center}
%\vspace*{.1 true in}
%\begin{center}
%{\large U.~Freiburg}\footnote{address\par}, {\large B.M.~Hambly}\footnote{Mathematical Institute, University of Oxford,
%24-29 St Giles, Oxford OX1 3LB, UK.\par
%~\mbox{ } E-mail:  hambly@maths.ox.ac.uk} and {\large J.E.~Hutchinson}\footnote{ANU}
%\end{center}
%\vspace*{.1 true in}
%\centerline{\today}
%\par

\title{Spectral Asymptotics for $V$-variable Sierpinski Gaskets}

\author{U.~Freiberg}
\address{Department of Mathematics \\ University of Siegen \\ DE-57068 Siegen \\ Germany
}
\email{freiberg@mathematik.uni-siegen.de}

\author{B.M.~Hambly}
\address{Mathematical Institute\\ University of Oxford\\
24-29 St Giles\\ Oxford OX1 3LB \\ UK}
\email{hambly@maths.ox.ac.uk}

\author{John~E. Hutchinson}
\address{Mathematical Sciences Institute\\
Australian National University\\
Canberra, ACT, 0200\\
Australia}
\email{John.Hutchinson@anu.edu.au} 

\date{\today}

\begin{abstract}
The family of $V$-variable fractals provides a means of interpolating between two 
families of random fractals previously considered in the literature; scale irregular fractals ($V=1$)
and random recursive fractals ($V=\infty$). We consider 
a class of $V$-variable affine nested fractals based on the Sierpinski gasket with a general
class of measures. We calculate the spectral exponent for a general measure and 
find the spectral dimension for these fractals. We show that the spectral properties and on-diagonal heat kernel estimates 
for $V$-variable fractals are closer to those of scale irregular fractals, in that it is the fluctuations in scale that determine
their behaviour but that there are also effects of the spatial variability. 
\end{abstract}

\maketitle

%*** [JH]  I have temporarily included the table of contents to give an overview and show the structure of the paper ***
%
%\tableofcontents

\section{Introduction}

The field of analysis on fractals has been primarily concerned with the construction
and analysis of Laplace operators on self-similar sets. This has yielded a well
developed theory for post critically finite (or p.c.f.) self-similar sets, a class of finitely ramified fractals 
\cite{Kig}.
One motivation for the development of such a theory, aside from its intrinsic mathematical
interest, has come from the study of transport in disordered media. However, in this setting
the fractals arise naturally in models from statistical physics at or near a phase transition 
and are therefore 
random objects without exact self-similarity but with some statistical self-similarity. 

In order to develop the mathematical tools to tackle analysis on such random fractals one approach 
has been to work with simple models based on self-similar sets but exhibiting randomness. 
%Recall from \cite{Hut} the description of a self-similar set as an iterated function system (or IFS) at each node 
%of a tree generated by the address space. By taking instead a family of IFSs, we can construct scale irregular
%fractals, in which the same  IFS  is chosen at each level in the tree construction, but the IFS can 
%vary from one level to the next. 
The first case to be treated was that of scale irregular fractals \cite{Ham1},
\cite{barham}, \cite{HKKZ} and \cite{DreStr}, which have spatial homogeneity but randomness in their scaling. A more 
natural setting is provided by random recursive fractals, initially constructed by \cite{MauWil}, \cite{Fal}, \cite{Gra}, 
where the fractal can be decomposed into a random number of independent scaled copies.  
%In this setting the tree is generated by choosing
%an IFS independently at each node in each level of the tree construction. 
The study of some analytic properties of 
classes of random recursive Sierpinski gasket can be found in \cite{Ham2}, \cite{Ham3}
and \cite{HamKum}. 

Recently there has been work tackling random sets arising from critical phenomena directly, with a particular focus on
the percolation model. Substantial progress has been made in the study of random walk on critical percolation clusters 
in the high dimensional case, see \cite{BJKS} and \cite{KozNac}. 
A bridge between these two approaches can be found in work on the continuum random tree \cite{Cro}, \cite{CroHam} 
or on critical percolation clusters on hierarchical lattices \cite{HamKum09}, both of which have random self-similar 
decompositions and hence have descriptions as random recursive fractals.

In this paper we consider $V$-variable fractals recently introduced in \cite{BHS0,BHS1}. 
%Recall from \cite{Hut} 
%the description of a self-similar set as an iterated function system (or IFS) at each node 
%of a tree generated by the address space. 
This class of random fractals
is defined via a family of iterated function systems and a  positive integer parameter $V$. It interpolates between 
the class of homogeneous (scale irregular) random fractals, corresponding to $V=1$, and the class of random 
recursive fractals,
corresponding to $V=\infty$. As for the random recursive fractals we can regard these
$V$-variable fractals as determined by a probability measure on the set of labelled
trees. In this case the measure is not a product measure, but is defined in a natural 
(if not completely obvious) manner which allows for at most $ V $ distinct subtrees rooted at each level.

Our aim in this paper is to investigate the analytic properties of the class of $V$-variable Sierpinski gaskets 
and to compare their behaviour to the scale irregular and random recursive cases. We show their Hausdorff dimension in the 
resistance metric is the zero of a certain pressure function and their
spectral dimension, the exponent for the growth of the eigenvalue counting function, is the zero of another pressure 
function. The connection between these two dimensions is established.   We develop and extend standard methodology to examine more 
detailed properties of the eigenvalue counting function and the on-diagonal heat kernel. These results show that 
the $V$-variable fractals are closer to the scale irregular case, in that
their fine properties are generally determined by fluctuations in scale rather than fluctuations which occur spatially across 
the fractal. 

\subsection*{Model problems}

We  consider two model problems. Recall from \cite{Hut} the description of a self-similar set as an iterated function 
system (or IFS) at each node of a tree generated by the address space.

\subsubsection*{Homogeneous and Random Recursive Fractals} 

For the first model problem  we consider the two IFSs generating the Sierpinski gasket fractal SG(2) and the fractal SG(3)  
defined in \cite{Ham1}.  
 %  *** {Deleted 
%``We consider the case where we choose each type of IFS with equal
%probability'' since it is misleading as it does not take in to account the choice of $ V $. ***}
The scale factors for SG(2) are mass $m_2=3$, length $\ell_2=2$ and time $s_2=5$.
For SG(3) we have mass $m_3=6$, length $\ell_3=3$ and time $s_3=90/7$. The conductance scale factors can 
be computed directly, or from the Einstein relation $ \rho = s/m $, %(see Appendix~\ref{ER})
giving $\rho_2=5/3, 
\rho_3=15/7$. Let $(M, S, L)$ be a triple of random variables taking each of the values  $(m_i,s_i,\ell_i)$ where 
$i=2,3$ with probabilities $p,1-p$ respectively. 

Then, for the $V=1$ (homogeneous) case, we construct a random fractal using a sequence 
taking its values in $\{2,3\}$ and applying the corresponding IFS to all sets at a given level of construction. A realization
of the first few stages can be seen in Figure~\ref{1-var}. Then 
a simple scaling analysis shows that the Hausdorff dimension is given by $d_f = \mathbb{E} \log M/\mathbb{E}  
\log L$ where $\mathbb{E} $ denotes the expectation with respect to the probability measure generating the 
sequence.  For the spectral dimension with respect to the natural ``flat measure'' one can extend the idea from 
\cite{Fuk} and \cite{kiglap} in the case of a single IFS fractal and apply a scaling argument to the Dirichlet form 
together with a Dirichlet-Neumann bracketing argument, see \cite{Ham4}.   This gives the spectral dimension 
$d_s = 2\mathbb{E} \log M/\mathbb{E} \log S$.
For the $V=\infty$ (random recursive) case, each IFS is chosen independently for each node at each level. In this case 
we have $d_s=2d_f^r/(d_f^r+1)$ where $d_f^r$ is the Hausdorff dimension
in the resistance metric, that is $d_f^r$ is such that $\mathbb{E} (M (S/M)^{-d_f^r})=
\mathbb{E} M^{1+d_f^r}S^{-d_f^r} = 1$.
The argument again uses scaling properties of the Dirichlet form and a Dirichlet-Neumann bracketing argument, 
see \cite{Ham4, Ham3}.  An alternative approach to computing the spectral dimension for random $ V=1,\infty $ 
fractals is via heat kernel estimates, see \cite{barham} and \cite{Ham1,Ham2,Ham4,Ham3}.

The second model problem is drawn from the class of affine nested fractals considered in \cite{FHK}. 
This model interpolates between the slit triangle (which is not itself an affine nested fractal) and SG(3). 
Consider  7 triangles in the configuration shown in Figure~1 and take
$\ell$ as the side length of the three triangles at the corners of the original triangle. 
The side lengths of the other triangles are given as $1-2\ell$ for
the three triangles on the centre of each side and $3\ell-1$ for the downward pointing central triangle, where 
$1/3 < \ell < 1/2$. As $\ell \to 1/2$ we have the slit triangle and at $\ell=1/3$ we have SG(3).
\begin{figure}[htbp] \label{modsg}
\centerline{\epsfig{file=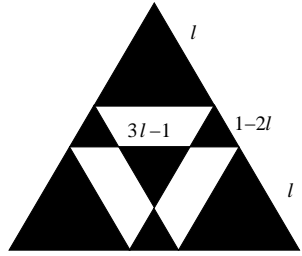, height=1.3in}}
\caption{A member of the family of Sierpinski gaskets interpolating SG(3) and the slit triangle, where $1/3 < \ell < 1/2$.}
\end{figure}
We   construct a homogeneous random or random recursive fractal by taking a suitable distribution for $\ell$ on 
$[1/3, 1/2)$ and either using a sequence, applying the same IFS at each node in the construction tree for the $V=1$ case, 
or independently for each node in the $V=\infty$ case. 
%In order to fall under the assumptions that we make later, 
%the distribution can have a component taking values on the points $\{1/3,1/2\}$ as well as having a density function 
%on the interval $(1/3,1/2)$ that is bounded away from $\{1/3,1/2\}$.

We note that even scale irregular ($V=1$) affine nested gaskets of this type have not been
treated before and as a consequence of our results we will be able to calculate the Hausdorff and
spectral dimension for the random homogeneous version ($V=1$). By the triangle-star transform, 
if we assume that the resistance of each piece is proportional to its length, then the resistance 
scale factor is \mbox{$(2\ell+1)/(\ell+2)$} in that
if we take resistances  on the three different types of triangle to be 
\mbox{$  (\ell+2)/(2\ell+1) (\ell,  1-2\ell ,  3\ell-1 ) $} 
then this is electrically equivalent to the triangle with unit resistance on each edge.

In Section~2 we recall from \cite{BHS2} the Hausdorff dimension result for   $V$-variable fractals,
and we derive the spectral dimension from our calculations in Sections~4 and~5. 

\subsubsection*{$V$-Variable Fractals}

To understand the $V$-variable versions of our model problems, first consider the $ V=1 $ (spatially homogeneous, 
scale irregular) case of a $ V $-variable labelled tree in a manner parallel to the approach taken in the general setting. 
See Figure~\ref{1-var}.  For $ V=1 $ all subtrees rooted at each fixed level are the same, as are the corresponding
subfractals at each fixed level, hence the terminology ``homogeneous''.  The subtrees at one level are typically 
not the same as the subtrees at another level, hence the terminology ``scale irregular''. 

\begin{figure}[htbp] %  figure placement: here, top, bottom, or page
  \centering
  \includegraphics[width=3in]{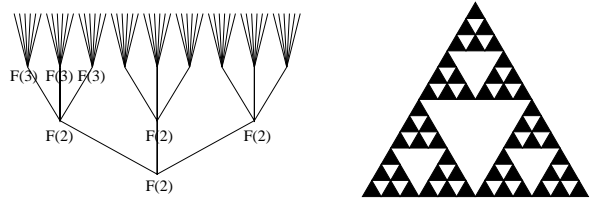} 
  \caption{The level 3 approximation  to a 1-variable   tree, and the  prefractal approximation  to the associated 
  1-variable, or scale irregular,  fractal.  Here the family of IFSs is $ \boldsymbol{F} =\{  F(2), F(3) \} $ with 
  members generating the sets $ SG(2) $ and $ SG(3) $ respectively.}  
  \label{1-var}
\end{figure}

For a general $ V$-variable tree and for the corresponding $ V $-variable fractal, there are at most $ V $ distinct subtrees 
up to isomorphism rooted at each fixed level,  and correspondingly at most $V$ distinct subfractals up to rescaling at each 
fixed level of refinement.  See Figure~\ref{IFStree} for a level 2 approximation to a $ V $-variable tree with $ V \geq 2 $. In
Section~\ref{secgva} we discuss this in some detail and see that there is a natural probability distribution  on the 
class of $ V $-variable fractals for each fixed $V$.

The construction of $ V $-variable trees and hence $ V $-variable fractals will require an assignment of a \emph{type}
chosen from $ \{1,\dots, V\} $, as well as an IFS, to each node of the tree.  Nodes with the same type  and at the same 
level will have identical subtrees rooted at those nodes. The subfractals corresponding to those nodes will be identical 
up to scaling.  See Figure~\ref{V-var}. We choose the IFSs according to a probability measure and will write $P_V$ for 
the probability measure on the space of trees or $V$-variable fractals and $E_V$ for expectation with respect to $P_V$.

Let $n(1)$ be a random variable denoting the first level after level 0 at which all nodes are assigned the same type. 
Since the number of types is finite and we will assume a uniform upper bound on the branching number, 
$E_V n(1)<\infty$. Note that $n(1)=1$ if $ V=1 $, and clearly $E_V n(1)$ increases with $ V $.

We write $\boldsymbol{i} =i_1\dots i_k$ for a node in the tree and denote its height or length by $ | \bfi | = k $. 
The root node is denoted by $ \emptyset $ and 
$ | \emptyset | = 0 $. The Hausdorff dimension $ d_f $ of the $V$-variable gasket formed from SG(2) and SG(3) is 
given $P_V$ almost surely by the zero of a pressure function in that ($P_V$ almost surely) it is the unique 
$ d_f $ such that 
$   E_V \log \sum_{|\bfi | = n(1) } (\ell_{i_1}\cdot \ldots \cdot \ell_{i_{n(1)}})^{ d_f} = 0$,
where $\ell_{i_k}$ is the length scale value $1/2$ or $1/3$ according to which of SG(2) or SG(3) is chosen.  
See Theorem~\ref{bdhd},  also Theorem~\ref{thm:lil}.

\subsection*{Results}
\emph{For further detail see the Overview  at the beginning of the following Sections~2--5.}

Our main results first establish an expression for the spectral exponent over a general class of measures and 
 determine the spectral dimension for these fractals. We   then provide finer results of two types. We   consider 
the eigenvalue counting function and the on-diagonal heat kernel and obtain upper and lower bounds on these 
quantities which hold for all $V$-variable trees. By placing a probability measure on the trees we   obtain 
almost sure results capturing more explicitly their fluctuations.  In the model problems  the expectation is either over a discrete measure on $\{2,3\}$ or over a 
suitable distribution on $[1/3, 1/2]$.

We   show in Theorem~\ref{thm:Nspecdim} that the spectral exponent can also be expressed as the zero of a 
pressure function. In Theorems~\ref{spfm} and~\ref{spmax} we see  
that the spectral dimension, the maximum value of the spectral exponent over all measures $\mu$ defined using a product 
of weights, satisfies the equation $d_s/2=d_f^r/(d_f^r+1)$ where $d_f^r$ is the Hausdorff dimension in the resistance metric.  
This dimension in turn is the zero of another pressure function, see Theorem~\ref{rdzp}.

% (where
%the density on $(1/3,1/2)$ must be bounded away from $1/3$ and $1/2$) for the choice of $\ell $ for each type (which 
%is uniform on $\{1,\dots,V\}$).

We  establish upper and lower estimates for the eigenvalue counting
function and on-diagonal heat kernel for a general class of measures. 
We show that the observed fluctuations
arise from two different effects. The
first is due to global scaling fluctuations as observed for scale irregular nested Sierpinski gaskets \cite{barham}. 
The second effect, which arises in the $V$-variable setting for $V>1$
or $V=1$ when the contraction factors are not all the same, 
gives additional, though much smaller, fluctuations due to the spatial variability of
these fractals. 

%Our first result here is not probabilistic in that it applies to all possible $V$-variable fractals. 
We first establish from Lemma~\ref{firsteest} the non-probabilistic result that if ${\mathcal N}(\lambda)$ denotes the
number of eigenvalues less than $\lambda$ (for the Dirichlet or
Neumann Laplacian), then there is a time scale
factor $T_k$, a mass scale factor $M_k$ and a correction factor
$A_k$, such that there are
constants $c_1,c_2$ with
\[ c_1 M_k \leq \mathcal{N}(A_k T_k) \mbox{ and }  \mathcal{N}(T_k) \leq c_2 M_k, \;\; \forall k. \]  
As in the scale irregular gaskets of \cite{barham}, this result is 
true for all realizations. By construction the scale factors $M_k,T_k$ grow exponentially in $k$ but we 
will be able to show that $P_V$ almost surely we have 
$A_k\leq c k^{\beta}$, and even in certain cases
$A_k\leq C (\log{k})^{\beta}$, for some constant $\beta$.
The spectral exponent for any measure $\mu$ defined by a
set of weights associated with a given IFS is
\[ \frac{d_s(\mu)}{2}:=\lim_{x\to\infty} \frac{\log{\mathcal{N}(\lambda)}}{\log \lambda}, \]
and we give a formula for this quantity as the zero of a suitable pressure function.
In the case where the weights are `flat' in
the resistance metric we can show that there is a function $\phi(\lambda) = \exp(\sqrt{\log{\lambda}\log\log\log{\lambda}})$ such that 
$P_V$-almost surely
\begin{equation}
c_1 \lambda^{d_s/2} \phi(\lambda)^{-c_2} \leq \mathcal{N}(\lambda) \leq c_3 \lambda^{d_s/2} \phi(\lambda)^{c_4}, \label{eq:Nasymp}
\end{equation}
for large $\lambda$, where $d_s=2d_f^r/(d_f^r+1)$ and $d_f^r$ is the Hausdorff dimension in the resistance metric.

 To compare our results with previous work we note that in the $V=1$ case for nested Sierpinski gaskets 
it is shown in \cite{barham} that the Weyl limit for the normalized counting function
does not exist in general and we have for all realizations that
\[ c_1 M_k \leq \mathcal{N}(T_k) \leq c_2 M_k. \]
This leads to the same size scale fluctuations as for the $V$-variable case given in (\ref{eq:Nasymp}).
For the random recursive case of \cite{Ham3}, the averaging leads to a Weyl limit in that
\[
 \lim_{\lambda\to\infty} \frac{\mathcal{N}(\lambda)}{\lambda^{d_s/2}}  \text{ exists } \ P_{\infty} \;a.s.,
  \]
where $d_s=2d_f^r/(d_f^r+1)$ and $d_f^r$ is the Hausdorff dimension in the resistance metric.

We will also be able to remark on the on-diagonal heat kernel. We note
that the measures we work with in this setting do not have the volume
doubling property and hence it is harder work to produce good heat
kernel estimates. In the setting considered here we can extend the arguments of \cite{barham} and \cite{BarKum} 
to get fluctuation results for the heat kernel.
In Theorems~\ref{thm:hkub} and ~\ref{thm:hklb} we show that the on-diagonal heat kernel estimate is determined by
the local environment. In the case
where the measure is the `flat' measure in the resistance metric
we can describe the small time global fluctuations in that for almost every point $x$ in the fractal,
\[ c_1 t^{-d_s/2} \phi(1/t)^{-c_2} \leq p_{t}(x,x) \leq c_3 t^{-d_s/2} \phi(1/t)^{c_4} , \;\;0<t\leq c_5, \;\;P_V\;a.s., \]
for suitable deterministic constants $ c_1,c_2,c_3,c_4 $, and for all $t\leq c_5$, a random constant depending on the
point $x$.
These are of the same order as the $V=1$ case obtained in \cite{barham} and much larger than those in 
the random recursive case, \cite{HamKum}.

In the case of general measures we will see that $P_V$-almost surely, $\mu$-almost every $x$ in the fractal does not 
have the same spectral exponent as the counting function (except when we choose the flat measure) and 
thus there will be a multifractal structure to the local heat kernel estimates in the same way as observed in 
\cite{BarKum}, \cite{HamKigKum}.

\bigskip
We restrict ourselves to affine nested fractals
based on the Sierpinski gasket in $ \mathbb{R}^d $ where $  d \geq 2 $. The problem of the existence of a
limiting Dirichlet form is not solved more generally, even for the case of homogeneous
random fractals. If this problem
were solved, then the techniques used here would enable more general results to be
obtained concerning $V$-variable p.c.f. fractals.

\bigskip The structure of the paper is as follows. We give the construction of $V$-variable
affine nested Sierpinski gaskets in Section~2. We show that by using the structure
of $V$-variability there is a natural decomposition of the fractals at `necks'; a level at which all subtrees are the same. 
This idea was first used by Scealy in~\cite{Sce}. In Section~3
we construct the Dirichlet form, compute the resistance dimension, and determine other properties which will facilitate analysis
on these sets. In Section~4 we treat the spectral asymptotics.  The heat kernel is
dealt with in Section~5.

\subsection*{Acknowledgement}
We particularly wish to thank an anonymous referee for an unusually careful and detailed set of comments.  Addressing these has led to a number of improvements in the results of the paper.

%%%%%%%%%%%%%%%%%%%%%%%%%%%%%%%
%%%%%%%%%%%%%%%%%%%%%%%%%%%%%%%
%%%%%%%%%%%%%%%%%%%%%%%%%%%%%%%

\section{Geometry of $V$-Variable Fractals}\label{secgva}

\subsection{Overview} Random $ V $-variable fractals are generated from a possibly uncountable 
family $ \boldsymbol{F}$ of IFSs.  Each individual IFS $ F \in \boldsymbol{F} $  generates an affine nested fractal.  
We also  impose various probability distributions on~$ \boldsymbol{F}   $. 

For motivation, consider the two model problems in the Introduction.  Namely, $ \boldsymbol{F}  =\{ F_2, F_3 \} $ 
is the pair of IFSs generating $ SG(2) $ and $ SG(3) $, or $ \boldsymbol{F}$ is the family of affine nested fractals 
$ F_\ell $ generating the prefractal in Figure~1 for $ \ell \in [1/3,1/2] $.  

A $ V $-variable tree corresponding to $ \boldsymbol{F} $ is a tree with an IFS from $ \boldsymbol{F} $ associated 
to each node, a \emph{type} from the set $ \{1,\dots, V \} $ associated to each node, and such that if two nodes 
at the same level have the same type, then the corresponding (labelled) subtrees rooted at those two nodes are 
isomorphic.  This last requirement is achieved by using a sequence of \emph{environments}, one at each level, to 
construct a $ V $-variable tree.  Each $ V $-variable tree generates a $ V $-variable fractal set in the natural way.  
The case $ V=1 $ corresponds to homogeneous fractals and $ V \to \infty $ corresponds to random recursive fractals.

If all nodes at some level have the same type, the level is called a \emph{neck}.  Neck levels are given by a sequence of 
independent geometric random variables.   In Lemma~\ref{lem:geomrvs} we record some useful results for such 
random variables.  In Section~\ref{hbd} we recall the Hausdorff  dimension result from~\cite{BHS2} but in the 
framework of necks as used in this paper, and then give a refinement by using the law of the iterated logarithm. 
This provides motivation for some of the spectral results.

%%%%%%%%%%%%%%%%%%%%%%%%%%
\subsection{Families of Affine Nested Fractals} \label{afn}

Let   ${\boldsymbol{F}}  $ be a possibly uncountable class of IFSs $ F $, each generating a compact fractal $ K^F$, 
and each defined via a set of similitudes $ \{\psi_i^F\}_{i\in S^F } $ acting on $ \mathbb{R}^d $, with contraction 
factors $\{\ell_i^F\}_{i\in S^F}$ and $ S^F=\{1,\dots, N^F \} $.  If it 
is clear from the context we write $ K $, $ \psi_i $, $ N $ and $S$ for $ K^F $, $\psi_i^F $, $ N^F $ and $ S^F $ respectively, 
and similarly for other notation.  

%changed paragraph
%\begin{aligned} 
%N_{\inf}   &:=   \inf \{N^F :      F \in \boldsymbol{F} \} \geq 3, \\
%N_{\sup} &:= \sup \{N^F :     F \in \boldsymbol{F} \} < \infty.
%\end{aligned} 

We will have
\begin{equation} \label{Nbd} 
\begin{gathered} 
3 \leq N_{\inf}    :=   \inf \{N^F :      F \in \boldsymbol{F} \}   \leq 
 \sup \{N^F :     F \in \boldsymbol{F} \} =:N_{\sup}   < \infty,\\
0 < \ell_{\inf} :=   \inf \{\ell^F :      F \in \boldsymbol{F} \} .
\end{gathered} 
\end{equation} 
The first follows from our later constructions, see \eqref{V0ass}.
The second and third are  for technical reasons arising in the study of the heat kernel and spectral asymptotics. 
See also the comments after Definition~\ref{dfneck}, from which it is clear that weaker conditions will suffice to construct $V$-variable fractals and establish their Hausdorff dimension.

Let $  \Psi^F $ denote the \emph{set of fixed points} of the $ \{\psi_i^F\}_{i\in S^F }  $.  Then $ x\in  \Psi^F $  is an 
\emph{essential fixed point} if there exists $ y \in   \Psi^F $ and $ i \neq j $ such that $ \psi_i^F(x) = \psi_j^F(y) $.  
Let \emph{$ V_0 $ denote the set of essential fixed points}.  

We always assume that \emph{  $ V_0 $ does not depend on $ F$}.

\bigskip Assume the \emph{uniform} open set condition  for the
$\{\psi^F_i\} $. That is, there is a non-empty, bounded open set $O$, \emph{independent of $ F $},  
such that
$\{\psi^F_i (O)\}_{i\in S^F}$ are disjoint and $\bigcup_{i\in S^F}\psi_i^F (O)\subset O$.

Let $ \psi^F_{i_1\cdots i_n} =  \psi^F_{i_1}\circ\cdots\circ\psi^F_{i_n} $ and let
\begin{equation} \label{} 
\displaystyle {V^F_n =  \bigcup_{i_1,\cdots, i_n \in S^F}  \psi^F_{i_1\cdots i_n}(V_0)},  \quad 
V^F_{*}=\bigcup_{n\ge 0} V^F_n.
\end{equation} 
 Then $K^F=cl (V^F_*)$, the closure of $V^F_*$. 
 
 For
$i_1,\dots,i_n\in S^F$, we call $ \psi^F_{i_1\cdots i_n}(V_0)$ an \emph{$n$-cell} and
$ \psi^F_{i_1\cdots i_n}(K^F)$ an \emph{$n$-complex}.

For $x,y \in {\br}^d  (x \ne y)$, set
$H_{xy}=\{z \in {\br}^d : |z-x|=|z-y|\}$ and let
$U_{xy}:{\br}^d \to {\br}^d$ be the reflection transformation
with respect to $H_{xy}$. 

\medskip When computing the spectral dimensions we further  assume  each $K^F$ is  
an  \emph{affine nested fractal}.  That is, the open set condition holds, $|V_0|\ge 2$, and:
\begin{enumerate}
\item $K^F$ is connected;
\item  (Nesting) If $(i_1,\cdots,i_n)$ and $(j_1,\cdots,j_n)$
are distinct $ n $-tuples  of elements from $S^F$, then
\[
\psi^F_{i_1\cdots i_n}(K^F)\cap
\psi^F_{j_1\cdots j_n}(K^F)=\psi^F_{i_1\cdots i_n}(V_0)\cap
\psi^F_{j_1\cdots j_n}(V_0);
\]
\item  (Symmetry) For $x,y \in V_0~(x \ne y)$,
$U_{xy}$ maps $n$-cells to $n$-cells, and it maps any $n$-cell which
contains elements in both sides of $H_{xy}$ to itself for each $n\ge 0$.
\end{enumerate} 

We also make the technical assumption that $|\psi^F_i(V_0)\cap \psi^F_j(V_0)|\leq 1$ for all $1\leq i\neq j\leq N^F$.

%We say that $\psi_i^F(K^F)$ and $\psi_j^F(K^F)$ are the \emph{same size} if they 
%can be mapped to each other using the reflection maps $ U_{xy}$. 
%
%For each $F$, let \emph{$k^F$ 
%denote the number of  size equivalence classes}.

%%%%%%%%%%%%%%%%%%%%%%%%%%%%%%%%%%%%%%%
\subsection{Trees and Recursive Fractals}
 
Fix a family ${\boldsymbol{F}}  $ of IFSs as before.  For our initial purposes it is sufficient only that the IFSs consist of  
uniformly contractive maps on~$ \mathbb{R}^d $.
 
Each realisation  of a random fractal is built by means of an \emph{IFS construction tree}, or \emph{tree} for short, 
defined as follows.

\begin{defn}  (See Figure~\ref{IFStree}) An \emph{(IFS construction) tree $ T $} corresponding to $ \boldsymbol{F} $ is  a    
tree with the following properties: 
\begin{enumerate}
\item there is a single, level 0, root node $ \emptyset $;
\item the \emph{branching number} $ N^{\boldsymbol{i}} $ at each node $ \boldsymbol{i} $ has 
$ 2 \leq N^{\boldsymbol{i}} <\infty$ ($N^{\boldsymbol{i}} \geq 3$ later);
\item the  edges with  initial node $ \boldsymbol{i} $ are numbered (``left to right'') by $ 1,\dots, N ^{\boldsymbol{i}} $;
where  $ \boldsymbol{i} = i_1\dots i_{k }$ in the usual manner and $| \boldsymbol{i} | := k \geq 1$ is the \emph{level} 
of~$ \boldsymbol{i}$, or $  \boldsymbol{i} =\emptyset $ in which case   $ |\boldsymbol{i} | := 0 $ is the level;
\item there is an \emph{IFS} $ F^{ \boldsymbol{i} } \in \boldsymbol{F} $ associated with each node $ \boldsymbol{i} $,  
$ N^ {\boldsymbol{i}} = |  F^{ \boldsymbol{i} } |  $ (the cardinality of $  F^{ \boldsymbol{i} }$), and the $k $th edge 
with initial node $ \boldsymbol{i} $ is associated with the $ k  $th function in the IFS $ F^{ \boldsymbol{i}}$.
\end{enumerate} 
The   unique compact set $ K = K(T)$ associated with $ T $ in the usual manner is called a \emph{recursive  fractal}.  
\end{defn} 

\begin{figure}[htbp] %  figure placement: here, top, bottom, or page
  \centering
  \includegraphics[width=3in]{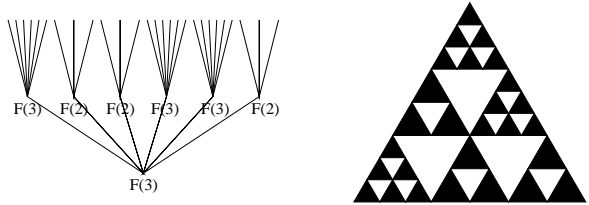} 
  \caption{Level 2 approximations to an IFS tree and to the associated fractal. Here $ \boldsymbol{F} = \{F(2), F(3)\}$ 
contains the IFSs generating $ SG(2) $ and $ SG(3) $ respectively.  Edges of the tree with a given initial node are 
enumerated from left to right; they correspond to subcells enumerated anticlockwise from the bottom left corner 
of the cell corresponding to the given node.}
  \label{IFStree}
\end{figure}

\begin{figure}[htbp] %  figure placement: here, top, bottom, or page
  \centering
  \includegraphics[width=4.5in]{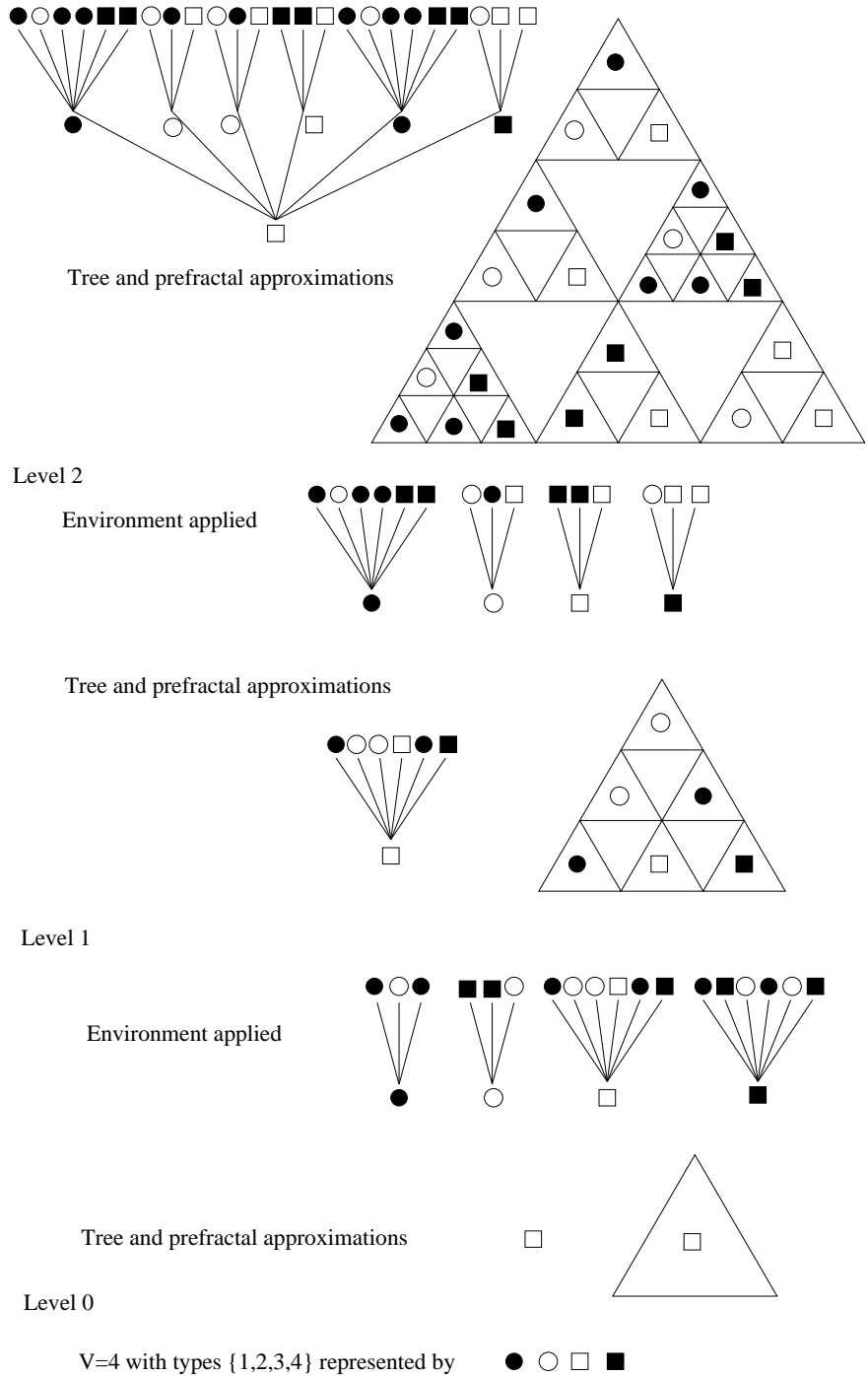} 
 \caption{Approximations   to a 4-variable   tree  and the     prefractal approximations to the corresponding 4-variable 
 fractal.  The IFSs are $ F(2) $ and $ F(3) $ generating $ SG(2) $ and $ SG(3) $. The environment at each level is applied 
 to the approximation  at the previous level.   The IFS labels  are not shown  since   in this case they are   determined by 
 the branching number.}
  \label{V-var}
\end{figure}

%%%%%%%%%%%%%%%%%%%%%%%%%%%%%%%
\begin{notn} \label{notn1}

The \emph{boundary} $ \partial T $ of a tree $ T $ is the set of infinite paths through $ T $ beginning at $ \emptyset $.

For $ \boldsymbol{i} \in T $   the   \emph{cylinder set} $ [ \boldsymbol{i} ] \subset \partial T $ is the set of all infinite paths $ \boldsymbol{w} \in \partial T $ such that $ \boldsymbol{i} $ is an \emph{initial segment} of $ \boldsymbol{w} $, written $ \boldsymbol{i} \prec \boldsymbol{w} $.

\smallskip The \emph{concatenation} of two sequences $ \boldsymbol{i}$ and $ \boldsymbol{j} $, where $\boldsymbol{i}$ is of finite length, is denoted by the juxtaposition $ \boldsymbol{i} \boldsymbol{j} $.

The \emph{truncation} of $ \boldsymbol{i} $ to the first $ n $ places is defined by  $ \boldsymbol{i} | n = i_1\dots i_n $.

A \emph{cut}  for the tree $ T $ is a finite  set $\Lambda \subset  T $ with the property that for every $ \boldsymbol{w}  \in \partial T $ there is exactly one $ \boldsymbol{i} \in \Lambda $ such that $ \boldsymbol{i} \prec \boldsymbol{w} $.  Equivalently, $ \{ [ \boldsymbol{i}] : \boldsymbol{i} \in \Lambda \}$ is a partition of $ \partial T $.

\smallskip  For a tree $ T $  and a node $ \boldsymbol{i} \in T $,  there will usually be associated quantities such as an IFS $ F^{ \boldsymbol{i} } $, a type $ \tau^ {\boldsymbol{i}  } \in \{1,\dots, V \} $ (see Definition~\ref{dfVt}) or a branching number $ N^{ \boldsymbol{i} } $.  In this case $ \boldsymbol{i} $ is shown as a  \emph{superscript}.
 
In particular,  the \emph{transfer operator}  $ \sigma^{\boldsymbol{i}}  $ acts on $ T $ to produce the   tree
 $ \sigma^{\boldsymbol{i} } T $, where, 
 %%BH - add
 writing $T^{\bfj}$ for the address of node $\bfj$,
 %%BH- end add 
 \begin{equation} \label{} 
   \left( \sigma^{\boldsymbol{i} } T \right)^{\boldsymbol{j} } := T^{\boldsymbol{i} \boldsymbol{j} } .
   \end{equation} 
That is, $  \sigma^{\boldsymbol{i} } T $ is the subtree of $ T $ which has its base (or root) node at $ \boldsymbol{i} $.

We frequently need to multiply a sequence of quantities, or compose a sequence of functions, along a finite branch corresponding to a node $ \boldsymbol{i} = i_1\dots i_n $ of $ T $.  In this case, $ \boldsymbol{i} $ is shown as a \emph{subscript}.  For example,
if $ \boldsymbol{i} = i_1\dots i_n $ then, with some abuse of notation for the second term,
\begin{equation} \label{lprod} 
\ell_{\boldsymbol{i}} := \ell_{i_1}\cdot \ldots \cdot \ell_{i_n} := \ell^{F^\emptyset}_{i_1} \cdot 
                 \ell^{F^{i_1}}_{i_2} \cdot     \ell^{F^{i_1i_2}}_{i_3}  \cdot     \ldots \cdot \ell^{F^{i_1\dots i_{n-1}}}_{i_n}
\end{equation} 
is the product of scaling factors corresponding to the edges along the branch $ i_1\dots i_n $, and analogously for other scaling factors. Similarly,
\begin{equation} \label{} 
\psi_{\boldsymbol{i}} :=  \psi_{i_1}\circ \dots \circ \psi_{i_n} := \psi^{F^\emptyset}_{i_1} \circ 
                \psi^{F^{i_1}}_{i_2} \circ     \psi^{F^{i_1i_2}}_{i_3} \circ      \dots \circ \psi^{F^{i_1\dots i_{n-1}}}_{i_n}
\end{equation} 
is the composition of functions along the same branch.
\end{notn}

 \begin{notn}[Cells and Complexes]\label{notcc}
The recursive   fractal   $K = K(T)$ generated by $ T $  satisfies 
\begin{equation} \label{} 
 K(T)  = \bigcup_{i=1}^{N^{\emptyset}} \psi_i^{F^{\emptyset}} \big(K(\sigma^iT) \big)
  = \bigcup_{| \boldsymbol{i} | = n} \psi_{ \boldsymbol{i} } (K(\sigma^{ \boldsymbol{i} } T)),
\end{equation} 
where the second equality comes from iterating the first.   

For $ | \boldsymbol{i} | = n $ the $n$-complex and $n$-cell with address $ \boldsymbol{i} $ are respectively
\begin{equation} \label{celldf}
K_{\boldsymbol{i} } = \psi_{ \boldsymbol{i} } (K(\sigma^ { \boldsymbol{i} } T)), \quad 
\Delta_{ \boldsymbol{i} } := \psi_{ \boldsymbol{i} } (V_0),
\end{equation} 
recalling that $ V_0 $ is the set of essential fixed points of $ F \in \boldsymbol{F} $ and is the same for all ~$ F $. 

%%BH - deleted as hopefully we only refer to complexes
%However, we will also abuse notation, and refer to $  K_{\boldsymbol{i} } $ simply as a \emph{cell}, rather than a complex.
%%BH - end delete 

%Let $ G_0= (V_0, E_0)$ be the complete graph on $ V_0 $.  
\begin{asm} In Section \ref{secanv} and subsequently we assume 
 \begin{equation} \label{V0ass}
\begin{aligned} 
&\text{$ V_0 $ is the set of vertices of an equilateral tetrahedron in $ \mathbb{R}^d $ for some $ d \geq 2 $}, \\
&\text{$ E_0 $ is the set of edges},  
 G_0= (V_0, E_0)\text{ is the complete graph on } V_0 .
\end{aligned} 
\end{equation} 
\end{asm}

We will need various sequences of graph approximations $\{G_n\}_{n=0}^{\infty}$ to the fractal $ K(T) $.  In particular we  
use the notation $G_n = (V_n, E_n)$, where 
\begin{equation} \label{GVE} 
V_n := \bigcup_{| \boldsymbol{i} | = n } \psi_{ \boldsymbol{i}} (V_0) = \bigcup_{| \boldsymbol{i} | = n }  
\Delta_{ \boldsymbol{i} }, \quad E_n := \bigcup_{| \boldsymbol{i} | = n } \psi_{ \boldsymbol{i}} (E_0).
\end{equation} 
We can recover the   fractal itself  as $K(T) = cl(\bigcup_n V_n)$, where $ cl $ denotes closure.

We will write $x\sim_n y$ for $x,y \in V_n$ if $x,y$ are connected by
an edge in $E_n$.

%The \emph{neighbourhood} in $G_n$ of the cell $\Delta_{\bfi}$   is
%\begin{equation} \label{} 
%D_{\bfi} = \bigcup \big\{   \Delta_{\bfj}  :  | \boldsymbol{i} | = | \boldsymbol{j} |, \    \triangle_{\bfi}\cap\triangle_{\bfj}
% \neq \emptyset \big\}. 
%\end{equation} 

 \end{notn}
 
%%%%%%%%%%%%%%%%%%%%%%%%%%%%%
\subsection{$ V $-Variable Trees and $ V $-Variable Fractals} 

Fix a natural number $ V $.
For motivation   see Figure~\ref{V-var}.

\medskip 
The following definition of a $ V $-variable tree and $ V $-variable fractal  is equivalent to that in~\cite{BHS1} 
and~\cite{BHS2}, but avoids working with $ V $-tuples of trees and fractals.

\begin{defn} \label{dfVt}
A  \emph{$ V $-variable  tree} corresponding to $ \boldsymbol{F} $ is an IFS construction tree $ T $ corresponding to 
$ \boldsymbol{F} $, with a \emph{type} $ \tau  ^{ \boldsymbol{i} } \in \{1,\dots, V \} $ associated to each node 
$ \boldsymbol{i} $.  Moreover, if two nodes $ \boldsymbol{i} $ and $ \boldsymbol{j} $ at the \underline{same  level} 
$ | \boldsymbol{i} | = | \boldsymbol{j} | $  have the same type $ \tau ^ { \boldsymbol{i} } = 
\tau ^ { \boldsymbol{j} } $, then:
\begin{enumerate}
\item  $ \boldsymbol{i} $ and $ \boldsymbol{j} $   have the same   associated IFS  $ F^ { \boldsymbol{i} } = 
F^ { \boldsymbol{j} } $ and hence the same branching number $ N^ { \boldsymbol{i} } = N^ { \boldsymbol{j} } $; 
 
 \item comparable successor nodes $ \boldsymbol{i} p $ and $ \boldsymbol{j} p $,  where $ 1 \leq p \leq  
 N^ { \boldsymbol{i} } = N^ { \boldsymbol{j} } $,  have the same type  $ \tau ^{ \boldsymbol{i}  p } =   
 \tau ^{ \boldsymbol{j}  p } $.
   \end{enumerate} 

The recursive fractal $ K = K(T) $ associated to a  $ V $-variable   tree $ T $ as above is called a 
\emph{$ V $-variable fractal} corresponding to $ \boldsymbol{F} $.    
  
The  \emph{class  of   $ V $-variable trees and class of $ V $-variable fractals} corresponding to $ \boldsymbol{F} $ 
are denoted by $\Omega _  V = \Omega^{ \boldsymbol{F} }_V    $ and $ \mathcal{K} _  V  =  
\mathcal{K} ^{ \boldsymbol{F} }_V  $ respectively.
\end{defn}     

\begin{rem} \label{rm1v}
A $ V $-variable   tree  has at most $  V $ distinct IFSs associated to the nodes at each fixed level.
If two nodes at the \emph{same} level  of  a $ V $-variable   tree have the same type then the subtrees rooted at 
these two nodes are identical, i.e.\
\begin{equation} \label{} 
| \boldsymbol{i} | = | \boldsymbol{j}  |
 \ \&\ \tau ^{\boldsymbol{i}}= \tau ^{\boldsymbol{j} }
 \  \Longrightarrow\  \sigma ^{ \boldsymbol{i} } T = \sigma ^ { \boldsymbol{j} } T .
\end{equation} 
In particular, for each level,  there are at most $ V $ distinct subtrees rooted at that level.

A 1-variable tree is essentially the same as an IFS tree which generates a scale irregular or homogeneous fractal as 
in~\cite{Ham1}, \cite{Ham4} and \cite{barham}. 
\end{rem}

The following is used in the construction and analysis of $ V $-variable fractals.

\begin{defn} \label{dfenv}  An \emph{environment} $ E $ assigns to each type $v \in \{1,\dots, V \} $  both 
an IFS  $  F_v \in \boldsymbol{F} $ and a sequence of types $( \tau_{v,i})_{i=1}^ {|F_v|}$, where $  |F_v| $ is 
the number of functions in $ F_v $.  We write 
\begin{equation} \label{} 
E = \big(E(1),\dots, E(V)\big),\quad   E(v)  = \big(F^E_v,  \tau^E_{v,1}, \dots,  \tau^E_{v, |F_v| }\big). 
\end{equation} 
\end{defn} 

For a pictorial example see Figure~\ref{V-var}. For the following consider the case $ n=2 $ in Figure~\ref{V-var}.

\begin{constr} 
A $ V $-variable   tree  is constructed  from a sequence of environments $ (E^ k )_{k\geq 1} $ in the natural way  as follows:
\begin{itemize} 

\item[\emph{Stage 0}:]   Begin with the root node $ \emptyset $ and an initial type $ \tau^\emptyset $ assigned to 
this node.

\item[\emph{Stage 1}:]  Use $ E^1 $ and the type $ \tau^\emptyset $ in the natural way to  assign an IFS to the level 0 
node, construct the level 1 nodes and assign a type to each of them. 

More precisely, use $ E^1(v) $ where 
\[ v:= \tau^\emptyset , \quad E^1 \big(v\big) =  \Big(F^{E^1}_ {v},  \tau^{E^1}_{v, 1} , \dots,     
\tau^{E^1}_{v, | F^{E^1}_ {v}|}\Big), \]
to assign the IFS $F^{\emptyset} :=F^{E^1}_ {v}  $  to the node $ \emptyset $ and in particular determine the 
branching number $N^{\emptyset} :=   \big| F^{E^1}_ {v}\big| $ at $ \emptyset $, and to assign the type 
$\tau^{j}:= \tau^{E^1}_{v, j} $  to each level $ 1 $ node $ j $.  

\item[\vdots ]  

\item[\emph{Stage n}:]   (By the completion of stage  $  n-1$ for $ n \geq 2 $, an IFS  $F ^ { \boldsymbol{i} } $ will have been assigned to each node   $  \boldsymbol{i} $ of level $| \boldsymbol{i} |  \leq n-2 $,   all nodes $ \boldsymbol{i} $ of level  $ | \boldsymbol{j} | \leq n-1 $ will  have been constructed and a type 
$ \tau ^ { \boldsymbol{j} } $ will have been assigned to each.)
 
Use $ E^n $ in the natural way to  assign an IFS to each level $ n-1 $  node according to its type, to construct the level $ n $  nodes and to assign a type to them. 

More precisely, use $ E^n(v) $  for $ 1\leq v \leq V $ where 
\[
  E^n \big(v\big) =  \Big(F^{E^n}_ {v},  \tau^{E^n}_{v, 1} , \dots,     \tau^{E^n}_{v, | F^{E^n}_ {v} |}\Big),
\]
to assign the IFS $F^{ \boldsymbol{i} } := F^{E^n}_ {v}  $  to each level $ n-1 $  node $ \boldsymbol{i} $ of type $ v $ and in particular to determine the branching  number $N^{ \boldsymbol{i} }: = \big| F^{E^n}_ {v}\big| $ at the node $ \boldsymbol{i} $, and to assign the  type $ \tau^{ \boldsymbol{i} j} := \tau^{E^n}_{v, j} $  to the  level $ n $  node $ \boldsymbol{i} j $.  
\end{itemize} 

It follows by an easy induction that the properties in  Definition~\ref{dfVt} hold at all nodes.
\hfill $\square$  \end{constr} 

We now note the following facts about the connectivity properties of $V$-variable fractals.

\begin{lem} \label{lem:cnprop}
Let $K$ be a $V$-variable fractal. Then\\
(1) $K$ is connected \\ 
(2) $K$ is nested:
For all $\bfi,\bfj \in T$, if $[\bfi] \cap [\bfj] = \emptyset$, then 
$K_{\bfi} \cap K_{\bfj} = \psi_{\bfi}(V_0)\cap \psi_{\bfj}(V_0)$.
\end{lem}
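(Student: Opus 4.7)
\emph{(1) Connectedness.} I would prove by induction on $n$ that the approximation graph $G_n=(V_n,E_n)$ is connected; since $V_n\subset V_{n+1}$ and $K(T)=\mathrm{cl}(\bigcup_n V_n)$, this gives connectedness of $K(T)$. The base $n=0$ is immediate since $G_0$ is the complete graph on $V_0$. For the step, fix an $n$-cell $\Delta_{\bfi}=\psi_{\bfi}(V_0)$ with $|\bfi|=n$; the IFS $F^{\bfi}$ at that node is affine nested, so $K^{F^{\bfi}}$ is connected, and consequently the level-1 refinement graph of $K^{F^{\bfi}}$ (on vertices $\bigcup_j\psi_j^{F^{\bfi}}(V_0)$ with the pushed-forward $E_0$-edges) is connected and contains $V_0$. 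Pushing this forward by $\psi_{\bfi}$, the refinement of $\Delta_{\bfi}$ inside $G_{n+1}$ is connected and contains $\Delta_{\bfi}$. Combined with the inductive connectedness of $G_n$, this yields connectedness of $G_{n+1}$.

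\emph{(2) Nested property.} The inclusion $\psi_{\bfi}(V_0)\cap\psi_{\bfj}(V_0)\subset K_{\bfi}\cap K_{\bfj}$ is immediate because $V_0$ is common to every IFS in $\boldsymbol{F}$, so $V_0\subset K(\sigma^{\bfm}T)$ for every node $\bfm$. For the reverse inclusion, let $\bfk$ be the longest common prefix of $\bfi$ and $\bfj$ and write $\bfi=\bfk p\bfi'$, $\bfj=\bfk q\bfj'$ with $p\ne q$. I would proceed in two stages. The first stage is a root-level identity: for any $V$-variable tree $T'$ with root IFS $F\in\boldsymbol{F}$ and $p\ne q$,
\[
\psi_p^F\big(K(\sigma^pT')\big)\cap\psi_q^F\big(K(\sigma^qT')\big)=\psi_p^F(V_0)\cap\psi_q^F(V_0).
\]
Iterating the uniform OSC yields $K(\sigma^{\bfm}T')\subset\overline{O}$ for every $\bfm$, so the left side is contained in $\psi_p^F(\overline{O})\cap\psi_q^F(\overline{O})$. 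For an affine nested IFS with the standard open set $O$ (interior of the convex hull of $V_0$ in the tetrahedral setting~\eqref{V0ass}), the symmetry, nesting axiom, and single-point overlap $|\psi_p^F(V_0)\cap\psi_q^F(V_0)|\le 1$ together force $\psi_p^F(\overline{O})\cap\psi_q^F(\overline{O})=\psi_p^F(V_0)\cap\psi_q^F(V_0)$. Applying this root-level identity with $T'=\sigma^{\bfk}T$ and prepending $\psi_{\bfk}$ gives
\[
K_{\bfk p}\cap K_{\bfk q}=\psi_{\bfk p}(V_0)\cap\psi_{\bfk q}(V_0).
\]

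The second stage refines down the branches $\bfi'$ and $\bfj'$. Any $x\in K_{\bfi}\cap K_{\bfj}$ lies in $\psi_{\bfk p}(V_0)\cap\psi_{\bfk q}(V_0)$, so write $x=\psi_{\bfk p}(v)$ for some $v\in V_0$. Membership in $K_{\bfi}$ forces $v\in\psi_{i'_1}^{F^{\bfk p}}(K(\sigma^{\bfk p i'_1}T))$, the first step down $\bfi'$. The structural fact $V_0\cap\psi_r^F(\overline{O})\subset\psi_r^F(V_0)$ (vertices of $\overline{O}$ sitting inside a sub-cell must be vertices of that sub-cell, by the same convex-hull geometry) then forces $v\in\psi_{i'_1}^{F^{\bfk p}}(V_0)$. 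Iterating along $\bfi'$ and symmetrically along $\bfj'$ yields $x\in\psi_{\bfi}(V_0)\cap\psi_{\bfj}(V_0)$.

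\emph{Main obstacle.} The delicate point is establishing the identity $\psi_p^F(\overline{O})\cap\psi_q^F(\overline{O})=\psi_p^F(V_0)\cap\psi_q^F(V_0)$, which sharpens the OSC (interior disjointness) to nesting at the boundary. It cannot be extracted from the nesting axiom directly since that axiom concerns $K^F$ rather than $\overline{O}$, which is what is needed because the $V$-variable subfractals do not coincide with any single $K^F$. It follows, however, from the affine nested fractal symmetry together with the canonical convex-hull choice of $O$ in the setting~\eqref{V0ass}. Once this structural fact is in hand the branch-wise refinement is routine.
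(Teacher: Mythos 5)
Your argument for part (2) follows the same skeleton as the paper's: take the longest common prefix $\bfk$, reduce to the intersection at the first branching level below $\bfk$, use the single-point overlap assumption to get at most one intersection point, and push that point down both branches into $\psi_{\bfi}(V_0)\cap\psi_{\bfj}(V_0)$. Where you go further is in filling in two steps the paper leaves implicit. First, the paper invokes ``the nesting axiom for $F^{\bfk}$'' to control the intersection of the two complexes one level below $\bfk$, even though the nesting axiom is stated for $K^{F^{\bfk}}$ and not for the $V$-variable subfractals $K(\sigma^{\bfk p}T)$; you bridge this by noting that the uniform open set condition gives $K(\sigma^{\boldsymbol{m}}T)\subset\overline{O}$ for every node $\boldsymbol{m}$ and then arguing about $\psi^F_p(\overline{O})\cap\psi^F_q(\overline{O})$. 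Second, the paper asserts without elaboration that the resulting intersection point lies in $\psi_{\bfi}(V_0)\cap\psi_{\bfj}(V_0)$, which you justify via the extremal-point observation $V_0\cap\psi^F_r(\overline{O})\subset\psi^F_r(V_0)$; this is correct once $O$ is the interior of the convex hull of $V_0$ in the tetrahedral setting of \eqref{V0ass}, since a vertex of a simplex lying in a convex sub-simplex must be an extreme, hence vertex, point of that sub-simplex. So the route is the same, but you have made the load-bearing geometry explicit, which is a genuine improvement in rigour over the paper's terse sketch. The one place your write-up is itself still soft, and you flag it, is the identity $\psi^F_p(\overline{O})\cap\psi^F_q(\overline{O})=\psi^F_p(V_0)\cap\psi^F_q(V_0)$: the open set condition alone only rules out interior overlap, so excluding boundary overlap along an edge or face requires either applying the nesting axiom to $\partial O\subset K^F$ or a direct geometric check for the gasket families in question. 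Part (1) is fine; your inductive graph-connectivity argument is a correct expansion of the paper's one-line assertion that connectedness ``is clear.''
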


\begin{proof}
(1) The connectedness is clear as all the affine nested fractals in the family are connected.

(2) In our setting this is straightforward to see as 
if $[\bfi] \cap [\bfj] = \emptyset$, there exists a $\bfk$ of maximal length with $\bfk\prec \bfi$ and 
$\bfk\prec\bfj$, such that   $K_{\bfi} \subset K_{\bfk}$ and $ K_{\bfj} \subset K_{\bfk}$. If we write $\bfi = \bfk i_1\dots$ and $\bfj = 
\bfk j_1\dots$, then $i_1\neq j_1$ and by the nesting axiom for $F^{\bfk}$ we have $K_{\bfk i_1} \cap K_{\bfk j_1} 
= \psi_{\bfk i_1}(V_0)\cap V_{\bfk j_1}(V_0)$. If the intersection is empty we are done.  Otherwise, by our technical
assumption on affine nested fractals that $|\psi^{F^{\bfk}}_{i_1}(V_0) \cap \psi^{F^{\bfk}}_{j_1}(V_0)| \leq 1$, there is a single intersection point which is the image of a fixed point in $V_0$.  If $K_{\bfi} \cap K_{\bfj}\neq \emptyset$, 
this is the intersection point of $K_{\bfi} \cap K_{\bfj}$ and therefore of $\psi_{\bfi}(V_0)\cap V_{\bfj}(V_0)$ as required.
If $K_{\bfi} \cap K_{\bfj}= \emptyset$ we are done.
\end{proof}

%%%%%%%%%%%%%%%%%%%%%%%%%%%%%%%
\subsection{Random $ V $-Variable Trees and Random $ V $-Variable Fractals}\label{rvtf}

\begin{defn} \label{dfpv} Fix a probability distribution $ P $ on $ \boldsymbol{F} $.  This induces a 
probability distribution $ P_V $ on the set of environments as follows. Choose the IFSs $ F^{{E} } _v $ for 
$ v \in \{1,\dots,V\} $ in an i.i.d. manner according to $ P $.   Choose types $ \tau ^{{E}} _{v,j} $ for 
$ 1 \leq j \leq | F^{{E} } _v | $   in an i.i.d. manner according to the uniform distribution on $ \{1,\dots, V \} $ 
and otherwise independently of the  $ F^{{E} } _w$.
\end{defn}

\begin{defn}  \label{dfpv2}
The \emph{probability distribution on the set $ \Omega_V $} of $ V $-variable trees is obtained by choosing 
$ \tau ^\emptyset \in \{1,\dots, V\} $ according to the uniform distribution and independently choosing the
environments at each stage in an i.i.d. manner according to $ P_V $. This probability distribution on $ V $-variable 
trees induces a probability distribution on the set $ \mathcal{K} _V $ of $ V $-variable fractals.  Both the probability
distribution on trees and that on fractals are denoted by $ P_V $. We will write $E_V$ for expectation with respect to $P_V$. 
%The $ \sigma $-algebra of measurable subfamilies of $ \mathcal{K}_V $ is the Borel $ \sigma $-algebra induced by the 
%Hausdorff metric on nonempty compact subsets of $ \mathbb{R}^d $.  For $ \Omega_V $ one uses any of the 
%standard tree metrics.

\emph{Random $ V$-variable   trees} and \emph{random $ V $-variable fractals} are random labelled trees and 
random compact subsets of $ \mathbb{R}^d  $ respectively, having the distribution $ P_V$.
Later, when we add additional scale factors for resistance and weights associated with each $F\in \boldsymbol{F}$, 
we will assume they are measurable with respect to $F\in\boldsymbol{F}$. 
\end{defn}

Although the distribution $ P_V $ on environments is a product measure, this is far from the case for the corresponding
distribution $ P_V $ on $ \Omega_V $ and $ \mathcal{K} _V $.  There is a high degree of dependency between the 
types (and hence the IFSs) assigned to different nodes at the same level. 

\begin{rem} 
The classes $ \mathcal{K}_V $ interpolate  between the class of homogeneous fractals in the case $ V=1 $ and the class of 
recursive fractals as $ V \to \infty $.  The probability spaces  $ (\mathcal{K}_V,P_V) $ interpolate  between the natural 
probability distribution on  homogeneous fractals in the case $ V=1 $ and the natural probability distribution on the class 
of recursive fractals as $ V \to \infty $.   See \cite{BHS0} and \cite{BHS1}.
\end{rem} 

\begin{notn} 
It will often be convenient to identify the \emph{sample space} for random quantities such as trees, fractals, 
functions  associated to a branch of a tree, etc., with the set $ \Omega _V $ of $ V $-variable trees. 
We use $ \omega $ to denote a generic element of  $ \Omega_V $ and combine this with other notations in 
the natural manner.  Thus we may write  $ T^ \omega $,  $ K^ \omega $, $ \psi_{ \boldsymbol{i} }^\omega $ etc.

In particular,  $ \sigma^{ \boldsymbol{i} } \omega $ is   the transfer operator   defined in Notation~\ref{notn1} 
for a tree $ T $. See for example the first equality in \eqref{dfen}.  
However, we usually suppress $ \omega $  as in the second equation in \eqref{dfen}.
Also see \eqref{maxfe} and the explanation which follows it.
\end{notn}

%%%%%%%%%%%%%%%%%%%%%%%%%%
\subsection{Necks}

The notion of a \emph{neck} is critical for the analysis that follows.  

\begin{defn} \label{dfneck} The environment $ E$ in Definition~\ref{dfenv}  is a \emph{neck}  if all 
$ \tau _{v,i} ^{E} $ are equal.

A \emph{neck} for a $ V $-variable tree $ \omega$ is a natural number $n$ such that the environment $E$ applied 
at stage $n$ in the construction of $ \omega$ is a neck environment.  In this case we say a \emph{neck occurs 
at level~$ n $}.   If $  \boldsymbol{i}  $ is a node in $ \omega$ and $ |  \boldsymbol{i}  | = n $, then 
$  \boldsymbol{i}  $ is called a \emph{level $ n $ neck node}.
\end{defn} 

\begin{figure}[htbp] %  figure placement: here, top, bottom, or page
   \centering
   \includegraphics[width=4in]{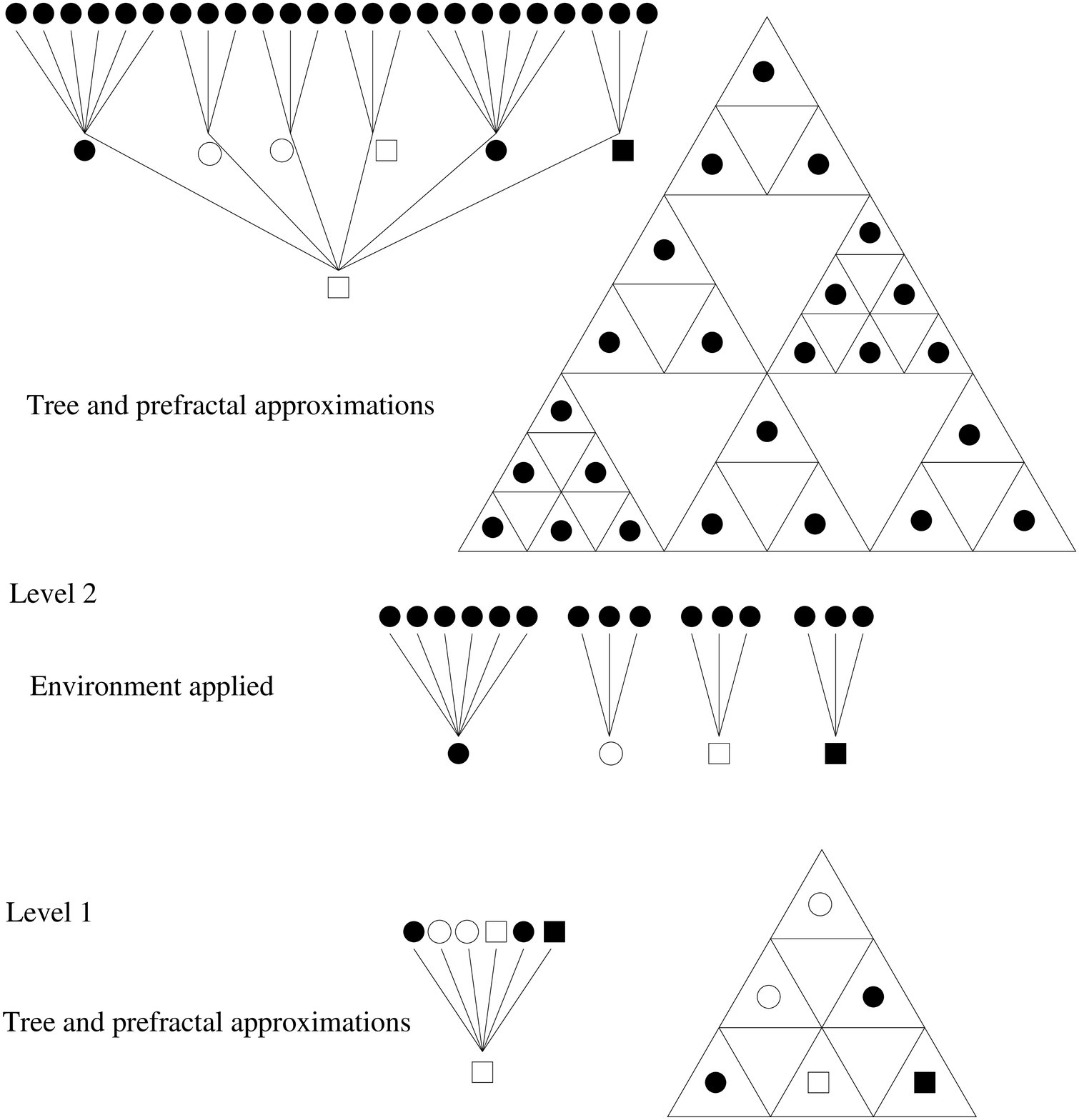} 
   \caption{Compare with Figure~\ref{V-var}, except that now a neck occurs at level 2.  All subtrees rooted at this level
    will be the same, although they have not yet been constructed.  All 2-complexes will be the same up to scaling by 
    factors determined by the construction up to this level.}
   \label{figneck}
\end{figure}

If a neck occurs at level $n$ then the type assigned to every node at that level is the same. See Figure~\ref{figneck}. 
It follows from Remark~\ref{rm1v}  that all subtrees rooted at level $ n $ will be the same. Note that the subtrees 
themselves are only constructed at later stages, and even the common value of the IFS at a level $ n $ neck node is 
not determined until stage $ n+1 $.  

There is however no restriction on the IFSs occurring in a neck environment ${E} $. For a level $ n $ neck 
these IFSs are applied at level $ n-1 $.  

Because there is an upper bound on the number of functions $ N^F $ in any IFS $ F \in \boldsymbol{F} $, there is only a 
finite number of type choices to be made in selecting an environment. It follows that necks occur infinitely often 
almost surely with respect to the probability $ P_V $ defined in Definition~\ref{dfpv2}. The sequence of neck levels 
in the construction of a $ V $-variable tree or fractal is denoted by
\begin{equation} \label{seqneck} 
0= n(0) < n(1) < \dots < n(k) < \cdots .
\end{equation} 

The sequence of times between necks is a sequence of independent geometric random variables, and in particular 
the expected first neck satisfies
\begin{equation} \label{}
E_V n(1) < \infty.
\end{equation} 

Many of our future estimates rely on various a.s.\ properties of necks.   However, some estimates just require that 
there be an infinite sequence of necks. For this reason we make the definition:
\begin{equation} \label{dfwvd} 
\text{\emph{$ \Omega'_V \subset \Omega_V $  is  the set of $ V $-variable trees with an infinite sequence of necks.}}
\end{equation} 

 \medskip We next give an elementary result on the asymptotic behaviour of   a sequence of geometric random variables 
 $ (Y_k)_{k\geq 1} $. It follows that $ Y_k $ grows at most logarithmically in $ k $, and powers of $ Y_k $  grow at 
 most geometrically, with similar results for the maximum and the mean of $\{ Y_1, \dots, Y_k  \}$. 

The following is standard  but included for completeness. Note that the $Y_k$ need not actually be geometric random variables.

\medskip 
%***[JH] 
%\begin{enumerate}
%\item  In the following lemma I have removed both the iid requirement and the geometric RV requirement, replaced ``='' 
%by `` $ \leq $'', and used arbitrary $ x> 0 $.  This is what we really need, and is most convenient for Corollary~\ref{cortail}, 
%for example.

%\item An earlier version had a constant $ 2 $ in the ``max'' case, but this is not needed. 

%\item I have used $ p $ instead of $ 1-p $.  Just because it is easier to keep logs positive, and cleaner to write $ \log 1/p 
%$ instead of $ \log   1/(1-p)$, and more natural in our applications.

%\item The proof of the max inequality below  is elementary, it required some thought to get the various bits in the correct 
%order.  But should it be included?

%\item I have added the sum result as it is needed later to estimate $ n(k) $.   But now I realise it must also be standard.  
%Is there a reference?
%\end{enumerate}
%**** 

%\begin{lem}\label{lem:geomrvs}
%Suppose  $\{Y_i\}_{i=1}^{\infty}$ is a sequence of not necessarily independent  geometric   random
%variables  with $\bp(Y_i>k) = p^k$ for $k=1,2,\dots$.  Then $\bp$ a.s.  
%\begin{gather}
%  \limsup_{k\to\infty} \frac{  Y_k}{\log{k}} \leq \frac{1}{\log 1/p} , \quad \text{moreover }
%  \limsup_{k\to\infty} \frac{\max_{1\leq i\leq k} Y_i}{\log{k}} \leq \frac{1}{\log 1/p} ,  \label{geomrvs1}  \\
%  \limsup_{k \to \infty} \frac{\sum_{i=1}^k Y_i}{ k \log k } \leq \frac{2}{\log 1/p } .   \label{geomrvs2} 
%  \end{gather}
%  \end{lem}

\begin{lem}\label{lem:geomrvs}
Suppose  $\{Y_k\}_{k=1}^{\infty}$ is a sequence of not necessarily independent    random
variables  with $P(Y_k>x) \leq A p^x$, where $0<p<1 $ and for all $ x>0 $.  Suppose $ n \geq 1 $ is a natural number. 
Then   a.s.  
\begin{gather}
  \limsup_{k\to\infty} \frac{  Y_{nk}}{\log{k}} \leq \frac{1}{\log 1/p} , \quad  
  \limsup_{k\to\infty} \frac{\max_{1\leq i\leq  nk} Y_i}{\log{k}} \leq \frac{1}{\log 1/p} ,  \label{geomrvs1}  \\
  \limsup_{k \to \infty} \frac{\sum_{i=1}^{nk} Y_i}{ k \log k } \leq \frac{2n}{\log 1/p } .   \label{geomrvs2} 
  \end{gather}
  \end{lem}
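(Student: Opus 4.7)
The plan is to prove all three bounds via Borel--Cantelli, with the sum estimate being an immediate corollary of the maximum estimate. Throughout, set $\alpha := 1/\log(1/p)$.

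For the pointwise bound on $Y_{nk}$, fix $\epsilon > 0$ and let $x_k = (1+\epsilon) \alpha \log k$. Then
\[
  P\bigl(Y_{nk} > x_k\bigr) \leq A p^{x_k} = A\, k^{-(1+\epsilon)},
\]
which is summable in $k$. By Borel--Cantelli, a.s.\ $Y_{nk} \leq x_k$ for all but finitely many $k$, so $\limsup_k Y_{nk}/\log k \leq (1+\epsilon) \alpha$. Letting $\epsilon \to 0$ along a countable sequence gives the first inequality of \eqref{geomrvs1}.

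The maximum is more delicate: the naive union bound $P(\max_{i \le nk} Y_i > x_k) \leq An k^{-\epsilon}$ is not summable for small $\epsilon$. To avoid this I would pass to the geometric subsequence $k_j = 2^j$. At this subsequence,
\[
  P\Bigl(\max_{1 \le i \le n 2^j} Y_i > (1+\epsilon) \alpha\, j \log 2\Bigr) \leq A n\, 2^j \cdot 2^{-j(1+\epsilon)} = An\, 2^{-j\epsilon},
\]
which is summable, so Borel--Cantelli gives $\max_{i \le n 2^j} Y_i \leq (1+\epsilon) \alpha j \log 2$ for $j$ large a.s. For arbitrary $k$, pick $j$ with $2^j \leq k < 2^{j+1}$; monotonicity of the maximum yields
\[
  \frac{\max_{i \le nk} Y_i}{\log k} \leq \frac{\max_{i \le n 2^{j+1}} Y_i}{j \log 2} \leq \frac{(1+\epsilon)\alpha (j+1)}{j} \longrightarrow (1+\epsilon) \alpha ,
\]
and letting $\epsilon \to 0$ along a countable sequence proves the second inequality of \eqref{geomrvs1}.

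Finally, \eqref{geomrvs2} follows without further work from the maximum bound: since $Y_i \leq \max_{j \le nk} Y_j$ for each $i \leq nk$,
\[
  \sum_{i=1}^{nk} Y_i \leq nk \cdot \max_{1 \le i \le nk} Y_i,
\]
and dividing by $k \log k$ and applying \eqref{geomrvs1} yields $\limsup_k \sum_{i=1}^{nk} Y_i/(k \log k) \leq n\alpha$, which is bounded above by $2n\alpha$ as claimed. The factor $2$ is intentionally generous; the argument actually gives $n\alpha$. The only substantive obstacle is the one addressed by the subsequence trick in the maximum step; the rest is routine Borel--Cantelli, and independence of the $Y_k$ is never invoked.
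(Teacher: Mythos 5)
Your proof is correct, and all three bounds are established; but your route diverges from the paper's at two points, and the comparison is instructive. For the maximum bound, the paper sidesteps the union-bound issue you (correctly) flag, not by passing to a dyadic subsequence, but by observing that once the first bound gives $Y_i \leq (\alpha+\delta)\log i$ for all $i\geq k_0(\omega,\delta)$, one has automatically $\max_{k_0\leq i\leq k}Y_i/\log k \leq \max_{k_0\leq i\leq k}Y_i/\log i \leq \alpha+\delta$, and the initial finitely many terms vanish after dividing by $\log k$; this avoids any fresh Borel--Cantelli calculation, whereas your dyadic trick needs a second summability estimate. For the sum bound the comparison reverses: the paper runs a third, independent Borel--Cantelli argument with a parameter $\gamma>2$, producing the stated constant $2n/\log(1/p)$, while you simply bound $\sum_{i=1}^{nk}Y_i \leq nk\max_{i\leq nk}Y_i$ and read off the result from the maximum estimate, getting the sharper constant $n/\log(1/p)$ essentially for free. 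Both proofs are valid and self-contained; yours front-loads the work into the maximum step and then reaps the sum as a corollary, the paper's spreads the work across three parallel applications of Borel--Cantelli but keeps the middle step completely elementary. Either would serve the lemma's downstream uses, since only an upper bound of the stated order is ever needed.
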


\begin{proof} The case $ n>1  $  is a direct consequence of the case $ n =1$, which we establish.   

\medskip Suppose $ \epsilon > 0 $.  Since $ P(Y_k > x ) \leq Ap^{x} $ for $ x>0 $,
\[
\sum_{k\geq 1} P\left(Y_k > \frac{(1+ \epsilon) \log k}{\log 1/p } \right)
 \leq A \sum_{k \geq 1} p ^{  (1+ \epsilon ) \log k /( \log 1/p) } 
 = A \sum_{k \geq 1 } k^{ - ( 1 + \epsilon ) } < \infty .
 \]
Hence by the first Borel-Cantelli lemma, 
 \[
   \limsup_{k\to\infty} \frac{  Y_k}{\log{k}} \leq \frac{1+ \epsilon }{\log 1/p}    \text{\quad a.s.}
   \]
   Since $ \epsilon > 0 $ is arbitrary, the first inequality in \eqref{geomrvs1} follows.
 
\smallskip The second inequality in \eqref{geomrvs1}  is now an elementary consequence. 
Suppose $ \delta  > 0 $.  Using   the first inequality in  \eqref{geomrvs1} to get the second inequality below, $ P $ a.s.\ there exists 
$ k_0 = k_0(\omega, \delta  ) $ such that  $ k \geq k_0 $ implies
 \[
\max_{k_0\leq i\leq k} \frac{Y_i}{\log{k}} \leq \max_{k_0\leq i\leq k} \frac{Y_i}{\log i} \leq \frac{1 }{\log 1/p} + \delta  .
\]
Hence 
\[ 
 \limsup_{k \to \infty} \max_{k_0\leq i\leq k}  \frac{Y_i}{\log{k}} \leq \frac{1}{\log 1/p} + \delta   \text{\quad a.s.}
\]
Replacing $ k_0 $  by $ 1 $ and letting $ \delta \to 0 $  in the above  implies the second inequality in~\eqref{geomrvs1}.

\smallskip  For \eqref{geomrvs2} fix $ \gamma > 0 $.  Then
\begin{align*}
\sum_{k\geq 1} P  \Bigg( &\sum_{i=1}^k Y_i >   \frac{ \gamma k \log k } { \log 1/p } \Bigg)  
 \leq  \sum_{k\geq 1} \sum_{i=1}^k P \left( Y_i > \frac{ \gamma  \log k } { \log 1/p } \right)  \\ 
&\leq \sum_{k \geq 1} k A p^{ \gamma \log k / \log 1/p } 
=A \sum_{k\geq 1} k^{1 - \gamma } < \infty, \text{ if } \gamma > 2 .
\end{align*} 
By the first Borel-Cantelli lemma,  if $ \gamma  > 2 $,
\[
  \limsup_{k \to \infty} \frac{\sum_{i=1}^k Y_i}{ k \log k } \leq \frac{\gamma }{\log 1/p }
  \quad \text{a.s} .   
\]
This gives \eqref{geomrvs2}.
 \end{proof} 
 
%%new
We also include a decomposition of sums of products of scale factors. 

It may help to note that the factors on the right side of \eqref{eq:spdecomp} in the next Lemma are calculated by first  choosing and fixing, 
for each $ j=1 \dots k $, an arbitrary  node of $ T $  at level $ n(j-1) $. For fixed $ j $  all subtrees of $ T $ rooted at this level are identical 
by the definition of a neck.  The factor in  \eqref{eq:spdecomp}  is  the sum, of products of   $ s_i^p $ type weights, along all paths in such 
a \emph{subtree} starting from its root node  and ending at a   first neck level node.  There is a one-one correspondence between the set 
of  such paths in the \emph{subtree} and the set of paths   in the \emph{original} tree starting from the \emph{chosen} node at  level 
$ n(j-1) $ and ending at a level $ n(j) $ node.

\begin{lem}\label{lem:sumprod}
Let $s_i=s_i^F \in \mathbb{R}$ for  $ i = 1,\dots, N^F$ be scaling factors associated with each family $F$, 
where
\begin{equation} \label{sest} 
\begin{aligned}
0< s_{\inf}  &:=  \inf\{s^F_i  :      i  \in 1,\dots,N^F,\,  F\in \bff\} ,  \\
s_{\sup}  &:=  \sup\{s^F_i  :  i\in 1,\dots,N^F,\,  F\in \boldsymbol{F} \}   < \infty.
\end{aligned} 
\end{equation} 
Then, writing $s_{\bfi} = s_{i_1}\cdot \ldots \cdot s_{i_n}$ for $\bfi = i_1\dots i_n \in T$, and with $ s^{(j)}_{ \boldsymbol{i} } $ defined in the natural way in the body of the proof,
we have
\begin{equation} 
 \sum_{\boldsymbol{i} \in T, | \boldsymbol{i} | = n(k) } s_{ \boldsymbol{i} }     = \prod_{j=1}^k \Bigg( \sum_{| \boldsymbol{i} | = n(j) - n(j-1) } s^{(j-1)}_{ \boldsymbol{i} }  \Bigg)  \label{eq:spdecomp}.
\end{equation}
Moreover,
\begin{equation} 
\lim_{k\to \infty} \frac{1}{k} \log \sum_{ | \boldsymbol{i} | = n(k) } s_{ \boldsymbol{i} } = E_V  \log \sum_{ | \boldsymbol{i} | = n(1) } s_{ \boldsymbol{i} }  \quad P_V \text{ a.s.} \label{eq:slln}
\end{equation}
\end{lem}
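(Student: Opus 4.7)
The plan is to exploit the defining feature of necks --- that at any neck level $n(j-1)$ all subtrees of $T$ rooted at that level carry identical labelled structure, including scaling data --- in order to decompose a length-$n(k)$ path in $T$ as a concatenation of $k$ inter-neck subpaths. Equation \eqref{eq:spdecomp} will then fall out by distributivity, and after taking logarithms \eqref{eq:slln} will reduce to Kolmogorov's SLLN for an i.i.d.\ sequence.

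For the factorization, I would, for each $j=1,\dots,k$, pick any one ``canonical'' node at level $n(j-1)$ and let $\widetilde T^{(j-1)}$ denote the subtree of $T$ rooted there, truncated at depth $n(j)-n(j-1)$. By the neck property this labelled subtree does not depend on the canonical choice, so for $|\bfi|=n(j)-n(j-1)$ one may unambiguously define $s^{(j-1)}_{\bfi}$ to be the product of scale factors along the path $\bfi$ inside $\widetilde T^{(j-1)}$. Each $\bfi$ with $|\bfi|=n(k)$ then decomposes uniquely as a concatenation $\bfi^{(1)}\cdots\bfi^{(k)}$ with $|\bfi^{(j)}|=n(j)-n(j-1)$, and $s_{\bfi}=\prod_{j=1}^{k} s^{(j-1)}_{\bfi^{(j)}}$; summing over all such $\bfi$ and factoring yields \eqref{eq:spdecomp}.

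For \eqref{eq:slln} I would set $X_j:=\log\sum_{|\bfi|=n(j)-n(j-1)} s^{(j-1)}_{\bfi}$ and $\Delta_j:=n(j)-n(j-1)$. The uniform bounds \eqref{Nbd} and \eqref{sest} give
\[ N_{\inf}^{\Delta_j} s_{\inf}^{\Delta_j} \;\le\; \sum_{|\bfi|=\Delta_j} s^{(j-1)}_{\bfi} \;\le\; N_{\sup}^{\Delta_j} s_{\sup}^{\Delta_j},\]
so $|X_j|\le C\Delta_j$ for a deterministic $C$, and since $\Delta_j$ is geometric with $E_V\Delta_j<\infty$, each $X_j$ is in $L^1(P_V)$. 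The $X_j$ should form an i.i.d.\ sequence: $X_j$ is a functional of the common type $\tau_{j-1}$ at level $n(j-1)$ together with the environments $E^{n(j-1)+1},\dots,E^{n(j)}$ (non-necks followed by one neck), and this pair has the same joint law across $j$ by the i.i.d.\ structure of the environment sequence and the fact that the common neck type is uniform on $\{1,\dots,V\}$. Kolmogorov's SLLN applied to $(X_j)$ then gives \eqref{eq:slln}. The main obstacle is precisely this independence check: the neck environment $E^{n(j)}$ is shared between block $j$ and block $j+1$, but under $P_V$ it splits into an independent IFS-part (feeding $X_j$ through the level $n(j)-1$ IFSs) and type-part (feeding $X_{j+1}$ through $\tau_j$), so no genuine dependence is created.
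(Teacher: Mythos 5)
Your proposal is correct and follows essentially the same route as the paper's proof: define $s^{(j-1)}_{\bfi}$ via the common subtree between consecutive necks, obtain the telescoping factorization \eqref{eq:spdecomp}, set $X_j=\log\sum s^{(j-1)}_{\bfi}$, bound $|X_j|$ by a multiple of $n(j)-n(j-1)$ to get integrability, and apply the SLLN. The paper's proof asserts the i.i.d.\ property of the $X_j$ ``by construction''; your additional observation — that the shared neck environment $E^{n(j)}$ decouples because its IFS-part (driving $X_j$) and its common type value (driving $\tau_j$ and thus $X_{j+1}$) are independent given the neck event — is exactly the right justification, so this is a more explicit version of the same argument rather than a different one.
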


\begin{proof}
Let $ T^{(k)} $ denote the unique subtree of $ T $ rooted at the neck level $ n(k) $, so that in particular $ T^{(0)} = T $.  

Then, as explained subsequently 
(and following the notation of \eqref{lprod} but with the $ F $ there suppressed),
{\allowdisplaybreaks 
\begin{align} 
 &\sum_{\boldsymbol{i} \in T, | \boldsymbol{i} | = n(k) } s_{ \boldsymbol{i} }     =
\sum_{\boldsymbol{i} \in T, | \boldsymbol{i} | = n(k) }
  s_{i_1}^\emptyset \cdot s_{i_2}^{i_1} \cdot   s_{i_3}^{i_1i_2} \cdot \ldots  
          \cdot s_{ i_{n(k)} }^{ \boldsymbol{i} | ( n(k) - 1) }  \notag \\
 & \qquad  = 
  \sum_{\boldsymbol{i} \in T, | \boldsymbol{i} | = n(k)  }
\bigg\{ \left(    s_{i_1}^\emptyset \cdot s_{i_2}^{i_1}\cdot s_{i_3}^{i_1i_2} \cdot
                            \ldots \cdot s_{ i_{n(1)} }^{ \boldsymbol{i} | ( n(1) - 1) }    \right)  \notag\\
&\qquad  \qquad \qquad \quad  \cdot    \left(    s_{i_{n(1)+1}}^{ \boldsymbol{i} | n(1)}  
               \cdot  s_{ i_{n(1)+2} }^{ \boldsymbol{i} | n(1)+1 } 
                     \cdot \ldots \cdot s_{ i_{n(2)} }^{ \boldsymbol{i} | ( n(2) - 1) } \right) \cdot \ldots \notag\\
& \qquad  \qquad \qquad  \quad  \cdot   \left(    s_{i_{n(k-1)+1}}^{ \boldsymbol{i} | n(k-1)}  
                      \cdot  s_{ i_{n(k-1)+2} }^{ \boldsymbol{i} | n(k-1)+1 } 
                     \cdot \ldots \cdot s_{ i_{n(k)} }^{ \boldsymbol{i} | ( n(k) - 1) } \right)  \bigg\}  
          \notag    \\
%&=\sum_{| \boldsymbol{i} | = n(k)  } 
%\bigg\{  \left(  r_{i_1}^{(0),\emptyset} \cdot r_{i_2}^{(0),i_1}\cdot r_{i_3}^{(0),i_1i_2} \cdot
%                            \ldots \cdot r_{ i_{n(1)} }^{(0), \boldsymbol{i} | ( n(1) - 1) }    \right) \\
%& \qquad  \qquad \quad   \cdot  \left(  r_{i_1}^{(1),\emptyset} \cdot r_{i_2}^{(1),i_1}\cdot r_{i_3}^{(1),i_1i_2} \cdot  \ldots \cdot r_{ i_{n(2) - n(1)} }^{(1), \boldsymbol{i} | ( n(2) - n(1) -1) } \right) \cdot \ldots \\
% &  \qquad  \qquad \quad \cdot  \left(  r_{i_1}^{(k-1),\emptyset} \cdot r_{i_2}^{(k-1),i_1}\cdot r_{i_3}^{(k-1),i_1i_2} \cdot
% \ldots \cdot r_{ i_{n(2) - n(1)} }^{(k-1), \boldsymbol{i} | ( n(2) - n(1) -1) } \right)\ \bigg\} 
%              \\
&\qquad =  
  \sum_{\boldsymbol{i} \in T^{(0)}, | \boldsymbol{i} | = n(1) }
         s_{i_1}^{(0),\emptyset} \cdot s_{i_2}^{(0),i_1}\cdot s_{i_3}^{(0),i_1i_2} \cdot
                            \ldots \cdot s_{ i_{n(1)} }^{(0), \boldsymbol{i} | ( n(1) - 1) }   \notag \\
 &\quad   \times                               
         \sum_{\boldsymbol{i} \in T^{(1)}, | \boldsymbol{i} | = n(2)-n(1) }
         s_{i_1}^{(1),\emptyset} \cdot s_{i_2}^{(1),i_1}\cdot s_{i_3}^{(1),i_1i_2} \cdot
                            \ldots \cdot s_{ i_{n(2)-n(1)} }^{(1), \boldsymbol{i} | ( n(2) -n(1) - 1) }   
                                         \cdot \ldots \notag\\
& \quad  \times                             
     \sum_{\boldsymbol{i} \in T^{(k-1)}, | \boldsymbol{i} | = n(k)-n(k-1) }
         s_{i_1}^{(k-1),\emptyset} \cdot s_{i_2}^{(k-1),i_1}\cdot s_{i_3}^{(k-1),i_1i_2} \cdot
                           \ldots \cdot s_{ i_{n(k)-n(k-1)} }^{(k-1), \boldsymbol{i} | ( n(k) -n(k-1) - 1) }  \notag \\                          
&=   \Bigg( \sum_{| \boldsymbol{i} | = n(1) } s^{(0)}_{ \boldsymbol{i} }  \Bigg)                             
                    \cdot  \Bigg( \sum_{| \boldsymbol{i} | = n(2) - n(1) } s^{(1)}_{ \boldsymbol{i} }  \Bigg)  
    \cdot \ldots  \cdot  \Bigg( \sum_{| \boldsymbol{i} | = n(k) - n(k-1) } s^{(k-1)}_{ \boldsymbol{i} }  \Bigg).  \label{et}
\end{align} 
}

The first and last equality are immediate from the definitions.  The second equality is just a bracketing of terms.  

For the third equality note that each $ n(j) $ is a neck. A term such as $  s_{i_{n(1)+1}}^{ \boldsymbol{i} | n(1)} $, 
which corresponds to the edge in $ T $ from $   \boldsymbol{i} | n(1)  = i_1 \dots i_{n(1)} $ to $ i_1 \dots i_{n(1)}
 i_{n(1)+1}$, is independent of $\boldsymbol{i} | n(1)$ and  can also be regarded as corresponding to 
the level one edge from $ \emptyset $ to $ i_{n(1)+1} $ of the unique tree $ T^{(1)} $ rooted at every  level $ n(1) $ node.
Thus we   rewrite $  s_{i_{n(1)+1}}^{ \boldsymbol{i} | n(1)} $  as $ s_{i_1}^{(1),\emptyset} $,
with an abuse of notation in that $ \boldsymbol{i} $ and $i_{n(1)+1} $  in the first term refer to words from 
$ T = T^{(0)} $ whereas $ i_1 $  in the second term is the first element of a word from $ T^{(1)} $.
Similarly,  $  s_{i_{n(1)+2}}^{ \boldsymbol{i} | n(1)+1} $ is also independent of   $\boldsymbol{i} | n(1)$ and 
can also be regarded as corresponding to a level two edge from $ T^{(1)} $, etc.
Now use simple algebra 
%analogous to 
%$ 
%\sum_{a\in A,\, b\in B,\, c\in C} abc = \big(\sum_{a\in A} a \big) \big(\sum_{b\in B} b \big) \big(\sum_{c\in C} c \big)   
%$ 
to put the summations inside the parentheses.

The final equality is   a rewriting of the previous line and provides the definition of~$ s^{(j)}_{ \boldsymbol{i} } $.

\smallskip
For the $P_V$ almost sure convergence in \eqref{eq:slln}   let  
\[
X_k = \log  \sum_{| \boldsymbol{i} | = n(k) - n(k-1) } s^{(k-1)}_{ \boldsymbol{i} } , \ k \geq 1.
\]
By construction the $ X_k $  are i.i.d. and in particular
$ 
X_1 =  \log  \sum_{| \boldsymbol{i} | = n(1) } s_{ \boldsymbol{i} } 
$.
By the bounds on $s_i$ we have 
\begin{align*} 
E_V  |X_1| &\leq  
\sum_{n\geq 1} P\big(n(1) = n\big)\,  \max \Big\{
  \big| \log \big(N_{\sup}^n s_{\sup}^n \big)\big|,\   \big| \log \big(N_{\inf}^n s_{\inf}^n \big)\big|
                                           \Big\}           \\
&= \max \Big\{
  \big| \log \big(N_{\sup} s_{\sup} \big)\big|,\,   \big| \log \big(N_{\inf} s_{\inf} \big)\big|
                                           \Big\}  E_V   n(1) <\infty .
\end{align*} 

Hence, using \eqref{eq:spdecomp}, the $P_V$ almost sure convergence follows from the strong law of 
large numbers for the sequence $\{X_k\}$.
%\[
%\frac{1}{k} \log \sum_{ | \boldsymbol{i} | = n(k) } s_{ \boldsymbol{i} } \to \mathbb{E}  \log \sum_{ | \boldsymbol{i} | = n(1) } s_{ \boldsymbol{i} }, \quad P \text{ a.s.}
%\]
\end{proof}

%%%%%%%%%%%%%%%%%%%%%%%%%%%%%%%%%%
\subsection{Hausdorff and Box Dimensions}\label{hbd}   Assume that the family of IFSs ${\boldsymbol{F}}  $ satisfies the open set condition as  in Section~\ref{afn}.  We do not here require the affine  nested condition.  Recall the notation from Section~\ref{afn} and from Notation~\ref{notn1}.

Splitting up and treating the necks in the manner here was done first by Scealy in his PhD thesis~\cite{Sce}. 

\begin{thm} \label{bdhd}Suppose $ K $ is the random $ V $-variable fractal generated from  $ \boldsymbol{F} $.
Then the  Hausdorff and box dimension of   $K$ is $P_V $ a.s.\ given by the unique $\alpha$ such that
\begin{equation} \label{} 
E_V  \log \sum_{|\bfi|={n(1)}}  {\ell_{\boldsymbol{i}}}^  \alpha  = 0.
\end{equation} 
\end{thm}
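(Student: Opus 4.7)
The plan is to exploit the neck decomposition in Lemma~\ref{lem:sumprod} to reduce the dimension calculation to a strong-law-of-large-numbers argument along the subsequence $\{n(k)\}$, following the classical Moran/Mauldin--Williams template but with the randomness handled one neck-block at a time. First I would define the pressure function
\[
P(\alpha) := E_V \log \sum_{|\bfi|=n(1)} \ell_{\bfi}^{\alpha}.
\]
Since $\ell_i \in [\ell_{\inf},1)$ and the branching number is bounded between $N_{\inf}\ge 3$ and $N_{\sup}$ with $E_V n(1)<\infty$, the function $P(\alpha)$ is finite, continuous and strictly decreasing in $\alpha$, with $P(0)=E_V\log\sum_{|\bfi|=n(1)} 1 \ge E_V\log N_{\inf}^{n(1)}>0$ and $P(\alpha)\to -\infty$ as $\alpha\to\infty$. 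Hence there is a unique $\alpha^*\ge 0$ with $P(\alpha^*)=0$.

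For the upper bound, apply Lemma~\ref{lem:sumprod} with $s_i=\ell_i^{\alpha}$ to obtain, $P_V$-a.s.,
\[
\lim_{k\to\infty}\frac{1}{k}\log\sum_{|\bfi|=n(k)}\ell_{\bfi}^{\alpha} \;=\; P(\alpha).
\]
For any $\alpha>\alpha^*$ we have $P(\alpha)<0$, so $\sum_{|\bfi|=n(k)}\ell_{\bfi}^{\alpha}\to 0$ geometrically. Since $\{K_{\bfi}:|\bfi|=n(k)\}$ is a cover of $K$ with $\diam K_{\bfi}\le C\ell_{\bfi}\diam K(\sigma^{\bfi}T)\le C'\ell_{\bfi}$ (using the uniform bound on all IFSs), and since a.s.\ necks occur infinitely often with $\max_{|\bfi|=n(k)}\ell_{\bfi}\to 0$, this shows the $\alpha$-Hausdorff measure and the upper box-counting content of $K$ are zero, so $\dim_H K\le \dim_B K\le \alpha^*$ a.s.

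For the lower bound, fix $\alpha<\alpha^*$, so $P(\alpha)>0$, and build a Bernoulli-type measure $\mu$ on the boundary $\partial T$ by distributing mass at each neck block proportionally to $\ell_{\bfi}^{\alpha}$: if $\bfj$ is a node at level $n(k-1)$ and $\bfi$ a descendant at level $n(k)$, set
\[
\mu([\bfj\bfi]) \;=\; \mu([\bfj])\cdot\frac{\ell^{(k-1)}_{\bfi}{}^{\alpha}}{\sum_{|\bfi'|=n(k)-n(k-1)}\ell^{(k-1)}_{\bfi'}{}^{\alpha}},
\]
and push $\mu$ forward to $K$ via the natural projection $\partial T\to K$. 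By Lemma~\ref{lem:sumprod} the denominators satisfy $\tfrac1k\log(\text{denominator}_k)\to P(\alpha)>0$ a.s., so
\[
\mu(K_{\bfi})\le C \ell_{\bfi}^{\alpha}\, e^{-kP(\alpha)+o(k)} \qquad \text{for }|\bfi|=n(k).
\]
Using the open set condition together with Lemma~\ref{lem:cnprop}~(2) (nesting) to control how many $n(k)$-cells can meet a ball of radius $r\in[\ell_{\inf}\ell_{\bfi},\ell_{\bfi}]$ for some $\bfi$ at level $n(k)$, one deduces a Frostman condition $\mu(B(x,r))\le C r^{\alpha-\epsilon}$ for any $\epsilon>0$ and small $r$, after absorbing the factor $e^{-kP(\alpha)}$ into $r^{-\epsilon}$ via $r\asymp\ell_{\bfi}\le e^{-ck}$. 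The mass distribution principle then gives $\dim_H K\ge\alpha-\epsilon$, and letting $\alpha\uparrow\alpha^*$ and $\epsilon\downarrow 0$ completes the proof; the matching lower bound for box dimension is automatic since $\dim_B K\ge\dim_H K$.

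The main technical obstacle is the Frostman estimate on $\mu(B(x,r))$: one must bound the number of $n(k)$-cells that a small ball can intersect. The uniform open set condition provides a uniform Besicovitch-type constant, but converting the scale $r$ into a level $n(k)$ uniformly requires Lemma~\ref{lem:geomrvs} to control the fluctuations of the gaps $n(k)-n(k-1)$ and of $\max_{|\bfi|=n(k)}\log(1/\ell_{\bfi})$, so that the $o(k)$ error is dominated by any fixed positive power of $r$.
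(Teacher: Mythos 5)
The paper's own ``proof'' of Theorem~\ref{bdhd} is essentially a citation to \cite{BHS2}, with the remark that the neck decomposition simplifies the argument; the detailed model for how this goes is the analogous resistance-metric result Theorem~\ref{rdzp} (and its supporting Lemmas~\ref{dfa0}, \ref{lemubd}, \ref{dislem}, \ref{lemlbd}). Your pressure-function setup and your upper bound, covering $K$ by the cells $\{K_{\bfi}\}_{|\bfi|=n(k)}$ and applying Lemma~\ref{lem:sumprod}, match Lemma~\ref{lemubd} almost word for word. Your choice of measure with weights $\ell_i^{\alpha}$ is also the right one and agrees with Definition~\ref{dfwm}/Lemma~\ref{lemlbd}.

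The genuine gap is in the Frostman estimate. You propose to fix a neck level $n(k)$ with $\ell_{\bfi}\asymp r$ for the cell containing $x$, and then bound ``how many $n(k)$-cells can meet $B(x,r)$.'' This cannot work as stated: at a fixed neck level $n(k)$ the cells have diameters ranging from $\ell_{\inf}^{n(k)}$ to $\ell_{\sup}^{n(k)}$, so the cut $\{\bfi : |\bfi|=n(k)\}$ is not a scale cut. A ball of radius $r\asymp\ell_{\bfi}$ can intersect a large neighbouring cell $K_{\bfj}$ at the same level with $\ell_{\bfj}\gg r$, and for that cell $\mu(K_{\bfj})\asymp \ell_{\bfj}^{\alpha}/Z$ can be much bigger than $r^{\alpha}$; the open set condition and nesting only give bounded overlap for cells of \emph{comparable} size. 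The paper resolves this precisely by introducing a cut by scale rather than by level --- in Lemma~\ref{lemlbd} one cuts at $\Lambda_{\delta/c}$, defined by $r_{\bfj}\le \delta/c< r_{j_1\dots j_{n-1}}$ (Euclidean analogue: $\ell_{\bfj}\le \delta/c<\ell_{j_1\dots j_{n-1}}$), so that Lemma~\ref{dislem} gives at most $M_1$ cells meeting the ball, all of comparable diameter. The price one pays is that nodes of $\Lambda_{\delta/c}$ are generally \emph{not} neck nodes, so $\mu(K_{\bfj})$ cannot be read off directly from Definition~\ref{dfwm}; the paper then estimates $\mu(K_{\bfj})$ by descending to the next neck $s(|\bfj|)$, picking up an overshoot factor $N_{\sup}^{s(|\bfj|)-|\bfj|}$ which is controlled via Lemma~\ref{lem:geomrvs} (cf.\ \eqref{skest}), and a normalising denominator controlled by \eqref{sumraest}. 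Your sketch never disentangles the scale cut from the neck cut, so the bound $\mu(B(x,r))\le Cr^{\alpha-\epsilon}$ does not follow from the argument you give.

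A smaller issue: your upper box-dimension claim is asserted rather than argued. The cover $\{K_{\bfi}\}_{|\bfi|=n(k)}$ has widely varying diameters, so showing $\overline{\dim}_B K\le\alpha^{*}$ again requires a scale cut (refine large cells, stop small ones at scale $\epsilon$) and a bound on $\sum_{\bfi\in\Lambda_{\epsilon}}\ell_{\bfi}^{\alpha}$ for that cut --- the same type of argument as the Frostman step, not a by-product of $\mathcal H^{\alpha}(K)=0$.
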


\begin{proof}  See the Main Theorem in Section 4.4 of  \cite{BHS2}.  The expression there for the pressure function 
is equal to the simpler expression here.  This in turn leads to a simpler proof of that theorem, still along the lines 
of  Lemma~5.7 in \cite{BHS2} but working with a single neck as in the (somewhat more complicated) proofs   of Theorems~\ref{rdzp} and~\ref{thm:Nspecdim}.
\end{proof} 

We give a slight refinement of this result. 

\begin{thm}\label{thm:lil}
There exists a constant $C$ such that
\begin{equation}
\limsup_{k\to\infty} \frac{1}{ \sqrt{ k\log{ \log{k} } } } 
\log \sum_{| \boldsymbol{i} | = n(k)} { \ell_{\boldsymbol{i}}}^ { \alpha}  = C, \text{ $P_V$  a.s.}
\label{eq:dimlil}
\end{equation}
\end{thm}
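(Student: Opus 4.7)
The plan is to reduce the statement to a classical law of the iterated logarithm for an i.i.d. sum by exploiting the neck decomposition already provided in Lemma~\ref{lem:sumprod}.

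First, I would apply Lemma~\ref{lem:sumprod} with the choice $s_i = \ell_i^{\alpha}$, where $\alpha$ is the Hausdorff dimension from Theorem~\ref{bdhd}. Note that $\ell_{\inf} > 0$ by \eqref{Nbd}, and trivially $\ell_i \leq 1$, so the hypotheses on the scale factors are satisfied. The lemma yields
\begin{equation*}
\log \sum_{|\bfi| = n(k)} \ell_{\bfi}^{\alpha} = \sum_{j=1}^{k} X_j, \qquad
X_j := \log \sum_{|\bfi| = n(j) - n(j-1)} \ell_{\bfi}^{(j-1),\alpha},
\end{equation*}
where the $X_j$ are i.i.d.\ (this is the content of the construction in the proof of Lemma~\ref{lem:sumprod}, each $X_j$ corresponding to an independent subtree of height equal to an independent inter-neck time). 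Moreover, by Theorem~\ref{bdhd}, $E_V X_1 = E_V \log \sum_{|\bfi|=n(1)} \ell_{\bfi}^{\alpha} = 0$.

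Next I would check that $E_V X_1^2 < \infty$, which is the only ingredient beyond what is done in the proof of Lemma~\ref{lem:sumprod}. Using \eqref{Nbd} and $\ell_{\inf} \leq \ell_i \leq 1$, on the event $\{n(1) = n\}$ the inner sum has between $N_{\inf}^n$ and $N_{\sup}^n$ terms, each lying in $[\ell_{\inf}^{\alpha n}, 1]$, so
\begin{equation*}
|X_1| \leq C\, n(1)
\end{equation*}
for some deterministic constant $C$ depending only on $N_{\inf}, N_{\sup}, \ell_{\inf}, \alpha$. Since $n(1)$ is dominated by a geometric random variable (the probability that an environment is a neck is bounded below), $E_V n(1)^2 < \infty$ and hence $\sigma^2 := \mathrm{Var}(X_1) < \infty$.

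Finally, I would invoke the Hartman--Wintner law of the iterated logarithm for the i.i.d.\ mean-zero, finite-variance sequence $\{X_j\}$, which gives
\begin{equation*}
\limsup_{k\to\infty} \frac{1}{\sqrt{k \log \log k}} \sum_{j=1}^{k} X_j = \sigma \sqrt{2}, \quad P_V \text{ a.s.},
\end{equation*}
so that \eqref{eq:dimlil} holds with $C = \sigma\sqrt{2}$. The only potential subtlety is ruling out the degenerate case $\sigma = 0$; this happens iff $X_1$ is a.s.\ constant (hence $\equiv 0$), which fails as soon as $\boldsymbol{F}$ contains at least two IFSs with distinct values of $\sum_i \ell_i^\alpha$---but the stated theorem allows $C = 0$ and so this does not need to be excluded. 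The main obstacle, such as it is, is simply the moment bound on $X_1$; everything else is immediate from the neck decomposition of Lemma~\ref{lem:sumprod} together with the defining property of $\alpha$ from Theorem~\ref{bdhd}.
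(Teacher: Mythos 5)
Your proposal is correct and follows essentially the same route as the paper: apply Lemma~\ref{lem:sumprod} with $s_i = \ell_i^{\alpha}$ to decompose $\log \sum_{|\bfi|=n(k)}\ell_{\bfi}^{\alpha}$ into a sum of i.i.d.\ mean-zero increments $X_j$, verify $E_V X_1^2 < \infty$ via the bounds \eqref{Nbd} together with the geometric tail of $n(1)$, and conclude by the Hartman--Wintner law of the iterated logarithm. Your explicit bound $|X_1| \leq C\, n(1)$ and your remark that the degenerate case $\sigma = 0$ is harmless (since the theorem permits $C=0$) fill in two small details left implicit in the paper's version.
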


\begin{proof}
We can apply Lemma~\ref{lem:sumprod} with $s_i = l_i$

%First observe that 
%\begin{equation} \label{fact} 
% \sum_{| \boldsymbol{i} | = n(2) } {\ell_{ \boldsymbol{i} } }^{  \alpha }
%=   
%%\! \sum _{ i_1,\dots , i_{n(1)}} \left( \ell^\emptyset _{i_1} \cdot \ldots \cdot
%%               \ell_{i_{n(1)}}^{i_1\dots i_{n(1)-1}}\right)^{  \alpha}
%%\Big(  \sum _{ i_{n(1)+1},\dots , i_{n(2)}}  \left( \ell_{i_{n(1)+1}}^{i_1\dots i_{n(1) }}  \cdot \ldots \cdot
%%                             \ell_{i_{n(2)}}^{i_1\dots i_{n(2)-1}}\right)^{  \alpha}  \Big), 
%\! \sum _{ i_1,\dots , i_{n(1)}}\!\!\! \left( \ell^\emptyset _{i_1} \cdot \ldots \cdot
%               \ell_{i_{n(1)}}^{i_1\dots i_{n(1)-1}}\right)^{  \alpha}  \!\!\!\!\!
%   \sum _{ i_{n(1)+1},\dots , i_{n(2)}} \!\!\! \left( \ell_{i_{n(1)+1}}^{i_1\dots i_{n(1) }}  \cdot \ldots \cdot
%                             \ell_{i_{n(2)}}^{i_1\dots i_{n(2)-1}}\right)^{  \alpha}\!\!  , 
%\end{equation} 
%where the second sum  is the same for all $ i_1,\dots, i_{n(1)} $ because $ n(1) $ is a neck, and so the right side of
%the equality  is a product of two factors.  Since $ n(2) $ is a neck, and by independence between generations, the two 
%factors  are iid with the same distribution as $  \sum_{|\bfi|={n(1)}}{ \ell_{\boldsymbol{i}}}^ {  \alpha}$. Similarly 
%for $ n(k) $ instead of $ n(2) $, and so taking logs we see that $ \log  \sum_{|\bfi|={n(k)}}
%{ \ell_{\boldsymbol{i}}}^{  \alpha}$ is a sum of $ k $ iid random variables, each with the same distribution as 
%$ \log  \sum_{|\bfi|={n(1)}}{ \ell_{\boldsymbol{i}}}^ {  \alpha}$

Since $  E_V \log \sum_{|\bfi|={n(1)}}{ \ell_{\boldsymbol{i}}}^ {  \alpha}  = 0$, 
$ 
\lim_{k\to \infty}  \frac{1}{k} \log \sum_{| \boldsymbol{i} | = n(k)}   { \ell_{\boldsymbol{i}}}^ {  \alpha} =  0 \text{ a.s.} 
 $
Using the   bounds \eqref{Nbd} on   $ N^F$ and $ \ell^F $ it is easy to check that 
$ E_V\left( \log \sum_{|\bfi|={n(1)}}{\ell_{\boldsymbol{i}}}^ {  \alpha}\right)^2  < \infty$. 
The law of the iterated logarithm for the sequence of random variables 
$X_k=\log \sum_{| \boldsymbol{i} | = n(k)-n(k-1)} \ell_{\bfi}^{\alpha}$ now implies the result.
\end{proof} 

%%%%%%%%%%%%%%%%%%%%%%%%%%%%%%%%%%%%%%%%%%%%%%%%
%%%%%%%%%%%%%%%%%%%%%%%%%%%%%%%%%%%%%%%%%%%%%%%%
%%%%%%%%%%%%%%%%%%%%%%%%%%%%%%%%%%%%%%%%%%%%%%%%
\section{Analysis on $V$-Variable Fractals} \label{secanv} 

\subsection{Overview} Our $V$-variable affine nested gaskets are connected and nested by Lemma~\ref{lem:cnprop} but 
they need not have spatial symmetry, in contrast to the scale irregular nested gaskets considered 
in \cite{barham}. 

In order to study analysis on these $ V$-variable  affine nested  fractals we define in Section~\ref{secdrf} their Dirichlet
forms and show that these are resistance forms. We also show that the resistance metric between points is comparable to an   appropriate product  of resistance factors.  In Section~\ref{secwm} we introduce general families of weights and measures and prove a few basic properties.
We introduce in Section~\ref{sectanc} the notion of the cut set $ \Lambda_k $, where each 
cut is at a neck level and the crossing time for the corresponding neck cell is of order $ e^{-k} $.  Asymptotic 
properties of various quantities associated with these neck cells are established.  In Section~\ref{secrd} we show the Hausdorff dimension in the resistance metric is given by the zero of an appropriate pressure function.

%%%%%%%%%%%%%%%%%%%%%%%%%%%
\subsection{Dirichlet and Resistance Form} \label{secdrf}

The construction of the Dirichlet form follows \cite{Kig}. 

Assume as given a harmonic structure $(D,\rho^F)$ for each IFS $F$ in
the family $\bff$. Since all our affine nested fractals are based on 
the same triangle or $d$-dimensional tetrahedron with vertices $ V_0 $, the matrix $D$ 
will be independent of $F$ and is
given by 
\begin{equation} \label{} 
D(x,y)=1,\ \forall x,y \in V_0  \text{ with } x\neq y, \quad   D(x,x)=-d\ \forall x \in V_0.
\end{equation} 
Vectors
$\rho^F=(\rho^F_1,\dots,\rho^F_{N^F})$, 
specifying the \emph{conductance scaling factors} to be applied to each cell, will be chosen to
respect the symmetries of the limiting fractal. 
%That is, the values
%of $\rho^F_i$ are the same for each element of the $k^F$ size equivalence classes.  

Assume
\begin{equation} \label{rhoest} 
\begin{aligned}
 1<  \rho_{\inf}  &:=  \inf\{\rho^F_i  :     1\leq  i \leq N^F,\,  F\in \bff\}, \\
 \rho_{\sup}  &:=  \sup\{\rho^F_i  :   1\leq  i \leq N^F,\,  F\in \boldsymbol{F} \} < \infty.
\end{aligned} 
\end{equation} 
   
\bigskip 
The associated renormalization map for each $F \in \boldsymbol{F} $ is assumed to have the usual
fixed point property. We now state this more formally.

Let 
\begin{equation} \label{dfE0} 
\mathcal{E}_0 (f,g) = \frac12 \sum_{x,y \in V_0} \big(f(x)-f(y)\big)\big(g(x)-g(y)\big)
\end{equation} 
 be the
Dirichlet form on the graph $G_0=(V_0,E_0)$ with conductances
determined by the matrix $D$.  Each edge is summed over twice, and hence the factor $ 1/2 $.

The  choice of $\rho^F$ is such that
\begin{equation} \label{}
 \ce_0(f,f) = \inf\bigg\{\sum_{i=1}^{N^F}
\rho^F_i\ce_0\big(h\circ\psi_i^F,h\circ\psi_i^F\big)\,  \bigg| \,   h:V_1 \to \mathbb{R},\, h|_{V_0}=f\bigg\}. 
\end{equation} 
One can also regard this as placing conductors $\rho^F_i$ on each edge of
the 1-cell with address $i$, which ensures that the effective resistances between
  vertices from $G_0$ in the graph $G_1$
is the same as the effective resistance in $ G_0 $ itself --- see   Notation~\ref{notcc} and \eqref{dfrm}. 
 
Define $\cf_{n}:=\{f \mid f:V_n \to \br\}$. Use recursion on $ n \geq 1$ to define
\begin{equation} \label{dfen} 
\begin{aligned}
\ce_{n}^{\omega}(f,g) &=
\sum_{i=1}^{N^{\omega, \emptyset}}\rho_i^{\omega,\emptyset}  
\ce_{n-1}^{\sigma^i\omega} \big(f\circ\psi^{\omega,\emptyset}_i,
g\circ\psi^{\omega,\emptyset}_i\big) \quad \forall f,g \in \mathcal{F} _n^ \omega , \\
 \text{\ i.e.} \quad 
\ce_{n} (f,g)  &= \sum_{i=1}^{N^{ \emptyset}}\rho_i^{ \emptyset}  
\ce_{n-1}^{\sigma^i } \big(f\circ\psi^{ \emptyset}_i,
g\circ\psi^{ \emptyset}_i\big)\quad \forall f,g \in \mathcal{F} _n . 
\end{aligned} 
\end{equation} 
It follows that
\begin{equation} \label{} 
\ce_{n}(f,g) = \sum_{ | \boldsymbol{i} | = n}
    \rho_{ \boldsymbol{i} }\, \mathcal{E} _{0} \big(f \circ \psi_{ \boldsymbol{i} }, g \circ \psi_{ \boldsymbol{i} }\big),
\end{equation} 
where $ \rho_{ \boldsymbol{i} } $ and $   \psi_{ \boldsymbol{i} } $ are as in Notation~\ref{notn1}.

The sequence of forms $(\ce_{n},\cf_{n})$ can be thought of as
corresponding to conductances $ \rho_{ \boldsymbol{i} } $  on the edges of the cell 
$ \Delta_{ \boldsymbol{i} } $ in the graph $G_n$, where $ | \boldsymbol{i}  | = n $.

\medskip

One next defines a resistance form first on $ V_* :=   \bigcup_{n\geq 0} V_n  $ and then on its closure $ K $ in the standard manner as follows.
By the definition of the conductance scale factors $\rho_i^F$, one has
monotonicity of the sequence of quadratic forms $\ce_{n}^{\omega}(f,f)$. 
Define  
\begin{equation*} %\label{lmfm1}
\ce^{\omega}(f,f) = \lim_{n\to\infty}
\ce^{\omega}_{n}(f,f) \quad
\end{equation*} 
for $ f : V_* \to \mathbb{R} $,  restricting to those $ f $ such that the limit is finite.
Using the definition of $ R $ in \eqref{dfrm} with $ K $ replace by $ V_* $, one shows that $ R $ is a metric on $ V_* $ as in Theorem 2.1.14, page 48 of \cite{Kig}.  
Noting definition \eqref{dfrsf}, one  next proves  the natural analogue of Lemma \ref{lem:resb} and Corollary~\ref{cor:diambd} for $ V_* $, without utilising Theorem~\ref{thmdrf}.  It follows that the metric $ R $ induces the Euclidean topology on $ V_* $  and the completion of this metric induces the Euclidean topology on $ K $.

\medskip

One can now define a limit form on $ K $ by
\begin{equation} \label{lmfm}
\ce^{\omega}(f,f) = \lim_{n\to\infty}
\ce^{\omega}_{n}(f,f) \quad \forall f\in \cf^{\omega} := \left\{ f: \sup_n
\ce^{\omega}_{n}(f,f)<\infty \right\}, 
\end{equation} 
where $ f : K \to \mathbb{R} $. Note that, from \eqref{dfrm}, if $ \mathcal{E} (f,f) < \infty $ then $ f $ is continuous and so is canonically determined on $ K $ by its values on the dense subset $ \bigcup_{n\geq 0} V_n \subset K $.

 \bigskip
 
It follows from the definitions that there is a decomposition of the limit form for any cut $ \Lambda $ of the tree $ T^ \omega $, see Notation~\ref{notn1}.
Namely,
\begin{equation}  \label{eq:formdecomp} 
\ce (f,g) = \sum_{\bfi\in \Lambda } \rho_{\bfi}\,
\ce^{\sigma^{\bfi} }(f\circ\psi_{\bfi}, g\circ\psi_{\bfi}) \quad \forall f,g\in \cf  . 
\end{equation}
 Note the   case  $ \Lambda = \{ \boldsymbol{i} : | \boldsymbol{i} | = k \}$ for some $ k $  and the case $ \Lambda = \Lambda_k $ as in \eqref{lkcut}. 
The result \eqref{eq:formdecomp} in the first of these cases with $ k=1 $  is essentially just a consequence of the scaling property \eqref{dfen} and letting $ n \to \infty $.  The general result follows from iterating this down the various levels corresponding to the partition $ \Lambda $.   

\bigskip

The \emph{effective resistance metric} between any pair of points $x,y \in K$
is defined by
\begin{equation} \label{dfrm}
\begin{aligned}
 R(x,y) ^{-1} &=  \inf\Big\{\ce(f,f): f(x)=0, f(y)=1\Big\} \\
&=  \inf \left\{ \frac{\ce(f,f)}{|f(x) - f(y)|^2} : f(x)\neq f(y) \right\} .
\end{aligned}
\end{equation} 
The proof this is a metric is essentially  as in Theorem 2.1.14, page 48 of \cite{Kig}.

\bigskip

Recall that $ (\mathcal{E} , \mathcal{F} ) $ is a \emph{local regular Dirichlet form} on $ L^2(K,\mu) $ if it 
has the following properties:
\begin{enumerate}
\item \emph{closed}: $ \mathcal{F}  $ is a Hilbert space under the inner product 
$ (f,g) \mapsto \mathcal{E} (f,g) + \int fg\, d \mu $;
\item \emph{Markov or Dirichlet}: $ \mathcal{E} (\overline{f},  \overline{f}) \leq \mathcal{E} (f,f) $ if $ \overline{f}$ 
is obtained by truncating $ f $ above by $ 1 $ and below by $ 0 $;
\item \emph{core or regular}: if $ C(K) $ is the space of continuous functions on $ K $ then $ C(K)\cap \mathcal{F} $ is 
dense in $ \mathcal{F} $ in the Hilbert space sense and dense in $ C(K) $ in the sup norm;
\item \emph{local}: $ \mathcal{E} (f,g) = 0 $ if $ f $ and $ g  $ have disjoint supports.
\end{enumerate} 
For $(\mathcal{E} , \mathcal{F} ) $ to be a \emph{resistance form} it is sufficient that in addition $ R $ defines a 
metric, and in particular that $ R(x,y) $ is finite and non zero if $ x \neq y $.

%
%** [JH]   I included the above discussion for completeness and for the ``non expert'' (like me) ***
%
%*** BH - I guess it depends where we are going to send the paper as to how much of this is necessary *** 

\begin{thm} \label{thmdrf} For each $\omega\in\Omega$ and each finite Borel regular measure $ \mu^\omega  $ on 
$ K^\omega $ with full topological support,  $(\ce^{\omega},\cf^{\omega})$   defines a local regular Dirichlet form on $L^2(K^{\omega},
\mu^\omega) $. 
The Dirichlet form is a resistance form with resistance metric $ R $.  
\end{thm}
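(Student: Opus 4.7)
The plan is to follow the standard Kigami framework for resistance forms (as in Chapter 2 of \cite{Kig}), adapted to the $V$-variable setting by exploiting the uniform bounds \eqref{rhoest} on the conductance scalings and \eqref{Nbd} on the contraction ratios. Because $\omega$ is fixed throughout, the only work beyond the deterministic case is to check that the randomness does not destroy the uniform control needed to pass to the closure $K^\omega$. In what follows I suppress $\omega$.

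First I would verify the resistance form structure on $V_\ast=\bigcup_n V_n$. The compatibility condition defining $\rho^F$ ensures monotonicity $\ce_n(f,f)\le \ce_{n+1}(f,f)$ for $f\in\cf_n$ viewed in $\cf_{n+1}$ via harmonic extension, so the limit in \eqref{lmfm} is well-defined (possibly $+\infty$), and the resulting pair $(\ce,\cf_*)$ on $V_\ast$ satisfies the axioms of a resistance form. The key point is that the quadratic variational definition \eqref{dfrm} of $R$, combined with connectedness of each $K^F$ (and hence of $K$ by Lemma~\ref{lem:cnprop}), yields $0<R(x,y)<\infty$ for all $x\ne y$ in $V_\ast$. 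This is Theorem~2.1.14 of \cite{Kig} applied levelwise, and depends only on the structure of $D$ and on each $\rho^F_i$ being strictly positive.

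Next I would extend the form to $K$. Using the decomposition \eqref{eq:formdecomp} and iterating gives, for any $n$-cell $K_\bfi$, the estimate $\mathrm{diam}_R(K_\bfi)\le C\rho_\bfi^{-1}$. Since $\rho_{\inf}>1$ by \eqref{rhoest}, this diameter tends to zero uniformly in $|\bfi|=n$ as $n\to\infty$. Combined with the fact that each $x\in K$ lies in a nested sequence of cells whose Euclidean diameters (controlled by $\ell_{\inf}$) also shrink to zero, this shows that $R$ and the Euclidean metric induce the same topology on $V_\ast$, so the $R$-completion equals $K$. Every $f\in\cf$ with $\ce(f,f)<\infty$ is $R$-Lipschitz by the second form of \eqref{dfrm}, and hence extends continuously to $K$. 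This is the content of the later Lemma~\ref{lem:resb} and Corollary~\ref{cor:diambd} referenced in the excerpt, which I would invoke here.

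Finally I would check the four Dirichlet form axioms on $L^2(K,\mu)$. \emph{Closedness}: the bilinear form $\ce(f,g)+\int fg\,d\mu$ makes $\cf$ a Hilbert space because $R$-convergence together with $L^2$-convergence controls both the uniform norm (since $\mu$ has full support and $K$ is $R$-compact) and the energy; this is standard once $R$ is a metric. \emph{Markov}: for any $f$ and its truncation $\overline f$ to $[0,1]$, one has $\ce_n(\overline f,\overline f)\le \ce_n(f,f)$ at each level $n$ (as this reduces to an inequality for the finite graph form $\mathcal{E}_0$ on $V_0$, applied cellwise), and then pass to the limit. \emph{Regularity}: harmonic extensions of functions on $V_n$ are continuous and dense in $(\cf,\ce_1)$ by construction, and separate points of $K$, so a Stone–Weierstrass argument gives density of $\cf\cap C(K)$ in $C(K)$. \emph{Locality}: if $\mathrm{supp}\,f$ and $\mathrm{supp}\,g$ are disjoint, their positive $R$-distance lets us choose $n$ so that no $n$-cell meets both supports, and then the cellular decomposition \eqref{eq:formdecomp} at level $n$ makes each summand vanish.

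The main obstacle I anticipate is the extension from $V_\ast$ to $K$, because in the $V$-variable setting one cannot directly invoke Kigami's p.c.f.\ theorems: the uniform estimate $\mathrm{diam}_R(K_\bfi)\le C\rho_\bfi^{-1}$ relies crucially on $\rho_{\inf}>1$ and on the affine nested property (so that effective resistance between boundary points of a cell is proportional to its resistance weight). Once this estimate is in hand the rest of the argument is a careful but routine transcription of Kigami's proof, with the role of the single self-similar IFS replaced by the cellular decomposition along a cut $\Lambda$ at an arbitrary level.
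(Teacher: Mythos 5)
Your proposal follows essentially the same route the paper takes: define the form on $V_*$ by monotone limits, show $R$ is a metric (citing Theorem 2.1.14 of Kigami), prove the cell-diameter estimate $\diam_R K_{\bfi}\le C r_{\bfi}$ to identify the $R$-completion of $V_*$ with $K$, and then invoke the standard Kigami theory to conclude. The paper's own proof is in fact terser than yours — it does most of this work in the paragraph immediately preceding the theorem and then simply cites Section 3.4, Appendix B3 and Section 2.3 of \cite{Kig} for the Dirichlet and resistance form properties — so your explicit verification of closedness, Markov, regularity and locality is just a fleshing-out of what the paper delegates to the reference.

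One small point worth making explicit: you write that you would "invoke" Lemma~\ref{lem:resb} and Corollary~\ref{cor:diambd} at the extension step, but those results are stated for $K$ and appear after this theorem. As the paper notes, one must first prove the $V_*$-analogues of those estimates \emph{without} using Theorem~\ref{thmdrf}, precisely to avoid circularity; once $(\ce,\cf)$ is built and shown to be a resistance form, the $K$-versions follow. Your argument contains this step in substance — the diameter estimate is derived directly from \eqref{eq:formdecomp} and $\rho_{\inf}>1$ rather than from the later lemma — so this is a matter of presentation rather than a gap.
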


\begin{proof} \mbox{}
The existence of the Dirichlet form $(\ce,\cf)$ as the limit of an increasing sequence of Dirichlet forms is essentially 
as summarised in the first paragraph of Section 3.4 of \cite{Kig}.  See \cite{Kig} Appendix B3 for a discussion of 
Dirichlet forms. The proof that the Dirichlet form is a resistance form is essentially as in Section 2.3 of~\cite{Kig}.
\end{proof} 

It will be convenient here and subsequently to work with \emph{resistance scaling factors} which are just the inverse of 
the conductance scaling factors introduced in Section~\ref{secdrf}. Thus we define
\begin{equation} \label{dfrsf}
r_i^F = (\rho_i^F)^{-1}, \  r_{ \boldsymbol{i} } = {\rho_{ \boldsymbol{i} }}^{-1}.
\end{equation}
We also note that for the resistance scale factors we have
\begin{equation} \label{rest} 
\begin{aligned}
0< r_{\inf}  &:=  \inf\{r^F_i  :      i  \in 1,\dots,N^F,\,  F\in \bff\} = {\rho_{\sup}}^{-1}, \\
r_{\sup}  &:=  \sup\{r^F_i  :  i\in 1,\dots,N^F,\,  F\in \boldsymbol{F} \} = {\rho_{\inf}}^{-1}  < 1.
\end{aligned} 
\end{equation} 

Next we see that the resistance metric distance between two vertices in a cell $ \Delta_{\bfi}$ (see \eqref{celldf}) is comparable to the resistance scaling factor $ r_{\bfi}$   for that cell.
\begin{lem}\label{lem:resb}
There is a constant nonrandom $ c_1 >0$ such that
%% BH - there is no probability in this result so removed 
%   $ P $ a.s.\ 
if $x, y \in \Delta_{\bfi}$ and $ x \neq y $   then
\begin{equation} \label{estresb}  
c_1    r_{ \boldsymbol{i} }  \leq R(x,y) \leq  r_{ \boldsymbol{i} }. 
 \end{equation} 
\end{lem}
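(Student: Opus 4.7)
The plan is to prove the two bounds separately, using the self-similar scaling of $\mathcal{E}$ together with the fixed-point property of the harmonic structure.

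\emph{Upper bound.} Let $n=|\bfi|$ and let $x_0,y_0\in V_0$ be the preimages $x_0=\psi_{\bfi}^{-1}(x)$, $y_0=\psi_{\bfi}^{-1}(y)$. From the cell decomposition \eqref{eq:formdecomp} at the cut $\{\bfj:|\bfj|=n\}$, keeping only the $\bfi$-summand, for every $f\in\cf$ with $f(x)=0$ and $f(y)=1$ we have
\[
 \mathcal{E}(f,f) \;\ge\; \rho_{\bfi}\, \mathcal{E}^{\sigma^{\bfi}}(f\circ\psi_{\bfi},f\circ\psi_{\bfi}) \;\ge\; \rho_{\bfi}/R^{\sigma^{\bfi}}(x_0,y_0).
\]
The $V_0$-trace of $\mathcal{E}^\omega$ equals $\mathcal{E}_0$ uniformly in $\omega$ (this is the harmonic-structure fixed-point property, iterated to $n\to\infty$), so $R^{\sigma^{\bfi}}(x_0,y_0)=R_0(x_0,y_0)=2/(d+1)$, computed directly on the complete graph $G_0$. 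Taking the infimum over $f$ yields $R(x,y)\le 2r_{\bfi}/(d+1)\le r_{\bfi}$.

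\emph{Lower bound, and the main obstacle.} I aim to exhibit a test function $f$ with $f(x)=0$, $f(y)=1$, and $\mathcal{E}(f,f)\le C\rho_{\bfi}$ for a non-random constant $C=C(d,\rho_{\inf},\rho_{\sup},N_{\sup})$; this gives $R(x,y)\ge c_1 r_{\bfi}$ with $c_1=1/C$. The natural first attempt sets $f(x)=0$, $f(y)=1$, $f(v)=1/2$ for all other $v\in V_n$, and extends harmonically within each level-$n$ cell. By \eqref{eq:formdecomp} the $\bfi$-cell contributes $\rho_{\bfi}(d+1)/2$, while a cell $\bfk\ne\bfi$ at level $n$ contributes $d\rho_{\bfk}/4$ iff it shares $x$ or $y$ with $\Delta_{\bfi}$ (and only at those single vertices, by the technical assumption $|\psi_i^F(V_0)\cap\psi_j^F(V_0)|\le 1$). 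The difficulty is genuinely $V$-variable: for an adjacent $\bfk$ the ratio $\rho_{\bfk}/\rho_{\bfi}$ can be as large as $(\rho_{\sup}/\rho_{\inf})^{n-m}$, where $m$ is the level of the common ancestor of $\bfi$ and $\bfk$, so the naive sum $\sum_{\bfk}\rho_{\bfk}$ is not controlled by $\rho_{\bfi}$ alone.

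\emph{Resolution.} To remedy this I refine the test function to interpolate through successive layers of ancestor cells: on the $k$-th ancestral shell $\{\bfj:|\bfj|=n-k\}$ I assign an intermediate value whose deviation from $1/2$ is chosen so that transitions across cells of large conductance scale are geometrically damped. Using $r_{\sup}<1$, the contribution of the $k$-th shell becomes bounded by $\rho_{\bfi}\cdot q^k$ for some $q\in(0,1)$ depending only on $\rho_{\inf},\rho_{\sup}$ and the branching bound $N_{\sup}$; summing the resulting geometric series gives $\mathcal{E}(f,f)\le C\rho_{\bfi}$, hence $R(x,y)\ge c_1 r_{\bfi}$ with $c_1$ non-random as required.
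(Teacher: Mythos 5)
Your upper bound is correct and, if anything, slightly sharper than the paper's: identifying $R^{\sigma^{\bfi}}(x_0,y_0)$ with the $G_0$-resistance $2/(d+1)$ via the fixed-point (trace) property of the harmonic structure is a clean and valid way to get $R(x,y)\le 2r_{\bfi}/(d+1)\le r_{\bfi}$, and it matches the spirit of the paper's argument.

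The lower bound has a genuine gap. You correctly diagnose the obstruction: if a neighbouring level-$n$ cell $\bfk$ branches from $\bfi$ at a much earlier level $m$, then $\rho_{\bfk}/\rho_{\bfi}$ can be as large as $(\rho_{\sup}/\rho_{\inf})^{n-m}$, so the level-$n$ decomposition cannot control the energy. But your proposed ``ancestral-shell'' remedy is only a sketch, not a construction; you give no formula for the shell values, no estimate of the per-shell energy, and no reason the sum is geometric. Moreover the shells $\{\bfj:|\bfj|=n-k\}$ do not obviously address the issue, since the problematic cells all sit \emph{at} level $n$ and meet $\Delta_{\bfi}$ only at the single point $y$. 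The paper avoids the difficulty entirely by cutting the tree at the \emph{resistance scale} of $\bfi$ rather than at its combinatorial level: take the cut $\Lambda$ of nodes $\bfj=j_1\dots j_k$ with $r_{j_1\dots j_k}\le r_{\bfi}\le r_{j_1\dots j_{k-1}}$, which contains $\bfi$. Then every $\bfj\in\Lambda$ satisfies $r_{\inf}\,r_{\bfi}\le r_{\bfj}\le r_{\bfi}$, hence $\rho_{\bfi}\le\rho_{\bfj}\le\rho_{\bfi}/r_{\inf}$, so all conductances across this cut are uniformly comparable to $\rho_{\bfi}$. The naive test function ($f=1$ at $y$, $f=0$ at every other vertex of $\widetilde V=\bigcup_{\bfj\in\Lambda}\psi_{\bfj}(V_0)$, harmonic elsewhere) then has energy $d\sum_{\bfj\in\Lambda,\,y\in\Delta_{\bfj}}\rho_{\bfj}\le dM\rho_{\bfi}/r_{\inf}$, where $M$ is the maximal number of tetrahedra sharing a vertex, giving the lower bound at once with a non-random $c_1$. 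This resistance-scale cut is the key missing idea; the same device reappears in the paper as the cut $\Lambda_\epsilon$ in Lemma~\ref{dislem}.
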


\begin{proof}  Fix $ x $, $ y $ and $ \bfi $ as in the statement of the lemma.

\medskip

If $ f(x) = 0 $ and $ f(y) = 1 $, then using \eqref{eq:formdecomp}, \eqref{dfE0}, monotonicity of the limit  in \eqref{lmfm}, and \eqref{V0ass},
\[
\ce(f,f)  \geq  
\rho_{\bfi} \ce_{0}(f\circ\psi_{\bfi},f\circ\psi_{\bfi})  
=   d\rho_{\bfi},
\]
where   $ d $ comes from the fact there are $ d $ edges in $ V_0 $ containing $ y $.
%as   $ \ce_{0}(f \circ\psi_{\bfj},f \circ\psi_{\bfj}) = 2 $ if $  y \in \triangle_{\bfj} $ and $=  0 $ otherwise.  

This gives  the upper bound for $ R $ in~\eqref{estresb}.

\medskip
For the lower bound, following Notation~\ref{notn1}, consider a cut $ \Lambda $ of the underlying tree such that  $  \boldsymbol{j} =j_1\dots j_n  \in \Lambda $ if $ r_{ \boldsymbol{j} } $ is comparable to $ r_{ \boldsymbol{i} } $.  More precisely,  $  \boldsymbol{j}    \in \Lambda $  if
\begin{equation} \label{rRint} 
 r_{ \boldsymbol{j} }  \leq r_{ \boldsymbol{i} }  \leq r_{j_1\dots j_{n-1}}.
\end{equation} 

Let $ \widetilde{V} = \bigcup_{ \boldsymbol{j} \in \Lambda } \psi_{\boldsymbol{j}} (V_0)    $ be the set of   vertices corresponding to cells $ \Delta_{ \boldsymbol{j} }$ for  $  \boldsymbol{j}  \in \Lambda $ (analogous to \eqref{GVE}).
Note that $ \boldsymbol{i} \in \Lambda $ and so $ x,y \in \widetilde{V} $.
Consider the function $ f $   such that $ f(y) = 1 $ and $ f(z) = 0 $ for all other $ z \in \widetilde{V} $, and harmonically interpolate.  
Then
\begin{equation} \label{Mprop} 
\ce(f ,f )  
 =   \sum_{ j \in \Lambda,\,  y\in\Delta_{\bfj}  }   \rho_{\bfj} \ce_{0}(f \circ\psi_{\bfj},f \circ\psi_{\bfj})   
=d  \sum_{j \in \Lambda,\,  y\in\Delta_{\bfj} }   \rho_{\bfj}   
   \leq \frac{dM}{r_{\inf}}   \rho_{\bfi}, 
\end{equation} 
using ~\eqref{rRint},  taking $ d $ as in  \eqref{V0ass},
and  $ M $ the maximum number of regular tetrahedra in $ \mathbb{R}^d $ with disjoint interiors that can have a common vertex.

This gives  the lower bound in~\eqref{estresb}.
\end{proof} 

\begin{cor}\label{cor:diambd}
There is an upper bound on the diameter  of the set $ K $ in the resistance
metric, in that there exists a nonrandom constant $C$ such that 
%%there is no probability used so removed
% $ P $ a.s.,
\begin{equation} \label{}
\diam_R K :=  \sup_{x,y\in K} R(x,y) \leq C.
 \end{equation} 
 %%BH - added a further version as this is needed in section 5
More generally, for all $\bfi\in T$, 
\begin{equation} \label{} 
\diam_R K_{\bfi} :=  \sup_{x,y\in K_{\bfi}} R(x,y) \leq Cr_{\bfi}.
 \end{equation} 
%%BH - end add
\end{cor}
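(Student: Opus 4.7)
My plan is to deduce both inequalities from the single bound $\diam_R K^\omega \leq C$ holding uniformly in $\omega$. The cell statement then follows by scaling: for $x,y \in K_{\bfi}$ and any $f$ with $f(x)=0, f(y)=1$, applying the decomposition \eqref{eq:formdecomp} to the cut $\{\bfj : |\bfj| = |\bfi|\}$ gives
\[
\ce(f,f) \geq \rho_{\bfi}\, \ce^{\sigma^{\bfi}}(f\circ\psi_{\bfi}, f\circ\psi_{\bfi}) \geq \rho_{\bfi}/R^{\sigma^{\bfi}}(\psi_{\bfi}^{-1}x,\, \psi_{\bfi}^{-1}y),
\]
hence $R(x,y) \leq r_{\bfi}\,\diam_R K^{\sigma^{\bfi}\omega}$; since $\sigma^{\bfi}\omega$ is itself a $V$-variable tree on the same family $\bff$, the universal bound $C$ also controls its diameter.

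For the global bound I fix any essential fixed point $v \in V_0$ and show $R(v,x) \leq C_1$ for every $x \in K^\omega$; the triangle inequality then gives $\diam_R K^\omega \leq 2C_1$.  The argument is a telescoping chain $w_0 = v, w_1, w_2, \dots \to x$ built as follows.  Choose a nested sequence of cells $\bfj_1 \prec \bfj_2 \prec \cdots$ with $|\bfj_n|=n$ and $x \in K_{\bfj_n}$, and pick an arbitrary vertex $w_n \in \Delta_{\bfj_n}$ for each $n \geq 1$.  Since $V_0 \subset V_1^{F^{\bfj_{n-1}}} = \bigcup_k \psi_k^{F^{\bfj_{n-1}}}(V_0)$, the vertex $w_{n-1}$ lies in some sibling subcell $\Delta_{\bfj_{n-1}k^{*}}$ of $K_{\bfj_{n-1}}$, while $w_n$ lies in $\Delta_{\bfj_n}$.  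By connectedness of each $K^F$ together with the nesting axiom, the cell-adjacency graph on $\{\Delta_{\bfj_{n-1}k} : 1 \leq k \leq N^{\bfj_{n-1}}\}$ is a connected graph on at most $N_{\sup}$ vertices, so there is a chain of at most $N_{\sup}$ sibling subcells from $\Delta_{\bfj_{n-1}k^{*}}$ to $\Delta_{\bfj_n}$ in which consecutive cells share a vertex.  Walking along the shared vertices and applying Lemma~\ref{lem:resb} to each link (each link lies inside a single cell $\Delta_{\bfj_{n-1}k}$ whose resistance scale is $\leq r_{\sup}^n$) yields $R(w_{n-1}, w_n) \leq N_{\sup}\, r_{\sup}^n$.

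Summing the resulting geometric series and using $r_{\sup} < 1$ from \eqref{rest} produces
\[
\sum_{n \geq 1} R(w_{n-1}, w_n) \leq \frac{N_{\sup}\, r_{\sup}}{1-r_{\sup}} =: C_1,
\]
which is independent of $\omega$ and of $x$.  In particular $\{w_n\}$ is $R$-Cauchy with $R(v, w_n) \leq C_1$ for all $n$.  The Euclidean diameter of $K_{\bfj_n}$ shrinks geometrically because $\ell_{\bfj_n} \leq \ell_{\sup}^n \to 0$, so $w_n \to x$ in Euclidean norm; since the resistance and Euclidean topologies coincide on $K^\omega$, this is also the $R$-limit, and passing to the limit gives $R(v,x) \leq C_1$ as required.

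The main obstacle is verifying the uniform chain bound at every scale, namely that a chain of at most $N_{\sup}$ sibling subcells of $K_{\bfj_{n-1}}$ links the subcell containing $w_{n-1}$ to the one containing $w_n$.  Once bounded branching \eqref{Nbd}, connectedness of each $K^F$, and the nesting axiom are combined this is immediate, but it is exactly the ingredient that makes the geometric series close up uniformly over $\omega$; without it the naive chain argument at level $n$ would produce chain lengths growing like $r_{\sup}^{-n}$ and the bound would collapse.
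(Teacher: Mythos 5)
Your proof is correct and follows essentially the same route as the paper's: both build a chain of vertices through a nested sequence of cells around the target point, bound each link by a chain of at most $N_{\sup}$ sibling cells each of resistance scale $r_{\sup}^n$, and sum the geometric series using $r_{\sup}<1$. The only superficial differences are that the paper uses two chains meeting at a common level-$0$ point rather than fixing a basepoint $v\in V_0$, and the paper obtains the cell bound by simply rerunning the chain argument inside $K_{\bfi}$ rather than via the scaling reduction through \eqref{eq:formdecomp}; both variants are equivalent.
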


\begin{proof} 
First consider points $x,y \in V_n$ (see \eqref{GVE}) and suppose $ x \in \Delta_{ \boldsymbol{i} } \subset V_n $, $ y \in 
\Delta_{ \boldsymbol{j} } \subset V_n $, with $ | \boldsymbol{i} | = | \boldsymbol{j} | = n $. 

Let $ x_0 = y_0 $, $ x_ k \in \Delta_{ \boldsymbol{i} | k } $, $ y_k \in \Delta_{ \boldsymbol{j} | k } $ for $ k = 1,\dots, n $, with $ x_n=x $ and $ y_n =y $. By the triangle inequality for the metric $R$,
\[
R(x,y) \leq \sum_{k=1}^{n } R(x_{k-1},x_{k}) +  \sum_{k=1}^{n } R(y_{k-1},y_{k}).
\]

Since $ x_{k-1}, x_k \in V_k $  and all cells are triangles or   tetrahedra, if a path from $ x_{k-1}$ to 
$  x_k  $ consisting of edges from $ V_{k} $ contains two edges from the same $k$-cell then it can be replaced by 
a shorter path from $ x_{k-1}$ to $  x_k  $ also consisting of edges from $ V_{k} $. 
It follows there is a path from $ x_{k-1}$ to $  x_k  $ consisting of at most $ N_{\sup} $ edges from $ V_k $.  Hence 
 \[
 R(x_{k-1}, x_k) \leq N_{\sup} r^k_{\sup},
 \]
 from \eqref{estresb}.  Hence
 \[
 R(x,y) \leq \frac{2N_{\sup}} {(1-r_{\sup})}.
 \]
 
Using the density of the vertices $\bigcup_n V_n$ in $K$ we have the result.

%%BH - added a line to the proof
The second statement follows in the same way.
\end{proof}

Note that the result holds for all $\omega\in\Omega$.

%%%%%%%%%%%%%%%%%%%%%%%%%%%%%%%%%%%%%%%%%%
\subsection{Weights and Measures} \label{secwm}

We next introduce a general family of measures on $ \partial T $ (see Notation~\ref{notn1}) and on the corresponding  fractal set  $ K $, by using a set of weights $(w_1^F,\dots,w_{N^F}^F)$ defined for each $F\in\bff$ with
$w_i^F>0$.  We do not require $ \sum_i w_i^F = 1 $.

\bigskip
Assume
\begin{equation} \label{west}
\begin{aligned} 
0< w_{\inf} &:=  \inf\{w^F_i :  1\leq i \leq N^F,\, F\in \bff\}, \\
 w_{\sup} &:=  \sup\{w^F_i : 1\leq i \leq N^F,\, F\in  \boldsymbol{F}  \} < \infty.
\end{aligned} 
\end{equation}

Following Notation~\ref{notn1} let the weight $ w_{\bfi}$ of the cell $\Delta_{\bfi } $ 
(corresponding complex, or corresponding cylinder) be the natural product of weights along the branch given by 
the node $\bfi$.  That is, if $ | \bfi | = n $, then
\begin{equation} \label{wprod}   
w _{ \bfi }  :=w_{i_1}^{F^\emptyset}   \cdot w_{i_2}^{F^{ i_1}} \cdot \ldots \cdot w_{i_n}^{F^{i_1\dots i_{n-1}}} .
\end{equation}

Of particular interest are weights of the form  $ w_i^F =(r_i^F)^{  \alpha } $  for all $ F \in \boldsymbol{F} $ 
and some fixed $\alpha>0$, in which case $w_{\bfi} = {r_{\bfi}}^{\alpha}$. This example is the reason we do not 
require $\sum_i w_i^F = 1 $, since it would not be possible to achieve the normalisation simultaneously for 
all $F \in \boldsymbol{F}$. 

% If $ w^F_i  =  (r_i^F) ^{\alpha}$ where $ \alpha $ is the Hausdorff dimension, then $ \mu $ is a ``normalised'' 
%$ \alpha $-dimensional  
%``flat'' measure.  Note that for the case $ V=1 $ with nested gaskets (for example the two IFSs corresponding to 
%$SG(2)$ and $SG(3)$ respectively and constant  weights for each), then $ \mu $ is  independent of $ \alpha $.  But $ \mu $ 
%does depend on $ \alpha $ for the model class of affine nested gaskets.

\bigskip
The following   construction  is basic, and is special to the case of $ V $-variable fractals.
 
\begin{defn} \label{dfwm} Let $(w_1^F,\dots,w_{N^F}^F)$ for  $F\in\bff$ be a set of weights as before. 
For $ | \boldsymbol{i} |   $  a neck let 
\begin{equation} \label{dfmui} 
  \mu _{\bfi } := \mu ([ \bfi ] ) 
:= \frac{  w _{\bfi} }   { \sum_{|\boldsymbol{j}    | 
                     = | \boldsymbol{i} |   }   w _{\boldsymbol{j} } }.
                     \end{equation} 
The corresponding unit mass measure $ \mu $ on $ \partial T $ is called the \emph{unit mass measure with 
weights~$ w ^F_i $}.

The pushforward measure  on $ K $ under the address map $ \pi : \partial T \to K $ given by  
$ \bigcap_{n=1}^{\infty} K_{ \boldsymbol{i} | n } = \{  \pi (\boldsymbol{i} )\} $  is also denoted by $ \mu $.
\end{defn} 

Note that from  the definition of a neck, \eqref{dfmui}  is consistent via finite additivity from one level of neck 
to the next, it extends by addition to any complex or cylinder, and so by standard consistency conditions it extends 
to a unit mass   (probability) measure $ \mu $ on $\partial  T $.

We note the following simple estimates for use in the rest of this subsection and in Lemmas~\ref{lem:evalests},  
\ref{critest} and ~\ref{lem:muk}.

\begin{lem} \label{hineq} Suppose $\boldsymbol{i}$ and $\boldsymbol{j}$ are two nodes of the same type with
$| \boldsymbol{i} | = | \boldsymbol{j} | =  n $. Then
\begin{equation} \label{mibds}
\left(\frac{w_{\inf}}{w_{\sup}}\right)^n \mu_{ \boldsymbol{j} } \leq  \mu_{ \boldsymbol{i} } \leq 
\left(\frac{w_{\sup}}{w_{\inf}}\right)^n \mu_{ \boldsymbol{j}}.
\end{equation} 

If  $ \boldsymbol{i} $ is a neck node then
\begin{equation} \label{mibd} 
 \left(\frac{ w_{\inf}} {N_{\sup} w_{\sup}}\right)^n  \leq  \mu_{ \boldsymbol{i} }  < \left(1+ \left( \frac{w_{\inf}}{w_{\sup}}\right)^n \right)^{-1}<1.
 \end{equation}  
\end{lem}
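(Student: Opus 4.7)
The plan is to treat the two inequalities separately; the only conceptual step is for~\eqref{mibds}, while the bounds in~\eqref{mibd} will be immediate from the definition of $\mu_{\bfi}$.

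For~\eqref{mibds}, the subtlety is that the level $n=|\boldsymbol{i}|=|\boldsymbol{j}|$ need not itself be a neck, so neither $\mu_{\boldsymbol{i}}$ nor $\mu_{\boldsymbol{j}}$ is given directly by~\eqref{dfmui}. I would work on $\Omega'_V$, choose the next neck level $m \geq n$, and use additivity to write
\[
\mu_{\boldsymbol{i}} = \sum_{|\boldsymbol{k}|=m-n} \mu_{\boldsymbol{i}\boldsymbol{k}} = \frac{1}{Z_m} \sum_{|\boldsymbol{k}|=m-n} w_{\boldsymbol{i}\boldsymbol{k}}, \qquad Z_m := \sum_{|\boldsymbol{l}|=m} w_{\boldsymbol{l}},
\]
and likewise for $\mu_{\boldsymbol{j}}$. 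Since $\tau^{\boldsymbol{i}} = \tau^{\boldsymbol{j}}$ and $|\boldsymbol{i}|=|\boldsymbol{j}|$, Remark~\ref{rm1v} gives $\sigma^{\boldsymbol{i}}T = \sigma^{\boldsymbol{j}}T$, so the $\boldsymbol{k}$-index sets for the two sums coincide and by~\eqref{wprod} we may factor $w_{\boldsymbol{i}\boldsymbol{k}} = w_{\boldsymbol{i}}\widetilde w_{\boldsymbol{k}}$ and $w_{\boldsymbol{j}\boldsymbol{k}} = w_{\boldsymbol{j}}\widetilde w_{\boldsymbol{k}}$ with a common subtree-weight $\widetilde w_{\boldsymbol{k}}$. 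Pulling $w_{\boldsymbol{i}}$ and $w_{\boldsymbol{j}}$ out of their respective sums then produces the clean cancellation
\[
\frac{\mu_{\boldsymbol{i}}}{\mu_{\boldsymbol{j}}} = \frac{w_{\boldsymbol{i}}}{w_{\boldsymbol{j}}},
\]
and the uniform bounds in~\eqref{west} immediately give $(w_{\inf}/w_{\sup})^n \leq w_{\boldsymbol{i}}/w_{\boldsymbol{j}} \leq (w_{\sup}/w_{\inf})^n$.

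For~\eqref{mibd}, the node $\boldsymbol{i}$ sits at a neck level, so~\eqref{dfmui} applies directly. The lower bound will follow from $w_{\boldsymbol{i}} \geq w_{\inf}^n$ together with the crude estimate $Z_n \leq N_{\sup}^n w_{\sup}^n$ (there are at most $N_{\sup}^n$ level-$n$ nodes, each of weight at most $w_{\sup}^n$). For the upper bound I would rewrite
\[
\mu_{\boldsymbol{i}} \;=\; \Bigg(1 + \sum_{|\boldsymbol{j}|=n,\;\boldsymbol{j}\neq\boldsymbol{i}} \frac{w_{\boldsymbol{j}}}{w_{\boldsymbol{i}}}\Bigg)^{-1},
\]
observe that $N_{\inf} \geq 3$ from~\eqref{Nbd} ensures the existence of at least one competing node $\boldsymbol{j}\neq\boldsymbol{i}$ at every level $n\geq 1$, and note that each such term satisfies $w_{\boldsymbol{j}}/w_{\boldsymbol{i}} \geq (w_{\inf}/w_{\sup})^n$ by~\eqref{west}. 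The only (minor) obstacle is organising the subtree-isomorphism step transparently; the rest is bookkeeping with the uniform weight and branching bounds.
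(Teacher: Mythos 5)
Your proof is correct and follows essentially the same route as the paper: push to the next neck level $N\geq n$, use the fact that nodes of the same type at the same level have isomorphic subtrees (Remark~\ref{rm1v}) to factor out $w_{\boldsymbol{i}}$ and $w_{\boldsymbol{j}}$ against a common subtree sum, conclude $\mu_{\boldsymbol{i}}/\mu_{\boldsymbol{j}}=w_{\boldsymbol{i}}/w_{\boldsymbol{j}}$, and apply the uniform weight bounds; for~\eqref{mibd} the paper likewise uses $Z_n \le N_{\sup}^n w_{\sup}^n$ for the lower bound and rewrites $\mu_{\boldsymbol{i}}$ as $\bigl(1+\sum_{\boldsymbol{p}\neq\boldsymbol{i}} w_{\boldsymbol{p}}/w_{\boldsymbol{i}}\bigr)^{-1}$ for the upper bound, keeping $(N_{\inf}^n-1)$ terms before dropping to one. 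Your restriction to $\Omega'_V$ and your remark that $N_{\inf}\geq 3$ guarantees a competing node at each level $n\geq 1$ are both implicit in the paper and are the right things to note.
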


\begin{proof} Suppose $ N $ is the first neck $ \geq n $. Then
\[
%\mu_{ \boldsymbol{i} }  = \frac{
% \sum \big\{ w_{ \boldsymbol{i} \boldsymbol{k}} : | \boldsymbol{i} \boldsymbol{k} | = N , \ \boldsymbol{k} \in T^{ \sigma^{ \boldsymbol{i} } }  \big\} }
% { \sum \{ w_{  \boldsymbol{p}  } : | \boldsymbol{p} | = N,\ \boldsymbol{p} \in T \} 
% }   
%  = w_{ \boldsymbol{i} }  \frac{ 
%  \sum \big\{ w^{\sigma^{ \boldsymbol{i}} } _ {\boldsymbol{k}} : | \boldsymbol{k} | = N - n,  \boldsymbol{k} \in T^{ \sigma^{ \boldsymbol{i} } }  \big\}
%  } 
% { \sum \{ w_{  \boldsymbol{p}  } : | \boldsymbol{p} | = N,\ \boldsymbol{p} \in T  \} 
% } , 
\mu_{ \boldsymbol{i} }  = \frac{
 \sum_{ | \boldsymbol{i} \boldsymbol{k} | = N ,\,   \boldsymbol{k} \in T^{ \sigma^{ \boldsymbol{i} }  } }\,  w_{ \boldsymbol{i} \boldsymbol{k}    }
  }
 { 
 \sum_{ | \boldsymbol{p} | = N,\, \boldsymbol{p} \in T}\, w_{  \boldsymbol{p}  }  
 }   
  = w_{ \boldsymbol{i} }\,  \frac{ 
  \sum_{  | \boldsymbol{k} | = N - n,\,  \boldsymbol{k} \in T^{ \sigma^{ \boldsymbol{i} } }    } \,    w^{\sigma^{ \boldsymbol{i}} } _ {\boldsymbol{k}} 
  } 
 { \sum_{| \boldsymbol{p} | = N,\, \boldsymbol{p} \in T }\, w_{  \boldsymbol{p}  }  
 } , 
 \]
where $ w^{\sigma^{ \boldsymbol{i}} }_{ \boldsymbol{k} } $ is the product of weights along any branch of 
$T^{\sigma^{ \boldsymbol{i}} } $ of length $ N - n $ beginning at $ \emptyset $, or equivalently any branch 
of $ T $ of length $ N - n $ beginning at $ \boldsymbol{i} $.
A similar expression is obtained for $ \mu_{\boldsymbol{j}}$.  Since $  \boldsymbol{i}   $ and $  \boldsymbol{j}$ 
are of the same type and level, the trees $T ^{\sigma^{ \boldsymbol{i}} } $ and  $ T ^{\sigma^{ \boldsymbol{j}} } $ 
are identical, and so 
 $\mu_{ \boldsymbol{i} } / w_{ \boldsymbol{i} } = \mu_{ \boldsymbol{j} } / w_{ \boldsymbol{j} } $.
 Then \eqref{mibds} follows from $ w_{\inf}^n \leq w_{ \boldsymbol{i} } \leq w_{\sup}^n $.

If $ n $ is a neck then 
\[
\mu_{ \boldsymbol{i} } = \frac{ w _ { \boldsymbol{i} } }  { \sum_{ | \boldsymbol{p} | = n}     w _ { \boldsymbol{p} }     }
  \geq \frac{ w_{\inf}^n }  { N_{ \sup } ^n w_{\sup}^n }. 
  \] 
Also we have that 
\begin{eqnarray*}
\mu_{ \boldsymbol{i} } = \frac{ w _ { \boldsymbol{i} } }  { \sum_{ | \boldsymbol{p} | = n}     w _ { \boldsymbol{p} }     }
&\leq & \left(1+ \sum_{|\boldsymbol{p}|=n, \boldsymbol{p}\neq \bfi} \frac{w_{\boldsymbol{p}}}{w_{\bfi}}\right)^{-1}  \\
 &\leq & \left(1+ (N_{\inf}^n-1) \left(\frac{w_{\inf}}{w_{\sup}}\right)^n\right)^{-1}  \\
 &<& \left(1+ \left(\frac{w_{\inf}}{w_{\sup}}\right)^n\right)^{-1} <1,
 \end{eqnarray*}
completing the proof of \eqref{mibd}.
  \end{proof}

We show in Lemma~\ref{simK} that the pushforward measure on $ K $ is given by a similar expression to that for $\mu$ on $\partial T$. 
For this we first show that the measure $ \mu $ on $ K $ is nonatomic.
%does not charge countable sets and in particular $\mu(\bigcup_{i=0}^{\infty} V_i)=0$.

\begin{lem}\label{noatoms}
$P_V$ a.s.\ we have for $\bfi\in\partial T$
\[ \mu (\boldsymbol{i} )  = 0. \]
%Hence
%\[ \mu(V_*)=0 \]
\end{lem}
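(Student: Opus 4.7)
The plan is to fix $\boldsymbol{i}\in\partial T$ and prove that $\mu_{\boldsymbol{i}|n(k)}\to 0$ along the neck sequence; since $\mu(\{\boldsymbol{i}\})\le \mu_{\boldsymbol{i}|n(k)}$ for all $k$, this forces $\mu(\{\boldsymbol{i}\})=0$. The neck sequence $0=n(0)<n(1)<\cdots$ is $P_V$-a.s.\ infinite by the discussion preceding \eqref{dfwvd}, so this restriction loses nothing.

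First I would apply Lemma~\ref{lem:sumprod} with $s_i=w_i^F$ to the denominator of \eqref{dfmui}, combined with the corresponding decomposition \eqref{wprod} of the numerator $w_{\boldsymbol{i}|n(k)}$ along the same neck cuts. After cancellation this gives
\[
\mu_{\boldsymbol{i}|n(k)} \;=\; \prod_{j=1}^{k}\alpha_j,
\]
where $\alpha_j$ is the $\mu$-mass, computed inside the subtree $T^{(j-1)}$ rooted at $\boldsymbol{i}|n(j-1)$, of the neck cylinder at level $Y_j:=n(j)-n(j-1)$ that contains $\boldsymbol{i}$. Since all environments beyond a neck level are shared across the tree, $T^{(j-1)}$ is itself a $V$-variable tree with the same weight family, and $Y_j$ is a neck level of it. Estimate \eqref{mibd} of Lemma~\ref{hineq} therefore applies and yields
\[
\alpha_j \;<\; \bigl(1+(w_{\inf}/w_{\sup})^{Y_j}\bigr)^{-1};
\]
the bound depends on $\boldsymbol{i}$ only through the ambient tree, not through the chosen path.

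The finish is probabilistic. The increments $Y_j$ are i.i.d.\ geometric random variables, as noted around \eqref{seqneck}, and $p:=P_V(Y_1=1)>0$ because the event that the first environment is a neck (all its type slots filled by the same value of $v$) has positive probability. By the second Borel--Cantelli lemma the index set $A=\{j:Y_j=1\}$ is $P_V$-a.s.\ infinite, and for each $j\in A$ we have $\alpha_j\le c:=(1+w_{\inf}/w_{\sup})^{-1}<1$, while the other factors lie in $(0,1)$. Hence
\[
\mu_{\boldsymbol{i}|n(k)} \;\le\; c^{|A\cap\{1,\dots,k\}|}\;\longrightarrow\;0 \quad\text{as } k\to\infty,
\]
on a full-measure event that is independent of $\boldsymbol{i}$, yielding the stated non-atomicity simultaneously for every $\boldsymbol{i}\in\partial T$.

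The step I expect to require the most care is the product decomposition $\mu_{\boldsymbol{i}|n(k)}=\prod_j\alpha_j$ and the justification that $\alpha_j$ may legitimately be read as a $\mu$-mass inside a genuinely $V$-variable subtree, so that Lemma~\ref{hineq} can be invoked; the degenerate case $w_{\inf}=w_{\sup}$ is not a genuine obstacle, since then $\alpha_j\le 1/2$ for every $j$ and the Borel--Cantelli step may be bypassed entirely.
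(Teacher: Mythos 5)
Your proof is correct, and it reuses the paper's two key ingredients: the factorisation $\mu_{\bfi|n(k)}=\prod_{j=1}^k\alpha_j$ along necks (which in the paper is exactly the sequence $\mu^{(j)}=\mu_{\bfi|n(j)}/\mu_{\bfi|n(j-1)}$ obtained from \eqref{eq:spdecomp}) and the uniform bound $\alpha_j<\bigl(1+\zeta^{Y_j}\bigr)^{-1}$ from \eqref{mibd}. Where you diverge is the probabilistic finish. The paper takes logarithms, applies the strong law of large numbers to $\frac1k\sum_j\log\alpha_j$, and then bounds $E_V\log\bigl(1+\zeta^{n(1)}\bigr)$ from below by $\tfrac12E_V\zeta^{n(1)}>0$ using the geometric distribution of $n(1)$; this yields the strictly negative limit and hence exponential decay $\mu_{\bfi|n(k)}\le e^{-ck}$. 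You instead observe that the i.i.d.\ geometric increments $Y_j$ take the value $1$ infinitely often a.s.\ (second Borel--Cantelli), and that on those indices $\alpha_j\le(1+\zeta)^{-1}<1$ while every $\alpha_j<1$, so the product degenerates. This is a softer argument: it avoids the log-moment calculation and the elementary inequality, and it makes transparent why the bound cannot saturate; the price is that you only conclude convergence to zero, not the exponential rate which the paper records in passing (``in fact exponentially fast''). Both are valid; if you want to recover the rate from your version, replace Borel--Cantelli with the SLLN for the indicator $\mathbf{1}\{Y_j=1\}$, which gives $|A\cap\{1,\dots,k\}|\sim pk$ and hence $\mu_{\bfi|n(k)}\le c^{pk(1+o(1))}$.

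One point worth stating more carefully in your write-up is the justification, which you flag yourself, that $\alpha_j$ is the $\mu$-mass of a neck cylinder in $T^{(j-1)}$ so that Lemma~\ref{hineq} applies: since $n(j)$ is a neck of $T$, the level $Y_j=n(j)-n(j-1)$ is a neck of $T^{(j-1)}$, and $T^{(j-1)}$ is itself a well-defined $V$-variable tree because the subtree rooted at any neck node is unique; this is exactly the observation underlying the ``third equality'' discussion in the proof of Lemma~\ref{lem:sumprod}. Once that is in place, your bound on $\alpha_j$ is uniform over all $\bfi\in\partial T$, the exceptional null event $\{|A|<\infty\}$ depends only on $\omega$ and not on $\bfi$, and the simultaneity across $\bfi$ is clean.
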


\begin{proof}
%The second part will follow from the first as $V_*$ is countable. 
Since  $[\bfi|n]$ is a decreasing sequence of sets, from~\eqref{dfmui}
\[ \mu  ( \boldsymbol{i} ) = \lim_{k\to\infty} \mu_{\bfi|n(k)}. \]
By \eqref{eq:spdecomp} and \eqref{mibd}, writing $\zeta:=w_{\inf}/w_{\sup}$, the sequence of random variables 
\[ \mu^{(j)} = \frac{\mu_{\bfi|n(j)}}{\mu_{\bfi|n(j-1)}} = \frac{w_{\bfi|n(j)-n(j-1)}}
{\sum_{|\bfj|=n(j)-n(j-1)} w_{\bfj}} \leq \left(1+\zeta^{n(j)-n(j-1)}\right)^{-1}. \]
Taking logs and applying the law of large numbers we see,  $P_V$ a.s., for all $\bfi$,
\[ \lim_{k\to\infty} \frac{1}{k} \log  \mu_{\bfi|n(k)} \leq    -\lim_{k\to\infty} \frac{1}{k} \sum_{j=1}^k \log \left(1+\zeta^{n(j)-n(j-1)}
\right)  =- E_V \log \left(1+\zeta^{n(1)}\right). \]
Now, using the fact that $n(1)$ is a geometric random variable, $\zeta\leq 1$ 
and $\log(1+x) \geq x/2$ for $x\leq 1$, we conclude
\[ - E_v\log\left(1+\zeta^{n(1)}\right) \leq -\frac12 E_V \zeta^{n(1)} = -\frac12 \frac{\zeta}{E_V n(1)(1-\zeta)+\zeta}<0. \]
In particular, $P_V$ almost surely, for all $\bfi\in\partial T$, we have $ \lim_{k\to\infty} \mu_{\bfi|n(k)}= 0 $  (in fact exponentially fast) 
as required.
\end{proof}
  
 \begin{lem}\label{simK}  
 The address map $ \pi : \partial T \to K $ is one-one except on a countable set. The pushforward measure $ \mu $ on $ K $ is nonatomic.  Moreover, for $ | \boldsymbol{i} |   $  a neck,
 \begin{equation} \label{propmui} 
 \mu _{\bfi }  = \mu (K_{\bfi })   
= \frac{  w _{\bfi} }   { \sum_{|\boldsymbol{j}    | 
                     = | \boldsymbol{i} |   }   w _{\boldsymbol{j} } }.
                     \end{equation} 
%Compare with \eqref{dfmui}.
 \end{lem}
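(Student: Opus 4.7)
The plan is to combine the nesting property from Lemma~\ref{lem:cnprop}(2) with the nonatomicity on $\partial T$ from Lemma~\ref{noatoms}, and then obtain the formula \eqref{propmui} as an almost-immediate consequence.

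First I would address the ``one-one except on a countable set'' claim. Suppose $\boldsymbol{i},\boldsymbol{j}\in\partial T$ with $\boldsymbol{i}\neq\boldsymbol{j}$ but $\pi(\boldsymbol{i})=\pi(\boldsymbol{j})$. Let $n$ be the first level at which the two infinite paths differ, so that $[\boldsymbol{i}|n]\cap[\boldsymbol{j}|n]=\emptyset$. By Lemma~\ref{lem:cnprop}(2) (nesting) together with the technical assumption $|\psi_i^F(V_0)\cap\psi_j^F(V_0)|\leq 1$, the intersection $K_{\boldsymbol{i}|n}\cap K_{\boldsymbol{j}|n}=\psi_{\boldsymbol{i}|n}(V_0)\cap\psi_{\boldsymbol{j}|n}(V_0)$ contains at most one point. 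Hence $\pi(\boldsymbol{i})$ lies in the countable set $\bigcup_{n\geq 0}V_n$. For each vertex $x\in\bigcup_n V_n$ the preimage $\pi^{-1}\{x\}$ is finite (at most the number of $n$-cells meeting at $x$, which is bounded by $M$ in the notation of \eqref{Mprop}). Therefore the set where $\pi$ fails to be injective is a countable union of finite sets, hence countable.

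Next I would establish nonatomicity of the pushforward. For any $x\in K$, $\pi^{-1}\{x\}$ is finite, so
\[
\mu\{x\}=\mu\bigl(\pi^{-1}\{x\}\bigr)=\sum_{\boldsymbol{i}\in\pi^{-1}\{x\}}\mu\{\boldsymbol{i}\}=0,
\]
where each summand vanishes $P_V$-a.s.\ by Lemma~\ref{noatoms}.

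Finally I would derive \eqref{propmui}. For any $\boldsymbol{i}\in T$, we have $[\boldsymbol{i}]\subseteq\pi^{-1}(K_{\boldsymbol{i}})$, and the difference $\pi^{-1}(K_{\boldsymbol{i}})\setminus[\boldsymbol{i}]$ consists of those $\boldsymbol{j}\in\partial T$ with $\boldsymbol{j}\notin[\boldsymbol{i}]$ yet $\pi(\boldsymbol{j})\in K_{\boldsymbol{i}}$. For such $\boldsymbol{j}$, letting $m$ be the first level of disagreement between $\boldsymbol{i}$ and $\boldsymbol{j}$, the point $\pi(\boldsymbol{j})$ lies in $K_{\boldsymbol{i}|m}\cap K_{\boldsymbol{j}|m}$, which is again at most a single vertex. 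Thus $\pi(\pi^{-1}(K_{\boldsymbol{i}})\setminus[\boldsymbol{i}])$ is contained in the countable set $\bigcup_n V_n$, and by nonatomicity of the pushforward this set has $\mu$-measure zero. So
\[
\mu(K_{\boldsymbol{i}})=\mu\bigl(\pi^{-1}(K_{\boldsymbol{i}})\bigr)=\mu([\boldsymbol{i}])=\mu_{\boldsymbol{i}},
\]
and for $|\boldsymbol{i}|$ a neck the right-hand side equals $w_{\boldsymbol{i}}/\sum_{|\boldsymbol{j}|=|\boldsymbol{i}|}w_{\boldsymbol{j}}$ by Definition~\ref{dfwm}.

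The only mildly subtle point is ensuring that the ``boundary'' between cylinder sets has measure zero, which is where nonatomicity is used; the combinatorial input is that vertices in $\bigcup_n V_n$ have only finitely many preimages under $\pi$, a direct consequence of nesting and the technical assumption on pairwise cell intersections.
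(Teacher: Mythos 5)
Your proof is correct and follows essentially the same route as the paper's: the countability of the exceptional set comes from the nesting property and the technical assumption $|\psi_i^F(V_0)\cap\psi_j^F(V_0)|\leq 1$, which forces any point with two addresses to lie in the countable vertex set $V_*$, and nonatomicity then comes from Lemma~\ref{noatoms}. You are somewhat more explicit than the paper at two points where the paper compresses: you note that the fibre $\pi^{-1}\{x\}$ is finite (bounded by the number of cells meeting at a vertex), which is needed to pass from nonatomicity on $\partial T$ to nonatomicity of the pushforward on $K$, and you spell out why $\mu(K_{\bfi})=\mu([\bfi])$ by showing the symmetric difference $\pi^{-1}(K_{\bfi})\setminus[\bfi]$ maps into $V_*$ and hence is $\mu$-null; the paper leaves the latter implicit in the phrase ``follows from~\eqref{dfmui} and the definition of the pushforward measure.'' These elaborations are worthwhile, and no step in your argument fails.
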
 
  
\begin{proof}
Suppose $ a = \pi ( \boldsymbol{i} ) = \pi ( \boldsymbol{j} ) $.  Then for some $ n $ we have $ i_1\dots i_n = j_1 \dots j_n $ and $ i_{n+1} \neq j_{n+1} $.
It follows that $ a  \in K_{i_1\dots i_n i_{n+1}} \cap K_{i_1\dots i_n j_{n+1}}  $.  From Lemma~\ref{lem:cnprop},
$ a  \in \psi_{i_1\dots i_n i_{n+1}}(V_0) $. This establishes countability of the set of points in $ K $ with more than one address.  From Lemma~\ref{noatoms} it follows this set has $ \mu $-measure zero.  The result \eqref{propmui} now follows from~\eqref{dfmui} and the definition of the pushforward measure.
\end{proof}

It follows from \eqref{propmui} that  
\begin{equation} \label{} 
\int_{ K_ { \bfi }   }f\, d\mu  =   \mu _{ \bfi }
     \int_{K^{\sigma^ { \bfi }  } }
 f \circ   \psi _{ \bfi } \  d\mu^{\sigma^ { \bfi }  } ,
\end{equation} 
where as usual  $ \mu = \mu^\omega $ is the measure on $ K = K^ \omega $ but here restricted to 
$ K_ { \bfi } = K^\omega _ { \bfi } $, and $ \mu^{\sigma^ { \bfi }  } =
\mu^{\omega , \sigma^ { \bfi }  } $ is the measure on $   K^{ \sigma^ { \bfi }  } =  K^{\omega, \sigma^ { \bfi }  }$ which 
is essentially just a scaled copy of the subfractal $ K_ { \bfi } $. 
By construction,  the left integral is a multiple of the right integral, with constant independent of~$ f $.  
Setting $ f = 1 $ gives the constant.  Note that $ | \boldsymbol{i} | = n $ need not be a neck.

The inner product (or any integral) can   be decomposed as follows:
\begin{equation} \label{inndec} 
(f,g)_{\mu  } = \sum_{ \bfi \in \Lambda  }    \mu  _ {\bfi }\,
            ( f\circ   \psi _{ \bfi }, g \circ   \psi _{ \bfi } )_{\mu^{\sigma^{ \bfi}   } }            
\end{equation}
for any cut $ \Lambda $, see Notation~\ref{notn1}.

Note that \eqref{inndec} is analogous to the decomposition \eqref{eq:formdecomp} for the Dirichlet form. 
The difference is that the scaling factors $\rho_{\boldsymbol{i}}$ in \eqref{eq:formdecomp} are simply computed 
from the prescribed quantities $ \rho_i^F $, unlike the scaling factors $ \mu_{ \boldsymbol{i} } $ in \eqref{inndec} 
which are  related to the prescribed quantities $ w_i^F $ in a simple manner only in the case where the $ \boldsymbol{i} $ 
are all neck nodes.    

We write 
\begin{equation} \label{}
\|f\|_2 =(f,f)_{\mu }^{1/2}
\end{equation}
for the natural norm on $L^2(K ,\mu )$.

%%%%%%%%%%%%%%%%%%%%%%%%%%%%%%%%%%%%%%%%%
\subsection{Time and Neck Cuts}\label{sectanc}
 
%*** [JH] Perhaps following needs more explanation, crossing ``cost'' might be a better term -- or perhaps delete*** 
%%BH-removed one sentence 

 We now introduce the  special cut sets which will be essential for our analysis. The idea is to cut at neck nodes in such 
 a manner that crossing times are comparable. 
%It is because we can control the growth rate of neck levels that we are ultimately able to obtain the estimates we need.

\bigskip
Define 
\begin{equation} \label{dfti} 
 t_{ \boldsymbol{i} } = \mu_{ \boldsymbol{i} } r_{\boldsymbol{i} } .
 \end{equation} 
From the Einstein relation 
%in Appendix~\ref{ER},   
$t_{\bfi} $ can be thought of as a \emph{crossing time} for the continuous time random walk on the cell 
$\Delta_{\boldsymbol{i}}$, with resistance given by $r_{\boldsymbol{i}}$ and expected jump time given by~$\mu_{\boldsymbol{i}}$.  
	
%*** Appendix A is misleading as it   refers only to the natural resistance measure on the graph.  But the more general notion where jump rate at each point is given by the value of a general measure such as $\mu_{ \boldsymbol{i} }  $ at the point, does give what is needed.  An even more general notion (which does not seem to be well known?) with a cost function gives further justification.  I'll explain.***
	
Note that whereas $ w _{ \boldsymbol{i}} $ defined in \eqref{wprod} is a simple product of factors, as 
are $ \ell _ { \boldsymbol{i} } $, $ \rho_ { \boldsymbol{i} } $ and $r_ { \boldsymbol{i} } $ following 
the notation of \eqref{lprod}, this is not the case for $   \mu_{ \boldsymbol{i} } $ and hence not 
for $ t _ { \boldsymbol{i} } $.

% In order to control the growth of $ t_{ \boldsymbol{i} } $ we define
%	\begin{equation} \label{dfeta}
%	\eta = r_{\inf} \frac{N_{\inf} w_{\inf}}{N_{\sup} w_{\sup}} < 1 .
%	\end{equation}
%
%*** [JH] The following observation is also needed in Lemma~\ref{critest}.  Perhaps it should be promoted to a Proposition or Lemma? ***
% 
%The following simple geometric lower mass bound turns out to be critical here and later: If $ \boldsymbol{i} $ is  at a neck level with $ | \boldsymbol{i} | = n = n(\ell) $, it follows from the construction of $ \mu_{ \boldsymbol{i} } $ that 
%\begin{equation} \label{impwb} 
%\mu_{ \boldsymbol{i} } \geq \left(  \frac{N_{\inf} w_{\inf}}{N_{\sup} w_{\sup}} \right)^{n }.
%\end{equation} 
%

\bigskip
Define
\begin{equation} \label{dfeta} 
\eta = \frac{ r_{\inf}\, w_{\inf}} {N_{\sup} w_{\sup}}
\end{equation}
and note that $0<\eta< 1$. Then from \eqref{dfti} and \eqref{mibd},
\begin{equation} \label{tbd} 
\eta^n\leq t_{ \boldsymbol{i} } \leq r_{\sup}^n \quad  \text{if } | \boldsymbol{i}|= n    \text{ is a neck}  .
\end{equation} 
The second inequality is clearly true for any $ \boldsymbol{i} $, not necessarily at a neck.  

\bigskip
 Recalling from \eqref{seqneck} the notation $ n (\ell) $ for the $ \ell $th neck, define the \emph{cut sets} of $T$
\begin{equation} \label{lkcut}
\Lambda_0 = \{ \emptyset \}, \quad \Lambda_k =\Big\{\bfi \in T \ : \ \exists \ell \Big(| \boldsymbol{i} | = n(\ell)  ,\ t_{\bfi} \leq e^{-k} < 
t_{\bfi|n(\ell-1)}\Big) \Big\} \text{ if } k \geq 1,
\end{equation}
where $ \emptyset $ is the root node.  Thus $ \Lambda_k $ is the set of \emph{neck}  nodes  for which the crossing times of the corresponding cells are comparable to~$e^{-k}$.

For any $ \boldsymbol{i} $ such that $ | \boldsymbol{i} | $ is a neck, and in particular if $ \boldsymbol{i} \in \Lambda_k $, then we define
\begin{equation} \label{dfl}
\ell( \boldsymbol{i} ) := \ell \quad \text{if} \quad |  \boldsymbol{i} | = n (\ell) .
\end{equation}
That is, $ \ell( \boldsymbol{i} ) $ is the number of the neck corresponding to $ \boldsymbol{i} $.

% For each (infinite) word $ \boldsymbol{i} \in \partial T $ and $ k \geq 1 $ there is a unique $ \ell ( \boldsymbol{i} , k ) $ defined by
%\begin{equation} \label{dflk} 
% \boldsymbol{i} | n( \ell( \boldsymbol{i} , k)) \in \Lambda_ k .
%\end{equation} 
%Clearly $ \ell( \boldsymbol{i} , k)  \leq  \ell( \boldsymbol{i} , k+1)  $, with equality being possible.

\bigskip
 We introduce further notation to capture the scale factors.
\begin{gather} \label{}
M_k = |\Lambda_k|   ,\quad 
\overline{t}_k  = M_k^{-1}\sum_{ \boldsymbol{i} \in \Lambda_k} t _ { \boldsymbol{i} } , \quad 
T_k  =  {\overline{t}_k}^{-1}; \label{dfmk} \\
y_k( \boldsymbol{i} )  = n(\ell) - n( \ell-1)\ \text{ if } \boldsymbol{i} \in \Lambda_k \text{ and } | \boldsymbol{i} | = n ( \ell) ,\quad  
y_k  = \max_{ \boldsymbol{i} \in \Lambda_k} y_k( \boldsymbol{i} ); \label{dfyk} \\
z_k =  \max\{ | \boldsymbol{i} | : \boldsymbol{i} \in \Lambda_k \} \label{dfzk} .
\end{gather} 

Thus $ M_k $ is the cardinality of the cut set $ \Lambda _ k $,  $ \overline{t}_k $ is the average crossing time for cells 
$K_ { \boldsymbol{i} } $ with  $  \boldsymbol{i} \in  \Lambda _k $ or equivalently the average time scaling when passing 
from $ K $ to $ K_ { \boldsymbol{i} } $, conversely $ T_k $ is the average time scaling when passing from  $ K_ { \boldsymbol{i} } $ 
to $ K $ for $  \boldsymbol{i} \in  \Lambda _k $;  $ y_k( \boldsymbol{i} ) $ is the number of generations between 
$ \Delta_{ \boldsymbol{i} } $ and its most recent ancestor also at a neck level, and $y_k $ is the maximum such number of ancestral generations over $ \boldsymbol{i} \in \Lambda_k $;
$ z_k $ is the maximum branch length of nodes in $ \Lambda_k $. 

Trivially,
\begin{equation} \label{} 
\min_{ \boldsymbol{i}  \in \Lambda_k} t_{ \boldsymbol{i} } \leq \overline{t}_k \leq \max_{ \boldsymbol{i} \in \Lambda_k } t_{ \boldsymbol{i} } .
\end{equation} 
 
\bigskip
For functions $ f  (k) $ and $ g  (k) $ we will use the notation
\begin{equation} \label{} 
f (k) \preccurlyeq g (k)\quad  \text{iff} \quad \limsup_{k\to \infty } \frac{f  (k) } { g (k) } \leq  1 .
\end{equation} 
That is, $f(k) \preccurlyeq g(k)$ means $ f $ is asymptotically dominated by $g $.

 In the next lemma we use Lemma~\ref{lem:geomrvs} to estimate the asymptotic behaviour of $ y_k $ and $ z_k$, 
and of the fluctuations of $ \ell( \boldsymbol{i}) $   and $t _{ \boldsymbol{i} }$ for $ \boldsymbol{i} \in \Lambda_k $. 
Note that sharper estimates for the simple case $V=1$ are given in Lemma~\ref{lem:spatialv=1}.
%

%   We see that $ P $ a.s.\ $ \ell(\boldsymbol{i}  , k ) $ grows linearly up to a logarithmic correction factor, $ y_k $ is 
%bounded up to a logarithmic correction factor, and $ \overline{t}_k $ is comparable to $ e^{-k} $ up to a geometric 
%correction factor.

%***[JH] The following lemma has been reworked and cleaned up.  Notation changed and I have included a few 
%other useful estimates which are used elsewhere.  Changed the last estimate above in version 12 to 
%$ t_{ \boldsymbol{i} } $ instead to $  \overline{t}_k $ as this is what is needed in Theorem~\ref{firsteest}  ***  

%**[JH] Made some changes in (a) in the following, and corresponding changes in definitions and proofs, to better 
%suit later arguments, in particular the  last theorem in Section 4. ***
\begin{lem}\label{lem:spatial} Suppose $ \eta $ is as in \eqref{dfeta}.
\begin{itemize} 
\item[(a)]  There exist $ c_1,c_2>0 $ such that $ P_V $ a.s.,  if  $ \boldsymbol{i} \in \Lambda_k$  then
\[
%  \boldsymbol{i} \in \partial T  &\Longrightarrow c_1 k (\log k)^{-1} \preccurlyeq \ell(\boldsymbol{i}  , k ) \leq c_2 k , \\
 c_1 k (\log k)^{-1} \preccurlyeq \ell( \boldsymbol{i} )  \leq c_2 k .
\]
\item[(b)] There exist $ c_3, c_4>0 $ such that $ P_V $ a.s.
\[
1 \leq y_k \preccurlyeq c_3\log k , \quad  z_k\preccurlyeq  c_4 k .
\]
\item[(c)] There exists $ \beta' >0 $    such that $ P_V $ a.s.,  if $ \boldsymbol{i} \in \Lambda_k $ then
\[
k^{- \beta' } e^{-k} \preccurlyeq \eta ^{y_k}\, e^{-k} \leq t_{ \boldsymbol{i} }  \leq e^{-k}.
\]
\end{itemize} 
\end{lem}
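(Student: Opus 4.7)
The plan is to establish parts (a), (b), and (c) in order, since the later parts rely on the earlier ones, and the whole argument ultimately rests on the two estimates $\eta^{|\bfi|}\leq t_{\bfi}\leq r_{\sup}^{|\bfi|}$ for neck nodes (equation~\eqref{tbd}) together with the law-of-iterated-logarithm-style estimates of Lemma~\ref{lem:geomrvs} applied to the inter-neck gaps $Y_{\ell}:=n(\ell)-n(\ell-1)$, which are i.i.d.\ geometric.

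For part (a), fix $\bfi\in\Lambda_k$ with $|\bfi|=n(\ell)$. The upper bound comes from the defining inequality $t_{\bfi|n(\ell-1)}>e^{-k}$ combined with $t_{\bfi|n(\ell-1)}\leq r_{\sup}^{n(\ell-1)}$, which gives $n(\ell-1)\leq k/\log(1/r_{\sup})$, and hence $\ell(\bfi)\leq n(\ell-1)+1\leq c_2 k$ since $\ell-1\leq n(\ell-1)$. For the lower bound, the defining inequality $t_{\bfi}\leq e^{-k}$ together with $t_{\bfi}\geq\eta^{|\bfi|}$ gives $|\bfi|\geq k/\log(1/\eta)$. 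Writing $n(\ell)=\sum_{j=1}^{\ell}Y_j$ and applying \eqref{geomrvs2}, we get $n(\ell)\preccurlyeq C\ell\log\ell$ a.s., so $\ell\log\ell\succcurlyeq c'k$. Combined with the already-proved $\ell\leq c_2 k$ this yields $\log\ell\leq\log(c_2 k)\sim\log k$, hence $\ell(\bfi)\succcurlyeq c_1 k/\log k$.

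For part (b), the lower bound $y_k\geq 1$ is immediate since necks are at strictly increasing levels. For the upper bound on $y_k$, observe $y_k(\bfi)=Y_{\ell(\bfi)}$, so $y_k\leq\max_{1\leq j\leq c_2 k}Y_j$; the second inequality in \eqref{geomrvs1} then gives $y_k\preccurlyeq c_3\log k$. For $z_k$, write $|\bfi|=n(\ell)=n(\ell-1)+y_k(\bfi)\leq k/\log(1/r_{\sup})+y_k$, so using the bound on $y_k$ we obtain $z_k\preccurlyeq c_4 k$.

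For part (c), the right inequality is just the definition of $\Lambda_k$. For the essential middle inequality, let $\bfi'=\bfi|n(\ell-1)$ and write $\bfi=\bfi'\bfk$ with $|\bfk|=y_k(\bfi)$. Since both $|\bfi|$ and $|\bfi'|$ are necks, the decomposition argument of Lemma~\ref{lem:sumprod}, applied to the weight sums in the denominators of $\mu_{\bfi}$ and $\mu_{\bfi'}$, gives
\[
\frac{\mu_{\bfi}}{\mu_{\bfi'}}=\frac{w_{\bfk}^{(\ell-1)}}{\sum_{|\bfk'|=y_k(\bfi)} w_{\bfk'}^{(\ell-1)}}\geq\left(\frac{w_{\inf}}{N_{\sup}w_{\sup}}\right)^{y_k(\bfi)},
\]
while $r_{\bfi}/r_{\bfi'}\geq r_{\inf}^{y_k(\bfi)}$ is immediate. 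Multiplying and recalling $\eta=r_{\inf}w_{\inf}/(N_{\sup}w_{\sup})$ yields $t_{\bfi}/t_{\bfi'}\geq\eta^{y_k(\bfi)}$, and since $t_{\bfi'}>e^{-k}$ and $\eta<1$ with $y_k(\bfi)\leq y_k$, we obtain $t_{\bfi}\geq\eta^{y_k}e^{-k}$. The $k^{-\beta'}$ bound follows from part (b): taking $\beta':=(c_3+\epsilon)\log(1/\eta)$ gives $\eta^{y_k}\geq k^{-\beta'}$ eventually.

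The main obstacle I expect is the mass-ratio inequality in part (c): the measure $\mu$ is not simply multiplicative in $\bfi$, so one cannot just compare $\mu_{\bfi}$ to $\mu_{\bfi'}$ edge-by-edge. The key is that both endpoints of the ratio are at neck levels, which via Lemma~\ref{lem:sumprod} allows the denominator to factor cleanly, turning the ratio into a quantity depending only on the $y_k(\bfi)$ steps between the two necks; otherwise dependence on the surrounding tree would obstruct the uniform bound by $\eta^{y_k(\bfi)}$.
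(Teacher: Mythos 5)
Your proof is correct and follows essentially the same route as the paper's: the upper and lower bounds on $\ell(\boldsymbol{i})$ in (a) from \eqref{tbd} and Lemma~\ref{lem:geomrvs}, the reduction of $y_k$ and $z_k$ in (b) to a maximum of geometric random variables up to level $c_2k$, and the key observation in (c) that both endpoints $\boldsymbol{i}$ and $\boldsymbol{i}|n(\ell-1)$ being neck levels lets the weight sum factor so that $t_{\boldsymbol{i}}/t_{\boldsymbol{i}|n(\ell-1)}\geq\eta^{y_k(\boldsymbol{i})}$. You spell out the mass-ratio computation in (c) a bit more explicitly than the paper (which compresses it into a sentence appealing to the argument behind \eqref{tbd}), but the content is identical.
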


\begin{proof} (a) \;\;  
Suppose $ \boldsymbol{i} \in \Lambda_k  $ and let $  \ell = \ell( \boldsymbol{i}  ) $.  From \eqref{tbd} and 
the definition of $ \Lambda _k $, 
\begin{equation} \label{bige} 
\eta ^{n(\ell)} \leq  t_{ \boldsymbol{i} } \leq e^{-k} < t_{ \boldsymbol{i} | n(\ell-1)} \leq r_{\sup}^{n(\ell-1)}.
\end{equation} 
In particular, $  n(\ell-1) \leq k /\log(1/r_{\sup}) $.  

 It follows that
\[
\ell = 1 + (\ell-1) \leq 1 + n(\ell-1) \leq 1 + \frac{k}{\log 1/r_{\sup} } \leq c_2 k.
\]

On the other hand from \eqref{bige},  $ n(\ell) \geq k/\log( 1/ \eta  ) $.  Using also $ \log k \geq \log \ell + \log  1/c_2  $, it follows 
from Lemma \ref{lem:geomrvs} \eqref{geomrvs2}, since $ n(\ell) = \sum_{i=1}^\ell \big(n(i) - n(i-1)\big) $    is a sum of geometric random variables, that  
a.s.\ (where $ \boldsymbol{i} \in \Lambda_k $) 
\[
\limsup_{k\to \infty} \frac{k}{\ell( \boldsymbol{i} )  \log k } \leq 
\limsup_{\ell \to \infty } \frac{ n(\ell) \log   1/\eta} { \ell( \log \ell + \log  1/c_2 ) }    \leq   \frac{2 \log 1/\eta }{\log 1/p} =: 1/c_1.
\]
Here $ p $ is the constant probability of not obtaining a neck at any particular level~$ \geq 1 $.

\bigskip
 \noindent (b)  \;\; Trivially, $ y_k \geq 1 $. By definition
\[
y_k  = \max_{ \boldsymbol{i} \in \Lambda_k} y_k( \boldsymbol{i} ) 
                  = \max_{ \boldsymbol{i} \in \Lambda _k } \Big(n \big( \ell ( \boldsymbol{i}  )\big) - n\big( \ell ( \boldsymbol{i} ) -1\big)\Big) 
       \leq  \max_{1\leq j \leq c_2 k} \Big(n(j) - n(j-1)\Big) ,
\]
   where the inequality comes from (a). 
   
   By Lemma \ref{lem:geomrvs} \eqref{geomrvs1} with $ Y_j = n(j) - n(j-1) $, $ P_V$  a.s.
   \[
   \limsup_{k \to \infty} \frac{y_k}{\log k} \leq \limsup_{k \to \infty} \frac{ \max_{1 \leq j \leq c_2 k} Y_j }{ \log c_2 k - \log c_2 }
   \leq \frac{1}{\log 1/p} =: c_3,
   \]
where $ p $ is as in (a).

It follows that with $ \boldsymbol{i} \in \Lambda_k $ and $ \ell = \ell( \boldsymbol{i}   ) $,
$ P_V $ a.s.
\begin{align*} 
| \boldsymbol{i} | &= n( \ell) = n( \ell - 1) +  n (\ell) - n( \ell -1) \leq n( \ell -1) + y_k \\
                & \preccurlyeq k / \log (1/ r_{\sup}) + c_3 \log k 
                 \preccurlyeq k / \log (1/ r_{\sup}) .
\end{align*} 
This gives the last inequality in (b).

\bigskip
 \noindent (c) \;\; The third inequality in (c) is immediate from the definition of $ \Lambda_k $.

\smallskip For the second inequality suppose $ \boldsymbol{i} \in \Lambda_k $ with $ | \boldsymbol{i} | = n(\ell) $.  Then 
\[
t_{ \boldsymbol{i} } = r _ { \boldsymbol{i} } \mu_{ \boldsymbol{i} } \geq 
               r _ { \boldsymbol{i} | n(\ell - 1) } \mu_{ \boldsymbol{i}| n(\ell - 1) } \eta^{ n(\ell) - n(\ell-1)}
 \]
by a similar argument to that for the first inequality in ~\eqref{tbd}.  More precisely, note that   by   definition   $ \mu_{ \boldsymbol{i} } $   is a product of    $ \mu_{ \boldsymbol{i}|n(\ell-1) } $     with factors that depend only on weights $ w $ defined along edges in the subtree rooted at $ \boldsymbol{i} | n(\ell-1) $, followed by a normalisation that depends only on the same weights since $ | \boldsymbol{i} | = n(\ell) $ is  a neck. 

Hence
\[
t_{ \boldsymbol{i} }  \geq     t_  { \boldsymbol{i} | n(\ell - 1) }     \eta^{ n(\ell) - n(\ell-1)}   \\
                 \geq  e^{-k}     \eta^{ y_k  }  
\]
by the definition of $ y_k $ and $ \Lambda_k $.  This  gives the second inequality in (c).

\smallskip For the first inequality take any $ \epsilon > 0 $, in which case by (b), a.s.\ there exists 
$ k_0 = k_0( \omega ) $ such that 
$ k \geq k_0 $ implies $ y_k \leq (c_3 + \epsilon ) \log k $, and so $ k \geq k_0 $ implies
\[
\eta^{y_k} \geq \eta^{(c_3 + \epsilon ) \log k} = k^{- ( c_3 + \epsilon ) \log 1/\eta} = k^{- \beta' }, 
\]
where $ \beta' = (c_3 + \epsilon ) \log 1/\eta $. Since $ \epsilon > 0 $ is arbitrary,
this completes the proof.
\end{proof}   
 
If $V=1$ the above can be sharpened to the following.

\begin{lem}\label{lem:spatialv=1}
In the case $V=1$ we have the following.
\begin{itemize} 
\item[(a)]  There exist $ c_1,c_2>0 $ such that  if  $ \boldsymbol{i} \in \Lambda_k$  then
\[ c_1 k  \leq \ell( \boldsymbol{i} )  \leq c_2 k .
\]
\item[(b)] There exists $ c_3>0 $ such that
\[ y_k =1, \quad  z_k \leq  c_3 k . \]
\item[(c)] There exists $ c_4>0 $  such that if $ \boldsymbol{i} \in \Lambda_k $ then
\[
c_4  e^{-k} \leq t_{ \boldsymbol{i} }  \leq e^{-k}.
\]
\end{itemize} 
\end{lem}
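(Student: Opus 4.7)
The plan is to exploit the observation (already noted in the paper after Construction~2.8) that when $V=1$ every level is automatically a neck. Indeed, a neck environment is one in which all types $\tau_{v,i}^E$ coincide, and when there is only a single available type this is vacuous. Hence $n(k)=k$ for every $k$, and this single fact collapses most of the analysis of Lemma~\ref{lem:spatial} to elementary estimates.

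First I would dispose of (b). Since $n(\ell)-n(\ell-1)=1$ for every $\ell$, the definition \eqref{dfyk} immediately gives $y_k(\bfi)=1$ for each $\bfi\in\Lambda_k$, whence $y_k=1$. The bound $z_k\leq c_3 k$ will then follow from (a), since $z_k=\max\{|\bfi|:\bfi\in\Lambda_k\}=\max\{\ell(\bfi):\bfi\in\Lambda_k\}$.

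Next, for (a), fix $\bfi\in\Lambda_k$; because every level is a neck we have $|\bfi|=n(\ell(\bfi))=\ell(\bfi)$. The inequalities $t_\bfi\leq e^{-k}<t_{\bfi|(|\bfi|-1)}$ from \eqref{lkcut} combine with the two-sided estimate \eqref{tbd}, valid here at every level because every level is a neck, to give
\[
\eta^{|\bfi|}\leq t_\bfi\leq e^{-k}<t_{\bfi|(|\bfi|-1)}\leq r_{\sup}^{|\bfi|-1}.
\]
Taking logarithms yields $|\bfi|\geq k/\log(1/\eta)$ and $|\bfi|\leq 1+k/\log(1/r_{\sup})$, which is (a) with $c_1=1/\log(1/\eta)$ and any $c_2>1/\log(1/r_{\sup})$ (absorbing the additive constant for $k$ large, or choosing $c_2$ slightly larger for all $k\geq 1$).

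Finally, for (c), the upper bound $t_\bfi\leq e^{-k}$ is built into the definition of $\Lambda_k$. For the lower bound I would simply reuse the chain of inequalities from the proof of part (c) of Lemma~\ref{lem:spatial}: since $n(\ell)-n(\ell-1)=1$, the estimate $t_\bfi\geq t_{\bfi|n(\ell-1)}\eta^{n(\ell)-n(\ell-1)}$ (justified there via the structure of $\mu_\bfi$ across a neck) becomes $t_\bfi\geq \eta\, t_{\bfi|(|\bfi|-1)}>\eta\, e^{-k}$, so $c_4=\eta$ works. There is no genuine obstacle here; the only point requiring a little care is confirming that the factorisation argument used in Lemma~\ref{lem:spatial}(c) still applies when the two neck levels are consecutive, which it does because the argument is purely about how $\mu_\bfi$ decomposes when passing between two adjacent neck levels.
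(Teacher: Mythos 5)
Your proposal is correct and follows exactly the route the paper has in mind: the paper's own (one-line) proof says the claims "follow from \eqref{tbd} and the fact that for $V=1$ every level is a neck," and your argument simply writes out the elementary consequences of that observation for each of (a), (b), (c). The details you supply—$n(\ell)=\ell$, so $|\bfi|=\ell(\bfi)$, the two-sided bound $\eta^{|\bfi|}\leq t_\bfi\leq e^{-k}<t_{\bfi|(|\bfi|-1)}\leq r_{\sup}^{|\bfi|-1}$, $y_k\equiv 1$, and $c_4=\eta$—are precisely what the paper is implicitly appealing to.
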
 
 
\begin{proof}
The first claim follows from \eqref{tbd} and the fact that for $V=1$ every level is a neck. The second and third follow similarly.
\end{proof}

%%%%%%%%%%%%%%%%%%%%%%%%%%%%%%%%%%%%%%%%%
\subsection{The Haudorff Dimension in the Resistance Metric} \label{secrd}

\begin{defn} The $ \alpha $-dimensional Hausdorff measure of $ K $ using the resistance metric $ R $  is denoted by  $ \mathcal{H}_R^ \alpha (K)  $.  The Hausdorff dimension of $ K$ in the resistance metric  is denoted by $ d_f^r = d_f^r(K)$.  
\end{defn}

The following theorem is the analogue of Theorem~\ref{bdhd}. However, the resistance metric $ R $ does not scale in the same way as the standard metric in $ \mathbb{R}^d $ and so the proof needs to be modified.   The proof combines ideas from Section~2 of~\cite{Kig-1},  Section~2 of~\cite{Kig-2} and Section~4 of~\cite{BHS2}.  In the case of  \cite{BHS2} the corresponding argument   is simplified here because of the use of necks. Note that  we do not expect  the appropriate Hausdorff measure function to be a power function, unlike in~\cite{Kig-1} and ~\cite{Kig-2}.

\begin{thm} \label{rdzp}
 The Hausdorff dimension in the resistance metric $ d^r_f$ of  $ K $ is the unique power $ \alpha_0 $ such that 
\begin{equation} \label{dfxa1}
E_V \log\sum_{|\bfi|=n(1)} r_{\bfi}^{ \alpha _0}=0,
\end{equation} 
\end{thm}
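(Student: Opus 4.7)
The plan is to establish the theorem in three stages: existence and uniqueness of $\alpha_0$, the upper bound $d_f^r \leq \alpha_0$ via an efficient covering argument, and the lower bound $d_f^r \geq \alpha_0$ via a mass distribution principle. For the first stage, I would write $P(\alpha) := E_V \log \sum_{|\bfi|=n(1)} r_{\bfi}^{\alpha}$. Using \eqref{rest} and $E_V n(1)<\infty$, one checks $P$ is continuous and strictly decreasing, with $P(0) \geq E_V n(1)\, \log N_{\inf} > 0$ and with $P(\alpha) \leq E_V n(1)(\log N_{\sup} + \alpha \log r_{\sup}) \to -\infty$ as $\alpha\to\infty$ since $r_{\sup}<1$. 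Hence a unique zero $\alpha_0$ exists.

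For the upper bound I would fix any $\alpha > \alpha_0$, so $P(\alpha)<0$. Applying Lemma~\ref{lem:sumprod} with $s_i := r_i^{\alpha}$ gives, $P_V$-a.s.,
\begin{equation*}
\frac{1}{k}\log \sum_{|\bfi|=n(k)} r_{\bfi}^{\alpha} \longrightarrow P(\alpha) < 0,
\end{equation*}
so the sum decays to $0$. The family $\{K_{\bfi} : |\bfi| = n(k)\}$ covers $K$ by Lemma~\ref{lem:cnprop}, and by Corollary~\ref{cor:diambd} each $K_{\bfi}$ has $\diam_R K_{\bfi} \leq C r_{\bfi} \leq C r_{\sup}^{n(k)} \to 0$. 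Hence $\mathcal{H}_R^{\alpha}(K) \leq C^{\alpha} \liminf_k \sum_{|\bfi|=n(k)} r_{\bfi}^{\alpha} = 0$, giving $d_f^r \leq \alpha$; letting $\alpha \downarrow \alpha_0$ yields $d_f^r \leq \alpha_0$.

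For the lower bound I would set $w_i^F := (r_i^F)^{\alpha_0}$, so $w_{\bfi} = r_{\bfi}^{\alpha_0}$, and use Definition~\ref{dfwm} to construct the associated probability measure $\mu$ on $K$. By Lemma~\ref{simK}, for any neck node $\bfi$ with $|\bfi|=n(k)$,
\begin{equation*}
\mu(K_{\bfi}) = \frac{r_{\bfi}^{\alpha_0}}{Z_k}, \qquad Z_k := \sum_{|\bfj|=n(k)} r_{\bfj}^{\alpha_0}.
\end{equation*}
Lemma~\ref{lem:sumprod} combined with $P(\alpha_0)=0$ gives $k^{-1}\log Z_k \to 0$ almost surely, so for any $\epsilon>0$ we have $Z_k \geq e^{-\epsilon k}$ for large $k$, whence $\mu(K_{\bfi}) \leq e^{\epsilon k}\, r_{\bfi}^{\alpha_0}$. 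Fix $\alpha < \alpha_0$. For $x\in K$ and small $s>0$, let $k=k(x,s)$ be the neck index such that the ancestor neck cells of $x$ at level $n(k)$ first have $R$-diameter $\leq s$; then $B_R(x,s)$ meets at most a bounded number $M'$ of such cells (by the nesting from Lemma~\ref{lem:cnprop} and the bounded valence of vertices in $V_0$), each of $\mu$-mass $\leq e^{\epsilon k}(s/C)^{\alpha_0}$. From $C r_{\bfi} \geq s$ at the preceding neck level and $r_{\bfi} \leq r_{\sup}^{n(k-1)}$, one gets $k \leq c\log(1/s)$, so $e^{\epsilon k}\leq s^{-c\epsilon}$, yielding $\mu(B_R(x,s)) \leq C' s^{\alpha_0 - c\epsilon}$. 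Choosing $\epsilon$ so that $\alpha_0 - c\epsilon > \alpha$ and invoking the mass distribution principle gives $d_f^r \geq \alpha$; letting $\alpha \uparrow \alpha_0$ concludes the proof.

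The main obstacle is the last step: bounding $\mu$ of a resistance ball uniformly. The resistance metric does not scale as cleanly as the Euclidean one, so the passage from cell estimates to ball estimates requires combining Corollary~\ref{cor:diambd} (diameter control) with the nesting from Lemma~\ref{lem:cnprop} (to limit how many neck cells at a given scale the ball can meet) and the almost sure subexponential control of $Z_k$ from Lemma~\ref{lem:sumprod}. The remark preceding the theorem that the critical Hausdorff measure may not be a power function is consistent with these subexponential corrections, which is precisely why one proves $d_f^r \geq \alpha$ for every $\alpha < \alpha_0$ rather than directly showing $0 < \mathcal{H}_R^{\alpha_0}(K) < \infty$.
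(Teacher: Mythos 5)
Your Stage 1 (existence and uniqueness of $\alpha_0$) and Stage 2 (upper bound) match the paper's Lemmas~\ref{dfa0} and~\ref{lemubd} essentially verbatim: the monotonicity/Lipschitz argument for the pressure function, the cover $\{K_{\bfi}:|\bfi|=n(k)\}$, Corollary~\ref{cor:diambd} for diameters, and \eqref{eq:slln} from Lemma~\ref{lem:sumprod} for the a.s.\ decay of $\sum r_{\bfi}^\alpha$. No issues there.

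The lower bound, however, has a genuine gap at exactly the step you flag as ``the main obstacle.'' You assert that $B_R(x,s)$ meets at most a bounded number $M'$ of cells at neck level $n(k)$, citing only the nesting property (Lemma~\ref{lem:cnprop}) and bounded valence of $V_0$. Those facts control \emph{how cells intersect one another} but say nothing about \emph{how far a resistance ball can reach}. The difficulty is twofold. First, cells at a fixed level $n(k)$ have resistance scales $r_{\bfj}$ ranging over an exponentially wide interval $[r_{\inf}^{n(k)}, r_{\sup}^{n(k)}]$; the ball $B_R(x,s)$, with $s$ comparable to the diameter of $x$'s ancestor cell, can traverse many neighbouring cells of much smaller resistance scale and is not confined to the cell containing $x$ and its immediate neighbours. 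Second, even the number of level-$n(k)$ descendants of a single level-$n(k-1)$ cell is $N^{y_k}$, which is unbounded in $k$ (it grows like $\log k$ a.s.). The paper avoids both problems by cutting at a \emph{resistance scale} rather than a level: Lemma~\ref{dislem} works with the cut $\Lambda_\epsilon$, in which all cells have $r_{\bfj}$ comparable to $\epsilon$, and uses a harmonic-extension argument (the function which is $1$ on $V_{\bfi}$ and $0$ on the rest of $V_\epsilon$) to show that $\overline{B}_{c\epsilon}(x)$, with a small geometric constant $c$, cannot escape $K_{\bfi}$ and its $\leq M(d+1)$ neighbours. That potential-theoretic step is an essential ingredient your argument is missing, not a routine consequence of nesting.

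There is also a secondary structural difference. You take the weights $w_i^F = (r_i^F)^{\alpha_0}$ and control the normalisation $Z_k$ with a subexponential $e^{\epsilon k}$ margin. The paper (Lemma~\ref{lemlbd}) instead takes weights $(r_i^F)^\alpha$ with $\alpha<\alpha_0$ strictly, so $\beta = E_V\log\sum_{|\bfi|=n(1)}r_{\bfi}^\alpha>0$ and the normalisation $\sum_{|\bfi|=s(k)}r_{\bfi}^\alpha$ grows exponentially (estimate~\eqref{sumraest}); this exponential margin is what absorbs the factor $N_{\sup}^{s(k)-k}$ that appears when passing from the resistance cut $\Lambda_{\delta/c}$ (which is generally not at a neck) to the next neck level $s(k)$. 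Because you stayed entirely at neck levels you never encounter that factor, but you lost the resistance-scale comparability needed for the counting lemma; the two issues are two sides of the same coin. Repairing the proof means importing an analogue of Lemma~\ref{dislem}, after which either weight choice can be made to work.
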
 

\begin{proof}
This will follow from Lemmas~\ref{lemubd} and~\ref{lemlbd}.
\end{proof} 

\begin{lem}\label{dfa0}
The function 
\begin{equation} \label{}
\gamma( \alpha ) := E_V \log\sum_{|\bfi|=n(1)} r_{\bfi}^{ \alpha },
\end{equation} 
 is finite, strictly  decreasing and Lipschitz, with derivative in  the interval
\[
 [ (\log r_{\inf}) E_V n(1), (\log r_{\sup}) E_V n(1)] .
 \]  
Since $ \gamma (0) > 0 $ there is a unique $ \alpha_0 $ such that $ \gamma ( \alpha_0  )= 0 $ and moreover 
$ \alpha_0 > 0 $.
\end{lem}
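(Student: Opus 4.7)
The plan is to verify finiteness, the difference-quotient bound (which implies both the Lipschitz property and strict decrease with the stated derivative bracketing), positivity of $\gamma(0)$, and finally existence and uniqueness of $\alpha_0$, in that order. The central ingredients are the uniform bounds \eqref{rest} giving $0 < r_{\inf} \leq r_{\sup} < 1$, the branching bounds \eqref{Nbd} giving $3 \leq N_{\inf} \leq N_{\sup} < \infty$, and the finiteness $E_V n(1) < \infty$ noted after \eqref{seqneck}. From these, for every node $\bfi$ with $|\bfi| = n$ we have $r_{\bfi} \in [r_{\inf}^{n}, r_{\sup}^{n}]$, and the random tree has between $N_{\inf}^{n}$ and $N_{\sup}^{n}$ nodes at level~$n$.

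For finiteness of $\gamma(\alpha)$ I sandwich the inner sum. For $\alpha \geq 0$,
\[
N_{\inf}^{n(1)} r_{\inf}^{\alpha n(1)} \leq \sum_{|\bfi| = n(1)} r_{\bfi}^{\alpha} \leq N_{\sup}^{n(1)} r_{\sup}^{\alpha n(1)},
\]
with the analogous sandwich (interchange the roles of $r_{\inf}$ and $r_{\sup}$ in the power factor) for $\alpha < 0$. Taking logarithms bounds $\big|\log \sum r_{\bfi}^{\alpha}\big|$ by a constant multiple of $n(1)$, and $E_V n(1) < \infty$ then gives $|\gamma(\alpha)| < \infty$.

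For the derivative bound and strict decrease, take $\alpha_1 < \alpha_2$ and factor
\[
\sum_{|\bfi| = n(1)} r_{\bfi}^{\alpha_2} = \sum_{|\bfi| = n(1)} r_{\bfi}^{\alpha_1} \cdot r_{\bfi}^{\alpha_2 - \alpha_1}.
\]
Because $r_{\bfi} \in [r_{\inf}^{n(1)}, r_{\sup}^{n(1)}]$ with $r_{\inf}, r_{\sup} \in (0,1)$, and $\alpha_2 - \alpha_1 > 0$,
\[
r_{\inf}^{n(1)(\alpha_2 - \alpha_1)} \leq r_{\bfi}^{\alpha_2 - \alpha_1} \leq r_{\sup}^{n(1)(\alpha_2 - \alpha_1)}.
\]
Substituting, taking logarithms, dividing by $\alpha_2 - \alpha_1 > 0$, and applying $E_V$ (justified by the finiteness argument above) yields
\[
(\log r_{\inf})\, E_V n(1) \leq \frac{\gamma(\alpha_2) - \gamma(\alpha_1)}{\alpha_2 - \alpha_1} \leq (\log r_{\sup})\, E_V n(1).
\]
Both endpoints are strictly negative, so $\gamma$ is Lipschitz and strictly decreasing, and any derivative (left, right or two-sided) lies in the stated interval.

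Finally, since the number of level-$n(1)$ nodes is at least $N_{\inf}^{n(1)} \geq 3^{n(1)}$ by \eqref{Nbd},
\[
\gamma(0) = E_V \log \#\{\bfi : |\bfi| = n(1)\} \geq (\log 3)\, E_V n(1) > 0.
\]
Continuity of $\gamma$, together with $\gamma(0) > 0$ and the divergence $\gamma(\alpha) \to -\infty$ as $\alpha \to +\infty$ (a direct consequence of the upper difference-quotient bound, whose slope $(\log r_{\sup})E_V n(1)$ is strictly negative), gives a zero by the intermediate value theorem; strict monotonicity makes it unique; and $\gamma(0) > 0$ combined with strict decrease forces $\alpha_0 > 0$. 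No step presents a substantial obstacle — the only point requiring care is sign bookkeeping, since $\log r_{\inf}, \log r_{\sup} < 0$ and one must track how inequalities behave when raising numbers in $(0,1)$ to positive powers.
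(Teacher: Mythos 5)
Your proof is correct and follows essentially the same route as the paper's: the central step in both is the sandwich
\[
\gamma(\alpha_1) + (\alpha_2-\alpha_1)(\log r_{\inf})\,E_V n(1) \leq \gamma(\alpha_2) \leq \gamma(\alpha_1) + (\alpha_2-\alpha_1)(\log r_{\sup})\,E_V n(1),
\]
obtained (in your version) by the factoring $r_{\bfi}^{\alpha_2}=r_{\bfi}^{\alpha_1}r_{\bfi}^{\alpha_2-\alpha_1}$ and the pointwise bound $r_{\bfi}\in[r_{\inf}^{n(1)},r_{\sup}^{n(1)}]$, together with $\gamma(0)=E_V\log\#\{\bfi:|\bfi|=n(1)\}>0$ from the branching lower bound. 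The paper states the key inequality without spelling out the telescoping/factoring step; you supply exactly the derivation it leaves implicit, and you add the preliminary finiteness check and the IVT conclusion which the paper dispatches with ``the rest now follows.'' No substantive difference in approach.
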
 

\begin{proof}
If $ \alpha < \beta $, then from \eqref{rest},
\[
 \gamma ( \alpha ) + (\beta - \alpha )(\log r_{ \inf })E_V n(1) 
\leq \gamma ( \beta ) 
\leq \gamma ( \alpha ) + (\beta - \alpha ) (\log r_{ \sup } )E_V n(1) .
\]
This gives the Lipschitz estimate.

Since  $ \gamma (0) = E_V \bigl( \log \# \{ \boldsymbol{i} \in T \mid |  \boldsymbol{i}  | = n(1) \} \bigr) $, it follows that $ 0
 < \gamma (0) < \infty $.

%By \eqref{Nbd}
%\[
%0< \gamma (0) \leq E_V \log N_{\sup}^{n(1)}
%= N_{\sup} E_V n(1) < \infty.
%\]
The rest of the lemma now follows.
\end{proof}

\begin{lem}\label{lemubd}
Suppose  $ \alpha _0 $ is as in Lemma~\ref{dfa0}.  Then $ d^r_f(K) \leq \alpha _0 $, $ P_V $~a.s.
\end{lem}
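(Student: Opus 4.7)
The plan is to use a direct Hausdorff measure estimate via the natural covering of $K$ by complexes at neck levels, then invoke the asymptotic result from Lemma~\ref{lem:sumprod} to show the $\alpha$-Hausdorff measure vanishes for every $\alpha>\alpha_0$.

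Fix $\alpha>\alpha_0$. For each $k\geq 1$, the collection $\{K_{\bfi}:|\bfi|=n(k)\}$ forms a cover of $K$ by Notation~\ref{notcc}, since $K=\bigcup_{|\bfi|=n(k)}K_{\bfi}$. By Corollary~\ref{cor:diambd}, each $K_{\bfi}$ has $R$-diameter at most $Cr_{\bfi}$. Moreover, $r_{\bfi}\leq r_{\sup}^{n(k)}$ with $r_{\sup}<1$ by \eqref{rest}, and $n(k)\to\infty$ $P_V$-almost surely; hence these covers have mesh tending to $0$. Therefore
\begin{equation*}
\mathcal{H}^{\alpha}_R(K)\leq \liminf_{k\to\infty}\sum_{|\bfi|=n(k)}\big(\diam_R K_{\bfi}\big)^{\alpha}\leq C^{\alpha}\liminf_{k\to\infty}\sum_{|\bfi|=n(k)}r_{\bfi}^{\alpha}.
\end{equation*}

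Next I would apply Lemma~\ref{lem:sumprod} with the choice $s_i=r_i^{\alpha}$, noting that the required bounds \eqref{sest} hold by \eqref{rest}. The lemma yields
\begin{equation*}
\lim_{k\to\infty}\frac{1}{k}\log\sum_{|\bfi|=n(k)}r_{\bfi}^{\alpha}=E_V\log\sum_{|\bfi|=n(1)}r_{\bfi}^{\alpha}=\gamma(\alpha)\quad P_V\text{ a.s.}
\end{equation*}
Since $\gamma$ is strictly decreasing by Lemma~\ref{dfa0} and $\gamma(\alpha_0)=0$, we have $\gamma(\alpha)<0$, so the sums decay exponentially in $k$. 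Consequently $\mathcal{H}^{\alpha}_R(K)=0$ $P_V$-a.s., which forces $d^r_f(K)\leq\alpha$.

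Letting $\alpha\downarrow\alpha_0$ through a countable sequence gives $d^r_f(K)\leq\alpha_0$ on a $P_V$-full measure event. The only delicate point in the argument is making sure the almost-sure set on which Lemma~\ref{lem:sumprod} applies can be chosen independent of $\alpha$; this is handled by taking a countable dense sequence $\alpha_n\downarrow\alpha_0$ and intersecting the corresponding full-measure events, using continuity (in fact Lipschitz dependence) of $\gamma$ from Lemma~\ref{dfa0}. No major obstacle is expected here because the covering is the obvious one and the exponential decay is clean; the matching lower bound in Lemma~\ref{lemlbd} is where the real work lies.
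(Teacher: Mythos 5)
Your argument is the same as the paper's: cover $K$ by the neck-level complexes $\{K_{\bfi}:|\bfi|=n(k)\}$, bound the $R$-diameters by $Cr_{\bfi}$ via Corollary~\ref{cor:diambd}, and invoke the almost-sure limit \eqref{eq:slln} (equivalently Lemma~\ref{lem:sumprod}) together with $\gamma(\alpha)<0$ to force $\mathcal{H}^{\alpha}_R(K)=0$ for every $\alpha>\alpha_0$. You are a bit more explicit than the paper about the mesh of the cover tending to zero and about taking a countable sequence $\alpha_n\downarrow\alpha_0$ to get a single full-measure event, but these are routine points and the underlying argument is identical.
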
 

\begin{proof} Suppose $ \alpha > \alpha _0 $.
Using Corollary~\ref{cor:diambd}, 
\[
K = \bigcup_{| \bfi | = n(k) }K_{ \bfi }, \qquad
 \sum_{| \bfi | = n(k) } \diam_R^{ \alpha } K_{ \bfi } \leq C^ \alpha \sum_{| \bfi | = n(k) } r^{ \alpha }_{ \bfi } .
 \]
  From \eqref{eq:slln} and Lemma~\ref{dfa0},  
  \[
 \lim_{k\to \infty} \frac{1}{k} \log \sum_{ | \boldsymbol{i} | = n(k) } r^ \alpha _{ \boldsymbol{i} } = E_V  \log \sum_{ | \boldsymbol{i} | = n(1) }  r^ \alpha_{ \boldsymbol{i} } < 0, \quad P_V \text{ a.s.}  
\] 
Hence $ P_V $ a.s.,
\[   
\lim_{k\to \infty}  \log \sum_{ | \boldsymbol{i} | = n(k) } r^ \alpha _{ \boldsymbol{i} } = -\infty,
\qquad 
 \lim_{k\to \infty}   \sum_{ | \boldsymbol{i} | = n(k) } r^ \alpha _{ \boldsymbol{i} } = 0.
 \]
 Hence $  \mathcal{H}_R^ \alpha (K)  = 0 $, and so $ d_f^r(K) \leq \alpha _0 $.
  \end{proof} 

\begin{defn} Suppose $ \epsilon > 0 $.  Then $ \Lambda_ \epsilon  $ is the cut set of $T$ consisting of  those nodes  $  \boldsymbol{j} =j_1\dots j_n  $ such that 
\begin{equation} \label{rRinte} 
 r_{ \boldsymbol{j} }  \leq  \epsilon \leq r_{j_1\dots j_{n-1}}.
\end{equation}   
\end{defn} 

\begin{lem}\label{dislem}
 There exist non random constants $ c $ and $ M_1 $, such that for any  $ \epsilon >0$ and  $ x \in K $,  
\begin{equation} \label{bkest}
\# \left\{ \boldsymbol{j}  \in \Lambda _ \epsilon :  \overline{B}_{c \epsilon } (x) \cap K_{ \boldsymbol{j} }   \neq \emptyset \right\} \leq M_1,
\end{equation} 
where 
\[   
\overline{B}_{c \epsilon } (x)  = \{ y \in K : R(x,y) \leq c \epsilon \}.
\]
\end{lem}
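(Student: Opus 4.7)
The plan is to combine three ingredients: the resistance-metric diameter bound from Corollary~\ref{cor:diambd}, the uniform resistance scale of cells in $\Lambda_\epsilon$ coming from \eqref{rRinte} and \eqref{rest}, and the essential disjointness of these cells from Lemma~\ref{lem:cnprop}(2), followed by a combinatorial packing step using finite ramification.

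First, for any $\boldsymbol{j} \in \Lambda_\epsilon$, Corollary~\ref{cor:diambd} gives $\diam_R K_{\boldsymbol{j}} \leq C r_{\boldsymbol{j}} \leq C\epsilon$, so upon choosing $c$ to be any fixed positive constant (say $c = 1$), if $K_{\boldsymbol{j}} \cap \overline{B}_{c\epsilon}(x) \neq \emptyset$ then $K_{\boldsymbol{j}} \subset \overline{B}_{(c+C)\epsilon}(x)$. This reduces the lemma to counting cells of $\Lambda_\epsilon$ fitting inside a resistance ball of radius comparable to $\epsilon$. Second, from \eqref{rRinte} and \eqref{rest}, every $\boldsymbol{j} \in \Lambda_\epsilon$ satisfies $r_{\inf}\epsilon \leq r_{\boldsymbol{j}} \leq \epsilon$, so all cells have comparable resistance scale; Lemma~\ref{lem:resb} then gives $\diam_R K_{\boldsymbol{j}} \geq c_1 r_{\inf}\epsilon$, and distinct vertices of $\Delta_{\boldsymbol{j}}$ lie at resistance distance at least $c_1 r_{\inf}\epsilon$. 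Third, since $\Lambda_\epsilon$ is a cut, distinct $\boldsymbol{j},\boldsymbol{j}' \in \Lambda_\epsilon$ have $[\boldsymbol{j}] \cap [\boldsymbol{j}'] = \emptyset$, and Lemma~\ref{lem:cnprop}(2) gives $K_{\boldsymbol{j}} \cap K_{\boldsymbol{j}'} \subset \psi_{\boldsymbol{j}}(V_0) \cap \psi_{\boldsymbol{j}'}(V_0)$, a set of size at most one.

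For the main packing step, let $A = \{\boldsymbol{j} \in \Lambda_\epsilon : \overline{B}_{c\epsilon}(x) \cap K_{\boldsymbol{j}} \neq \emptyset\}$. Each point of $K$ is a boundary vertex for at most a structural constant $M_0$ many cells of the cut $\Lambda_\epsilon$: a point with $k$ addresses lies in exactly $k$ cells of $\Lambda_\epsilon$, and $k$ is uniformly bounded by the finite ramification of affine nested fractals, which in turn depends only on $|V_0|$ and $N_{\sup}$. Consequently $|V_0|\,|A|$ counted with multiplicity is at most $M_0$ times the number of distinct vertices in $\bigcup_{\boldsymbol{j} \in A} \Delta_{\boldsymbol{j}} \subset \overline{B}_{(c+C)\epsilon}(x)$. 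To bound this vertex count by a structural constant one works hierarchically: the levels $|\boldsymbol{j}|$ for $\boldsymbol{j} \in A$ lie in an interval of length $O(1)$ (since $r_{\boldsymbol{j}}$ is comparable to $\epsilon$ uniformly), so one replaces each $\boldsymbol{j} \in A$ by its ancestor at a chosen common level $n$ with $r_{\sup}^n \leq \epsilon < r_{\sup}^{n-1}$, and uses the uniform upper bound $N_{\sup}$ on branching together with the bounded degree of the graph approximations $G_n$ to conclude that only boundedly many such ancestors can meet $\overline{B}_{(c+C)\epsilon}(x)$.

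The principal obstacle is that the resistance metric need not be doubling, so one cannot invoke a direct metric packing argument. The workaround is to replace metric packing by a combinatorial argument based on the finite ramification and uniformly bounded branching of the family $\boldsymbol{F}$, together with the fact that the resistance diameters of cells in $\Lambda_\epsilon$ are comparable from above and below. Done in this way, the constants $c$ and $M_1$ depend only on $r_{\inf}$, $r_{\sup}$, $N_{\sup}$, $|V_0|$, and the structural constants $C$, $c_1$, $M_0$ already appearing in Corollary~\ref{cor:diambd} and Lemma~\ref{lem:resb}, so they are non-random as required.
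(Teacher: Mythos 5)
Your proposal correctly assembles the right supporting facts — the upper and lower comparability of $\diam_R K_{\boldsymbol{j}}$ for $\boldsymbol{j}\in\Lambda_\epsilon$ from Corollary~\ref{cor:diambd} and Lemma~\ref{lem:resb}, and the essential-disjointness of cut cells from Lemma~\ref{lem:cnprop} — but the final packing step contains a genuine gap, and the claim that $c$ may be ``any fixed positive constant (say $c=1$)'' is the source of it. The paper does not prove a packing bound for balls of arbitrary radius $\sim\epsilon$; it proves the bound only after choosing $c$ small. Specifically, the paper fixes the cell $K_{\boldsymbol{i}}\ni x$ with $\boldsymbol{i}\in\Lambda_\epsilon$, builds the test function $u$ which equals $1$ on $\psi_{\boldsymbol{i}}(V_0)$ and $0$ on all other cut vertices and is harmonically interpolated, and computes $\ce(u)\le Md(d+1)/(r_{\inf}\epsilon)$. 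This yields $R(x,y)>c\epsilon$ (with $c=r_{\inf}/2Md(d+1)$) for any $y$ in a cut cell $K_{\boldsymbol{j}}$ not touching $K_{\boldsymbol{i}}$, so $\overline{B}_{c\epsilon}(x)$ is confined to cells \emph{adjacent} to $K_{\boldsymbol{i}}$; the count of those is bounded immediately by the tetrahedron-incidence constant $M(d+1)$ from the nesting axiom. No metric packing of any kind is needed.

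Your workaround — replacing metric packing by a combinatorial argument via ancestors at a common level $n$ and the bounded degree of $G_n$ — does not close the gap, because it still requires knowing that only boundedly many level-$n$ cells can intersect a resistance ball of radius $\sim\epsilon$, and that is precisely the packing statement being sought. Bounded degree and finite ramification control adjacency, but they do not a priori control how many combinatorial adjacency steps a resistance ball of radius $\epsilon$ can cross, since effective resistance is not additive along paths through parallel routes and you have explicitly abandoned doubling. With $c$ unconstrained there is no reason the ball stays within a bounded combinatorial neighbourhood of $K_{\boldsymbol{i}}$; the paper's resistance lower bound, obtained from the test function, is exactly what forces the ball of radius $c\epsilon$ to do so. The missing idea in your argument is thus the local test-function estimate of $R(x,\cdot)$ that both determines the admissible $c$ and reduces the count to the immediate neighbours of a single cut cell.
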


\begin{proof}
Suppose $ x \in K_{\boldsymbol{i}}$ where $   \boldsymbol{i} \in \Lambda_ \epsilon $.

\medskip
First note
\begin{equation} \label{cibd}
\# \left\{ \boldsymbol{j}  \in \Lambda _ \epsilon :  K_{\boldsymbol{i}} \cap  K_{\boldsymbol{j}} \neq \emptyset \right\} \leq M(d+1),
\end{equation} 
where   $ d+1 $ is the number of vertices of a regular tetrahedron in $ \mathbb{R}^d $ (recall \eqref{V0ass}) and $ M $ is as in~\eqref{Mprop}.
This follows immediately from  Lemma~\ref{lem:cnprop}.
  
\medskip
   Let $ V_ \epsilon = \bigcup_{ \boldsymbol{j} \in \Lambda_{\epsilon} } \psi_{\boldsymbol{j}} (V_0) =: \bigcup_{ \boldsymbol{j} \in 
   \Lambda_{\epsilon} } V_{ \boldsymbol{j}}   $ denote the set of  vertices corresponding to the partition $ \Lambda_ \epsilon $.  
   
   Define $ u: V_ \epsilon \to \mathbb{R} $ by $ u(y) = 1 $ if $ y \in V_{\boldsymbol{i}} $ and $ u(y) = 0 $ otherwise.  Extend $ u $ to $ u:K \to \mathbb{R} $ by harmonic extension on each $ K_{\boldsymbol{j}}$ for $\boldsymbol{j} \in \Lambda_ \epsilon $.  Then $ u $ is constant on 
 $ K_{\boldsymbol{j}} $  if    $ K_{\boldsymbol{i}} \cap  K_{\boldsymbol{j}} = \emptyset $, and so
 \[
 \mathcal{E}(u) = \sum_{ \{ \boldsymbol{j} :  K_{\boldsymbol{i}} \cap  K_{\boldsymbol{j}} \neq \emptyset  \}   }   
 \rho_{\boldsymbol{j}}  \mathcal{E}_0(u\circ \psi_{\boldsymbol{j}} ) \leq M(d+1)d  \rho_{\boldsymbol{j}}
 \leq \frac{Md(d+1)}{r_{\inf } \epsilon }   ,
\]
where $ M(d+1) $ is from \eqref{cibd} and   $ d $ is the number of edges in $   \psi_{\boldsymbol{j}} (\Delta_0) $ with one vertex in $ V_{ \boldsymbol{i}} $.  

Setting $ c = r_{\inf}/ 2Md(d+1) $, it follows   $ R(x,y) > c \epsilon $ if $ y \in K_{\boldsymbol{j}}$ where $\boldsymbol{j} \in \Lambda_ \epsilon $  and $ K_{\boldsymbol{i}} \cap  K_{\boldsymbol{j}} = \emptyset $.  That is,
\begin{equation} \label{BKc}
    \overline{B}_{c \epsilon } (x) \cap K_{ \boldsymbol{j} } \neq \emptyset   \Longrightarrow  K_{\boldsymbol{i}} \cap  K_{\boldsymbol{j}} \neq \emptyset .
\end{equation}

Combining \eqref{BKc} and \eqref{cibd} gives \eqref{bkest}.
\end{proof}

\begin{lem}\label{lemlbd}
Suppose $ \alpha < \alpha _0 $. Let $ \mu $ be the unit  mass measure on $ K $ constructed as in Definition~\ref{dfwm} and Lemma~\ref{simK}, with weights $ w_i^F = (r_i^F)^ \alpha $ for  $F\in\bff$. Then $ P_V $ a.s., for any $ x \in K $ and   $ \delta > 0 $, $ \mu\big(B_ \delta (x) \big) < c_1 \delta ^ \alpha $, where the random constant $ c_1 $ depends on $ \omega $ but not on $ x $ or $ \delta $.

In particular, by the mass distribution principle, $ d_f^r(K) \geq \alpha   $ $ P_V $~a.s., and so $ d_f^r(K) \geq \alpha_0   $ $ P_V $~a.s.
\end{lem}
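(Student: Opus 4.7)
The plan is to establish the uniform pointwise bound $\mu(B_\delta(x)) \leq c_1(\omega)\,\delta^\alpha$ for every $x \in K$ and $\delta > 0$, $P_V$-almost surely, and then apply the mass distribution principle. Once this is shown for each $\alpha < \alpha_0$, taking a countable sequence $\alpha \uparrow \alpha_0$ yields $d_f^r(K) \geq \alpha_0$ a.s.

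First, invoke Lemma~\ref{dislem} at scale $\epsilon = \delta/c$: since $B_\delta(x) \subseteq \overline{B}_{c\epsilon}(x)$, this ball is covered by at most $M_1$ cells $K_\bfj$ with $\bfj \in \Lambda_{\delta/c}$, so it suffices to bound $\mu(K_\bfj)$ uniformly in such $\bfj$. Given $\bfj \in \Lambda_{\delta/c}$, let $\bfk$ be its longest ancestor (possibly $\bfk = \bfj$) whose length $n(\ell) = |\bfk|$ is a neck level. Then $K_\bfj \subseteq K_\bfk$, and since the chosen weights satisfy $w_\bfk = r_\bfk^\alpha$, Lemma~\ref{simK} at the neck level gives
\[
\mu(K_\bfj) \leq \mu(K_\bfk) = \frac{r_\bfk^\alpha}{S_\ell}, \qquad S_\ell := \sum_{|\boldsymbol{p}|=n(\ell)} r_{\boldsymbol{p}}^\alpha.
\]
Using the decomposition $r_\bfj = r_\bfk \prod_{m=n(\ell)+1}^{|\bfj|} r_{i_m}$, the lower bound $r_{i_m} \geq r_{\inf}$ from~\eqref{rest}, the membership $r_\bfj \leq \delta/c$, and $|\bfj| - n(\ell) \leq Y_{\ell+1} := n(\ell+1) - n(\ell)$, I obtain
\[
\mu(K_\bfj) \leq \left(\frac{\delta}{c}\right)^{\!\alpha} \frac{r_{\inf}^{-\alpha Y_{\ell+1}}}{S_\ell}.
\]

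It remains to show that the random quantity $A^* := \sup_{\ell \geq 0}\, r_{\inf}^{-\alpha Y_{\ell+1}}/S_\ell$ is $P_V$-a.s.\ finite. By Lemma~\ref{lem:sumprod} with $s_i = r_i^\alpha$ together with Lemma~\ref{dfa0}, one has $\ell^{-1}\log S_\ell \to \gamma(\alpha) > \gamma(\alpha_0) = 0$ a.s.\ (strict inequality because $\gamma$ is strictly decreasing and $\alpha < \alpha_0$), so $S_\ell$ grows exponentially. The neck gaps $\{Y_\ell\}_{\ell \geq 1}$ are i.i.d.\ geometric, so Lemma~\ref{lem:geomrvs} gives $\limsup_\ell Y_\ell/\log \ell < \infty$ a.s., whence $r_{\inf}^{-\alpha Y_{\ell+1}}$ grows at most polynomially in $\ell$. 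Consequently $r_{\inf}^{-\alpha Y_{\ell+1}}/S_\ell \to 0$ a.s., and since each term is a.s.\ finite, the supremum $A^*$ is a.s.\ finite. Setting $c_1 := M_1 c^{-\alpha} A^*$ completes the pointwise bound.

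The main obstacle is the uniformity in $\bfj \in T$ and $\delta > 0$: the ratio $r_{\inf}^{-\alpha Y_{\ell+1}}/S_\ell$ is only controlled asymptotically, so the finitely many small-$\ell$ exceptions must be absorbed into the sample-dependent constant $c_1(\omega)$. This absorption is possible precisely because the exponential growth of $S_\ell$ provided by $\alpha < \alpha_0$ dominates the logarithmic-scale growth of the neck gaps.
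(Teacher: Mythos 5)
Your proof is correct and takes a genuinely different route from the paper's, though both begin by applying Lemma~\ref{dislem} to reduce $\mu(B_\delta(x))$ to a bound on $\mu(K_{\bfj})$ for the at most $M_1$ cells $K_{\bfj}$, $\bfj\in\Lambda_{\delta/c}$, meeting $\overline{B}_\delta(x)$. The paper goes \emph{forward} to the first neck $s(k)\geq k:=|\bfj|$, decomposes $\mu(K_\bfj)=\sum_{\bfj\prec\bfi,\,|\bfi|=s(k)}\mu(K_\bfi)$, and must then lower-bound $\sum_{|\bfi|=s(k)}r_\bfi^\alpha$; this requires translating between the level $k$ and the neck index $\widetilde{k}$ with $n(\widetilde{k})=s(k)$ (together with the bound $n(k)\leq c_3 k\log k$) and yields only the sub-exponential estimate $\exp(k\beta/(2c_3\log k))$. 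You instead go \emph{backward} to the last neck $n(\ell)\leq|\bfj|$ and bound $\mu(K_\bfj)\leq\mu(K_{\bfk})=r_{\bfk}^\alpha/S_\ell$ directly via Lemma~\ref{simK}, using $r_\bfk^\alpha\leq r_\bfj^\alpha\, r_{\inf}^{-\alpha Y_{\ell+1}}$ to relate back to $r_\bfj\leq\delta/c$. Since you index by neck number $\ell$ throughout, Lemma~\ref{lem:sumprod} gives $S_\ell$ clean exponential growth of order $e^{\ell\gamma(\alpha)}$, while Lemma~\ref{lem:geomrvs} makes the correction $r_{\inf}^{-\alpha Y_{\ell+1}}$ only polynomial in $\ell$, so the supremum $A^*$ is a.s.\ finite with none of the level-versus-neck bookkeeping. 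The two arguments rest on the same external inputs (Lemma~\ref{dislem}, Lemma~\ref{simK}, Lemma~\ref{lem:sumprod}, Lemma~\ref{lem:geomrvs}, the strict monotonicity of $\gamma$ from Lemma~\ref{dfa0}); yours is arguably tighter and more transparent.
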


\begin{proof}
Fix $ x \in K $ and $ \delta > 0 $.  
If $ k$ is a level in the construction of $ T $, let $ s(k) $ denote the first neck level $ \geq k $. 
All balls are with respect to the resistance metric.

\medskip
From Lemma~\ref{dislem} applied to the cut $ \Lambda_{ \delta / c } $, and with $ c $ and $ M_1 $ as in that lemma, there are at most $ M_1 $ sets $ K_{\boldsymbol{j}} $ which meet $ \overline{B}_{\delta} (x) $ and satisfy $ \boldsymbol{j} \in \Lambda_{ \delta / c } $.  That is, satisfy, on setting $ \boldsymbol{j} = j_1\dots j_k $, 
\begin{equation} \label{partdc} 
r _{ \boldsymbol{j} } \leq \delta  / c < r_{j_1\dots j_{k-1}} .
\end{equation} 
It follows that
\begin{equation} \label{compdr}
\mu\big( B_{ \delta }(x) \big) \leq \sum_ { \boldsymbol{j} \in \Lambda_{ \delta /c } , B_{ \delta }(x) \cap K_{ \boldsymbol{j} } \neq \emptyset} 
\mu(K_{ \boldsymbol{j} } ) ,
\end{equation} 
and there are at most $ M_1 $ terms in the sum.
For each such $ K_{ \boldsymbol{j} } $, using Lemma~\ref{simK},
\begin{equation} \label{Kjest}
\begin{aligned}  
\mu (K_{ \boldsymbol{j} } ) &\leq \sum_{ \boldsymbol{j} \prec \boldsymbol{i} , | \boldsymbol{i} | = s(k) } \mu (K_{ \boldsymbol{i} }) =
\frac{  \sum_{ \boldsymbol{j} \prec \boldsymbol{i} , | \boldsymbol{i} | = s(k) } r_{ \boldsymbol{i} } ^\alpha } 
            { \sum_{  | \boldsymbol{i} | = s(k) } r_{ \boldsymbol{i} } ^\alpha}\\
&\leq \frac{ N_{\sup}^{ s(k) - k}  }   { \sum_{ | \boldsymbol{i} | = s(k) }  r_{ \boldsymbol{i} }^\alpha }\, r_{ \boldsymbol{j} }^\alpha 
\leq \frac{  N_{\sup}^{ s(k) - k}  }   { c^\alpha\sum_{ | \boldsymbol{i} | = s(k) }  r_{ \boldsymbol{i} }^\alpha }\,\delta ^\alpha
=: \theta(k) \, \delta ^ \alpha .
\end{aligned} 
\end{equation} 
Here $ N_{\sup} $ is  an upper bound for the branching number, see~\eqref{Nbd}.

\medskip
We need to estimate the  numerator and denominator of $\theta(k)$  in~\eqref{Kjest}.
 For this we use  estimates~\eqref{skest} and ~\eqref{sumraest}.  
 
 \emph{Until we establish ~\eqref{sumraest} we allow $ k $ to be an arbitrary positive integer,} not necessarily satisfying \eqref{partdc}.

Since $ s(k) - k $ is a geometric random variable, by the same argument as in Lemma~\ref{lem:spatial}(b), there is a constant $ c_1 $ such that 
$ s(k) - k  \preccurlyeq c_1 \log k  $ 
$ P_V $ a.s., and so   there is a constant $ c_2(\omega) $ such that 
\[
s(k) - k \leq c_2 \log k \quad P_V \text{ a.s.}
\]
for all $ k >1 $.
Hence $ P_V $ a.s., for $ k>1 $, 
\begin{equation} \label{skest}
 N_{\sup}^{ s(k) - k} \leq  N_{\sup}^{ c_2 \log k }
\end{equation} 

Next let 
\[
\beta = E_V \log \sum_{ | \boldsymbol{i} | = n(1) } r_{ \boldsymbol{i} } ^ \alpha .
\] 
Then $ \beta > 0 $ since $ \alpha < \alpha _0 $, see Lemma~\ref{dfa0}. It follows by \eqref{eq:slln} that as $k\to\infty$  
\[
\frac{1}{k} \log \sum_{ | \boldsymbol{i} | = n(k) } r_{ \boldsymbol{i} }^ \alpha \to \beta  \quad P_V \text{ a.s.}
\]
Hence for some $ \epsilon _0 = \epsilon _0 ( \omega ) > 0 $,
\begin{equation} \label{sumraest1}
\sum_{ | \boldsymbol{i} | = n(k) } r_{ \boldsymbol{i} }^ \alpha \geq \epsilon _0\, e^{k \beta / 2 } \quad \text{for } k >1 ,\quad   P_V \text{ a.s.}
\end{equation} 

However, we need an estimate similar to \eqref{sumraest1} involving $ s(k) $  rather than   $ n(k) $. 
First note, by setting $ Y_i = n(i) - n(i-1) $ and $ n=1 $ in   \eqref{geomrvs2},    that for some $ c_3 = c_3( \omega ) $ we have 
$ n(k) \leq c_3 k \log k $ $ P_V $ a.s.\ if $ k>1 $. Hence
\[
\sum_{ | \boldsymbol{i} | = n(k) } r_{ \boldsymbol{i} }^ \alpha \geq \epsilon _0 \exp\left(\frac{n(k) \beta}{2 c_3 \log k}\right) 
\quad \text{for } k >1 ,\quad   P_V \text{ a.s.}
\]
Since $ n(k) $ is an arbitrary neck,
\[
\sum_{ | \boldsymbol{i} | = s(k) } r_{ \boldsymbol{i} }^ \alpha \geq \epsilon _0 \exp\left(\frac{s(k) \beta}{2 c_3 \log \widetilde{k} }\right)
 \quad \text{for } k >1 ,\quad   P_V \text{ a.s.}
\]
where $ \widetilde{k} $ is the number of the neck $ s(k) $.  
Note $ s(k) \geq k $. Also note that $ \widetilde{k} \leq k $. (Otherwise there are at least $ k+ 1 $ necks between levels 1 and $ s(k) $ inclusive, and so in particular $ s(k) > k $.  But then there are at least $ k  $ necks between levels 1 and $ k $ inclusive, and so $ k $ is a neck.  However that gives $ s(k) = k $, a contradiction).  Hence
\begin{equation} \label{sumraest}
\sum_{ | \boldsymbol{i} | = s(k) } r_{ \boldsymbol{i} }^ \alpha \geq \epsilon _0\, \exp\left(\frac{k\beta}{ 2c_3 \log k}\right)
 \quad \text{for } k >1 ,\quad   P_V \text{ a.s.}
\end{equation}

It follows from ~\eqref{skest},  \eqref{sumraest} and the definition of $ \theta (k) $ in \eqref{Kjest},   that $ \theta (k) \to  0 $ as 
$ k \to \infty $.
On the other hand, with $ k := | \boldsymbol{j} | $ we have from \eqref{partdc} that 
\[  k := | \boldsymbol{j} | \geq \log(c/ \delta ) / \log (1/r_{\min}) \to \infty \]
uniformly for $ \boldsymbol{j} \in \Lambda_{ \delta / c } $ as $ \delta \to 0 $.  From  \eqref{Kjest}, \eqref{compdr}  and the uniform bound $ M_1$ on the number of terms, there exists $ \delta _0=  \delta _0(\omega) > 0 $ such that 
\begin{equation} \label{} 
 \mu\big( B_{ \delta }(x) \big) \leq  \delta ^ \alpha \text{ for } \delta \leq \delta _0  \quad P_V \text{ a.s.}.
 \end{equation}
 
It now follows by the mass distribution principle that  $ d_f^r(K) \geq \alpha   $ $ P_V $~a.s., and so $ d_f^r(K) \geq \alpha_0   $ $ P_V $~a.s.
\end{proof}

%%%%%%%%%%%%%%%%%%%%%%%%%%%%%%%%%%%%%%
%%%%%%%%%%%%%%%%%%%%%%%%%%%%%%%%%%%%%%
%%%%%%%%%%%%%%%%%%%%%%%%%%%%%%%%%%%%%%
%%%%%%%%%%%%%%%%%%%%%%%%%%%%%%%%%%%%%%
\section{Eigenvalue Counting Function}

\subsection{Overview}
In this section we consider random $V$-variable fractals constructed from essentially arbitrary resistances 
$ r_i^F $, from weights $ w_i^F $ which determine a 
measure $ \mu $, and from a probability measure $P$ on $\boldsymbol{F}$. See Sections~\ref{rvtf}, \ref{secdrf} 
and~\ref{secwm}. With every realisation of such a random fractal there is an associated Dirichlet form 
and a Laplacian. The growth rate of the corresponding eigenvalue counting function is defined to be $ d_s/2 $, 
where $ d_s $ is called the \emph{spectral exponent}. We see in Theorem~\ref{thm:Nspecdim} that $P_V$-a.s.\ $ d_s $ 
exists, is constant and is the zero of a pressure function constructed from the crossing times 
$ t_{\boldsymbol{i} } $. The proof relies on estimates concerning the occurrences of necks and on a 
Dirichlet-Neumann bracketing argument, see Lemmas~\ref{lem:spatial} and~\ref{seceest}. Lemma~\ref{seceest} gives
a result which holds for all realizations. (In the case 
$ w_i^F = (r_i^F)^{-1} $ some of the estimates can be sharpened, see Remark~\ref{casepl}.)

The natural metric on fractal sets constructed with resistances as here is the resistance metric. 
We saw in Theorem~\ref{rdzp} that the   Hausdorff dimension $d_f^r $ in this metric is given by the zero of a certain pressure function.
 A natural set of weights is  $ w_i^F = (r_i^F)^{d_f}$. 
The   measure $ \nu $  constructed from this set of weights   is called the \emph{flat measure}  with respect to the resistance metric. 

We see in Theorem~\ref{spfm} that $  d_s(\nu)/2 =  d_f^r/(d_f^r+1) $.  This establishes the analogue of Conjecture~4.6 in \cite{Kig-3}
for $ V $-variable fractals.  In Theorem~\ref{spmax} we show that for a fixed set of resistances $ r_i^F $, and for arbitrary weights $ w_i^F $ and corresponding
measure $ \mu $, the spectral exponent $ d_s(\mu) $  has a unique maximum when $ \mu $ is the flat measure  $ \nu$.  The spectral exponent in this case is called the \emph{spectral dimension} associated with the 
given resistances.

Finally, in the case of the flat measure $ \nu $, we give in Theorem~\ref{imprest} an improved $P_V$ almost sure estimate 
for the counting function itself rather than its log asymptotics. 

%%%%%%%%%%%%%%%%%%%%%%%%%%%%%%%%%%%%
\subsection{Preliminaries}

Following the notation of the previous section, we consider a fractal $ K =K^\omega $ and write $\partial K = V_0$ 
for the boundary of  $K$. We fix a measure $\mu =  \mu^\omega  $  on $K$ and, together with the Dirichlet form 
$ \mathcal{E} = \mathcal{E}^ \omega $, this allows one to define a Laplace operator 
$ \triangle_\mu=\triangle_\mu^ \omega $.
%*** [JH] We could save space by replacing the following by  `` ... for details.  We work with the formulation of the 
%Dirichlet and Neumann eigenvalue problems
%in terms of the energy functional.  First recall the definition of $ \mathcal{F} $ from ...''
%
%
%The Dirichlet and Neumann eigenvalue problems are:
%\begin{itemize}
%\item[(a)] \emph{Dirichlet case:}
%\begin{align*} 
%-\Delta u &=  \lambda u\quad   \mbox{in } K,\\
%u &=  0\quad   \mbox{on }\partial K,
%\end{align*} 
%where $ \partial K := V_0 $.
%\item[(b)] \emph{Neumann case:} 
%\begin{align*} 
%-\Delta u &=  \lambda u \quad   \mbox{in } K,\\
%du &=  0 \quad  \mbox{on }\partial K,
%\end{align*} 
%where $du$ is interpreted as a normal derivative on the
%boundary.
%\end{itemize} 
We will be interested in the spectrum of $-\triangle_\mu$ as this consists of positive eigenvalues. However, 
instead of working directly with $-\triangle_\mu$, we use a formulation of the Dirichlet and Neumann eigenvalue problems
in terms of the Dirichlet form, see \cite{Kig}. 

\bigskip
Recall the definition of $ \mathcal{F} $ from~\eqref{lmfm}. Let 
\begin{equation} \label{DDform}
\cf^{\omega}_D =
\{ f\in \cf^{\omega} : f(x)=0,\ x\in \partial K\}, \quad \ce^{\omega}_D(f,f) =
\ce^{\omega}(f,f)  \text{ for } f\in \cf^{\omega}_D,
\end{equation} 
and let $(\cdot,\cdot )_{\mu^{\omega}}$ be the inner product on
$L^2(K^{\omega},\mu^{\omega})$. It follows as in Theorem~\ref{thmdrf} that $(\ce_D,\cf_D)$ is a local regular Dirichlet 
form on $L^2(K \setminus \partial K,\mu)$. Now  \emph{$\lambda$ is a  Dirichlet eigenvalue with
eigenfunction $u \in \mathcal{F} _D ^\omega$, $u\neq 0$}, if 
\begin{equation} \label{dfde}
\ce^{\omega}_D(u,v) = \lambda(u,v)_{\mu^{\omega}}  \;\;\forall v\in \cf^{\omega}_D.
\end{equation} 

Similarly, \emph{$\lambda$ is a Neumann eigenvalue with eigenfunction 
$u\in \mathcal{F} ^\omega$, $u\neq 0$}, if
\begin{equation} \label{dfne}
\ce^{\omega}(u,v) = \lambda(u,v)_{\mu^{\omega}}  \;\;\forall v\in \cf^{\omega}.
\end{equation} 

As usual, we will in future normally omit the dependence on $\omega$.

\bigskip

By standard results \cite{Kig} the Dirichlet Laplacian has a
discrete spectrum  
\begin{equation} \label{} 
0< \lambda_1 < \lambda_2 \leq
\dots  \text{ where  } \lambda_n\to\infty \text{ as } n\to \infty,
\end{equation} 
and similarly for the Neumann Laplacian but with $ 0 = \lambda_1 $.

The Dirichlet and Neumann eigenvalue counting functions are defined by
\begin{equation} \label{}   \label{dfdec} 
\begin{aligned} 
\mathcal{N}  _D(s) &= \max\{i: \lambda_i  \leq s,\ \lambda_i \mbox{ is a Dirichlet eigenvalue}\},\\
\mathcal{N} _N(s) &= \max\{i: \lambda_i \leq  s,\  \lambda_i \mbox{ is a Neumann eigenvalue}\}.
\end{aligned} 
\end{equation} 
As usual, eigenvalues are counted according to their multiplicity. 

\bigskip

The following lemma implies the spectral exponent $ d_s(\mu)  $ in Definition~\ref{dfspm} is at most~$ 2 $ for 
any realization of our $V$-variable fractals. It is used to prove the second estimate in Lemma~\ref{firsteest}. 

\begin{lem} \label{lem:lingrowth}
With the same constant $ C $ as in Corollary~\ref{cor:diambd},
\[ 
\mathcal{N} _D(s) \leq Cs, \quad  \forall s>0.
\]
\end{lem}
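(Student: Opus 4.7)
The plan is to use the min-max (variational) characterization of Dirichlet eigenvalues together with the sharp sup-norm bound that a resistance form provides for functions vanishing on the boundary. This is a standard ``trace of the spectral projector'' argument.

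First I would establish the pointwise inequality $|f(x)|^2 \leq C\,\mathcal{E}_D(f,f)$ for every $f \in \mathcal{F}_D$. This is immediate from the defining property \eqref{dfrm} of the resistance metric combined with Corollary~\ref{cor:diambd}: for any $x \in K$ and any fixed $y_0 \in \partial K$ we have $f(y_0)=0$, hence
\begin{equation*}
|f(x)|^2 = |f(x)-f(y_0)|^2 \leq R(x,y_0)\,\mathcal{E}(f,f) \leq C\,\mathcal{E}_D(f,f).
\end{equation*}
In particular every $f \in \mathcal{F}_D$ is continuous on $K$ and the Dirichlet eigenfunctions are pointwise defined.

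Next, let $n := \mathcal{N}_D(s)$, and let $u_1,\dots,u_n$ be an $L^2(K,\mu)$-orthonormal family of Dirichlet eigenfunctions for eigenvalues $\lambda_1 \leq \dots \leq \lambda_n \leq s$. For any coefficients $(c_i)$ with $\sum_i c_i^2 = 1$, the function $f = \sum_i c_i u_i$ lies in $\mathcal{F}_D$, has $\|f\|_2^2 = 1$, and satisfies
\begin{equation*}
\mathcal{E}_D(f,f) = \sum_{i=1}^n c_i^2 \lambda_i \leq \lambda_n \leq s.
\end{equation*}
Applying the sup-norm bound of the previous paragraph and maximizing over unit vectors $(c_i)$ (the maximum of $|\sum_i c_i u_i(x)|^2$ over the unit sphere is $\sum_i u_i(x)^2$ by Cauchy--Schwarz) yields the pointwise inequality
\begin{equation*}
\sum_{i=1}^n u_i(x)^2 \leq Cs \quad \text{for every } x \in K.
\end{equation*}

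Finally I would integrate this inequality against the probability measure $\mu$. Since the $u_i$ are $L^2(\mu)$-orthonormal and $\mu(K)=1$ by Definition~\ref{dfwm},
\begin{equation*}
n = \sum_{i=1}^n \|u_i\|_2^2 = \int_K \sum_{i=1}^n u_i(x)^2 \, d\mu(x) \leq Cs\,\mu(K) = Cs,
\end{equation*}
which is the desired bound. There is no real obstacle here; the only point requiring a little care is ensuring pointwise evaluation of eigenfunctions is legitimate, which is precisely what the resistance-form sup-norm estimate supplies.
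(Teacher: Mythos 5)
Your proof is correct, and it takes a genuinely different (and more elementary) technical route than the paper's. The paper argues via the Green function $g(x,y)$ of the Dirichlet problem: from resistance-form theory one has $g(x,y)\leq g(x,x)=R(x,\partial K)\leq C$ and a Lipschitz bound on $g$ in the resistance metric, then Mercer's theorem gives the uniformly convergent expansion $g(x,x)=\sum_{i\geq 1}(\lambda_i^D)^{-1}\phi_i(x)^2$, which integrates to $\sum_i(\lambda_i^D)^{-1}\leq C$; a Chebyshev-type step, $\sum_i(\lambda_i^D)^{-1}\geq \mathcal{N}_D(s)/s$, then finishes. You instead bound the diagonal of the spectral projector $\sum_{i:\lambda_i\leq s}u_i(x)^2$ directly from the sup-norm estimate $|f(x)|^2\leq C\,\mathcal{E}_D(f,f)$ together with the min-max principle and Cauchy--Schwarz, and integrate against $\mu$. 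The two arguments rest on the same essential inputs (the uniform resistance-diameter bound of Corollary~\ref{cor:diambd}, the resistance-form sup-norm estimate, and $L^2(\mu)$-orthonormality of eigenfunctions), but yours avoids Mercer's theorem and the attendant continuity/uniform-convergence bookkeeping, bounding the counting function directly rather than passing through the resolvent trace $\sum_i 1/\lambda_i^D$. Both are valid; yours uses less machinery, while the paper's approach has the small side benefit of also producing the trace bound $\sum_i 1/\lambda_i^D\leq C$.
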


\begin{proof} The effective resistance between $ x\in K $ and the boundary set $ \partial K = V_0 $ is defined by
\[
R(x,\partial K)^{-1} = \inf \big\{ \mathcal{E} (f,f) : f \in \mathcal{F} _D,\, f(x) = 1 \big\}.
\]
From Corollary~\ref{cor:diambd} with the same constant $ C $, and for any $ y \in \partial K $,
\[
R(x,\partial K) \leq R(x,y)  \leq C .
\]

The Green function for the Dirichlet problem in $ K $
%%BH - removed and replaced
%is the symmetric function $ g(x,y) $ given by $ g(x,x) = R(x, \partial K ) $.  See, for example, Proposition 4.2 of %\cite{Kig1}.  In particular, $ P $ a.s.,
%
is a symmetric function $ g(x,y) $ which has $g(x,y) \leq  g(x,x) = R(x, \partial K ) $.  
See, for example, Proposition 4.2 of \cite{Kig1}. In particular, 
%%BH end add
\[
g(x,y) \leq C  
\]
independently of $ \omega $. Moreover, from Theorem 4.5 of \cite{Kig1},
\[
\big| g(x,y) - g(x,z) \big| \leq R(y,z).
\]
Hence $g$ is continuous, and in particular uniformly Lipschitz continuous, in the resistance metric.  

It follows from Mercer's theorem (for a proof of the theorem see the argument in \cite{Lax} pages 344--345) that  
\[ 
g(x,x) = \sum_{i\geq 1} \left(\lambda _i^D\right)^{-1} \phi_i(x)^2 
\]
and the series converges uniformly, where $ \phi_i $ are the orthonormal eigenfunctions corresponding to 
the Dirichlet eigenvalues $ \lambda _i^D $. Integrating with respect to $ x $,  
\[
C  \geq \sum_{i\geq 1} \left(\lambda _i^D\right)^{-1}  
      \geq \frac{1}{s} \mathcal{N} ^D(s),
\]
%\begin{align*}
%C &\geq \sum_{i\geq 1} \left(\lambda _i^D\right)^{-1}  
%    = \int_0^\infty \frac{1}{t} \, d \mathcal{N} ^D(t)  
%     \geq  \int_0^s \frac{1}{t} \, d \mathcal{N} ^D(t) \\
%    &=  \frac{1}{t}  \mathcal{N} ^D(t) \Big| _0^s + \int_0^s \mathcal{N} ^D(t) \frac{1}{t^2} \, dt   \\
%    &\geq \frac{1}{s} \mathcal{N} ^D(s).
%\end{align*} 
for any $ s>0 $.     
\end{proof} 

%%%%%%%%%%%%%%%%%%%%%%%%%%%%%%%
\subsection{Dirichlet-Neumann Bracketing}
In this and the following sections, fix a set of weights $ w_i^F$  as in Section~\ref{secwm} and let $ \mu $ be the 
corresponding measure.

In order to deduce properties of the counting function for $V$-variable fractals we use the method of 
Dirichlet-Neumann bracketing. 

\bigskip
Let  $\Lambda_k$  be the sequence of cutsets  \eqref{lkcut}. Using the notation of  \eqref{V0ass} and analogously to 
\eqref{GVE}, define
\begin{equation} \label{}   \label{} 
\begin{aligned} 
\widetilde{V}_k &= \bigcup \{ \psi_{\boldsymbol{i}}(V_0)   : \boldsymbol{i} \in \Lambda_k \}, \\
\widetilde{E}_k &= \bigcup \{ \psi_{\boldsymbol{i}}(E_0 )  : \boldsymbol{i} \in \Lambda_k \},\\
\widetilde{G}_k & = (\widetilde{V}_k, \widetilde{E}_k).
\end{aligned} 
\end{equation} 
Thus $ \widetilde{G}_k = (\widetilde{V}_k,\widetilde{E}_k)$  is the graph associated 
with the vertices $\widetilde{V}_k$ of the cells determined by $\Lambda_k$. 
 
Define $(\mathcal{E}^k,\mathcal{F}^k)$ by
\begin{equation} \label{Dkform}
\begin{aligned} 
\mathcal{F}^k &= \big\{f: K\backslash \widetilde{V}_k\to \br \ \big| \  
\forall \boldsymbol{i} \in \Lambda_k \  \exists f_{\bfi} \in \cf^{\sigma^{\bfi}} :
f\circ \psi_{\bfi} = f_{\bfi} \mbox{ on } K^{\sigma^{\bfi}} \backslash \partial K^{\sigma^{\bfi}}  
\big\}, \\
%&= \big\{f: K\backslash \widetilde{V}_k\to \br \ \big| \  f\circ
%\psi_{\bfi} = f_{\bfi} \mbox{ on } K^{\sigma^{\bfi}} \backslash \partial K^{\sigma^{\bfi}}, 
%\  \boldsymbol{i} \in \Lambda_k,\ f_{\bfi} \in \cf^{\sigma^{\bfi}}\big\}, \\
 \mathcal{E}^k(f,g) &= \sum_{\bfi\in \Lambda_k} \rho_{\bfi}
\ce^{\sigma^{\bfi}} (f\circ\psi_{\bfi},g\circ\psi_{\bfi})
\quad \text{for } f,g \in\mathcal{F}^k.
\end{aligned}
\end{equation}
The functions in $ \mathcal{F}^k $ should be regarded as continuous functions on the disjoint union 
$\mbox{\LARGE $ \sqcup$}_{ \boldsymbol{i} \in \Lambda_k}K_{\boldsymbol{i}} $ together with its 
natural direct sum topology.

%Informally, $ \mathcal{F} ^k $ is the class of functions (not normally continuous) defined on $ K $ which have finite 
%energy on each cell determined by $\Lambda_ k $, and $ \mathcal{E} ^k $ is the corresponding energy function.

Define $(\mathcal{E}^k_D,\mathcal{F}^k_D)$ by
\begin{equation} \label{DDkform}
\begin{aligned}
 \mathcal{F}^k_D &= \big\{ f\in\cf^{k}  \   \big| \  
\forall \boldsymbol{i} \in \Lambda_k \  
 f_{\bfi} |_{\widetilde{V}_0} = 0, \text{ where $ f_{\bfi}$ is as in } \eqref{Dkform} \big\}, \\
%
%
% 
% f|_{\widetilde{V}_k} = 0\big\}, \\
% \mathcal{F}^k_D &= \big\{ f\in\cf^{k}_D \   \big| \  f|_{\widetilde{V}_k} = 0\big\}, \\
 \mathcal{E}^k_D(f,g) &=   \mathcal{E}^k (f,g)
\quad \text{for } f,g \in\mathcal{F}^k_D. 
%
% \mathcal{E}^k_D(f,g) &= \sum_{\bfi\in \Lambda_k} \rho_{ \boldsymbol{i} }\,  \ce^{k}_D
%(f\circ\psi_{\bfi} ,g\circ\psi_{\bfi}) 
%\quad \text{for } f,g \in\mathcal{F}^k_D. 
\end{aligned} 
\end{equation} 
Thus $ \mathcal{F} ^k_D $ is the restriction of $\mathcal{F}^k$ (and of $\mathcal{F}$) to those functions 
which are zero on $ \widetilde{V}_k $, and $ \mathcal{E} ^k_D $ is the restricted energy functional.

It is straightforward to see that 
\begin{equation} \label{}
 \mathcal{F}^k_D \subset \cf _D \subset \cf \subset \mathcal{F}^k, \quad  
  \mathcal{E}^k_D \subset \ce _D \subset \ce \subset \mathcal{E}^k. 
\end{equation} 
That is, $ \mathcal{E} $ is just the restriction to $ \mathcal{F} $ of the functional $ \mathcal{E} ^ k $ and 
similarly for the other cases.
 
Note that $(\mathcal{E}^k,\mathcal{F}^k)$ and
$(\mathcal{E}^k_D,\mathcal{F}^k_D)$ are local regular
Dirichlet forms on the spaces 
$L^2(\mbox{\LARGE $ \sqcup$}_{ \boldsymbol{i} \in \Lambda_k}K_{\boldsymbol{i}},\mu)$ 
and  $L^2(K\setminus \widetilde{V}_k,\mu)$ respectively, with discrete spectra and bounded reproducing Dirichlet kernels, see~\cite{Kig1}.

%*** [JH] Is there a reference for the reproducing kernel statement in the Neumann case? ***
%
%*** BH - Kigami's book has the pcf case. His JFA paper on resistance forms has this in general I think. ***
%
%*** [JH] The JFA paper is a better reference, but it is for the \underline{Dirichlet} problem.   It is not clear to me what the situation is for a Green's function in the Neumann problem. I have changed the wording accordingly.

\bigskip
Analogously to \eqref{dfne} and \eqref{dfde} we define the notion that \emph{$ \lambda $ is an 
$(\mathcal{E}^k,\mathcal{F}^k)$, respectively
$(\mathcal{E}^k_D,\mathcal{F}^k_D)$, eigenvalue with eigenfunction $ u $}. The corresponding counting functions are 
\begin{equation} \label{}
\begin{aligned}
\mathcal{N}^k_N (s) &= \max\{i: \lambda_i  \leq s,\ \lambda_i \mbox{ is an $ (\mathcal{E}^k,\mathcal{F}^k) $  eigenvalue}\},\\
\mathcal{N}_D^k(s) &= \max\{i: \lambda_i  \leq s,\ \lambda_i \mbox{ is an $ (\mathcal{E}^k_D,\mathcal{F}^k_D)$  eigenvalue}\}.
\end{aligned} 
\end{equation} 

In order to compare the various counting functions, first note that if $ \Lambda = \Lambda_k $ then the 
decomposition  (\ref{eq:formdecomp}) with $ \mathcal{E} $ replaced by $ \mathcal{E} ^k $, and the decomposition 
(\ref{inndec}), both generalise  to   functions $ f,g \in  \mathcal{F}^k $. The key observation now is that 
if $\lambda$ is a (Neumann) $ (\mathcal{E}^k,\mathcal{F}^k) $ eigenvalue with
eigenfunction $u$, then  we have for all $v\in\mathcal{F}^k $ that
\begin{equation} \label{}
\sum_{\bfi\in \Lambda_k} \rho_\bfi \ce^{\sigma^{\bfi}}(u\circ
\psi_{\bfi},v\circ\psi_{\bfi}) = \mathcal{E}^k(u,v) =
\lambda(u,v)_{\mu} = \lambda \sum_{\bfi\in \Lambda_k }
\mu_{\bfi} (u\circ\psi_{\bfi},v\circ\psi_{\bfi})_{\mu^{\sigma^{\bfi}}} .
\end{equation} 

If we take $v$ to be a function supported on a complex with address
$\bfi\in \Lambda_k $, we see that
\begin{equation} \label{} 
 \ce^{\sigma^{\bfi}}(u\circ\psi_{\bfi},v\circ\psi_{\bfi}) =
t_{\bfi} \lambda  (u\circ\psi_{\bfi},v\circ\psi_{\bfi})_{\mu^{\sigma^{\bfi}}}, 
\end{equation} 
since $ t_{\bfi} = \rho_{\bfi}^{-1}\mu_{\bfi}$. Thus $t_{\bfi} \lambda$ is an eigenvalue of
$(\ce^{\sigma^{\bfi}},\cf^{\sigma^{\bfi}})$ with eigenfunction
$u_{ \boldsymbol{i} }=u\circ\psi_{\bfi}$. Conversely, from $u_{\boldsymbol{i} }$ we can construct (Neumann) eigenfunctions and
eigenvalues for $(\mathcal{E}^k,\mathcal{F}^k)$, since
\begin{equation} \label{buef} 
\widetilde{u}_{ \boldsymbol{i} } (x) := \left\{ \begin {array}{ll} u_{ \boldsymbol{i} }(x)  & x\in K\cap
  \mbox{int}\, \psi_{\bfi}(K)  \\ 0  & x\notin K\cap
  \mbox{int}\, \psi_{\bfi}(K)  \end{array}
\right. \text{ is an eigenfunction with eigenvalue $\lambda$.}
\end{equation} 
 Hence  \begin{equation} \label{}
  \mathcal{N}^k_N (s) = \sum_{\bfi\in \Lambda_k} \mathcal{N}_N ^{\sigma^{\bfi} }(t_{\bfi} s ),  
  \quad  \mathcal{N} ^k_D(s) = \sum_{\bfi\in \Lambda_k} \mathcal{N} ^{\sigma^{\bfi} }_D(t_{\bfi} s ),
\end{equation}
 with the argument in the Dirichlet case being similar to that for the Neumann case. 
  
\begin{lem}\label{lem:scale}
The following relationships hold for all $s>0$
\begin{equation}  \label{eq:dnb} 
\begin{gathered}
\sum_{\bfi\in \Lambda_k} \mathcal{N} ^{\sigma^{\bfi}}_D(t_{\bfi}s)  \leq \mathcal{N} _D(s) \leq \mathcal{N}_N (s) 
  \leq   \sum_{\bfi\in \Lambda_k}  \mathcal{N}_N ^{\sigma^{\bfi}}(t_{\bfi}s)   ,\\
\mathcal{N} _D (s) \leq \mathcal{N}_N  (s) \leq \mathcal{N}_D  (s)+ d+1. 
\end{gathered} 
\end{equation} 
\end{lem}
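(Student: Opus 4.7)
The plan is to apply the Courant--Fischer min-max characterisation of eigenvalues to the chain of form-domain inclusions
$$
 \mathcal{F}^k_D \;\subset\; \mathcal{F}_D \;\subset\; \mathcal{F} \;\subset\; \mathcal{F}^k,
$$
each form being the restriction of the next, and then to combine this with the decomposition identities $\mathcal{N}^k_N(s) = \sum_{\bfi \in \Lambda_k} \mathcal{N}_N^{\sigma^{\bfi}}(t_{\bfi} s)$ and $\mathcal{N}^k_D(s) = \sum_{\bfi \in \Lambda_k} \mathcal{N}_D^{\sigma^{\bfi}}(t_{\bfi} s)$ that were derived immediately before the statement (via the cell-supported eigenfunctions $\tu_{\bfi}$ of~\eqref{buef} and the scaling relation $t_{\bfi} = \rho_{\bfi}^{-1}\mu_{\bfi}$).

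For the first chain in \eqref{eq:dnb}, I would denote by $\lambda_i^{k,D}, \lambda_i^D, \lambda_i^N, \lambda_i^k$ the $i$-th eigenvalues of $(\mathcal{E}^k_D,\mathcal{F}^k_D)$, $(\mathcal{E}_D,\mathcal{F}_D)$, $(\mathcal{E},\mathcal{F})$ and $(\mathcal{E}^k,\mathcal{F}^k)$ respectively. Since enlarging the space of admissible functions (with the form unchanged where defined) can only decrease each eigenvalue, the min-max principle applied to the chain above gives
$$
 \lambda_i^{k,D} \;\geq\; \lambda_i^D \;\geq\; \lambda_i^N \;\geq\; \lambda_i^k \quad \forall i,
$$
which translates into $\mathcal{N}^k_D(s) \leq \mathcal{N}_D(s) \leq \mathcal{N}_N(s) \leq \mathcal{N}^k_N(s)$. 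Substituting the two decomposition identities produces the first chain of inequalities.

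For the second inequality $\mathcal{N}_N(s) \leq \mathcal{N}_D(s) + d+1$, the key observation is that $\mathcal{F}_D$ is the kernel of the boundary-restriction map $f \mapsto f|_{V_0}$ from $\mathcal{F}$ into $\mathbb{R}^{V_0}$, so by assumption~\eqref{V0ass} (which gives $|V_0| = d+1$) the codimension of $\mathcal{F}_D$ inside $\mathcal{F}$ is at most $d+1$. A standard consequence of min-max (eigenvalue interlacing under finite-codimension restriction) then yields $\lambda_{i+d+1}^N \geq \lambda_i^D$ for every $i$, and this is exactly the required counting-function inequality.

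There is no real obstacle here since the nontrivial computation — the eigenvalue decomposition under the cut $\Lambda_k$ using the scaling $t_{\bfi}\lambda$ — was already carried out in the paragraphs preceding the lemma; the present statement is the packaging of those identities with two applications of min-max (one monotonicity, one codimension interlacing).
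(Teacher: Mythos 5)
Your proposal is correct and follows exactly the standard Dirichlet--Neumann bracketing argument that the paper's proof invokes by citing \cite{Kig}, Section 4.1. You have simply spelled out the min-max monotonicity along the chain $\mathcal{F}^k_D \subset \mathcal{F}_D \subset \mathcal{F} \subset \mathcal{F}^k$ and the finite-codimension ($\leq |V_0| = d+1$) interlacing for the boundary restriction, which is precisely the content of the paper's brief justification.
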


\begin{proof} The proofs are a consequence of Dirichlet-Neumann
bracketing and are straightforward extensions of those found 
in \cite{Kig}~Section~4.1 for the p.c.f.\ fractal case. 
The upper bound on the difference
in the Neumann and Dirichlet counting functions is given by the number
of vertices of $V_0$, which is $d+1$ in our setting.
\end{proof} 

%%%%%%%%%%%%%%%%%%%%%%%%%%%%%%%%%
\subsection{Eigenvalue Estimates}  
As in the previous section, fix    weights $ w_i^F$    and  the corresponding measure  $ \mu $.
Let the random variable $\lambda_1^D$ denote the first   Dirichlet eigenvalue.

%Eigenfuctions with higher eigenvalues will be constructed from localised first eigenfunctions by means of \eqref{buef}.  
%In order to obtain lower bounds on the eigenvalue counting function we will need upper bounds on the first eigenvalues.
%
%***[BH]  its not quite right***  
%
%*** [JH] presumably you were referring to the previous version ``For this reason we will need a.s.\ estimates on the first eigenvalues.''
%I replaced it by the last sentence above.***
% 
%
% We note this  is a random variable, and although it 
%  is bounded below away from 0, in general it is not
%bounded above uniformly if $ V>1 $.
%
%***[JH] Will probably replace  ``does not  appear  to be'' by ``is not''.  ***
%
%***BH - changed wording *** 
%
%***[JH].  I think that to show this rigorously would require more argument.   How about mildly hedging our bets with ``in general we do not obtain a uniform upper bound if $ V > 1 $''.***
 
\begin{lem}\label{lem:evalests} If $ C $ is the upper bound on the diameter of $ K $ in the resistance metric given in Corollary~\ref{cor:diambd} then 
for $ n \geq 2 $, 
\begin{equation} \label{fevb} 
\begin{aligned}   
C^{-1} \leq \lambda _1^D &\leq \frac{d(d+1)\rho^n_{\sup}}{\mu(K\setminus K_{b,n})}, \\
\quad  V=1\, \Longrightarrow\,   \lambda _1^D &\leq   \frac{ (d+1)^2\rho^2_{\sup} w_{\sup}^2} { w_{\inf}^2},
\end{aligned} 
\end{equation} 
where $ K_{b,n} $ is the union of the $d+1$ boundary $ n $-complexes attached to the $d+1$ boundary vertices in $ V_0 $. 
\end{lem}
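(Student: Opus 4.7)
The plan is to use the variational characterization
$\lambda_1^D = \inf\{\mathcal{E}(f,f)/\|f\|_2^2 : f \in \mathcal{F}_D,\, f \not\equiv 0\}$,
and in each case either establish a universal Poincar\'e-type inequality (for the lower bound) or exhibit an explicit test function (for the upper bounds).

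For the lower bound $C^{-1} \leq \lambda_1^D$, any $f \in \mathcal{F}_D$ vanishes on $V_0$, so fixing any $y \in V_0$ the definition \eqref{dfrm} of the resistance metric gives $|f(x)|^2 = |f(x)-f(y)|^2 \leq R(x,y)\,\mathcal{E}(f,f)$ for every $x \in K$. By Corollary~\ref{cor:diambd} the diameter of $K$ in $R$ is bounded by $C$, so $|f(x)|^2 \leq C\,\mathcal{E}(f,f)$ pointwise. Integrating against the probability measure $\mu$ yields $\|f\|_2^2 \leq C\,\mathcal{E}(f,f)$, which is exactly the Rayleigh-quotient lower bound $\lambda_1^D \geq C^{-1}$.

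For the general upper bound I build a test function $f_n \in \mathcal{F}_D$ by setting $f_n \equiv 1$ on $K \setminus K_{b,n}$, $f_n = 0$ on $V_0$, and extending harmonically on each of the $d+1$ boundary $n$-complexes $K_{\bfi} \subset K_{b,n}$. Continuity across $\partial K_{b,n}$ is automatic: the non-boundary vertices of each $\psi_{\bfi}(V_0)$ already carry the value $1$ from the constant extension on the interior, and these are exactly the prescribed harmonic boundary data on the boundary $n$-complex. By the decomposition \eqref{eq:formdecomp} applied to the level-$n$ cut, $\mathcal{E}(f_n,f_n)$ receives contributions only from the boundary complexes; on each such complex the harmonic extension realises $\mathcal{E}_0$ of its $V_0$-boundary data, which consists of a single $0$ at the boundary vertex and $d$ values equal to $1$, giving $\mathcal{E}_0(f_n\circ\psi_{\bfi},f_n\circ\psi_{\bfi}) = d$. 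Summing over the $d+1$ boundary complexes and using $\rho_{\bfi} \leq \rho_{\sup}^n$ gives $\mathcal{E}(f_n,f_n) \leq d(d+1)\rho_{\sup}^n$, while $\|f_n\|_2^2 \geq \mu(K\setminus K_{b,n})$ since $f_n = 1$ there, and the Rayleigh quotient delivers the stated upper bound for every $n\geq 2$.

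For the $V=1$ refinement I apply the general bound at $n=2$ and bound $\mu(K\setminus K_{b,2})$ from below. In the $V=1$ case every level is a neck, so by Lemma~\ref{simK} $\mu_{\bfi} = w_{\bfi}/W_2$ with $W_2 = \sum_{|\bfj|=2}w_{\bfj}$, and by \eqref{wprod} together with the constancy of the IFS at each level, $W_2 = W^{F_1}W^{F_2}$ where $W^F := \sum_{i} w_i^F$. The $d+1$ boundary $2$-cells each have weight at most $w_{\sup}^2$, so $\sum_{\text{bdy}}w_{\bfi} \leq (d+1)w_{\sup}^2$; meanwhile each IFS $F$ has (at least) $d+1$ boundary subcells, one per boundary vertex of $V_0$, giving $W^F \geq (d+1)w_{\inf}$ and hence $W_2 \geq (d+1)^2 w_{\inf}^2$. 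Combining these and simplifying produces $\mu(K\setminus K_{b,2}) \geq d\,w_{\inf}^2 / ((d+1)\,w_{\sup}^2)$, and substituting into the general bound yields exactly $(d+1)^2\rho_{\sup}^2 w_{\sup}^2/w_{\inf}^2$.

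The hard part is the final measure estimate: the naive lower bound on $W_2$ from counting only the $d+1$ boundary subcells in each factor $W^F$ is tight exactly in the flat case $w_{\sup}=w_{\inf}$ and becomes delicate when $w_{\sup}/w_{\inf}$ is large. Closing the inequality in the stated generality will require also exploiting the contribution of the \emph{non}-boundary subcells to each $W^F$ (available since $N^F \geq 3$), and careful bookkeeping of the interplay between $(d+1)\,w_{\sup}^2$ in the numerator and the product $W^{F_1}W^{F_2}$ in the denominator. This is the only step where the specific $V=1$ product structure is essential; the rest of the argument only uses the resistance-form scaling and the Dirichlet-form decomposition.
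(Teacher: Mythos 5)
The lower bound $C^{-1}\le\lambda_1^D$ and the general upper bound are correct and follow essentially the same route as the paper: your test function (constant $1$ off the boundary $n$-complexes, $0$ on $V_0$, harmonic on each boundary complex) is the same function the paper gets by prescribing values on $V_n$ and harmonically interpolating, and the energy and measure estimates match.

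The $V=1$ refinement, however, has a genuine gap, which you partly flag yourself. Your two bounds, $\sum_{\text{bdy}} w_{\bfi}\le (d+1)w_{\sup}^2$ and $W_2\ge (d+1)^2 w_{\inf}^2$, combine only to $\mu(K_{b,2})\le w_{\sup}^2/\bigl((d+1)w_{\inf}^2\bigr)$, hence $\mu(K\setminus K_{b,2})\ge 1-w_{\sup}^2/\bigl((d+1)w_{\inf}^2\bigr)$. This is \emph{not} the quantity $d\,w_{\inf}^2/\bigl((d+1)w_{\sup}^2\bigr)$ you assert, and when $w_{\sup}/w_{\inf}>\sqrt{d+1}$ it is negative and therefore vacuous. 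The claimed ``combining and simplifying'' step does not hold. Trying to fix this by tallying the interior subcells of each $W^F$ is still awkward, because you are comparing an absolute numerator $(d+1)w_{\sup}^2$ against an absolute denominator $W_2$ that you can only control up to the ratio $w_{\sup}/w_{\inf}$.

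The paper avoids this difficulty by never bounding $W_2$ in absolute terms. It compares the \emph{interior} mass directly to the \emph{boundary} mass using the same-type comparison \eqref{mibds} of Lemma~\ref{hineq}: in the $V=1$ case every $2$-complex has the same type, so for any interior $\bfi$ and boundary $\bfj$ one gets $\mu_{\bfi}\ge\zeta^2\mu_{\bfj}$ with $\zeta=w_{\inf}/w_{\sup}$. Since there are at least $d(d+1)$ interior $2$-complexes and exactly $d+1$ boundary ones, summing gives $\mu(K\setminus K_{b,2})\ge d\zeta^2\mu(K_{b,2})$, a \emph{relative} inequality between the two masses. Using $\mu(K_{b,2})=1-\mu(K\setminus K_{b,2})$ and solving yields $\mu(K\setminus K_{b,2})\ge d\zeta^2/(1+d\zeta^2)\ge d\zeta^2/(d+1)$, which is positive for every $\zeta\in(0,1]$. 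Substituting into the general bound then delivers the stated constant. The key idea you are missing is precisely this relative comparison of interior against boundary mass, which is what makes the estimate survive uniformly in $w_{\sup}/w_{\inf}$.
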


\begin{proof} 
Since the Dirichlet form is a resistance form we have   for $f \in \cf_D$ that
\[ % \label{eq:cty}
|f(x)-f(y)|^2 \leq R (x,y) \ce(f,f).
\]
Since $\mu$ is a probability measure and $f \in \cf_D$,   using Corollary~\ref{cor:diambd}  and the definition of $ R(x,y) $ in \eqref{dfrm}, it follows that 
\[
\|f\|_2^2  \leq  \sup_{x\in K^{\omega}} |f(x)|^2  
  \leq  \sup_{x,y\in K } |f(x)-f(y)|^2  \leq   \sup_{x,y\in K }  R (x,y)\,  \ce (f,f) \leq C\,  \ce (f,f).
\]
Hence by Rayleigh-Ritz, 
\[  
 \lambda_1^D = \inf_{f \in \cf_D} \frac{\ce(f,f)}{ \|f\|_2^2} \geq C^{-1}.
\]

\medskip Next let $ f(x) = 0 $ for $ x \in V_0 $, $ f(x) = 1 $ for $ x \in V_n \setminus V_0 $, 
and harmonically interpolate. Then
\begin{align*} 
\mathcal{E} (f,f) &= \mathcal{E} _n(f,f)  \leq d(d+1) \rho^n_{\sup}, \\
 \int_K f(x)^2 \mu(dx)   & \geq \mu(K\setminus K_{b,n}).
\end{align*} 
Again by Rayleigh-Ritz, this gives the upper bound.

\medskip If $ V=1 $ note that with $ n=2 $ there are at least $ d(d+1) $ interior cells as well as $ d+1 $ boundary cells.  Since all cells have the same type,  from \eqref{mibds} in Lemma~\ref{hineq}  with $ \zeta = w_{\inf}/w_{\sup} $, 
\begin{align*} 
\mu(K \setminus K_{b,2}) &\geq d \zeta^2 \mu(K_{b,2}) = d \zeta^2 \big( 1- \mu(K \setminus K_{b,2}) \big)\\
\therefore\  \mu(K \setminus K_{b,2}) &\geq \frac{d\zeta^2}{1 + d \zeta^2} \geq \frac{d \zeta^2}{d+1}.
\end{align*} 
This now gives the result for $ V=1 $.
\end{proof}

In order to obtain $P_V$-almost sure results we need to estimate the tail of the bottom eigenvalue random variable.
Note that this result is only relevant in the case where $V>1$ as if $ V=1 $ then $ \lambda _1^D $ is bounded 
above by~\eqref{fevb}. 
\begin{lem}\label{critest}
There exist constants $A>0$, $ \beta > 0 $  and $  \gamma >0$, such that
\begin{equation} \label{}
 P_V(\lambda_1^D > x ) \leq   A\exp(- \beta x^{\gamma}).
\end{equation} 
\end{lem}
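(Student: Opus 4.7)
My approach starts from the deterministic upper bound of Lemma~\ref{lem:evalests}: for every $n\geq 2$,
\[
\lambda_1^D \leq \frac{d(d+1)\rho_{\sup}^n}{\mu(K\setminus K_{b,n})}.
\]
Hence $P_V(\lambda_1^D > x) \leq P_V\bigl(\mu(K_{b,n}) > 1 - d(d+1)\rho_{\sup}^n/x\bigr)$ for every $n \geq 2$. Since $\mu$ is non-atomic by Lemma~\ref{noatoms} and the boundary complexes $K_{b,n}$ decrease in $n$, we have $\mu(K_{b,n}) \downarrow 0$, so there is freedom in the choice of $n=n(x)$. The problem reduces to producing an upper tail bound for $\mu(K_{b,n})$ at a suitably chosen $n$.

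The central probabilistic input will come from neck levels. At a neck every level-$n$ node has the same type, so Lemma~\ref{hineq} (inequality~\eqref{mibds}) applies between any pair of level-$n$ cells and forces $\mu_{\boldsymbol{i}} \leq (w_{\sup}/w_{\inf})^n / M_n$ for every such cell, with $M_n$ the number of level-$n$ cells. Summing over the at most $d+1$ boundary cells and using the deterministic lower bound $M_n \geq N_{\inf}^n \geq 3^n$ from~\eqref{Nbd} gives
\[
\mu(K_{b,n}) \leq (d+1)\bigl(w_{\sup}/(N_{\inf} w_{\inf})\bigr)^n
\]
at neck levels $n$. Combined with monotonicity of $\mu(K_{b,n})$ in $n$ and the geometric tail on the wait to the next neck past any prescribed deterministic level (arising from the uniformly positive probability that an environment is a neck, finiteness of the type set, and the independence from Definition~\ref{dfpv}), this produces with high probability that $\mu(K\setminus K_{b,n}) \geq 1/2$ for $n$ comparable to a prescribed value.

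Setting $n(x) \sim \alpha \log x$ in Lemma~\ref{lem:evalests} and combining with the single-neck tail estimate above yields, without further effort, a polynomial tail $P_V(\lambda_1^D > x) \leq A x^{-\beta'}$. The main obstacle is upgrading this polynomial bound to the sub-exponential form $A\exp(-\beta x^{\gamma})$ claimed. To achieve this I would exploit the i.i.d.\ decomposition over neck intervals from Lemma~\ref{lem:sumprod}: the sum $\sum_{|\boldsymbol{i}|=n(k)} w_{\boldsymbol{i}}$ controlling $\mu(K_{b,n(k)})$ factorises into an independent multiplicative product across consecutive necks, and the ratio controlling $\mu(K_{b,n(k)})$ at the $k$-th neck picks up an independent factor in $(0,1)$. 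A large-deviation estimate for this i.i.d.\ product, together with an independent large-deviation control on the cumulative growth of $M_{n(k)}$, and a careful optimisation of $n=n(x)$, is expected to deliver the required sub-exponential decay. The exponent $\gamma$ should emerge from the interplay between $\log\rho_{\sup}$, $\log(w_{\sup}/(N_{\inf} w_{\inf}))$ and the large-deviation rate of these neck-interval factors. The technically hardest point will be ensuring that the deterioration of the first-neck polynomial bound is overcome by the exponential gain available from iterating across many independent neck factors.
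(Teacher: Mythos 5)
Your opening is sound and parallels the paper: both start from the upper bound of Lemma~\ref{lem:evalests}, $\lambda_1^D \leq d(d+1)\rho_{\sup}^n/\mu(K\setminus K_{b,n})$, and both reduce the problem to a lower bound on $\mu(K\setminus K_{b,n})$ at a suitably chosen random level~$n$. Your route through neck levels does give a polynomial tail, and you correctly identify that the upgrade to stretched-exponential is the hard step. However, the repair you propose cannot work. Your random level is (essentially) the first neck, whose tail is geometric: $P_V(n(1) > y) \leq p^y$ with $p \in (0,1)$ fixed. Iterated i.i.d.\ products across later neck intervals and large-deviation estimates cannot thin this tail, because the event $\{\lambda_1^D > x\}$ is already forced on the event that the \emph{first} controlling level is large; you cannot use information from levels beyond the first neck to retroactively exclude a late first neck. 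After the substitution $n \sim \log x/\log\xi$, a geometric tail in $n$ yields a polynomial tail in $x$, no matter how the later factors behave. (There is also a secondary gap: your bound $\mu(K_{b,n}) \leq (d+1)(w_{\sup}/(N_{\inf}w_{\inf}))^n$ only decays if $w_{\sup}/w_{\inf} < N_{\inf}$, which is not assumed; the paper instead derives a lower bound $\mu(K\setminus K_{b,n}) > \zeta^n/(d+2)$ with $\zeta = w_{\inf}/w_{\sup}$, which always works.)

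The idea you are missing is that one should \emph{not} wait for a full neck. Let $n$ be the first level at which the type of a boundary $n$-complex of \emph{maximum mass} is shared by at least one \emph{interior} $n$-complex. That single type match is enough to run \eqref{mibds} and obtain $\mu(K\setminus K_{b,n}) > \zeta^n\mu(K_{b,n})/(d+1)$, hence $\mu(K\setminus K_{b,n}) > \zeta^n/(d+2)$, and thus $\lambda_1^D < d(d+1)(d+2)\xi^n$ with $\xi = \rho_{\sup}w_{\sup}/w_{\inf}$. The crucial gain is in the tail of this $n$: at level $n$ there are at least $(d+1)^n - (d+1)$ interior complexes, each assigned a type independently and uniformly from $\{1,\dots,V\}$ (Definition~\ref{dfpv}), so the probability of \emph{no} match at level $n$ is at most $p^{(d+1)^{n-1} - (d+1)}$ --- doubly exponential in $n$, not geometric. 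After the $\log x/\log\xi$ substitution this doubly exponential decay becomes exactly the stretched exponential $A\exp(-\beta x^{\gamma})$ with $\gamma = \log(d+1)/\log\xi$. It is this switch from ``all cells share a type'' (probability of failure geometric) to ``one boundary cell's type is reproduced somewhere among exponentially many interior cells'' (probability of failure doubly exponential) that powers the lemma, and it is precisely what your approach lacks.
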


\begin{proof}  Let $ n = n^\omega $ be the first level such that the following is true: if $ v $ is the type of a boundary $ n $-complex of maximum mass, then at least one interior (i.e.\ non-boundary) $ n $-complex is also of type $ v $. 
 
For any such $ n $ it follows from \eqref{mibds} that
\[
\text{if }  \zeta:= \frac{w_{\inf}}{w_{\sup}}\quad \text{then }  \mu(K \setminus K_{b,n} ) > \frac{\zeta^n}{d+1} \mu(K_{b,n} ) = \frac{\zeta^n }{d+1}  \big( 1 - \mu (K \setminus K_{b,n}) \big).
\]
Hence 
\[
\mu(K \setminus K_{b,n} ) > \frac{1}{1+(d+1)\zeta^{-n}} > \frac{\zeta^n}{d+2}.
\]
From Lemma~\ref{lem:evalests} it follows that 
\[
\lambda _1^D < d(d+1)(d+2) \rho_{\sup}^n   \zeta^{-n} =  d(d+1)(d+2) \xi^{ n} , \quad \text{where } 
\xi := \frac{ \rho_{\sup}w_{ \sup} }{ w_{\inf} }.
\]

Hence 
\[
P_V\big( \lambda _1 ^D > x \big) \leq P_V\big(d(d+1)(d+2) \xi^n > x \big) = P_V\left( n > \frac{ \log \frac{x}{d(d+1)(d+2)} }{\log \xi }\right).
\]
Since there are $ d+1 $ boundary $ n $-complexes and at least $ (d+1)^n -(d+1) $ interior $ n $-complexes, and since the type
of each $ n $-complex is selected independently of each other $ n $-complex,  
\[
 P_V(n>y) \leq p ^{(d+1)^{y-1}-(d+1)} = A p^{(d+1)^{y-1} }  = A \exp\Big( \frac{\log p}{d+1}  \exp(y\log (d+1)) \Big)
 \]
where $ p $ is the probability that a particular complex is not of type $ v  $ and $A=p^{-d-1}$.
Hence setting $ y = \log(x/(d(d+1)(d+2)))/ \log \xi $, 
 \[
P_V\big( \lambda _1 ^D > x \big) \leq A \exp \left(\frac{\log p}{d+1}  \left(\frac{x}{d(d+1)(d+2)}\right)^{\log (d+1)/\log \xi} \right) 
=  A \exp ( - \beta x^ \gamma )  ,
\]
where $ \gamma = \log (d+1) / \log \xi $ and $ \beta = \frac{1}{d+1} \log (1/p) / (d(d+1)(d+2))^ \gamma  $.
\end{proof} 

Define
%\[ \widehat{\lambda}_k = \max_{\bfi\in\Lambda_k} \lambda_1^D(\sigma^{\bfi}). \]
\begin{equation} \label{maxfe}
 \widehat{\lambda_1^k}  = \max \big\{ \lambda_1^{\sigma^{ \boldsymbol{i} } ,D} :   \bfi\in\Lambda_k \big\},
\end{equation} 
where $ \lambda_1^{\sigma^{ \boldsymbol{i} } ,D} $ is the first Dirichlet eigenvalue of the Dirichlet form
$ ( \mathcal{E} ^{ \sigma ^{\boldsymbol{i} }}, \mathcal{F} ) = ( \mathcal{E} ^{ \sigma ^{ \boldsymbol{i}} \omega  }, 
\mathcal{F}^ \omega ) $ with respect to the measure $ \mu ^{ \sigma ^{ \boldsymbol{i} }}  
= \mu ^ { \sigma ^{ \boldsymbol{i} }\omega  } $. 
Note that $  \widehat{\lambda_1^0} =  \lambda_1^D $.

If $V=1$ by~\eqref{fevb} we have $\widehat{\lambda_1^k} \leq (d+1)^2\rho^2_{\sup} w_{\sup}^2 / w_{\inf}^2$ for all $k$.

\begin{lem}\label{cortail} If $V>1$, then with $ \beta $ and $ \gamma $ as in Lemma~\ref{critest}, we have $ P_V $ a.s.\ that 
\begin{equation} \label{uefe} 
\widehat{\lambda_1^k}   \preccurlyeq  \beta^{-1/ \gamma }  (\log k)^{1/ \gamma } .
\end{equation} 
% There are constants $c $ and $  k_0(\omega)$ such that for all $k\geq k_0$,
%\[ \lambda_1^D(\sigma^k\omega) \leq c (\log{k})^{1/\gamma}   
%\quad \text{and} \quad 
%  \widehat{\lambda}_k \leq c_1  (\log{k})^{1/\gamma}, \quad \bp\ a.s. \] 
\end{lem}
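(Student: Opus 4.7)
The main issue is that $|\Lambda_k|$ can grow geometrically in $k$, so a naive union bound over $\bfi \in \Lambda_k$ combined with the tail estimate of Lemma~\ref{critest} is too crude.  The key observation is that the neck structure drastically reduces the number of \emph{distinct} values of $\lambda_1^{\sigma^{\bfi},D}$.  Indeed, every $\bfi \in \Lambda_k$ lies at a neck level $|\bfi| = n(\ell(\bfi))$; at a neck level all nodes share a common type, and by Remark~\ref{rm1v} all subtrees rooted at that level are identical as labelled trees.  The resistance scale factors, weights, and hence the forms $(\mathcal{E}^{\sigma^{\bfi}}, \mathcal{F}^{\sigma^{\bfi}})$ and measures $\mu^{\sigma^{\bfi}}$ depend only on the labelled subtree, so $\lambda_1^{\sigma^{\bfi},D}$ depends on $\bfi \in \Lambda_k$ only through $\ell(\bfi)$.

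Consequently, the cardinality of $\{\lambda_1^{\sigma^{\bfi},D} : \bfi \in \Lambda_k\}$ is bounded by $|\{\ell(\bfi) : \bfi \in \Lambda_k\}|$.  Applying Lemma~\ref{lem:spatial}(a), there is a deterministic constant $c_2$ such that $P_V$-a.s.\ for $k$ sufficiently large, $\ell(\bfi) \leq c_2 k$ for every $\bfi \in \Lambda_k$; hence the number of distinct first Dirichlet eigenvalues among $\{\lambda_1^{\sigma^{\bfi},D} : \bfi \in \Lambda_k\}$ is at most $c_2 k$.  Let $\Omega_k$ denote this a.s.\ eventual event.

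On $\Omega_k$, a union bound combined with Lemma~\ref{critest} yields
\[
P_V\bigl(\widehat{\lambda_1^k} > x,\ \Omega_k\bigr) \leq c_2 k \cdot A \exp(-\beta x^\gamma).
\]
Choosing $x_k = \beta^{-1/\gamma}\bigl((2+\varepsilon)\log k\bigr)^{1/\gamma}$ makes the right-hand side $A c_2 k^{-(1+\varepsilon)}$, which is summable.  The first Borel--Cantelli lemma and the a.s.\ validity of $\Omega_k$ for large $k$ then give $\widehat{\lambda_1^k} \leq x_k$ eventually, hence
\[
\limsup_{k\to\infty} \frac{\widehat{\lambda_1^k}}{\beta^{-1/\gamma}(\log k)^{1/\gamma}} \leq (2+\varepsilon)^{1/\gamma} \quad P_V \text{ a.s.}
\]
Letting $\varepsilon \downarrow 0$ and, if one wishes to attain the exact constant $\beta^{-1/\gamma}$ written in the statement, absorbing the factor $2^{1/\gamma}$ into a new but equally valid choice of $(\beta,\gamma)$ in Lemma~\ref{critest} (the tail bound only weakens), completes the argument.

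\textbf{Main obstacle.}  The subtle point is recognising that although the subtrees $\sigma^{\bfi}\omega$ for $\bfi \in \Lambda_k$ are highly dependent across different neck levels, the union bound does not require independence; what is needed is only to cut the number of terms from exponential to polynomial in $k$.  This reduction rests on the \emph{neck} property (same type at a level implies identical labelled subtree), which is the structural feature that distinguishes $V$-variable trees from purely random recursive ones.  A secondary bookkeeping concern is verifying that the exceptional set where $\ell(\bfi) > c_2 k $ for some $\bfi \in \Lambda_k$ does not contribute, which follows directly from Lemma~\ref{lem:spatial}(a).
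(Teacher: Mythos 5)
Your counting observation is correct and is a slightly cleaner route than the paper takes. You note that every $\bfi\in\Lambda_k$ is a neck node, that at a neck level $n(\ell)$ all subtrees are identical (Remark~\ref{rm1v}), and hence $\lambda_1^{\sigma^{\bfi},D}$ depends on $\bfi\in\Lambda_k$ only through $\ell(\bfi)\leq c_2 k$ (Lemma~\ref{lem:spatial}(a)), giving at most $c_2k$ distinct eigenvalues. The paper instead counts at most $V$ non-isomorphic subtrees \emph{per level} and uses $z_k\preccurlyeq c_4 k$ from Lemma~\ref{lem:spatial}(b), so it works with $\max_{1\leq j\leq Vz_k}Y_j$. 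Both counts are $O(k)$, and both rely on the same structural feature that $V$-variability compresses the number of distinct subtrees from exponential to polynomial.

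The genuine gap is in the final Borel--Cantelli step. Your union bound gives $P_V\bigl(\widehat{\lambda_1^k}>x,\Omega_k\bigr)\leq c_2 kA\exp(-\beta x^\gamma)$, which is summable only if you take $x_k=\beta^{-1/\gamma}\bigl((2+\varepsilon)\log k\bigr)^{1/\gamma}$; letting $\varepsilon\downarrow 0$ yields the constant $2^{1/\gamma}\beta^{-1/\gamma}$, not the stated $\beta^{-1/\gamma}$. Your proposed repair --- absorbing $2^{1/\gamma}$ into a new choice of $(\beta,\gamma)$ --- does not work: the tail bound $P_V(\lambda_1^D>x)\leq A\exp(-\beta x^\gamma)$ does not imply a bound with $2\beta$ in place of $\beta$ (nor does changing $\gamma$ help, since decreasing $\gamma$ weakens the tail but worsens the conclusion, and increasing $\gamma$ strengthens a bound you do not have). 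The clean way to get the sharp constant, and what the paper does via Lemma~\ref{lem:geomrvs}, is to shift the Borel--Cantelli: let $Z_\ell:=\lambda_1^{\sigma^{\bfi_\ell},D}$ for any node $\bfi_\ell$ at neck level $n(\ell)$ (well-defined since the subtrees are identical), apply Borel--Cantelli to the single sequence $(Z_\ell^\gamma)_{\ell\geq 1}$ (each with tail $P(Z_\ell^\gamma>x)\leq A(e^{-\beta})^x$, no independence needed) to obtain $\limsup_\ell Z_\ell/(\log\ell)^{1/\gamma}\leq\beta^{-1/\gamma}$, and only then use $\ell(\bfi)\leq c_2k$ together with $\log\ell\leq\log(c_2k)\sim\log k$ to bound the max. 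The multiplicative constant $c_2$ then lands inside a logarithm, so it disappears in the $\limsup$. For the downstream Lemma~\ref{seceest} your weaker constant is actually harmless (it is swallowed by $k^\alpha$), but as a proof of the stated \eqref{uefe} the argument as written falls short by a constant factor.
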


\begin{proof} In order to apply the growth estimate in the previous lemma and use Lemma~\ref{lem:geomrvs} we 
use two additional properties: 
\begin{enumerate}
\item The number of distinct subtrees, and hence eigenvalues, corresponding to each level of $ T $ is uniformly bounded (by $ V $);
\item The maximum level  corresponding to nodes in $ \Lambda_k $ is asymptotically bounded by a multiple of $ k $, see Lemma~\ref{lem:spatial}. 
\end{enumerate} 

\smallskip  First consider any sequence of random $ V $-variable IFS trees $ (T_j)_{j\geq 1 } $, not necessarily independent but all with the same distribution $ P=P_V $, see Definition~\ref{dfpv2}.  Let the corresponding    random first eigenvalues be $Y_j  $. 

Then for all $ x \geq 0 $, 
\begin{align}
P(Y_j>x) &\leq A \exp(- \beta x^ \gamma ) = A(e^{ - \beta} )^{ x^ \gamma } \quad \text{by Lemma \ref{critest},} \notag \\
\therefore\ P(Y_j ^ \gamma > x )  &\leq A(e^{- \beta })^x , \notag \\
\therefore\ \max_{1\leq j \leq k }Y_j^\gamma   &\preccurlyeq  \beta^{-1} \log k \quad  P_V \text{ a.s.}  \quad \text{by Lemma~\ref{lem:geomrvs}},\notag \\
\therefore\ \max_{1\leq j \leq k }Y_j  & \preccurlyeq  \beta^{-1/ \gamma }  (\log k)^{1/ \gamma } \label{c44bd} \quad  P_V \text{ a.s.} 
\end{align} 

For any tree $ T = T^ \omega $ there are at most $ V $ non-isomorphic  subtrees rooted at each level.  Let $( Y_j )_{j \geq 0} $ be the sequence of random variables given by the first eigenvalue of $ T $, followed by the first eigenvalues of non-isomorphic IFS subtrees of $ T $   at level one (there are at most $ V $), followed  by the first eigenvalues of non-isomorphic IFS subtrees of $ T $   at level two (again there are at most $ V $), etc.  
If $ Y_j $ corresponds to a subtree rooted at level $ p $ then by construction $ j \leq Vp $. With $ z_k $ as in \eqref{dfzk} it follows that
$ 
  \widehat{\lambda_1^k}  \leq \max_{1\leq j \leq Vz_k} Y_j 
$.
 
Hence $ P_V $ a.s.,
\begin{align*} 
\limsup_{k\to \infty} \frac{  \widehat{\lambda_1^k} }{(\log k )^{1/ \gamma } }
   &\leq \limsup_{k\to \infty} \frac{ \max_{1\leq j \leq Vz_k} Y_j}{\big(\log V z_k\big)^{1/ \gamma }}
      \left(\frac{ \log V z_k } {\log k} \right)^{1/ \gamma } \\
      &\leq \beta ^{-1/ \gamma }   \left( \limsup_{k\to \infty}   \frac{ \log V +\log z_k } {\log k} \right)^{1/ \gamma } 
      \quad \text{from \eqref{c44bd}} \\
      &\leq \beta ^{-1/ \gamma } ,
\end{align*} 
since $ z_k\preccurlyeq c_4 k $ from Lemma~\ref{lem:spatial}(b)  which implies $ \limsup_{k\to \infty} (\log z_k/\log k  ) 
\leq 1$.
\end{proof} 

We now wish to determine the limiting behaviour of the counting function. We first give the following result 
that is true for all $\omega\in\Omega_V'$, which we recall from~\eqref{dfwvd} assumes that $ \omega $ has 
an infinite number of necks.  
 
Recall  $ \eta\ (<1)$ defined in \eqref{dfeta}, and the quantities defined in \eqref{maxfe} and \eqref{dfmk}--\eqref{dfzk}.
  
\begin{lem}\label{firsteest}
There exists a constant $c_1$ such that if $ \omega \in \Omega_V' $ then
\begin{equation} \label{eqfe}   
  \mathcal{N} _D(T_k) \leq c_1 M_k, \quad 
  M_k \leq \mathcal{N} _D\big( \widehat{\lambda_1^k}  T_k  {\eta}^{-y_k} \big)    
\end{equation} 
for all $k\geq 0$. 
\end{lem}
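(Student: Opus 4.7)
The plan is to use Dirichlet--Neumann bracketing (Lemma~\ref{lem:scale}) with the neck cut set $\Lambda_k$, combined with two key facts: (i) $\mathcal{N}_D^{\sigma^\bfi}$ grows at most linearly (Lemma~\ref{lem:lingrowth}), and (ii) the crossing times $t_\bfi$ for $\bfi \in \Lambda_k$ are tightly bracketed around $e^{-k}$ (Lemma~\ref{lem:spatial}(c)). The entire argument is essentially bookkeeping once these pieces are in place.

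For the upper bound $\mathcal{N}_D(T_k) \leq c_1 M_k$, I would start from the right-hand chain in \eqref{eq:dnb}:
\[
\mathcal{N}_D(T_k) \leq \mathcal{N}_N(T_k) \leq \sum_{\bfi \in \Lambda_k} \mathcal{N}_N^{\sigma^\bfi}(t_\bfi T_k).
\]
By the second inequality in \eqref{eq:dnb} combined with Lemma~\ref{lem:lingrowth}, $\mathcal{N}_N^{\sigma^\bfi}(s) \leq Cs + (d+1)$ uniformly in $\bfi$. Plugging in and using $\sum_{\bfi \in \Lambda_k} t_\bfi = M_k \overline{t}_k = M_k/T_k$ by the very definition of $T_k$, the summed bound collapses to $(C + d+1) M_k$, giving $c_1 = C + d + 1$.

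For the lower bound I would use the left-hand side of \eqref{eq:dnb} with $s = \widehat{\lambda_1^k} T_k \eta^{-y_k}$:
\[
\mathcal{N}_D\bigl(\widehat{\lambda_1^k} T_k \eta^{-y_k}\bigr) \geq \sum_{\bfi \in \Lambda_k} \mathcal{N}_D^{\sigma^\bfi}\bigl(t_\bfi \widehat{\lambda_1^k} T_k \eta^{-y_k}\bigr).
\]
It suffices to show each summand is at least $1$, i.e.\ that the argument dominates $\lambda_1^{\sigma^\bfi,D}$. Since $\widehat{\lambda_1^k} \geq \lambda_1^{\sigma^\bfi,D}$ by construction, this reduces to verifying $t_\bfi T_k \eta^{-y_k} \geq 1$. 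Here I use two elementary estimates: first, $t_\bfi \leq e^{-k}$ for all $\bfi \in \Lambda_k$ by the definition \eqref{lkcut}, hence $\overline{t}_k \leq e^{-k}$ and $T_k \geq e^{k}$; second, the lower bound $t_\bfi \geq e^{-k} \eta^{y_k}$ extracted in the proof of Lemma~\ref{lem:spatial}(c), which follows from $t_\bfi \geq t_{\bfi|n(\ell-1)} \eta^{n(\ell)-n(\ell-1)} > e^{-k}\eta^{y_k}$ (note that the inequality $n(\ell)-n(\ell-1)\leq y_k$ and $\eta<1$ is what produces the $\eta^{y_k}$ factor). Multiplying these gives $t_\bfi T_k \eta^{-y_k} \geq e^{-k}\eta^{y_k} \cdot e^k \cdot \eta^{-y_k} = 1$, as required, and the sum then yields $M_k$.

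I do not expect any real obstacle: both bounds follow by mechanical application of the bracketing and the precise (not merely asymptotic) relations between $t_\bfi$, $\overline{t}_k$, and $\eta^{y_k}$. The only point requiring a little care is to make sure the bound $t_\bfi \geq e^{-k}\eta^{y_k}$ is used in its exact (non-asymptotic) form -- the statement of Lemma~\ref{lem:spatial}(c) is phrased asymptotically via $\preccurlyeq$, but the underlying inequality holds deterministically for every $\omega \in \Omega_V'$ and every $\bfi \in \Lambda_k$, which is exactly what we need for a realization-wise result valid for all $k \geq 0$.
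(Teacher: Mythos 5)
Your proof is correct and takes essentially the same approach as the paper: both use the Dirichlet--Neumann bracketing of Lemma~\ref{lem:scale} over the neck cut $\Lambda_k$, the linear-growth bound of Lemma~\ref{lem:lingrowth}, and the deterministic (not merely asymptotic) two-sided bracket $\eta^{y_k}e^{-k}\le t_{\bfi}\le e^{-k}$ on crossing times. The only cosmetic difference is that the paper's lower bound starts from the identity $M_k=\sum_{\bfi\in\Lambda_k}\mathcal{N}_D^{\sigma^{\bfi}}(\lambda_1^{\sigma^{\bfi},D})$ and then monotonically enlarges the argument via $t_{\bfi}^{-1}\le T_k\eta^{-y_k}$, whereas you apply the bracketing inequality directly with $s=\widehat{\lambda_1^k}T_k\eta^{-y_k}$ and verify each summand is at least $1$; the two are logically equivalent.
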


\begin{proof} 
  For the first estimate we have from \eqref{eq:dnb},  \eqref{dfmk} and Lemma~\ref{lem:lingrowth},
\begin{align*} 
\mathcal{N} _D(T_k)   &\leq \sum_{ \boldsymbol{i} \in \Lambda_k} \mathcal{N} _N^{\sigma^{\boldsymbol{i}}} 
( t_{ \boldsymbol{i} } T_ k )  \leq (d+1)M_k + \sum_{ \boldsymbol{i} \in \Lambda_k} 
\mathcal{N} _D^{ \sigma ^ { \boldsymbol{i} }} ( t_{ \boldsymbol{i} } T_ k ) \\
&   \leq (d+1)M_k + c\, T_ k \sum_{ \boldsymbol{i} \in \Lambda_k} t_{ \boldsymbol{i} }     
 \leq c_1  M_k  .   
\end{align*} 

Next note   from   definitions \eqref{dfmk}, \eqref{dfdec}  and \eqref{maxfe} of $ M_k $, 
$ \mathcal{N}_D$  and $ \widehat{\lambda_1^k}$ respectively, from the fact $   \lambda _1^{ \sigma ^{ \boldsymbol{i} },D}  <   \lambda _2^{ \sigma ^{ \boldsymbol{i} },D}  $ for the  equality below, and 
from   \eqref{eq:dnb} for the last inequality 
provided  $ t_{ \boldsymbol{i} }^{-1} \leq c(k) $ for all $ \boldsymbol{i} \in \Lambda_k $,  that
\begin{equation} \label{mknest} 
M_k  =  \sum_{ \boldsymbol{i} \in \Lambda_k} \mathcal{N} _D^{ \sigma ^{ \boldsymbol{i} }} 
                                 \big(   \lambda _1^{ \sigma ^{ \boldsymbol{i} },D}    \big)
 \leq  \sum_{ \boldsymbol{i} \in \Lambda_k} \mathcal{N} _D^{ \sigma ^{ \boldsymbol{i} }}
                               \big( \widehat{ \lambda _1^k}  \big) 
 \leq \mathcal{N} _D\big(\widehat{ \lambda _1^k}\, c(k)  \big). 
\end{equation} 
But
$ 
 t_{ \boldsymbol{i} }^{-1} \leq \eta^{-y_k} e^k \leq  \eta^{-y_k} T_k 
$ 
 from Lemma~\ref{lem:spatial}(c) and the definition~\eqref{dfmk} of $ T_k $. This gives the second estimate.
%
%\medskip For the second estimate   note that if $ x> 0 $ then
%\begin{align*}
%\mathcal{N} _D\big(\widehat{ \lambda _1^k}x  \big) 
%           &\geq  \sum_{ \boldsymbol{i} \in \Lambda_k} \mathcal{N} _D^{ \sigma ^{ \boldsymbol{i} }}
%                               \big( \widehat{ \lambda _1^k}  x t_{ \boldsymbol{i} } \big) 
%           \quad \text{from \eqref{eq:dnb}} \\
%            &\geq  \sum_{ \boldsymbol{i} \in \Lambda_k} \mathcal{N} _D^{ \sigma ^{ \boldsymbol{i} }} 
%                                 \big(   \lambda _1^{ \sigma ^{ \boldsymbol{i} },D}    x t_{ \boldsymbol{i} } \big)
%               \quad \text{from \eqref{maxfe}} \\
%            &\geq M_k,
%  \end{align*}
% where the last inequality follows from \eqref{dfmk} provided $ x t_{ \boldsymbol{i} } \geq 1 $  for all $ \boldsymbol{i} \in \Lambda_k $.
%But
%\[
%\eta^{-y_k} T_k t_{ \boldsymbol{i} } \geq e^{-k} T_k \geq 1,
%\]
%where the first inequality follows from Lemma~\ref{lem:spatial}(c) and the second inequality because 
%$ t_{ \boldsymbol{i} } \leq e^{-k} $ and $ T_k = {\overline{t}_k}^{-1} $.
%This gives the second estimate.
\end{proof} 

%*** [JH] Following needs rewording given the notational change in the previous proof ***
%
%***BH - probably we should just drop this altogether***
%
%\begin{rem}
%{\rm In this final step we have not used any averaging which may arise from the term 
%$M_k^{-1} \sum_{\bfi\in \Lambda_k} 
%I_{\{\tau_{\bfi}\geq c e^{-k}\}} $. A lower bound on this average could enable us to improve this result 
%and get a substantial improvement over the heat kernel approach given in Section~5.}
%\end{rem}

For $V=1$ we can improve this to the same estimate as that obtained in \cite{barham}~Section~7.
\begin{cor}\label{cor:v=1}
For $V=1$ there exist constants $c_1$ and $c_2$ such that for all $k\geq 0$,
\[  M_k \leq \mathcal{N}_D(c_1 T_k) \;\;\text{and} \;\;   \mathcal{N}_D(T_k) \leq c_2 M_k. \]
\end{cor}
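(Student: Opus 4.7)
The plan is to observe that the second inequality $\mathcal{N}_D(T_k)\leq c_2 M_k$ is already the first estimate in Lemma~\ref{firsteest}, which holds for all $\omega\in\Omega_V'$ and in particular when $V=1$ (where every level is a neck, so trivially $\omega\in\Omega_V'$). So the only content of the corollary is to upgrade the second estimate of Lemma~\ref{firsteest},
\[
M_k \leq \mathcal{N}_D\bigl(\widehat{\lambda_1^k}\, T_k\, \eta^{-y_k}\bigr),
\]
to the form $M_k\leq \mathcal{N}_D(c_1 T_k)$ by absorbing both $\widehat{\lambda_1^k}$ and $\eta^{-y_k}$ into a single deterministic constant in the $V=1$ case.

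For the first factor I would appeal to Lemma~\ref{lem:spatialv=1}(b), which states that when $V=1$ we have $y_k=1$ for all $k$; hence $\eta^{-y_k}=\eta^{-1}$, a deterministic constant independent of $k$ and $\omega$. For the second factor I would use the $V=1$ bound in Lemma~\ref{lem:evalests}, namely
\[
\lambda_1^D \leq \frac{(d+1)^2\rho_{\sup}^2 w_{\sup}^2}{w_{\inf}^2},
\]
which, crucially, depends only on the global structural constants $d, \rho_{\sup}, w_{\sup}, w_{\inf}$. Applying this bound to every subtree $\sigma^{\boldsymbol{i}}\omega$ for $\boldsymbol{i}\in\Lambda_k$ (each of which is itself a $1$-variable tree with the same defining constants), we conclude that $\widehat{\lambda_1^k}$ is bounded by the same deterministic constant for every $k$ and every $\omega$. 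This is the $V=1$ analogue of the tail estimate approach in Lemma~\ref{cortail}, but much simpler because we get a uniform bound rather than a $\log k$-type growth.

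Combining these two facts yields $\widehat{\lambda_1^k}\,\eta^{-y_k}\leq c_1$ for a deterministic constant $c_1$, and substitution into the second inequality of Lemma~\ref{firsteest} together with monotonicity of $\mathcal{N}_D$ gives
\[
M_k \leq \mathcal{N}_D\bigl(\widehat{\lambda_1^k}\, T_k\,\eta^{-y_k}\bigr) \leq \mathcal{N}_D(c_1 T_k),
\]
completing the proof. There is no real obstacle here; the only subtlety worth flagging is that the uniform bound on $\widehat{\lambda_1^k}$ applies because the $V=1$ estimate in Lemma~\ref{lem:evalests} depends only on the parameters $\rho_{\sup}, w_{\sup}, w_{\inf}$ of the family $\boldsymbol{F}$, which are shared by all subtrees, rather than on any properties of the particular realization $\omega$.
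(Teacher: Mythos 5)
Your proof is correct and follows essentially the same route as the paper's: it observes that the right-hand bound is already the first estimate of Lemma~\ref{firsteest}, then uses the $V=1$ uniform bound $\widehat{\lambda_1^k}\leq (d+1)^2\rho_{\sup}^2 w_{\sup}^2/w_{\inf}^2$ (from Lemma~\ref{lem:evalests}, noted in the paper just after \eqref{maxfe}) together with $y_k=1$ (Lemma~\ref{lem:spatialv=1}(b)) and monotonicity of $\mathcal{N}_D$ to collapse the second estimate of Lemma~\ref{firsteest} to $M_k\leq\mathcal{N}_D(c_1 T_k)$. The paper's proof is just a terser version of your argument.
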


\begin{proof}
As $V=1$ we know that $\widehat{\lambda_1^k}$ is bounded above and $y_k=1$ and hence the inequality on the right in 
\eqref{eqfe} reduces to $M_k \leq \mathcal{N}_D(c_1 T_k)$ as required. 
%As $T_k>M_k>1$ and the sequences are increasing we have
%$c^{-1} M_k \leq \mathcal{N}_D(T_k)$ as required.
\end{proof}

We next use asymptotic information about the frequency of necks to obtain the following.

\begin{lem} \label{seceest}
For $V>1$ there exist constants $ c_1$ and $   \alpha  $  such that $ P_V $ a.s.\ there is a  $ k_0(\omega) $  for which 
\[
\mathcal{N} _D(T_k) \leq c_1 M_k, \quad 
M_k \leq \mathcal{N} _D(  k^{\alpha}T_k) \text{\quad  if } k>k_0(\omega).
\]
\end{lem}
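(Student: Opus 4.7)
The plan is to derive both inequalities directly from Lemma~\ref{firsteest}, combined with the almost sure asymptotic bounds on $\widehat{\lambda_1^k}$ and $y_k$ established in Lemmas~\ref{cortail} and~\ref{lem:spatial}(b). Note that with probability one, $\omega \in \Omega_V'$ (necks occur infinitely often a.s.), so Lemma~\ref{firsteest} applies to $P_V$-a.e.\ $\omega$.

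For the first inequality, one simply quotes the first assertion of Lemma~\ref{firsteest}, namely $\mathcal{N}_D(T_k) \leq c_1 M_k$, which already holds for \emph{every} $\omega \in \Omega_V'$ and in particular $P_V$-a.s.; no probabilistic input is needed, and the constant $c_1$ is the one from that lemma.

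For the second inequality, I start from the second estimate in Lemma~\ref{firsteest},
\[
M_k \leq \mathcal{N}_D\bigl(\widehat{\lambda_1^k}\, T_k\, \eta^{-y_k}\bigr),
\]
and bound the factor $\widehat{\lambda_1^k}\, \eta^{-y_k}$ by $k^\alpha$ almost surely for some deterministic $\alpha$. By Lemma~\ref{cortail}, $P_V$ a.s.\
\[
\widehat{\lambda_1^k} \preccurlyeq \beta^{-1/\gamma}(\log k)^{1/\gamma},
\]
which is $o(k^\epsilon)$ for every $\epsilon>0$. By Lemma~\ref{lem:spatial}(b), $P_V$ a.s.\ $y_k \preccurlyeq c_3 \log k$, so that for any fixed $\epsilon>0$ and all sufficiently large $k$,
\[
\eta^{-y_k} \leq \eta^{-(c_3+\epsilon)\log k} = k^{(c_3+\epsilon)\log(1/\eta)}.
\]
Choosing $\epsilon = 1$ and setting $\alpha := (c_3+1)\log(1/\eta) + 1$ (recall $\eta<1$, so $\log(1/\eta)>0$), the product $\widehat{\lambda_1^k}\, \eta^{-y_k}$ is dominated by $k^\alpha$ for all $k \geq k_0(\omega)$, where $k_0(\omega)$ is the random threshold after which both a.s.\ estimates take effect. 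Since $\mathcal{N}_D$ is non-decreasing, this yields
\[
M_k \leq \mathcal{N}_D\bigl(k^\alpha T_k\bigr) \qquad \text{for } k > k_0(\omega),\ P_V\text{ a.s.,}
\]
as required.

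The only mildly delicate point is keeping track of what is deterministic versus random: the constants $c_1$ and $\alpha$ are deterministic (depending only on the structural parameters $d$, $p$, $\eta$, $\gamma$ via Lemmas~\ref{firsteest}, \ref{cortail}, \ref{lem:spatial}), while the threshold $k_0(\omega)$ absorbs both the a.s.\ bound on $\widehat{\lambda_1^k}$ from Lemma~\ref{cortail} and the a.s.\ bound on $y_k$ from Lemma~\ref{lem:spatial}(b). There is no substantive obstacle here: this lemma is essentially a packaging of the a.s.\ upgrades of Lemma~\ref{firsteest}, converting the awkward random factor $\widehat{\lambda_1^k}\eta^{-y_k}$ into a clean polynomial dilation $k^\alpha$ that will feed into the pressure-function arguments of later theorems.
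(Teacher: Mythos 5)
Your proof is correct and matches the paper's argument: both deduce the result from Lemma~\ref{firsteest} by bounding $\widehat{\lambda_1^k}$ via Lemma~\ref{cortail} and $\eta^{-y_k}$ polynomially in $k$. The only cosmetic difference is that the paper cites Lemma~\ref{lem:spatial}(c) for $\eta^{-y_k}\preccurlyeq k^{\beta'}$, whereas you re-derive that bound directly from Lemma~\ref{lem:spatial}(b) (which is exactly what the proof of part (c) does internally), so the content is identical.
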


\begin{proof} This follows from Lemma~\ref{firsteest}, since $  \widehat{\lambda_1^k}   
\preccurlyeq  \beta^{-1/ \gamma }  (\log k)^{1/ \gamma } $  by  \eqref{uefe} and $\eta^{-y_k} 
\preccurlyeq  k^{\beta '}$ by Lemma~\ref{lem:spatial}(c). 
\end{proof} 

%***[JH]  In the following Remark I have made some changes and commented on the significance of the case 
%considered.  I have some other comments, let's discuss. ***

\begin{rem}\label{casepl}
Suppose  $w_i^F  = \rho_i^F$ for all $ F $ and $ i $. Then $\mu_{\bfi}=\rho_{\bfi}/\sum_{\bfj\in \Lambda_k} \rho_{\bfj}$ 
for $\bfi\in\Lambda_k$. This measure corresponds to the random walk with constant expected waiting time at each node, see the sentence following~\eqref{dfti}. 
%see the Appendix. 
Note that  $t_{\bfi} = \mu_{ \boldsymbol{i} } \rho_{\boldsymbol{i} }^{-1}  = 1/\sum_{\bfj\in \Lambda_k} \rho_{\bfj}$  
is independent of $ \boldsymbol{i} \in \Lambda_k $, and that
$ T_k  =\sum_{\bfj\in \Lambda_k} \rho_{\bfj}$. 

For fixed $ k $ there is no spatial variability of the $ t_ {\boldsymbol{i}} $
with $  \bfi\in\Lambda_k$,  and so we can take $ c(k) = T_k $ in \eqref{mknest}, which improves~\eqref{eqfe} and gives
\begin{equation} \label{}
\mathcal{N} _D(T_k) \leq c M_k    \mbox{ and }   M_k \leq \mathcal{N} _D(\widehat{\lambda_1^k}\, T_k) .
\end{equation}
Using \eqref{uefe} we can improve the second estimate to have that $P_V$ almost surely  there is a 
$k_0(\omega)$ such that
\[  M_k \leq \mathcal{N} _D\big(2\beta^{-1/\gamma } (\log{k})^{1/\gamma} T_k\big), \;\; k>k_0(\omega). \]
\end{rem} 
 
%%%%%%%%%%%%%%%%%%%%%%%%%%%%%%%%%%%%
\subsection{Spectral Exponent} \label{secse}  

We again fix weights $ w_i^F$ and let $ \mu $ be the corresponding measure as in Definition~\ref{dfwm}.  

%\medskip
%In this section we see how to obtain the spectral exponent $d_s(\mu) $ for $ \mu$ from a certain pressure function 
%$ \gamma $ related to the crossing times $ t_i $.    See the following two definitions and Theorem~\eqref{thm:Nspecdim}.

\begin{defn} 
The \emph{pressure function} $\gamma = \gamma ( \beta )  $ where $ \beta \in \mathbb{R} $, and  the constant 
$ \beta_0 $, are defined  by
\begin{equation} \label{gprf}
\gamma ( \beta ) = E_V   \log \sum_{| \boldsymbol{i}  | = n(1) } t_{ \boldsymbol{i} } ^{ \beta  /2 }  , 
\quad \gamma (\beta_0 ) = 0 .
\end{equation} 
(It follows from Lemma~\ref{se1} that $ \beta_0 $ is  unique.)  
\end{defn} 

The pressure function and its zero can be found computationally.  See \cite{BHS2} for similar computations for the 
fractal dimension.

\begin{defn}\label{dfspm} 
The \emph{spectral exponent  $ d_s(\mu) $ for $ \mu $} is  defined by
\begin{equation} \label{} 
\frac{d_s(\mu)}{2} =  \lim_{t\to\infty}  \frac{\log{\mathcal{N}_D (t)}}{\log{t} } .
\end{equation} 
\end{defn} 

We see in Theorem~\ref{thm:Nspecdim} that a.s.\ the  spectral exponent  exists and equals the constant $\beta _0$.  
By Lemma~\ref{lem:scale}  we could replace $ \mathcal{N} _D $ by $ \mathcal{N}_N $. 

\bigskip
Recall the definition of $ \eta $ in \eqref{dfeta} and the estimate for   $ t_{\bfi} $ from ~\eqref{tbd}.

%\bigskip 
%Let
%\begin{equation} \label{ttdef}
%\begin{aligned} 
% t_{\inf} &:=  \inf\{t^F_i :  1\leq i \leq N^F,\, F\in \bff\}, \\
% t_{\sup} &:=  \sup\{t^F_i : 1\leq i \leq N^F,\, F\in  \boldsymbol{F}  \} .
%\end{aligned} 
%\end{equation} 
%From  \eqref{dfti} and \eqref{mibd},
%\begin{equation} \label{}
%0< \eta \leq t_{\inf} \leq t_{\sup} \leq r_{\sup} < 1 .
%\end{equation} 

\begin{lem}\label{se1}
The function $ \gamma ( \beta ) $ is finite, strictly  decreasing and Lipschitz, with derivative in the interval 
$ \left[ \log \left(\eta^{1/2} \right)E_V n(1), \log \left(r_{\sup}^{1/2}\right) E_V n(1)\right] $.  
Since $ \gamma (0) > 0 $ there is a unique $ \beta_0 $ such that $ \gamma ( \beta_0  )= 0 $ and moreover 
$ \beta_0 > 0 $.
\end{lem}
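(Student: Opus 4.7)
The plan is to follow closely the structure of the proof of Lemma~\ref{dfa0}, which handles the analogous statement for the resistance-based pressure function. The two main ingredients are (i) the two-sided bound $\eta^n \leq t_{\boldsymbol{i}} \leq r_{\sup}^n$ for $|\boldsymbol{i}|=n$ a neck level (from \eqref{tbd}), both of whose sides are in $(0,1)$, and (ii) the finiteness of $E_V n(1)$ recorded earlier.

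First I would prove the Lipschitz estimate and strict monotonicity. Fix $\alpha<\beta$ and pick any node $\boldsymbol{i}$ with $|\boldsymbol{i}|=n(1)$. Since $t_{\boldsymbol{i}}\in(0,1)$, we may factor
\begin{equation*}
t_{\boldsymbol{i}}^{\beta/2} = t_{\boldsymbol{i}}^{\alpha/2}\cdot t_{\boldsymbol{i}}^{(\beta-\alpha)/2},
\end{equation*}
and from \eqref{tbd} obtain $\eta^{(\beta-\alpha)n(1)/2} \leq t_{\boldsymbol{i}}^{(\beta-\alpha)/2}\leq r_{\sup}^{(\beta-\alpha)n(1)/2}$. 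Summing over $|\boldsymbol{i}|=n(1)$, taking logs, and then applying $E_V$ gives
\begin{equation*}
\gamma(\alpha) + (\beta-\alpha)\log\bigl(\eta^{1/2}\bigr)E_V n(1) \leq \gamma(\beta) \leq \gamma(\alpha) + (\beta-\alpha)\log\bigl(r_{\sup}^{1/2}\bigr)E_V n(1).
\end{equation*}
Since both $\eta$ and $r_{\sup}$ are strictly less than $1$, the bracketing interval lies in $(-\infty,0)$, so $\gamma$ is Lipschitz with slope in the claimed interval and in particular strictly decreasing.

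Next I would check finiteness and the sign of $\gamma(0)$. We have $\gamma(0)=E_V\log\#\{\boldsymbol{i}:|\boldsymbol{i}|=n(1)\}$, and since $N_{\inf}^{n(1)}\leq \#\{\boldsymbol{i}:|\boldsymbol{i}|=n(1)\}\leq N_{\sup}^{n(1)}$ with $N_{\inf}\geq 3$ by \eqref{Nbd}, we obtain
\begin{equation*}
(\log N_{\inf})\,E_V n(1) \leq \gamma(0) \leq (\log N_{\sup})\,E_V n(1).
\end{equation*}
Together with $E_V n(1)<\infty$, this shows $0<\gamma(0)<\infty$. Finiteness at arbitrary $\beta$ then follows from the Lipschitz estimate applied to the pair $(0,\beta)$.

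Finally, since $\gamma$ is continuous and strictly decreasing with $\gamma(0)>0$ and with upper-bound slope $\log(r_{\sup}^{1/2})E_V n(1)<0$, we get $\gamma(\beta)\to-\infty$ as $\beta\to\infty$, so by the intermediate value theorem there is a unique $\beta_0$ with $\gamma(\beta_0)=0$, and strict decrease forces $\beta_0>0$. I do not expect any real obstacle: the only subtlety is that the decomposition of $t_{\boldsymbol{i}}$ is not itself a simple product along the branch, but this is irrelevant here since \eqref{tbd} already provides the required two-sided estimate for $\boldsymbol{i}$ at a neck level, which is exactly what the pressure function summands are.
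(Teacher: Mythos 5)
Your proof is correct and follows the same approach as the paper: the paper's own proof also uses the bracketing inequality
\[
\gamma(\alpha) + \frac{\beta-\alpha}{2}(\log\eta)E_V n(1) \leq \gamma(\beta) \leq \gamma(\alpha) + \frac{\beta-\alpha}{2}(\log r_{\sup})E_V n(1),
\]
derived from \eqref{tbd}, together with $\gamma(0)=E_V\log\#\{\boldsymbol{i}:|\boldsymbol{i}|=n(1)\}\in(0,\infty)$, exactly as you do. You spell out the factoring $t_{\boldsymbol{i}}^{\beta/2}=t_{\boldsymbol{i}}^{\alpha/2}t_{\boldsymbol{i}}^{(\beta-\alpha)/2}$ more explicitly, but there is no genuine difference in method.
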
 

\begin{proof}
If $ \alpha < \beta $ then from \eqref{dfeta} and \eqref{tbd},
\[
 \gamma ( \alpha ) + \frac{ \beta - \alpha } { 2} (\log \eta)E_V n(1) 
\leq \gamma ( \beta ) 
\leq \gamma ( \alpha ) + \frac{ \beta - \alpha } { 2} (\log r_{ \sup } )E_V n(1) .
\]
This gives the Lipschitz estimate.

Since  $ \gamma (0) = E_V \bigl( \log \# \{ \boldsymbol{i} \in T \mid |  \boldsymbol{i}  | = n(1) \} \bigr) $, it follows that $ 0
 < \gamma (0) < \infty $.

%By \eqref{Nbd}
%\[
%0< \gamma (0) \leq E_V \log N_{\sup}^{n(1)}
%= N_{\sup} E_V n(1) < \infty.
%\]
The rest of the lemma follows.
\end{proof}

\begin{propn}\label{se2}
 $ P_V $ a.s.\ we have
\begin{equation} \label{dfxg} 
 \lim_{k\to\infty} \frac{1}{k} \log \sum_{|\bfi|= n(k)}  t_{\bfi}^{\beta/2} 
= \gamma(\beta).
\end{equation} 
\end{propn}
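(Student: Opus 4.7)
The plan is to reduce the sum $\sum_{|\bfi|=n(k)} t_{\bfi}^{\beta/2}$, which involves the non-multiplicative quantities $t_{\bfi}$ and $\mu_{\bfi}$, to two sums of genuine products of scale factors, so that Lemma~\ref{lem:sumprod} applies to each. The key algebraic observation is that, although $t_{\bfi}$ is not a simple product along the branch, at a neck level $n$ the normalisation in \eqref{dfmui} is shared across every $\bfi$ with $|\bfi|=n$. Hence, writing $t_{\bfi}=\mu_{\bfi}r_{\bfi}$ and using $\mu_{\bfi}=w_{\bfi}/\sum_{|\bfj|=n}w_{\bfj}$, we obtain at any neck level
\begin{equation*}
 \sum_{|\bfi|=n} t_{\bfi}^{\beta/2}
= \Bigl( \sum_{|\bfj|=n} w_{\bfj} \Bigr)^{-\beta/2}
  \sum_{|\bfi|=n} \bigl( w_{\bfi}\,r_{\bfi} \bigr)^{\beta/2}.
\end{equation*}
Both $w_{\bfi}$ and $(w_{\bfi}r_{\bfi})^{\beta/2}$ factor as genuine products along the branch $\bfi$, by \eqref{wprod} and the analogous product formula for $r_{\bfi}$.

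Next I would apply Lemma~\ref{lem:sumprod} twice. The required bounds \eqref{sest} for the scale factors $s_i=w_i$ follow from \eqref{west}, and the bounds for $s_i=(w_ir_i)^{\beta/2}$ follow from \eqref{west} combined with \eqref{rest}. Taking logs of the displayed identity and dividing by $k$, we get
\begin{equation*}
 \frac{1}{k}\log \sum_{|\bfi|=n(k)} t_{\bfi}^{\beta/2}
= \frac{1}{k}\log \sum_{|\bfi|=n(k)} (w_{\bfi}r_{\bfi})^{\beta/2}
 -\frac{\beta}{2}\cdot\frac{1}{k}\log \sum_{|\bfj|=n(k)} w_{\bfj},
\end{equation*}
and \eqref{eq:slln} gives $P_V$-a.s.\ convergence of each term on the right to, respectively, $E_V\log \sum_{|\bfi|=n(1)}(w_{\bfi}r_{\bfi})^{\beta/2}$ and $E_V \log\sum_{|\bfj|=n(1)} w_{\bfj}$.

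Finally, applying the same algebraic identity at the single neck level $n(1)$ and taking expectations shows that the $P_V$-a.s.\ limit equals
\begin{equation*}
E_V\log \sum_{|\bfi|=n(1)}(w_{\bfi}r_{\bfi})^{\beta/2}
 -\frac{\beta}{2}\,E_V \log\sum_{|\bfj|=n(1)} w_{\bfj}
= E_V\log\sum_{|\bfi|=n(1)} t_{\bfi}^{\beta/2}
=\gamma(\beta),
\end{equation*}
completing the proof. There is no real obstacle: the argument is essentially the algebraic decoupling of $t_{\bfi}$ at a neck into a product piece plus a global normalisation, after which Lemma~\ref{lem:sumprod} does all the probabilistic work. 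The one thing to be careful about is that the factorisation of $\sum t_{\bfi}^{\beta/2}$ into two product-type sums is valid \emph{only} because the cut is at a neck, where all $\mu_{\bfj}$ at that level share the same denominator; this is precisely why the statement is restricted to levels of the form $n(k)$.
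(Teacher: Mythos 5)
Your proof is correct and follows essentially the same route as the paper's: the identical algebraic decoupling of $t_{\bfi}^{\beta/2}$ at a neck level into $(w_{\bfi}r_{\bfi})^{\beta/2}$ and the shared normalisation $\bigl(\sum_{|\bfj|=n}w_{\bfj}\bigr)^{-\beta/2}$, followed by two applications of Lemma~\ref{lem:sumprod} via \eqref{eq:slln}. The only cosmetic difference is that the paper introduces the substitution $s_i^F=(r_i^Fw_i^F)^{\beta/2}$ and $s_i^F=w_i^F$ explicitly before invoking the lemma, but the argument is the same.
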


%\begin{proof}[Short Proof]
%By the definition of a neck, $\log  \sum_{|\bfi|= n(k)}  t_{\bfi}^{\beta/2} $ is the sum of $ k $ iid random variables each
 %with the same distribution as 
%$ \log \sum_{|\bfi|= n(1)}  t_{\bfi}^{\beta/2} $.  Using the strong law of large numbers gives the result.
%\end{proof} 
%
%*** [JH] I think instead we need more detail, something like the following, for a few reasons.  It takes a while to get the
% decomposition idea, we use it at a number of places, it is necessary to split the log as in \eqref{spl}, and there are a 
%couple of terms that need to be shown finite.***

\begin{proof} 
The idea is that from the definition of a neck, $\log  \sum_{|\bfi|= n(k)}  t_{\bfi}^{\beta/2} $ is the difference of two 
random variables, each of which is the sum of $ k $ iid random variables having  the same distribution as $ \log 
\sum_{|\bfi|= n(1)}  (r_{\bfi} w_{\bfi} )^{\beta/2} $ and  $ (\beta / 2 )   \log \sum_{|\bfi |= n(1)}  w_{\bfi}$ 
respectively.

\bigskip More precisely, suppose 
 $| \boldsymbol{i} | = n(k) $ and in particular is a neck. Then
\[
t_{ \boldsymbol{i} } = r_{ \boldsymbol{i} } \mu _ { \boldsymbol{i} } 
     = \frac{  r_{ \boldsymbol{i} }w_ { \boldsymbol{i} } }{ \sum_{ |\boldsymbol{j} |= n(k) } w_ { \boldsymbol{j} } },
\] 
and so
\begin{equation} \label{spl} 
 \log \sum_{|\bfi|= n(k)}  t_{\bfi}^{\beta/2} =  \log \sum_{|\bfi|= n(k)}  (r_{\bfi}w_{\bfi}) ^{\beta/2} - \frac{ \beta } { 2}  
 \log \sum_{|\bfi|= n(k)}  w_{\bfi} .
\end{equation} 

If we let $s_i^F = (r_i^F w_i^F)^{ \beta / 2 }  $ or  $ s_i^F = w_i^F $, it follows from \eqref{rhoest} and 
\eqref{west} that 
\begin{equation} 
\begin{aligned}
0< s_{\inf}  &:=  \inf\{s^F_i  :      i  \in 1,\dots,N^F,\,  F\in \bff\} ,  \\
s_{\sup}  &:=  \sup\{s^F_i  :  i\in 1,\dots,N^F,\,  F\in \boldsymbol{F} \}   < \infty.
\end{aligned} 
\end{equation} 
and we can apply Lemma~\ref{lem:sumprod}. Thus \eqref{eq:slln} applied to each term on the right hand side 
of \eqref{spl} gives the result.
\end{proof} 

\emph{Subsequently we write $ \mathcal{N} $ for $ \mathcal{N} _D $.}  But note that from the second line in  Lemma~\ref{lem:scale} the main estimates in the rest of the paper also apply immediately to $ \mathcal{N} _N $.

\bigskip
The proof of the following theorem relies on the Dirichlet-Neumann bracketing result in Lemma~\ref{seceest} and the   estimates in Lemma~\ref{lem:spatial}(c).

%It follows from the strong law of large numbers that the limit in \eqref{ltn}, with $ n(k) $ replaced by $ k $,  exists a.s.\ and equals  $ E_V  X_1 $.  But $ k/n(k)  \to q $ as $ k \to \infty $ by a standard property of geometric distributions.  This establishes the  theorem for sequences of necks. 
%
%For arbitrary $ n $ let $ n^* $ be the unique neck equal to, or immediately preceding, $ n $. It follows as in \eqref{et} that the error term in  replacing $ n $ by $ n^* $ in the expression $ \frac{1}{n} \log \sum_{ | \boldsymbol{i} | = n } r_{ \boldsymbol{i} } $ in \eqref{pwl} is dominated by $ (n-n^*)(1/n) \log(N_{\max}r_{\max}) \to 0 $ a.s.
%
%The theorem now follows.

%We can now define the spectral dimension as follows.
%Let $\gamma(\beta) = \be (\log \sum_{\bfi\in T_{(1)}} \tau_{\bfi}^{\beta/2})$
%and define $d_s:=\{\beta: \gamma(\beta)=0\}$.
\begin{thm}\label{thm:Nspecdim}
The spectral exponent is given by $\beta _0$ in that
\begin{equation} \label{dfspece}
\frac{d_s(\mu)}{2} :=  \lim_{t\to\infty} \frac{\log{\mathcal{N} (t)}}{\log{t}} = \frac{\beta_0}{2}, \quad P_V \text{ a.s.}
\end{equation} 
\end{thm}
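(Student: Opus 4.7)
Write $S(k):=\sum_{\bfi\in\Lambda_k}t_{\bfi}^{\beta_0/2}$ and $\phi(\ell):=\sum_{|\bfi|=n(\ell)}t_{\bfi}^{\beta_0/2}$, and denote by $\ell_-(k),\ell_+(k)$ the minimum and maximum of $\ell(\bfi)$ as $\bfi$ ranges over $\Lambda_k$.  The strategy is first to prove $\log M_k/k\to\beta_0/2$ $P_V$-a.s.\ using the pressure convergence in Proposition~\ref{se2}, and then to translate this to the theorem via the Dirichlet--Neumann bracketing of Lemma~\ref{seceest}.  From Lemma~\ref{lem:spatial}(c), every $\bfi\in\Lambda_k$ satisfies $t_{\bfi}\in[k^{-\beta'}e^{-k},e^{-k}]$ asymptotically; this gives both $\log T_k=k+O(\log k)$ and the squeeze
\[
M_k\,k^{-\beta'\beta_0/2}e^{-k\beta_0/2}\leq S(k)\leq M_k\,e^{-k\beta_0/2},
\]
so $\log M_k/k\to\beta_0/2$ is equivalent to $\log S(k)/k\to 0$ $P_V$-a.s.

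For the upper bound $\limsup\log S(k)/k\leq 0$ I decompose $\Lambda_k=\bigcup_{\ell=\ell_-(k)}^{\ell_+(k)}\Lambda_k^{(\ell)}$ with $\Lambda_k^{(\ell)}:=\{\bfi\in\Lambda_k:|\bfi|=n(\ell)\}\subseteq\{|\bfi|=n(\ell)\}$, yielding $S(k)\leq\sum_{\ell=\ell_-}^{\ell_+}\phi(\ell)$.  Proposition~\ref{se2} gives $\log\phi(\ell)/\ell\to\gamma(\beta_0)=0$ a.s., so for any $\epsilon>0$ one has $\phi(\ell)\leq e^{\epsilon\ell}$ for $\ell\geq L_0(\omega,\epsilon)$.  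Lemma~\ref{lem:spatial}(a) gives $\ell_-(k)\to\infty$ and $\ell_+(k)\leq c_2 k$ a.s., so $S(k)\leq c_2 k\cdot e^{\epsilon c_2 k}$ for $k$ large, and letting $\epsilon\downarrow 0$ completes this direction.

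The lower bound is the main obstacle.  It exploits the neck multiplicativity of the crossing times: if $\bfi$ is at neck level $n(\ell)$ and $\bfj\succ\bfi$ at neck level $n(\ell^*)$, then the identity $W_{\ell^*}=W_\ell\cdot W^{\mathrm{subtree}}_{\ell^*-\ell}$ (valid because the subtrees rooted at a common neck level are identical) gives $t_{\bfj}=t_{\bfi}\cdot t^{(\bfi)}_{\bfj/\bfi}$ where the second factor is computed in the subtree rooted at $\bfi$.  Fixing $\ell^*\geq\ell_+(k)$ and using that every $\bfj$ with $|\bfj|=n(\ell^*)$ has a unique ancestor in $\Lambda_k$,
\[
\phi(\ell^*)=\sum_{\bfi\in\Lambda_k}t_{\bfi}^{\beta_0/2}\,\phi^{[\ell(\bfi)]}(\ell^*-\ell(\bfi)),
\]
where $\phi^{[\ell]}(m)$ is the analogous sum in the (identical, by the neck property) subtree rooted at any node at neck level $n(\ell)$.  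Each $\phi^{[\ell]}$ is distributed as $\phi$, so Proposition~\ref{se2} gives $\log\phi^{[\ell]}(m)/m\to 0$ a.s.  Since only the $O(k)$ neck levels in $[\ell_-(k),\ell_+(k)]$ contribute, a Borel--Cantelli / union-bound argument promotes the pointwise convergence to a uniform estimate $\phi^{[\ell(\bfi)]}(\ell^*-\ell(\bfi))\leq e^{\epsilon k}$ valid for all $\bfi\in\Lambda_k$ and $k$ large.  Combined with $\phi(\ell^*)\geq e^{-\epsilon c_2 k}$ (Proposition~\ref{se2}), this gives $S(k)\geq e^{-O(\epsilon k)}$ and hence $\liminf\log S(k)/k\geq 0$.

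The hard point is precisely this uniform control of the subtree partition functions: the subtrees at distinct neck levels are nested rather than independent, but because only polynomially many neck levels contribute and the convergence behind Proposition~\ref{se2} is effectively exponential, a countable union bound suffices.  Having $\log M_k/k\to\beta_0/2$, Lemma~\ref{seceest} together with $\log T_k=\log(k^\alpha T_k)=k+O(\log k)$ yields $\log\mathcal{N}(T_k)/\log T_k\to\beta_0/2$ and $\log\mathcal{N}(k^\alpha T_k)/\log(k^\alpha T_k)\to\beta_0/2$; for arbitrary $t\to\infty$ one brackets $t$ between suitable $T_k$-values (whose logarithms differ by $1+O(\log k/k)$) and uses monotonicity of $\mathcal{N}$ to obtain $\log\mathcal{N}(t)/\log t\to\beta_0/2$, $P_V$-a.s.
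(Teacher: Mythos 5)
Your proposal is essentially correct and achieves the theorem by a genuinely different route from the paper, but the step you flag as the ``hard point'' is actually simpler than you think, and as stated would not follow from what the paper provides.  The paper never works at $\beta=\beta_0$: it introduces the auxiliary unit-mass measures $\nu_\beta$ with weights $(r_iw_i)^{\beta/2}$ for $\beta\neq\beta_0$ and uses the cut-set identity $\sum_{\bfi\in\Lambda_k}\nu_\beta[\bfi]=1$ together with the sign of $\gamma(\beta)$: for $\beta>\beta_0$ the denominators $\sum_{|\bfj|=n(\ell)}t_{\bfj}^{\beta/2}$ decay exponentially, so $\nu_\beta[\bfi]\geq c\,t_\bfi^{\beta/2}$ eventually and $M_k\preccurlyeq k^{O(1)}e^{k\beta/2}$; for $\beta<\beta_0$ the opposite, giving $M_k\succcurlyeq e^{k\beta/2}$; then $\beta\to\beta_0$.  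You instead work directly with $S(k)=\sum_{\bfi\in\Lambda_k}t_\bfi^{\beta_0/2}$ and a squeeze on $M_k$, which is legitimate.

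The issue is your lower-bound argument for $S(k)$.  You assert that ``a Borel--Cantelli / union-bound argument promotes the pointwise convergence to a uniform estimate'' and that ``the convergence behind Proposition~\ref{se2} is effectively exponential.''  Proposition~\ref{se2} is proved via the strong law of large numbers and gives no rate; establishing the exponential concentration you need would require a separate large-deviations estimate for the i.i.d.\ increments of $\log\phi$.  Fortunately, you don't need it.  The neck multiplicativity $t_\bfj=t_\bfi\cdot t^{(\bfi)}_{\bfj/\bfi}$ that you correctly identified, applied at arbitrary neck levels (not just those in $\Lambda_k$), gives the \emph{deterministic} identity $\phi(\ell+m)=\phi(\ell)\,\phi^{[\ell]}(m)$, i.e.\ $\phi^{[\ell]}(m)=\phi(\ell+m)/\phi(\ell)$.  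There are no independent ``subtree partition functions'' to control: they are simply ratios of the single sequence $\phi$.  Given $\epsilon>0$, Proposition~\ref{se2} alone yields $L_0(\omega)$ with $|\log\phi(n)|\leq\epsilon n$ for $n\geq L_0$; since $\ell_-(k)\to\infty$ by Lemma~\ref{lem:spatial}(a), for $k$ large every $\bfi\in\Lambda_k$ has $\ell(\bfi)\geq L_0$ and hence $\phi^{[\ell(\bfi)]}(\ell^*-\ell(\bfi))\leq e^{\epsilon(\ell^*+\ell(\bfi))}\leq e^{2\epsilon c_2 k}$, which is exactly the uniform bound you wanted.  (Note also that your decomposition then reads $1=\sum_{\bfi\in\Lambda_k}t_\bfi^{\beta_0/2}/\phi(\ell(\bfi))$, which is precisely the statement that $\nu_{\beta_0}$ is a unit-mass measure on the cut $\Lambda_k$ --- so your route and the paper's are two dialects of the same observation.)  With that replacement your proof is complete; the upper bound for $S(k)$ and the final bracketing in $t$ via Lemma~\ref{seceest} and Lemma~\ref{lem:spatial}(c) match the paper's treatment.
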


%We require preliminary Lemmas. Let $\hT = \bigcup_l T_{n(l)}$ be the tree consisting of only the necks.

\begin{proof}[Proof of Theorem] Define the unit mass measure $\nu_{\beta}$ on $\partial T$ by setting, for any 
$\beta$ and for $| \boldsymbol{i} | = n(k)$,
\[ 
\nu_{\beta}[\bfi] = \frac{t_{\bfi}^{\beta/2}}{\sum_{|\bfi |=n(k) }t_{\bfi}^{\beta/2}}.
 \]
 It is straightforward to check  that $ \nu_ \beta  $ is just the unit mass measure with weights $ (r_iw_i)^{ \beta /2 } $ 
 as in Definition~\ref{dfwm}.

If $\gamma(\beta)<0$ or equivalently $ \beta > \beta_0 $,  then from \eqref{dfxg} for $\epsilon>0$ small enough 
we have $  P_V$   a.s.\    that there is a constant $k_0$ such that
\[  \nu_{\beta}[\bfi] \geq t_{\bfi}^{\beta/2} e^{-k(\gamma(\beta)+\epsilon)} \geq c t_{\bfi}^{\beta/2} \text{\quad if } 
k \geq k_0 .  \]
As $\Lambda_k$ is a cut set, by using the lower estimate above  we have from Lemma~\ref{lem:spatial}(c) that $ P_V $ a.s.\ if $ k \geq k_0$ then
\[
1  =  \sum_{\bfi\in \Lambda_k} \nu_{\beta}[\bfi] 
 \geq   \sum_{\bfi\in\Lambda_k} c t_{\bfi}^{\beta/2} 
 \succcurlyeq  c M_k k^{-\beta\beta'/2} e ^{-k\beta /2}, \quad  \text{for some } \beta' > 0 .
\]
Thus 
\begin{equation} \label{eq:mkupper}
M_k \preccurlyeq c k^{ \beta  \beta'/2} e^{k\beta/2}, \quad P_V \text{ a.s.}   
\end{equation}

Suppose $ t > 1 $ and let $k$ be such that $e^{k-1} <t \leq e^{k}$. Then $ t \leq T_k $ by Lemma~\ref{lem:spatial}(c)  and so
\[ 
\frac{\log{\mathcal{N} (t)}}{\log{t}} \leq \frac{\log{\mathcal{N} (T_k)}}{\log{t}} \leq \frac{\log(c M_k)}{k-1}
\preccurlyeq \frac{ \beta }{ 2 }, \quad P_V \text{ a.s.}   , 
 \]
where  the second inequality is from the first estimate in Lemma~\ref{seceest} and the third inequality is from \eqref{eq:mkupper}.

As this holds for all $\beta> \beta_0$ we have
\begin{equation} \label{lim1} 
 \frac{\log{\mathcal{N} (s)}}{\log{s}} \preccurlyeq \frac{\beta_0}{2}, \quad P_V \text{ a.s.}  
\end{equation}

\medskip Similarly we have an asymptotic  lower bound. 
For this choose  $ \beta < \beta_0 $,  or equivalently such that $\gamma(\beta)>0$.  Then   for
small enough $\epsilon>0$ we have $ P_V $ a.s.\ that for some $ k_0 = k_0(\omega) $  
\[ 
\nu_{\beta}(\bfi) \leq c  t_{\bfi}^{\beta/2} \text{\quad if } k \geq k_0,
\]
and hence from Lemma~\ref{lem:spatial}(c), that $ P_V $ a.s.\  then 
\[
1= \sum_{\bfi\in \Lambda_k} \nu_{\beta}(\bfi) 
\leq  \sum_{\bfi\in\Lambda_k} c t_{\bfi}^{\beta/2} 
\leq  c M_k e^{-k\beta/2}    \text{\quad if } k \geq k_0.
\]
Thus $ P_V $ a.s.\
\begin{equation}  \label{eq:mklower}
M_k \geq c e^{k\beta/2}  \text{\quad if } k \geq k_0.
\end{equation}

From the second estimate in   Lemma~\ref{seceest} and using \eqref{eq:mklower},
\begin{equation} \label{parest}
\frac{ \log \mathcal{N} (k^{\alpha } T_k) } { k } \geq \frac{ \log M_k } { k } \succcurlyeq \frac{ \beta }{2} \quad 
P_V \text{ a.s.} 
\end{equation}

Again choosing $k$ such that $e^{k-1} \leq t < e^k$, we have from Lemma~\ref{lem:spatial}(c) that
for some $ \alpha '$, 
\[ k^{\alpha }T_k \preccurlyeq k^{\alpha ' }  e^k \leq  e(1+ \log t)^{\alpha ' }t, \quad P_V \text{ a.s.}  \]
Hence
\[
\liminf_{k\to \infty} \frac{ \log \mathcal{N} \big( k^{\alpha  } T_k \big)} { k }\leq \liminf_{t\to \infty}  \frac{ \log \mathcal{N} \big( e ( 1+\log t)^{ \alpha '} t \big) } { \log t }, \quad P_V \text{ a.s.} 
\]
Setting $ y = y(t) =   e ( 1+\log t)^{ \alpha '} t $, 
since $ \lim_{t\to \infty }  \log y(t)/\log t =1$  and   $ y(t) \to \infty $ as $ t \to \infty $, it follows  
\[
\liminf_{k\to \infty} \frac{\log  \mathcal{N} \big( k^{ \alpha  } T_k \big)} { k } \leq \liminf_{t\to \infty}  \frac{ \log \mathcal{N}(t) } { \log t }, \quad P_V \text{ a.s.} 
\]

Combining this with \eqref{parest}, since $ \beta < \beta_0 $ is arbitrary, implies 
\begin{equation} \label{lim2} 
 \frac{\log{\mathcal{N} (s)}}{\log{s}} \succcurlyeq \frac{\beta_0}{2}, \quad P_V \text{ a.s.}  
\end{equation} 

\medskip The required result follows from \eqref{lim1} and \eqref{lim2}.
\end{proof} 
%%%%%%%%%%%%%%%%%%%%%%%%%%%%%%%%%%%%%
\subsection{Spectral Dimension}   

\begin{defn} \label{dffm}The \emph{flat measure}   with respect to the resistance metric   is the unit mass measure $ \nu $ with weights   $w_i^F=(r_i^F)^{ d_f^r}$,  where $ d_f^r $ is the Hausdorff dimension in the resistance metric (see Definition~\ref{dfwm}).
%In this case we write $ w = r^{d_f} $, where $ w $ is defined in \eqref{dfws}.  

The \emph{spectral dimension} $d_s$ is the spectral exponent for the flat measure.  
\end{defn} 
Further justification for the definition of $ d_s $ is given in Theorem~\ref{spmax}.

\medskip
Recall from Theorem~\ref{rdzp} that  $ d_f^r $ is uniquely characterised by 
\begin{equation} \label{dfxa}
E_V \log\sum_{|\bfi|=n(1)} r_{\bfi}^{ d_f^r}=0,
\end{equation} 
As a consequence, the following theorem establishes the analogue of Conjecture~4.6 in \cite{Kig-3}
for $ V $-variable fractals.
% Note that 
%\begin{equation} \label{mttc}
% | \boldsymbol{i} |= n(k)\ \Longrightarrow \
%  \mu_{\bfi} = \frac{r_{\bfi}^{d_f}}{\sum_{|\bfj|= n(k)} r_{\bfj}^{ d_f}} ,
% \quad 
%t_{\bfi} := r_{ \boldsymbol{i} } \mu_ { \boldsymbol{i} } = \frac{r_{\bfi}^{1+d_f}}{\sum_{|\bfj|= n(k)} r_{\bfj}^{ d_f}} .
%\end{equation} 
%
%The \emph{spectral dimension} $ d_s $ is defined to be the  spectral exponent for the flat measure.  That is,  
%\begin{equation} \label{dfsd} 
%0 = \gamma(d_s)  :=  \be \log \sum_{|\bfi|=n(1) } t_{\bfi}^{d_s/2}  
%=\be \log \sum_{|\bfi|=n(1) } r_{\bfi}^{ (1+d_f)d_s/2},  
%\end{equation} 
%using  \eqref{mttc} (with $ k=1 $) and  \eqref{dfxa}   for the last equality.

\begin{thm}\label{spfm}
The spectral exponent for the flat measure $ \nu $  is given $P_V$ a.s.\  by
\begin{equation} \label{per}
\frac{ d_s(\nu)}{2} = \frac{d_f^r}{d_f^r+1}.
\end{equation} 
\end{thm}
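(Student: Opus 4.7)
The plan is to evaluate the pressure function $\gamma$ of \eqref{gprf} at the flat measure and read off its zero $\beta_0$, then invoke Theorem~\ref{thm:Nspecdim} which gives $d_s(\nu)/2 = \beta_0/2$.

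First I would substitute the flat-measure weights $w_i^F = (r_i^F)^{d_f^r}$ into the crossing-time formula. Since $|\bfi| = n(1)$ is a neck, by \eqref{propmui} and \eqref{dfti},
\begin{equation*}
t_{\bfi} = r_{\bfi}\, \mu_{\bfi} = \frac{r_{\bfi}\, w_{\bfi}}{\sum_{|\bfj|=n(1)} w_{\bfj}} = \frac{r_{\bfi}^{\,1+d_f^r}}{\sum_{|\bfj|=n(1)} r_{\bfj}^{\,d_f^r}}.
\end{equation*}
Raising to the power $\beta/2$ and summing over neck nodes at level $n(1)$,
\begin{equation*}
\sum_{|\bfi|=n(1)} t_{\bfi}^{\beta/2} = \frac{\sum_{|\bfi|=n(1)} r_{\bfi}^{\,\beta(1+d_f^r)/2}}{\left(\sum_{|\bfj|=n(1)} r_{\bfj}^{\,d_f^r}\right)^{\beta/2}}.
\end{equation*}

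Next I would take logarithms and apply $E_V$. The denominator contributes $-(\beta/2)\, E_V \log \sum_{|\bfj|=n(1)} r_{\bfj}^{\,d_f^r}$, which vanishes by the defining equation \eqref{dfxa} of $d_f^r$ in Theorem~\ref{rdzp}. Therefore
\begin{equation*}
\gamma(\beta) \;=\; E_V \log \sum_{|\bfi|=n(1)} r_{\bfi}^{\,\beta(1+d_f^r)/2}.
\end{equation*}
Comparing with \eqref{dfxa}, the right-hand side vanishes precisely when $\beta(1+d_f^r)/2 = d_f^r$, i.e.\ at
\begin{equation*}
\beta_0 \;=\; \frac{2 d_f^r}{1+d_f^r}.
\end{equation*}
By Lemma~\ref{se1} (strict monotonicity of $\gamma$) this is the unique zero, so Theorem~\ref{thm:Nspecdim} gives $d_s(\nu)/2 = \beta_0/2 = d_f^r/(d_f^r+1)$ a.s., as required.

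There is essentially no obstacle here: the argument is purely algebraic once one observes that the normalising constant in the definition of $\mu_{\bfi}$ at a neck level, when raised to $\beta/2$ and put under $E_V \log$, is exactly the quantity that defines $d_f^r$. The only point meriting care is that we are working at a neck level so that the product-like formula \eqref{propmui} for $\mu_{\bfi}$ is available, which is why the computation collapses so cleanly.
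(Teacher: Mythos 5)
Your proof is correct and follows essentially the same route as the paper: substitute the flat weights into the neck-level crossing-time formula, plug into the pressure function, use the defining equation of $d_f^r$ to kill the normalising factor, and solve $\beta(1+d_f^r)/2 = d_f^r$. The only cosmetic difference is that you evaluate $\gamma$ at a general $\beta$ and locate its zero, while the paper writes the equation directly for $d_s(\nu)$; the algebra and the appeal to Lemma~\ref{se1} and Theorem~\ref{thm:Nspecdim} are the same.
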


\begin{proof}  From Definition~\ref{dffm},
 \eqref{gprf}, \eqref{dfti} and  \eqref{dfmui}, if $ | \boldsymbol{i} | = n(\ell) $ is a neck then
\begin{equation} \label{mttc}
t_{\bfi} := r_{ \boldsymbol{i} } \nu_ { \boldsymbol{i} } =     \frac{r_{ \boldsymbol{i} } w_{\bfi} }{\sum_{|\bfj|= n(\ell)} w_{\bfj} } 
=        \frac{r_{\bfi}^{1+d_f^r}}{\sum_{|\bfj|= n(\ell)} r_{\bfj}^{ d_f^r}} .
\end{equation} 
Hence the spectral exponent $  d_s(\nu) $ is uniquely characterised  by
 \begin{equation} \label{dfsd} 
0 = \gamma(d_s(\nu) )  :=  E_V \log \sum_{|\bfi|=n(1) } t_{\bfi}^{d_s(\nu) /2} = 
 E_V  \log \sum_{|\bfi|=n(1) } \left(  \frac{r_{\bfi}^{1+d_f^r}}{\sum_{|\bfj|= n(1)} r_{\bfj}^{ d_f^r}}  \right) ^{d_s(\nu) /2}.
\end{equation} 
%where, if $ | \boldsymbol{i} | = n(\ell) $ is a neck, then 
 
Using   \eqref{dfxa}, 
\[
0  = E_V \log \sum_{ | \boldsymbol{i} | = n(1) } r_{ \boldsymbol{i} } ^{ (1+ d_f^r) d_s(\nu) /2 }- 
\frac{ d_s(\nu) } { 2 }\, E_V \log  \sum_{ | \boldsymbol{j} | = 1 } r_{ \boldsymbol{j} }  ^{ d_f^r}  
 =  E_V \log \sum_{ | \boldsymbol{i} | = n(1) } r_{ \boldsymbol{i} } ^{ (1+ d_f^r) d_s(\nu) /2}.
\]
Using \eqref{dfxa} again and the  uniqueness of $ d_f^r$, it follows that  $d_f^r = (1+d_f^r)d_s(\nu) /2$, which gives~\eqref{per}.
\end{proof}

\bigskip
We next show that the spectral dimension maximises the spectral exponent $ d_s (\mu) $ over \emph{all} 
measures $ \mu $ defined from a set  of weights $ w_i^F $ as in  Section~\ref{secwm}.  A related  result for   
deterministic fractals is established in Theorem A2 of \cite{kiglap} using Lagrange multipliers.  Here we need  
a different argument, but this also  establishes uniqueness of the $ w_i^F $ and hence of $ \mu$.

The proof is partly motivated by \cite{HR}, in particular Section 4 and the discussion following Corollary 2.7.
We first need the following general inequality.

\begin{propn} \label{prpiiq}Suppose $ \{p_1, \dots, p_N\} $ and $ \{q_1, \dots, q_N\} $ are sets of positive real 
valued random variables, each with the same random cardinality $ N $, on a probability space $(\Omega, \bp)$. 
Suppose $ \mathbb{E} \log \sum_{k=1}^N p_k = 0 $ and that the constant $ \gamma $ satisfies $ 0< \gamma < 1 $.
Then 
\begin{equation} \label{iiq}
\mathbb{E} \log \sum_{k=1}^N p_k q_k ^ \gamma \leq \mathbb{E} \log \left( \sum_{k=1}^N p_k q_k \right)^ \gamma ,
\end{equation}
with equality iff $ q_1 = \dots = q_N $ a.s. 
\end{propn}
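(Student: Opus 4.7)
The plan is to reduce the inequality to a pointwise application of Hölder's inequality, using the normalization $\mathbb{E} \log \sum_k p_k = 0$ to absorb an extra factor.

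First I would rewrite the right-hand side using the hypothesis. Since $\mathbb{E} \log \sum_{k=1}^N p_k = 0$, we have
\[
\gamma\, \mathbb{E} \log \sum_{k=1}^N p_k q_k = \gamma\, \mathbb{E} \log \sum_{k=1}^N p_k q_k + (1-\gamma)\, \mathbb{E} \log \sum_{k=1}^N p_k,
\]
so \eqref{iiq} is equivalent to
\[
\mathbb{E} \log \frac{\sum_{k=1}^N p_k q_k^\gamma}{\bigl(\sum_{k=1}^N p_k q_k\bigr)^\gamma \bigl(\sum_{k=1}^N p_k\bigr)^{1-\gamma}} \leq 0.
\]

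Next I would verify the inequality inside the expectation pointwise on $\Omega$. Writing $p_k q_k^\gamma = (p_k q_k)^\gamma \cdot p_k^{1-\gamma}$ and applying Hölder's inequality with conjugate exponents $1/\gamma$ and $1/(1-\gamma)$ (both $>1$ since $0 < \gamma < 1$) gives
\[
\sum_{k=1}^N p_k q_k^\gamma = \sum_{k=1}^N (p_k q_k)^\gamma p_k^{1-\gamma} \leq \Bigl(\sum_{k=1}^N p_k q_k\Bigr)^\gamma \Bigl(\sum_{k=1}^N p_k\Bigr)^{1-\gamma}.
\]
Hence the argument of the logarithm is $\leq 1$ a.s., the logarithm is $\leq 0$ a.s., and the expectation is $\leq 0$, which is \eqref{iiq}.

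For the equality case, I would use the sharpness of Hölder: the pointwise inequality is an equality at a given sample point precisely when $(p_k q_k)^\gamma$ and $p_k^{1-\gamma}$, equivalently $p_k q_k$ and $p_k$, are proportional in $k$, i.e.\ all $q_k$ are equal at that sample point. Since the integrand is $\leq 0$ a.s.\ and has zero expectation iff it vanishes a.s., equality in \eqref{iiq} is equivalent to $q_1 = \cdots = q_N$ a.s., as claimed. The only subtle point is checking that the integrand is integrable (so that ``expectation zero implies a.s.\ zero'' applies), which follows from the finiteness assumptions on the $p_k, q_k$ together with the finite cardinality $N$; there is no genuine obstacle here, only bookkeeping.
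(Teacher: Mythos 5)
Your proof is correct and takes essentially the same route as the paper: both establish the pointwise inequality $\sum_k p_k q_k^\gamma \leq (\sum_k p_k)^{1-\gamma}(\sum_k p_k q_k)^\gamma$ via H\"older, take logarithms and expectations, invoke $\mathbb{E}\log\sum_k p_k = 0$ to kill the extra factor, and characterise equality through the sharpness of H\"older. You make the H\"older exponents and the ``nonpositive integrand with zero expectation vanishes a.s.'' step explicit where the paper states them more tersely, but the argument is the same.
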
 

\begin{proof} 
%If $ \mu $ is a measure on a set $ X $ and $ f $ is a non-negative real-valued $ \mu $-measurable function, then 
%by H\"{o}lder's inequality, since $ 0 < \gamma < 1 $,
%\[
%\int_X f^ \gamma \leq \mu(X)^{1- \gamma } \left( \int_X f \right)^{ \gamma } ,
%\]
%where equality holds iff $ f $ is constant   a.e.  Taking $ \mu $ to be the discrete measure $ \{p_1,\dots, p_N\} $ on $ X =\{1,\dots, N \} $ gives
For any $N$, a suitable version of H\"older's inequality for sequences yields
\begin{equation} \label{iiq2} 
 \sum_{k=1}^N p_k q_k ^ \gamma \leq \left(  \sum_{k=1}^N p_k \right)^{ 1 - \gamma } \left(  \sum_{k=1}^N p_k q_k \right)^ \gamma.
\end{equation} 
 
Taking logs and expectations, and using the assumption on the random sets $ \{ p_1, \dots, p_N \} $ gives 
\begin{equation} \label{iiq3} 
 E_V \log  \sum_{k=1}^N p_k q_k ^ \gamma   \leq ( 1 - \gamma ) \mathbb{E} \log \sum_{k=1}^N p_k 
                                                                + \mathbb{E} \log \left(  \sum_{k=1}^N p_k q_k \right)^ \gamma   
                                                                 =  \mathbb{E} \log \left(  \sum_{k=1}^N p_k q_k \right)^ \gamma.
\end{equation} 
This gives \eqref{iiq}.

If $   q_1 = \dots =  q_N = c $ a.s.\ where $ c $ is a random variable, then equality holds in \eqref{iiq} since both sides equal $ \mathbb{E} \log c^{\gamma} $. 

If it is not the case that $   q_1 = \dots =  q_N  $ a.s.\ then strict inequality holds in \eqref{iiq2} with positive probability and hence strict inequality holds in~\eqref{iiq3}. 
 \end{proof}

\begin{thm} \label{spmax}
 The spectral dimension  $ d_ s $ is the maximum spectral exponent $ d_ s (\mu) $ over all    measures $ \mu$ defined from weights $ w_i^F $.  Equality holds if and only if for some constant $ c $,  $ w_i^F = c\, (r_i^F)^{d_f^r} $ a.s., in which case the corresponding measure $ \mu $ is the flat measure with respect to the resistance metric.
\end{thm}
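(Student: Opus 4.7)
The plan is to apply Proposition \ref{prpiiq} to a carefully chosen triple $(p_k,q_k,\gamma)$ and then read off the characterization of equality from its equality clause. By Lemma \ref{se1} and Theorem \ref{thm:Nspecdim}, the spectral exponent $d_s(\mu)$ is the unique zero of the strictly decreasing function $\beta\mapsto \gamma_\mu(\beta):=E_V\log\sum_{|\bfi|=n(1)} t_{\bfi}^{\beta/2}$, where at a neck $t_{\bfi}=r_{\bfi}w_{\bfi}/W$ with $W:=\sum_{|\bfj|=n(1)}w_{\bfj}$. Thus $d_s(\mu)\le d_s$ is equivalent to $\gamma_\mu(d_s)\le 0$. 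Setting $\alpha:=d_s/2=d_f^r/(d_f^r+1)\in(0,1)$,
\[
\gamma_\mu(d_s)\;=\;E_V\log\sum_{|\bfk|=n(1)} r_{\bfk}^{\alpha}w_{\bfk}^{\alpha}\;-\;\alpha\, E_V\log\sum_{|\bfk|=n(1)} w_{\bfk}.
\]

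To prove $\gamma_\mu(d_s)\le 0$, I would apply Proposition \ref{prpiiq} to the a.s.\ finite random family indexed by $\{\bfk:|\bfk|=n(1)\}$ with
\[
p_k:=r_{\bfk}^{d_f^r},\qquad q_k:=w_{\bfk}\,r_{\bfk}^{-d_f^r},\qquad \gamma:=\alpha\in(0,1).
\]
The hypothesis $E_V\log\sum_k p_k=0$ is exactly the characterisation \eqref{dfxa1} of $d_f^r$ from Theorem \ref{rdzp}; integrability of the relevant logs follows from the uniform bounds \eqref{rest}, \eqref{west} together with $E_V n(1)<\infty$. The key algebraic identity $d_f^r(1-\alpha)=\alpha$ then gives the clean factorisations
\[
p_k q_k^{\alpha}\;=\;r_{\bfk}^{\alpha}w_{\bfk}^{\alpha},\qquad p_k q_k\;=\;w_{\bfk},
\]
so the inequality asserted by the proposition is precisely $\gamma_\mu(d_s)\le 0$, which yields $d_s(\mu)\le d_s$.

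For the equality half, if $d_s(\mu)=d_s$ then strict monotonicity of $\gamma_\mu$ forces $\gamma_\mu(d_s)=0$, and the equality clause of Proposition \ref{prpiiq} implies that $w_{\bfk}/r_{\bfk}^{d_f^r}$ is $P_V$-a.s.\ constant in $\bfk$ for $|\bfk|=n(1)$. The main obstacle is to upgrade this branch-level constancy to the pointwise identification $w_i^F=c\,(r_i^F)^{d_f^r}$ a.s.\ for a single constant $c$. I would first condition on the positive-probability event $\{n(1)=1\}$, on which the constancy collapses to: for $P$-a.e.\ $F$, the ratio $\rho_i^F:=w_i^F/(r_i^F)^{d_f^r}$ is independent of $i$, yielding $w_i^F=c_F(r_i^F)^{d_f^r}$. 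Next, on $\{n(1)=2\}$ (which carries positive probability when $V\ge 2$), comparing two branches $i_1 i_2$ and $i_1' i_2'$ with $i_1\ne i_1'$ that may carry distinct level-$1$ IFSs forces $c_{F^{i_1}}=c_{F^{i_1'}}$; the support of $P$ then propagates this equality to a common constant $c$. (The $V=1$ case is handled analogously by passing to deeper neck levels and exploiting the i.i.d.\ structure of environments across levels.) Once the weights have this form, the resulting measure $\mu$ coincides with the flat measure $\nu$, completing the characterization.
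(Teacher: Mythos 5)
Your derivation of the inequality $d_s(\mu)\le d_s$ coincides exactly with the paper's: both proofs take $p_{\bfi}=r_{\bfi}^{d_f^r}$, $q_{\bfi}=w_{\bfi}/r_{\bfi}^{d_f^r}$, $\gamma=d_s/2=d_f^r/(d_f^r+1)$, verify $E_V\log\sum_{|\bfi|=n(1)}p_{\bfi}=0$ from Theorem~\ref{rdzp}, exploit $d_f^r(1-\gamma)=\gamma$ to identify $p_{\bfi}q_{\bfi}^{\gamma}=(r_{\bfi}w_{\bfi})^{\gamma}$ and $p_{\bfi}q_{\bfi}=w_{\bfi}$, and read off $\gamma_\mu(d_s)\le 0$ from Proposition~\ref{prpiiq}. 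Where you add value is the equality half: the paper discharges the passage from ``$w_{\bfi}/r_{\bfi}^{d_f^r}$ is a.s.\ independent of $\bfi$ at $|\bfi|=n(1)$'' to ``$w_i^F=c(r_i^F)^{d_f^r}$ for a universal constant $c$'' with a bare ``Clearly''. Your conditioning on the positive-probability event $\{n(1)=1\}$ (valid because the neck event depends only on the types in $E^1$, not on the IFSs, so the marginal of $F^\emptyset$ is still $P$), followed by conditioning on $\{n(1)=2\}$ to compare branches through level-$1$ nodes of different type, supplies a genuine argument for this step when $V\ge 2$.

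The parenthetical claim for $V=1$ is not correct as written, though. When $V=1$ every level is a neck, so $n(1)\equiv 1$ and ``passing to deeper neck levels'' gives nothing new: the constraint at level $n(k)=k$ reads $w_{\bfi}/r_{\bfi}^{d_f^r}$ independent of $\bfi$ with $|\bfi|=k$, but in a $1$-variable tree every node at a given level carries the same IFS, so this ratio automatically factors into $\prod_{j<k}c_{F^{(j)}}$ and is independent of $\bfi$ whatever the $c_F$ are. Thus the only constraint is $w_i^F=c_F(r_i^F)^{d_f^r}$ with an $F$-dependent factor $c_F$, and there is no mechanism to force $c_F$ constant. Indeed it need not be: the $c_F$ factors cancel in $\mu_{\bfi}=w_{\bfi}/\sum_{\bfj}w_{\bfj}$, so the measure equals the flat measure $\nu$ for any choice of $c_F$. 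The conclusion $\mu=\nu$ therefore survives for $V=1$, but the literal weight identification with a universal $c$ is sufficient, not necessary, in that case. This imprecision is also present in the paper's ``Clearly'' step; you should flag it rather than assert that the $V\ge 2$ argument extends by analogy.
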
  

\begin{proof}
For $  | \boldsymbol{i} | = n(1) $ let $ p_{ \boldsymbol{i} } = r_{ \boldsymbol{i} } ^ { d_f^r } $, so that 
$ E_V \log \sum_{ | \boldsymbol{i} | = n(1) }p_{ \boldsymbol{i} } =0$. 

Suppose $ w = \{ w_j^F \mid f \in \boldsymbol{F}, \ 1\leq j \leq N^F  \} $ is a set of weights and   consider the corresponding 
$ w_{ \boldsymbol{i} } $.  Let $ q_{ \boldsymbol{i} } = w _ { \boldsymbol{i} } / r_{ \boldsymbol{i} } ^{ d_ f^r} $.

Then from \eqref{iiq},
\[
 E_V\log \sum_{ |\boldsymbol{i} | = n(1) } \Big(  r_{ \boldsymbol{i} } ^{ d_ f^r}  \Big) ^ { 1 - \gamma } w _ { \boldsymbol{i} } ^ \gamma
        \  \leq\  E_V \log \bigg( \sum_{ | \boldsymbol{i} | = n(1) } w_{ \boldsymbol{i} } \bigg)^ \gamma .
  \]
  
Choosing $ \gamma $ so that the powers of $ r_{ \boldsymbol{i} } $ and $ w_{ \boldsymbol{i} } $ are equal, gives 
$ \gamma = d_f^r/(d_f^r + 1) $, i.e.\ $ \gamma = d_s/2 $.  Hence 
\[  
 E_V\log \sum_{ | \boldsymbol{i} | = n(1) } t_{ \boldsymbol{i} }   ^{ d_s/2}   = 
 E_V \log \frac{ \sum_{ | \boldsymbol{i} | = n(1) } ( r_{ \boldsymbol{i} } w_{ \boldsymbol{i} } ) ^{ d_s/2} }
             { \left( \sum_{ | \boldsymbol{i} | = n(1) } w_{ \boldsymbol{i} } \right) ^{ d_s/2} } \leq 0 .
\]
Moreover,  by Proposition~\ref{prpiiq} equality holds  if and only if a.s.\ it is the case that $ w_{ \boldsymbol{i} } / 
r_{ \boldsymbol{i} } ^{ d_f^r} $ is independent of $ \boldsymbol{i} $ for $  | \boldsymbol{i} | = n( 1 ) $.    
Clearly, this is true iff $ w_j^F = c ( r_j^F)^{ d_f^r } $ a.s.\ for some constant $ c $. 

From the definition \eqref{gprf} of $ d_s(\mu) $,  we have $ E_V \log   \sum_{ | \boldsymbol{i} | = n(1) } t_{ \boldsymbol{i} }^{ d_s(\mu)/2}
  =  0 $.
From Lemma~\ref{se1}  and the previous inequality, it follows that $ d_s(\mu) \leq d_s $, and equality holds iff 
$ w_j^F =   c ( r_j^F)^{ d_f } $ a.s.\ for some constant $ c $ 
 \end{proof} 

We next give a sharpening of Theorem~\ref{thm:Nspecdim} in the case of the flat measure with respect to the resistance 
metric.  This shows that for this measure, for all $V>1$, we have the same fluctuations as observed in the version of the 
$V=1$ case treated in \cite{barham}. For this, let
\begin{eqnarray}
\Phi(s) &=&  \sqrt{s\log{\log{s}}}, \nonumber \\
\phi(t) &=& \exp\big(\Phi(\log{t})\big) = \exp\big(  \sqrt{\log{t}\log{\log{\log{t}}}}\big). \label{phidef} 
\end{eqnarray}  

%*** [JH] In the following theorem I removed the last line ``There are values of $x$ at which the ratio is near the upper and lower bounds.''.
%The correct remark would be of an asymptotic nature.  But it is not clear, for example,  that the natural remark regarding 
%$ \liminf \mathcal{N} (x)   x^{-d_s/2} \phi(x)^{ c_2} $ is the correct one.  ***

\begin{thm}\label{imprest}
Suppose $\mu$ is the flat measure in the resistance metric.
Then there exist positive (non-random) constants $c_1,c_2,c_3,c_4$, and  there exists a positive finite random variable $ c_0 = c_0 (\omega) $, such that  if  $ t \geq c_0 $ then
\[ c_1 \phi(t)^{-c_2} \leq \frac{\mathcal{N} (t)}{t^{d_s/2}} \leq c_3 \phi(t)^{c_4} \quad P_V \text{ a.s.} \]
\end{thm}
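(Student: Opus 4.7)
The plan is to sharpen the proof of Theorem~\ref{thm:Nspecdim} by replacing its polynomial error terms with law-of-iterated-logarithm (LIL) fluctuations, exploiting the specific form of the flat measure. The starting point is the identity derived in the proof of Theorem~\ref{spfm}: for every neck level $n(\ell)$,
\[
\sum_{|\bfi|=n(\ell)} t_{\bfi}^{d_s/2} = S_\ell^{1/(d_f^r+1)},\qquad S_\ell := \sum_{|\bfi|=n(\ell)} r_{\bfi}^{d_f^r},
\]
together with the defining equation $E_V\log S_1=0$ from~\eqref{dfxa1}. By Lemma~\ref{lem:sumprod}, $\log S_k$ is a sum of $k$ centred i.i.d.\ random variables whose second moment is finite (from~\eqref{rest} and $E_V n(1)<\infty$). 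The classical LIL, exactly as in Theorem~\ref{thm:lil}, then gives
\[
\limsup_{\ell\to\infty} \frac{|\log S_\ell|}{\sqrt{\ell\log\log\ell}} \leq C_0\quad P_V\text{-a.s.}
\]

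Control of $M_k=|\Lambda_k|$ proceeds via the unit-mass measure $\nu_{d_s}$ introduced in the proof of Theorem~\ref{thm:Nspecdim}. For a neck node $\bfi$ at level $n(\ell(\bfi))$ the flat-measure identity above gives
\[
\nu_{d_s}[\bfi] = \frac{t_{\bfi}^{d_s/2}}{S_{\ell(\bfi)}^{1/(d_f^r+1)}}.
\]
Summing the partition identity $1=\sum_{\bfi\in\Lambda_k}\nu_{d_s}[\bfi]$, substituting $e^{-k}k^{-\beta'}\leq t_{\bfi}\leq e^{-k}$ from Lemma~\ref{lem:spatial}(c) and restricting $\ell(\bfi)$ to $[c_1k/\log k,\,c_2k]$ by Lemma~\ref{lem:spatial}(a), the LIL bound $|\log S_{\ell(\bfi)}|\leq C_0\Phi(\ell(\bfi))\leq C_0'\Phi(k)$ applied uniformly over this window yields $P_V$-a.s.\
\[
e^{kd_s/2}\exp(-C_1\Phi(k)) \leq M_k \leq e^{kd_s/2}\exp(C_1\Phi(k))\qquad\text{for } k\geq k_0(\omega),
\]
after absorbing the polynomial corrections $k^{\pm\beta' d_s/2}$ into $\exp(C_1\Phi(k))$, since $\log k = o(\Phi(k))$.

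Translating these two-sided bounds on $M_k$ into an estimate on $\mathcal{N}(t)$ uses Lemma~\ref{seceest}. Given $t$, pick $k$ so that $e^{k-1}<t\leq e^k$, and note $T_k\in[e^k, k^{\beta'}e^k]$ by Lemma~\ref{lem:spatial}(c), so $T_k\geq t$. The first estimate in Lemma~\ref{seceest} gives $\mathcal{N}(t)\leq\mathcal{N}(T_k)\leq cM_k\leq c\,t^{d_s/2}\phi(t)^{c_4}$, since $\Phi(k)=\Phi(\log t)+O(1)$. For the lower bound, the second estimate $M_k\leq\mathcal{N}(k^\alpha T_k)$ with $k^\alpha T_k\leq k^{\alpha+\beta'}e^k$ gives $\mathcal{N}(k^\alpha T_k)\geq c\,e^{kd_s/2}\exp(-C_1\Phi(k))$, and monotonicity of $\mathcal{N}$ plus the observation $k^{(\alpha+\beta')d_s/2}\leq\exp(c\log\log t)\leq\exp(c\Phi(\log t))$ extends this to $\mathcal{N}(t)\geq c\,t^{d_s/2}\phi(t)^{-c_2}$ for all large $t$.

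The main obstacle is the spatial variability of $t_{\bfi}$ across $\bfi\in\Lambda_k$, which is absent in the $V=1$ case treated in~\cite{barham}. There Lemma~\ref{lem:spatialv=1} forces $\ell(\bfi)\equiv k$ and $t_{\bfi}\asymp e^{-k}$ uniformly, so a single LIL evaluation at level $k$ suffices. For $V>1$, $\ell(\bfi)$ ranges over the wide window $[c_1k/\log k,\,c_2k]$, and the LIL bound for $S_\ell$ must be controlled uniformly over this window; the saving point is that $\Phi$ is slowly varying (so $\Phi(c_2k)$ and $\Phi(c_1k/\log k)$ are both bounded by a constant multiple of $\Phi(k)$), and all polynomial $k$-corrections from Lemmas~\ref{lem:spatial}(b,c) and~\ref{cortail} are $O(\log k)=o(\Phi(k))$, hence absorbed harmlessly into $\phi^{O(1)}$ in the final estimate.
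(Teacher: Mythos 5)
Your proposal follows essentially the same route as the paper's proof: you invoke the measure $\nu_{d_s}$ from the proof of Theorem~\ref{thm:Nspecdim}, use the law of the iterated logarithm (via Lemma~\ref{lem:sumprod} and the decomposition at necks) to control $S_\ell=\sum_{|\bfi|=n(\ell)}r_{\bfi}^{d_f^r}$, sum the partition identity $1=\sum_{\bfi\in\Lambda_k}\nu_{d_s}[\bfi]$ with the $t_{\bfi}$ and $\ell(\bfi)$ bounds from Lemma~\ref{lem:spatial} to deduce $c^{-1}e^{kd_s/2}e^{-c\Phi(k)}\leq M_k\leq ce^{kd_s/2}e^{c\Phi(k)}$, and then transfer this to $\mathcal{N}(t)$ by Lemma~\ref{seceest} and absorption of the polynomial-in-$k$ corrections into $\phi^{O(1)}$. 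The identification of the main obstacle (spatial variability of $t_{\bfi}$ for $V>1$, handled via slow variation of $\Phi$) and the absorption argument also match the paper's treatment.
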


\begin{proof} 
%**[JH]  Simplified the proof by absorbing  the ``combinatorial'' stuff  back into Lemma 3.4. Wrote out the details and fixed some typos and small mistakes***

Consider the unit mass measure $\nu_ \beta $ constructed in  the proof of Theorem~\ref{thm:Nspecdim}, where now 
$\beta = d_s$ is the spectral dimension as in~\eqref{dfsd}. 

In the following the constant $ c $ may change from line to line, and even from one inequality to the next.

If $ |\boldsymbol{i} |$ is a neck and $ | \boldsymbol{i}  | = n(\ell) $ then from \eqref{per} and \eqref{mttc}, 
\begin{equation} \label{4121} 
\nu_{d_s}[\bfi]  =  \frac{t_{\bfi}^{d_s/2}}{\sum_{|\bfj | = {n(\ell)}} t_{\bfj}^{d_s/2}}  
 =  \frac{t_{\bfi}^{d_s/2}}{  \left(  \sum_{|\bfj | = {n(\ell)}} r _{\bfj}^{d_f^r} \right)^{1/(1+d_f^r)} }.
\end{equation} 
%***[JH] The exponent $ 1/(1+d_f) $ missing in previous version ***
 
Using the law of the iterated logarithm,
as in Theorem~\ref{thm:lil} and from the decomposition~\eqref{et}, $ P_V $ a.s.\ there exists a   constant $ c $ such that,  
for   $ \ell $ sufficiently large,
\begin{equation} \label{4122} 
-c \leq \frac{ \log \sum_{| \boldsymbol{i} | = n (\ell) } r_{ \boldsymbol{i} }^{d_f^r}}{ \Phi(\ell) } \leq c, \qquad 
  \text{i.e.}  \quad e^{-c\Phi(\ell)} \leq \sum_{| \boldsymbol{i} |=  n (\ell) } r_{ \boldsymbol{i} }^{d_f^r} \leq e^{ c\Phi(\ell)}.
\end{equation} 

 Since $ \nu_{d_s} $ is a unit measure and $ \Lambda_k $ is a cut set, it follows from \eqref{4121} and \eqref{4122} 
 by summing over $ \boldsymbol{i} \in \Lambda_k $ that, for $ k $ sufficiently large,
\begin{equation} \label{88in} 
\sum_{ \boldsymbol{i} \in \Lambda_k} t_{ \boldsymbol{i} } ^{ d_s/2} e^{ -c\Phi(\ell( \boldsymbol{i} )) }\leq 1 
\leq  \sum_{ \boldsymbol{i} \in \Lambda_k} t_{ \boldsymbol{i} } ^{ d_s/2} e^{  c\Phi(\ell( \boldsymbol{i} ))},
\end{equation} 
where $ \ell( \boldsymbol{i} ) $ is defined in \eqref{dfl}.    But from Lemma~\ref{lem:spatial}(c)   and Lemma~\ref{lem:spatial}(a) respectively, the following hold $ P_V $ a.s.\ for  $ \boldsymbol{i} \in \Lambda_k $ and $  k $ sufficiently large:
 \[
 c^{-1}k^{- \beta '}e^{-k} \leq t_{ \boldsymbol{i} } \leq e^{ -k},
 \quad \ell( \boldsymbol{i} ) \leq c_2 k.
 \]
Moreover, $ \Phi(ck) \leq c^*\Phi(k) $ for some $ c^*=c^*(c) $ and all $ k \geq 3 $.  It follows from \eqref{88in} that, for $ k $ sufficiently large, 
\[
c^{-1} M_k  e^{ -k d_s/2} e^{-c\Phi(k)}\leq 1 \leq  c M_k e^{- k d_s/2}e^{c\Phi(k)},
\] 
since   $ k^{-  \beta ' d_s/2} $ can be absorbed into $ e^{-c\Phi(k)} $, with a new $ c $.  That is
\begin{equation} \label{ulmest} 
c^{-1} e^{  k d_s/2}e^{-c\Phi(k)} \leq M_k \leq c e^{  k d_s/2}e^{c\Phi(k)}.
\end{equation} 
 
 Given $ t>0 $ choose $ k $ so $ e^{k-1} < t \leq e^k $. Note also from Lemma~\ref{lem:spatial}(c) that $ e^k \leq T_k \leq c k^{ \beta '} e^k $, for $  k $ sufficiently large.  Then
 from Lemma~\ref{seceest} and \eqref{ulmest},
 \begin{equation} \label{nuest} 
 \mathcal{N} (t) \leq \mathcal{N} (T_k) \leq cM_k \leq ce^{ kd_s/2} e^{ c\Phi(k) } \leq c t ^{d_s/2} \phi(t)^c,
\end{equation} 
 where for the last inequality we note that $ \Phi(k) \leq \Phi(1+\log t) \leq c \Phi(\log t) $.
 
 Similarly, again from Lemma~\ref{seceest} and \eqref{ulmest},
 \[
 \mathcal{N} (k^{ \beta ''} T_k ) \geq M_k \geq c^{-1} e^{ k d_s/2} e^{- c\Phi(k) } \geq c^{-1} t^{ d_s/2} \phi(t)^{-c}.
 \]
 But 
$ 
 k^{ \beta ''} T_k \leq c (\log t)^{ \beta '' + \beta ' } t \leq c^* t 
$ 
 for $ t \geq 2 $ and $ c^* = c^*(c, \beta ', \beta '') $.
 It follows that  $ \mathcal{N} (c^* t ) \geq c^{-1} t^{ d_s/2} \phi(t)^{-c}$ and so
\begin{equation} \label{llmest} 
 \mathcal{N} ( t ) \geq c^{-1} t^{ d_s/2} \phi(t)^{-c}
 \end{equation} 
 if $ \log \log \log t > 0 $, hence if  $t \geq 16  $.
  
 The result follows from \eqref{nuest} and \eqref{llmest}. 
 \end{proof} 

%%%%%%%%%%%%%%%%%%%%%%%%%%%%%%%%%%%%%%%%%%

\begin{rem}{\rm 
By using the law of the iterated logarithm in the above we can show that the Weyl limit does not exist in that there
is a constant $c$ such that
\[0< \limsup_{s\to\infty} \frac{\mathcal{N}(s)}{s^{d_s/2}\phi(s)^c}, \;\; P_V \text{ a.s}. \] 
}
\end{rem}

%%%%%%%%%%%%%%%%%%%%%%%%%%%%%%%%%%%%%%%%%%
%%%%%%%%%%%%%%%%%%%%%%%%%%%%%%%%%%%%%%%%%%
%%%%%%%%%%%%%%%%%%%%%%%%%%%%%%%%%%%%%%%%%%

\section{On-Diagonal Heat Kernel Estimates}

\subsection{Overview} The on-diagonal heat kernel is determined for resistance forms by the
volume growth of balls. In \cite{Cro} it is shown how volume
estimates can be translated into heat kernel estimates in the case of non-uniform volume growth. 
We are in the same setting
but will express the bounds in a slightly different way. As we have
scale irregularity these will give rise to larger scale fluctuations than
the fluctuations arising from the spatial irregularity. Note that we will establish bounds for the Neumann heat kernel
and are in a setting where the measure is not volume doubling.

In previous work, in the $V=1$ setting of \cite{barham}, using our notation in (\ref{dfmk}) and (\ref{phidef}), the results 
obtained were that for all realizations there are non-random constants $c_1,c_2 $ such that
\[ c_1 M_k \leq p_{T_k^{-1}}(x,x) \leq c_2 M_k, \;\; \forall x\in K, \;\;k\geq 0, \]
while using a sequence chosen according to $P_1$, there are non-random constants $c_1,c_2,c_3,c_4$ and a
random variable $c_5 \in (0,\infty)$ under $P_1$, such that
\[ c_1 t^{-d_s/2} \phi(1/t)^{-c_3} \leq p_{t}(x,x) \leq c_2 t^{-d_s/2} \phi(1/t)^{c_4} , \;\;\forall x \in K, \;\; 
0<t< c_5, \;\; P_1\;a.s. \]
In the random recursive case ($V=\infty$) with its natural flat measure, as considered in \cite{HamKum}, the fluctuations 
were shown to be smaller in that there are fixed constants $c_1,c_2,a>0$ and a random variable $c_3 \in (0,\infty)$
under $P_{\infty}$ such that  
\[ c_1 t^{-d_s/2}|\log{t}|^{-a} \leq p_t(x,x) \leq c_2 t^{-d_s/2}|\log{t}|^a,\;\;\forall 0<t<c_3, \;\; \mu-a.e. \; x \in K, \;\;
P_{\infty}\;a.s. \]

We will show here that the on-diagonal heat kernel estimates for $V$ variable fractals are determined by
the local environment, see Theorems~\ref{thm:hkub} and ~\ref{thm:hklb}. In the case
of the flat measure in the resistance metric, see Definition~\ref{dffm}, we show in Theorem~\ref{thm:flatfluc} that
the global fluctuations are
of the same order as the $V=1$ case for nested Sierpinski gaskets with uniform measure as described in \cite{barham}. 
In the case of a general class of measures we will see in Theorem~\ref{thm:locspecd} that $\mu$-almost every 
$x\in K$ does not have the same spectral exponent
as the counting function (except when we choose the flat measure) and there will be a multifractal structure to the 
local heat kernel estimates in the same way as observed in \cite{BarKum}, \cite{HamKigKum}.

In order to transfer the fluctuations in the measure to the on-diagonal 
heat kernel we could apply a local Nash inequality, for example 
\cite{Kig2} or use \cite{Cro}. However we use more bare hands arguments adapted from those of 
\cite{barham}, \cite{BarKum} and \cite{HamKigKum} in order to keep the 
scale and spatial fluctuations separate.

Note that in \cite{Cro}, \cite{Kig4} it is shown that, in the case of resistance forms with 
non-uniform volume growth and under assumptions which hold in our setting, there 
exists a heat kernel which is jointly continuous in $(t,x,y) \in (0,\infty) \times K \times K$ for every $\omega\in\Omega$.

%%%%%%%%%%%%%%%%%%%%%%%%%%%%%%%%%%%%%%%%%%
\subsection{Upper Bound}

We adapt the scaling argument given in \cite{HamKigKum} Appendix B to this setting. 

Firstly, recall from Theorem~\ref{thmdrf} and the definitions and discussions around \eqref{DDform}, 
\eqref{Dkform}, \eqref{DDkform}, that $(\ce,\cf), (\ce_D,\cf_D), (\ce^{k},\cf^{k})$ and
$(\ce^{k}_D,\cf^{k}_D)$ are local regular
Dirichlet forms on $L^2(K,\mu), L^2(K\backslash V_0,\mu), L^2(\mbox{\LARGE $ \sqcup$}_{ \boldsymbol{i} \in
\Lambda_k}K_{\boldsymbol{i}},\mu), L^2(K\setminus \widetilde{V}_k,\mu)$ respectively.  For $\lambda>0$ let
\[ \ce_{\lambda}(f,g) = \ce(f,g) + \lambda(f,g)_{\mu}, \]
with similar expressions for the other Dirichlet forms. The space $\cf$ equipped with norm 
$\ce_{\lambda}^{1/2}$ is again a reproducing kernel Hilbert space and we write $g_{\lambda}, g_{\lambda}^D, 
g^k_{\lambda}, g_{\lambda}^{k,D}$ for the corresponding reproducing kernels.
% where, as $k$ is fixed throughout this section, we omit the $k$ from the notation. 

We state a scaling property of the Dirichlet form.

\begin{lem}
For all $f,g \in\cf$ we have
\[ \ce_{\lambda}(f,g) = \sum_{\bfi\in\Lambda_k} \rho_{\bfi} \ce_{\lambda t_{\bfi}}^{\sigma^{\bfi}}
(f\circ\psi_{\bfi},g\circ \psi_{\bfi}). \]
\end{lem}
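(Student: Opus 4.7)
The plan is to combine the two decomposition identities already established in the paper, namely \eqref{eq:formdecomp} for the energy functional and \eqref{inndec} for the $L^2$ inner product, and then use the defining relation for $t_{\bfi}$ to match factors inside the bracket. Since $\Lambda_k$ is a cut of $T$ (each $\bfi \in \Lambda_k$ is a neck node by construction in \eqref{lkcut}), both identities apply with $\Lambda = \Lambda_k$.

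First I would write
\[
\ce(f,g) \;=\; \sum_{\bfi \in \Lambda_k} \rho_{\bfi}\, \ce^{\sigma^{\bfi}}\!\big(f\circ\psi_{\bfi},\, g\circ\psi_{\bfi}\big),
\qquad
(f,g)_\mu \;=\; \sum_{\bfi \in \Lambda_k} \mu_{\bfi}\, (f\circ\psi_{\bfi},\, g\circ\psi_{\bfi})_{\mu^{\sigma^{\bfi}}},
\]
which are exactly \eqref{eq:formdecomp} and \eqref{inndec} applied to the cut $\Lambda_k$. Adding $\lambda$ times the second to the first gives a sum over $\bfi \in \Lambda_k$ in which the $\bfi$-th term is
\[
\rho_{\bfi}\, \ce^{\sigma^{\bfi}}\!\big(f\circ\psi_{\bfi},\, g\circ\psi_{\bfi}\big) \;+\; \lambda\mu_{\bfi}\, (f\circ\psi_{\bfi},\, g\circ\psi_{\bfi})_{\mu^{\sigma^{\bfi}}}.
\]

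The key algebraic step is to factor $\rho_{\bfi}$ out of both summands. Using $\rho_{\bfi} = r_{\bfi}^{-1}$ from \eqref{dfrsf} together with $t_{\bfi} = \mu_{\bfi} r_{\bfi}$ from \eqref{dfti}, we get $\mu_{\bfi} = \rho_{\bfi} t_{\bfi}$, hence $\lambda \mu_{\bfi} = \rho_{\bfi} (\lambda t_{\bfi})$. The $\bfi$-th term becomes
\[
\rho_{\bfi} \Big[\, \ce^{\sigma^{\bfi}}\!\big(f\circ\psi_{\bfi},\, g\circ\psi_{\bfi}\big) + \lambda t_{\bfi}\, (f\circ\psi_{\bfi},\, g\circ\psi_{\bfi})_{\mu^{\sigma^{\bfi}}}\Big] \;=\; \rho_{\bfi}\, \ce^{\sigma^{\bfi}}_{\lambda t_{\bfi}}\!\big(f\circ\psi_{\bfi},\, g\circ\psi_{\bfi}\big),
\]
by the definition of the shifted form $\ce^{\sigma^{\bfi}}_{\lambda t_{\bfi}}$ applied to the subfractal rooted at $\bfi$. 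Summing over $\bfi \in \Lambda_k$ yields the claimed identity.

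There is no real obstacle here; the content of the lemma is essentially bookkeeping. The only point worth verifying carefully is that the composed functions $f\circ\psi_{\bfi}$ and $g\circ\psi_{\bfi}$ lie in $\cf^{\sigma^{\bfi}}$ so that both terms of $\ce^{\sigma^{\bfi}}_{\lambda t_{\bfi}}$ are defined; this is automatic because $f,g \in \cf$ satisfy $\sup_n \ce_n(f,f) < \infty$, and the corresponding partial sums for the subfractal are dominated by those for $K$, via~\eqref{dfen} iterated down to level $|\bfi|$.
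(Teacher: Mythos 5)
Your proof is correct and follows exactly the same route the paper indicates: combine \eqref{eq:formdecomp} and \eqref{inndec} over the cut $\Lambda_k$, then use $t_{\bfi}=\mu_{\bfi}r_{\bfi}$ (i.e.\ $\mu_{\bfi}=\rho_{\bfi}t_{\bfi}$) to factor out $\rho_{\bfi}$. The paper's proof is just a one-line pointer to those three ingredients; you have simply written out the bookkeeping.
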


\begin{proof}  This follows by the scaling in (\ref{eq:formdecomp}) and (\ref{inndec}) and the definiton of 
$t_{\bfi}$ in (\ref{dfti}).
\end{proof} 

Let $g_{\lambda}^{D,\sigma^\bfi}$ be the reproducing kernel associated with the Dirichlet form 
$\ce_{D,\lambda}^{\sigma^{\bfi}}$ on $K^{\sigma^\bfi}$ with Dirichlet boundary conditions and 
let $g_{\lambda}^{\sigma^\bfi}$ be the reproducing kernel for the Dirichlet form $\ce_{\lambda}^{\sigma^{\bfi}}$ 
on $K^{\sigma^\bfi}$ with Neumann boundary conditions.

\begin{lem}\label{lem:greenscale}
We have for all $\bfi \in \Lambda_k$ and $x\in K_{\bfi}$, that
\[ g_{\lambda}^{D,\sigma^{\bfi}} (\psi^{-1}_{\bfi}(x),\psi_{\bfi}^{-1}(x)) = \rho_{\bfi} 
g^{k,D}_{\lambda/t_{\bfi}}(x,x). \]
and
\[ g_{\lambda}^{\sigma^{\bfi}}(\psi^{-1}_{\bfi}(x),\psi^{-1}_{\bfi}(x)) = \rho_{\bfi} g^k_{\lambda/t_{\bfi}}(x,x). \]
\end{lem}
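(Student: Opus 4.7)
The plan is to prove both identities by verifying that the natural scaled-and-extended candidate for the right-hand side satisfies the reproducing property on the left, using the Dirichlet form scaling identity in the preceding lemma together with the Hilbert space characterisation of the reproducing kernels $g_\lambda^{k,D}(\cdot, x)$ and $g_\lambda^k(\cdot, x)$.

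First I would combine the scaling identity $\mathcal{E}_\lambda(f,g) = \sum_{\bfj\in\Lambda_k} \rho_\bfj \mathcal{E}_{\lambda t_\bfj}^{\sigma^\bfj}(f\circ\psi_\bfj, g\circ\psi_\bfj)$ with the analogous decomposition of the $L^2$ inner product from \eqref{inndec}. Using the basic identity $\mu_\bfj = \rho_\bfj t_\bfj$ one finds that for $f,g \in \mathcal{F}^{k,D}$ (respectively $f,g \in \mathcal{F}^k$) one has
\[
\mathcal{E}^{k,D}_\lambda(f,g) = \sum_{\bfj\in\Lambda_k} \rho_\bfj\, \mathcal{E}^{D,\sigma^\bfj}_{\lambda t_\bfj}(f\circ\psi_\bfj, g\circ\psi_\bfj),
\]
and the same formula without the $D$'s for the Neumann case. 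This is the only algebraic fact we need.

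For the Dirichlet identity, fix $\bfi\in\Lambda_k$ and $x\in K_\bfi$, set $\lambda' = \lambda/t_\bfi$, and define
\[
\tilde{g}(z) = \begin{cases} \rho_\bfi^{-1}\, g^{D,\sigma^\bfi}_\lambda\big(\psi_\bfi^{-1}(z),\, \psi_\bfi^{-1}(x)\big) & z \in K_\bfi, \\ 0 & z \in K_\bfj,\ \bfj \in \Lambda_k,\ \bfj\neq \bfi. \end{cases}
\]
Because $g^{D,\sigma^\bfi}_\lambda(\cdot, \psi_\bfi^{-1}(x))$ vanishes on $V_0$, the function $\tilde g$ vanishes on $\widetilde V_k$, hence lies in $\mathcal{F}^{k,D}$. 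For any $f\in \mathcal{F}^{k,D}$ the decomposition above gives
\[
\mathcal{E}^{k,D}_{\lambda'}(\tilde g, f) = \sum_{\bfj\in\Lambda_k} \rho_\bfj\, \mathcal{E}^{D,\sigma^\bfj}_{\lambda' t_\bfj}(\tilde g\circ\psi_\bfj, f\circ\psi_\bfj) = \rho_\bfi\, \mathcal{E}^{D,\sigma^\bfi}_{\lambda}\big(\rho_\bfi^{-1} g^{D,\sigma^\bfi}_\lambda(\cdot, \psi_\bfi^{-1}(x)),\, f\circ\psi_\bfi\big),
\]
since only the $\bfj = \bfi$ term survives and $\lambda' t_\bfi = \lambda$. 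The reproducing property of $g^{D,\sigma^\bfi}_\lambda$ on $\mathcal{F}^{\sigma^\bfi}_D$ then yields $(f\circ\psi_\bfi)(\psi_\bfi^{-1}(x)) = f(x)$. Hence $\tilde g = g^{k,D}_{\lambda/t_\bfi}(\cdot, x)$, and evaluating at $z=x$ gives the first identity. The Neumann case is the same argument with no boundary constraint: extension by zero across the disjoint-union components of $\bigsqcup_\bfj K_\bfj$ always produces an element of $\mathcal{F}^k$, and the one-line computation above, using the Neumann scaling identity and the reproducing property of $g^{\sigma^\bfi}_\lambda$, finishes it.

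I do not expect a substantial obstacle: the relation $\mu_\bfj/\rho_\bfj = t_\bfj$ is exactly what converts the $\lambda$ on one side into $\lambda t_\bfj$ on the other, and the Dirichlet boundary condition on the subfractal kernel is precisely what makes the extension by zero admissible in $\mathcal{F}^{k,D}$. The only point worth being careful about is confirming that the decomposition of $\mathcal{E}_\lambda$ restricts correctly to $\mathcal{E}^{k,D}_\lambda$ and $\mathcal{E}^k_\lambda$, which is immediate from the definitions \eqref{Dkform} and \eqref{DDkform} and the matching decomposition of $(\cdot,\cdot)_\mu$ in \eqref{inndec} applied to the cut $\Lambda = \Lambda_k$.
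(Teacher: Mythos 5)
Your proposal is correct and is essentially the argument the paper gives: both rest on the scaling decomposition of the resolvent form $\ce_\lambda^{k,D}$ (respectively $\ce_\lambda^k$) over the cut $\Lambda_k$ via $\mu_\bfj = \rho_\bfj t_\bfj$, together with the reproducing-kernel characterisation and the observation that the subcell kernel $g^{D,\sigma^\bfi}_\lambda(\psi_\bfi^{-1}(\cdot),\psi_\bfi^{-1}(x))$, extended by zero, lies in $\cf^{k,D}$. The paper organises this as a chain of equalities using the reproducing property twice (once for $g^{k,D}_\lambda$ and once for $g^{D,\sigma^\bfi}_{\lambda t_\bfi}$), while you verify the reproducing property of an explicit rescaled candidate and then invoke uniqueness of the kernel, but this is only a repackaging of the same argument.
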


\begin{proof} 
We consider $g_{\lambda}^{D,\sigma^{\bfi}}(\psi_{\bfi}^{-1}(x),\psi_{\bfi}^{-1}(x))$, for $x\in K_{\bfi}$, which is
the reproducing kernel for $(\ce_{D,\lambda}^{\sigma^{\bfi}},\cf_D^{\sigma^{\bfi}})$ 
on $L^2(K^{\sigma^{\bfi}},  \mu^{\sigma^{\bfi}})$.
We note that $g_{\lambda}^{D,\sigma^{\bfi}}(\psi_{\bfi}^{-1}(y),\psi_{\bfi}^{-1}(x))=0$ for all 
$y\in K\backslash K_{\bfi}$.
Using this, the reproducing kernel property and the scaling, we have for $x\in K_{\bfi}$,
\begin{eqnarray*}
g_{\lambda t_{\bfi}}^{D,\sigma^{\bfi}}(\psi_{\bfi}^{-1}(x),\psi_{\bfi}^{-1}(x)) &=& \ce^k_{D,\lambda}
(g^{k,D}_{\lambda}(.,x),
g^{D,\sigma^{\bfi}}_{\lambda t_{\bfi}}(\psi_{\bfi}^{-1}(.),\psi_{\bfi}^{-1}(x))) \\
&=& \sum_{\bfj\in\Lambda_k} \rho_{\bfj} \ce_{D,\lambda t_{\bfj}}^{\sigma^{\bfj}}
(g^{k,D}_{\lambda}(\psi_{\bfj}(.),x),
g^{D,\sigma^{\bfi}}_{\lambda t_{\bfi}}(\psi_{\bfi}^{-1}(\psi_{\bfj}(.)),\psi_{\bfi}^{-1}(x))) \\
&=& \rho_{\bfi} \ce_{D,\lambda t_{\bfi}}^{\sigma^{\bfi}}(g^{k,D}_{\lambda}(\psi_{\bfi}(.),x),
g^{D,\sigma^{\bfi}}_{\lambda t_{\bfi}}(.,\psi_{\bfi}^{-1}(x))) \\
&=& \rho_{\bfi} g_{\lambda}^{k,D}(x,x)
\end{eqnarray*}
as required. 

The second equation follows by the same argument.
\end{proof} 

It is straightforward to see that, as
\[ \cf^{k}_D \subset \cf_D \subset \cf \subset \cf^k, \]
and $g_{\lambda}(x,x) = [\inf\{\ce_{\lambda}(f,f):f\in\cf, f(x)\geq 1\}]^{-1}$, 
(with similar expressions for $g^k_{\lambda}, g_{\lambda}^{k,D},g_{\lambda}^D$) we have 
\begin{equation}
g^{k,D}_{\lambda}(x,x) \leq g^{D}_{\lambda}(x,x) \leq g_{\lambda}(x,x) \leq g^k_{\lambda}(x,x), \;\;\forall x\in 
K\backslash \tilde{V}_k. \label{eq:green}
\end{equation}

\begin{lem}\label{lem:brk}
There exists a function $C(\lambda)$ such that for all $\lambda<\infty$
\[ \sup_{x\in K} g_{\lambda}(x,x) \leq C(\lambda) < \infty. \]
\end{lem}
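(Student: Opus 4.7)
The plan is to use the variational characterization
\[
g_\lambda(x,x) = \Big[\inf\big\{\mathcal{E}_\lambda(f,f) : f\in\mathcal{F},\ f(x)\geq 1\big\}\Big]^{-1},
\]
and derive a uniform positive lower bound on $\mathcal{E}_\lambda(f,f) = \mathcal{E}(f,f) + \lambda\|f\|_2^2$ over all such $f$, with the bound depending only on $\lambda$ and on the diameter constant $C$ from Corollary~\ref{cor:diambd}. The key inputs are: (i) $\mu$ is a probability measure so $\mu(K)=1$; (ii) $(\mathcal{E},\mathcal{F})$ is a resistance form, so $|f(x)-f(y)|^2 \leq R(x,y)\,\mathcal{E}(f,f)$; and (iii) $\sup_{x,y\in K} R(x,y) \leq C$.

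Fix $x\in K$ and suppose $f\in\mathcal{F}$ with $f(x)\geq 1$. From (ii) and (iii), for every $y\in K$,
\[
|f(y)| \geq f(x) - |f(x)-f(y)| \geq 1 - \sqrt{C\,\mathcal{E}(f,f)}.
\]
Split into two cases. If $\mathcal{E}(f,f) \leq 1/(4C)$, then $|f(y)|\geq 1/2$ for all $y\in K$, whence $\|f\|_2^2 \geq 1/4$ (using $\mu(K)=1$) and therefore $\mathcal{E}_\lambda(f,f) \geq \lambda/4$. Otherwise $\mathcal{E}(f,f) > 1/(4C)$ and so $\mathcal{E}_\lambda(f,f) > 1/(4C)$. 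In either case,
\[
\mathcal{E}_\lambda(f,f) \geq \min\!\Big(\tfrac{\lambda}{4},\, \tfrac{1}{4C}\Big) > 0.
\]

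Taking the reciprocal, one may set $C(\lambda) := \max(4/\lambda,\, 4C)$ and conclude $g_\lambda(x,x) \leq C(\lambda)$ uniformly in $x\in K$. There is no serious obstacle here; the only point to be a little careful about is that the Cauchy--Schwarz-type resistance estimate requires $\mathcal{E}(f,f)<\infty$, which we may assume since otherwise $\mathcal{E}_\lambda(f,f)=\infty$ and there is nothing to bound. The argument is completely deterministic (no use of necks or probability), relying only on the facts that the diameter of $K$ in the resistance metric is uniformly bounded and that $\mu$ is a probability measure, both of which hold for every $\omega\in\Omega$.
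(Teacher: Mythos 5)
Your proof is correct, and it takes a genuinely different route from the paper's. The paper's proof (following \cite{bar2}) applies the resistance inequality directly to the reproducing kernel $g_\lambda(x,\cdot)\in\mathcal{F}$, using that $\mathcal{E}_\lambda(g_\lambda(x,\cdot),g_\lambda(x,\cdot))=g_\lambda(x,x)$ to get $|g_\lambda(x,y)-g_\lambda(x,x)|^2 \leq C\, g_\lambda(x,x)$, and then integrates the resulting pointwise lower bound for $g_\lambda(x,\cdot)$ against $\mu$ and invokes (implicitly) the resolvent identity $\int_K g_\lambda(x,y)\,\mu(dy)=\lambda^{-1}$ to arrive at a quadratic inequality $g_\lambda(x,x)\leq \lambda^{-1}+(Cg_\lambda(x,x))^{1/2}$, which is then solved. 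You instead work on the variational side: starting from $g_\lambda(x,x)^{-1}=\inf\{\mathcal{E}_\lambda(f,f):f\in\mathcal{F},\,f(x)\geq 1\}$, you apply the same resistance inequality to an arbitrary competitor $f$ rather than to the kernel itself, and conclude with a simple dichotomy. Both proofs are deterministic, uniform in $\omega$, and rest on the two same essential ingredients (the resistance-form estimate and the uniform diameter bound from Corollary~\ref{cor:diambd}), but your argument avoids the conservativity identity $\int g_\lambda\,d\mu=1/\lambda$ altogether and produces the explicit bound $C(\lambda)=\max(4/\lambda,\,4C)$ without having to solve a quadratic inequality --- a slightly more elementary and self-contained route.
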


\begin{proof}
We follow the proof of \cite{bar2}~Theorem~7.20. Note that for any fixed $x\in K$ we have 
$g_{\lambda}(x,.)\in \cf$ and hence using (\ref{dfrm}) 
\[ |g_{\lambda}(x,y)-g_{\lambda}(x,x)|^2 \leq R(x,y) \ce_{\lambda}(g_{\lambda}(x,.),g_{\lambda}(x,.)). \]
By the reproducing kernel property and the global bound on the resistance across $K$ from Corollary~\ref{cor:diambd} 
we have
\[ |g_{\lambda}(x,y)-g_{\lambda}(x,x)|^2 \leq C g_{\lambda}(x,x). \]
Rearranging
\[ g_{\lambda}(x,y) \geq g_{\lambda}(x,x) - (Cg_{\lambda}(x,x))^{1/2}, \]
and integrating over $y$ against $\mu$ we have
\[ g_{\lambda}(x,x) \leq \frac1{\lambda} + (Cg_{\lambda}(x,x))^{1/2}. \]
The result then follows easily.
\end{proof}

\begin{lem}
There exists a constant $C$ such that for all $\bfi\in\Lambda_k$ and $x\in int(K_{\bfi})$, 
\[ g_{t_{\bfi}^{-1}}(x,x) \leq C \rho_{\bfi}^{-1}. \] 
\end{lem}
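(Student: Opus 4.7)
The plan is to chain together the three preceding lemmas: the monotonicity comparison \eqref{eq:green} to pass from the full Dirichlet form on $K$ to the cut form $(\ce^k,\cf^k)$, the scaling identity of Lemma~\ref{lem:greenscale} to pull a single cell's reproducing kernel out of the sum, and finally the uniform bound of Lemma~\ref{lem:brk} applied to the rescaled subfractal $K^{\sigma^{\bfi}}$.

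Concretely, first I would observe that for $\bfi\in\Lambda_k$ and $x\in\text{int}(K_{\bfi})$, we have $x\notin\widetilde{V}_k$, so \eqref{eq:green} gives
\[
g_{t_{\bfi}^{-1}}(x,x)\;\leq\; g^{k}_{t_{\bfi}^{-1}}(x,x).
\]
Next, apply the Neumann scaling identity from Lemma~\ref{lem:greenscale} with the choice $\lambda=1$, so that $\lambda/t_{\bfi}=t_{\bfi}^{-1}$, yielding
\[
g^{k}_{t_{\bfi}^{-1}}(x,x)\;=\;\rho_{\bfi}^{-1}\,g^{\sigma^{\bfi}}_{1}\!\bigl(\psi_{\bfi}^{-1}(x),\psi_{\bfi}^{-1}(x)\bigr).
\]

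Combining the two displays reduces the problem to bounding $g^{\sigma^{\bfi}}_{1}(y,y)$ uniformly in $y\in K^{\sigma^{\bfi}}$ and in $\bfi$. Here I would invoke Lemma~\ref{lem:brk} applied to the Dirichlet form $(\ce^{\sigma^{\bfi}},\cf^{\sigma^{\bfi}})$ on $L^{2}(K^{\sigma^{\bfi}},\mu^{\sigma^{\bfi}})$. The proof of that lemma only used two ingredients: that the measure is a probability measure, and that the resistance diameter of the underlying fractal is bounded by a nonrandom constant $C$ via Corollary~\ref{cor:diambd}. Both hold uniformly for every $K^{\sigma^{\bfi}}$: by construction $\mu^{\sigma^{\bfi}}$ is a probability measure, and the diameter estimate in Corollary~\ref{cor:diambd} is stated for \emph{all} $\omega$, hence applies to the shifted environment $\sigma^{\bfi}\omega$ with the same constant $C$. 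Therefore there exists $C'=C(1)$, independent of $\bfi$ and $\omega$, such that $g^{\sigma^{\bfi}}_{1}(y,y)\leq C'$ for all $y$. Stringing the inequalities together gives $g_{t_{\bfi}^{-1}}(x,x)\leq C'\rho_{\bfi}^{-1}$, as desired.

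The only subtlety, which I expect to be the main (but minor) point to verify carefully, is the uniformity of the constant $C'$ across the uncountable family of subfractals $K^{\sigma^{\bfi}\omega}$; this is precisely why it matters that Corollary~\ref{cor:diambd} was proved with a nonrandom constant depending only on the global constants $N_{\sup}$ and $r_{\sup}$ from~\eqref{Nbd} and~\eqref{rest}, and that $\mu^{\sigma^{\bfi}}$ is normalized to unit mass by Definition~\ref{dfwm}. Once this is noted, the argument is essentially a two-line computation.
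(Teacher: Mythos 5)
Your proof is correct and follows essentially the same route as the paper's: use \eqref{eq:green} to dominate $g_{t_{\bfi}^{-1}}$ by $g^k_{t_{\bfi}^{-1}}$, convert this to $\rho_{\bfi}^{-1}g_1^{\sigma^{\bfi}}$ via the Neumann scaling identity with $\lambda=1$, and bound the latter uniformly by Lemma~\ref{lem:brk}. The only difference is cosmetic: the paper writes out the full Dirichlet--Neumann sandwich from Lemma~\ref{lem:greenscale} before extracting the Neumann side, whereas you pass directly to the Neumann inequality, and your observation about the uniformity of the constant via Corollary~\ref{cor:diambd} and the unit normalisation of $\mu^{\sigma^{\bfi}}$ is exactly the point the paper relies on implicitly.
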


\begin{proof} 
By Lemma~\ref{lem:greenscale} and (\ref{eq:green}) we have for $x\in K_{\bfi}$
\begin{eqnarray*}
g_{\lambda}^{D,\sigma^{\bfi}}(\psi^{-1}_{\bfi}(x),\psi^{-1}_{\bfi}(x)) &=& \rho_{\bfi} 
g^{k,D}_{\lambda/t_{\bfi}}(x,x) \leq \rho_{\bfi} g_{\lambda/t_{\bfi}}(x,x) \\
&\leq & \rho_{\bfi} g^k_{\lambda/t_{\bfi}}(x,x) = g^{\sigma^{\bfi}}_{\lambda}(\psi^{-1}_{\bfi}(x),\psi^{-1}_{\bfi}(x)).
\end{eqnarray*}

Now set $\lambda=1$ and note that by Lemma~\ref{lem:brk} $g_1$ is uniformly bounded. Thus
\[ g_1^{D,\sigma^{\bfi}}(\psi^{-1}_{\bfi}(x),\psi^{-1}_{\bfi}(x)) \leq \rho_{\bfi} g_{1/t_{\bfi}}(x,x) \leq
 g_1^{\sigma^{\bfi}}(\psi^{-1}_{\bfi}(x),\psi^{-1}_{\bfi}(x)) \leq C. \]
Rearranging we have
\[ g_{t_{\bfi}^{-1}}(x,x) \leq C\rho_{\bfi}^{-1}, \]
as required.
\end{proof} 

\begin{thm}\label{thm:hkub}
There exists a constant $c$ such that
\[ p_{t_{\bfi}}(x,x) \leq c \mu_{\bfi}^{-1}, \;\; \forall x \in K_{\bfi}, \forall \bfi\in\Lambda_k. \]
\end{thm}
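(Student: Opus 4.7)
The plan is to convert the resolvent (Green kernel) estimate of the preceding lemma into a heat kernel estimate via the standard Laplace transform identity and the monotonicity of the on-diagonal heat kernel. Since $(\mathcal{E},\mathcal{F})$ is a local regular Dirichlet form on $L^2(K,\mu)$ with jointly continuous heat kernel $p_t(x,y)$ (as remarked from \cite{Cro}, \cite{Kig4}), the resolvent kernel $g_\lambda$ is related to $p_t$ by
\[
g_\lambda(x,x)=\int_0^\infty e^{-\lambda t}p_t(x,x)\,dt.
\]

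First, I would record the standard fact that $t\mapsto p_t(x,x)$ is non-increasing (an immediate consequence of the semigroup and symmetry properties: $p_{s+t}(x,x)=\int p_s(x,y)^2\mu(dy)\cdot \ldots$, or more directly from the spectral representation $p_t(x,x)=\sum_i e^{-\lambda_i t}\phi_i(x)^2$). Combining this with the Laplace transform identity gives, for any $\lambda>0$,
\[
g_\lambda(x,x)\ \geq\ \int_0^{1/\lambda} e^{-\lambda t} p_t(x,x)\,dt\ \geq\ p_{1/\lambda}(x,x)\cdot \frac{1-e^{-1}}{\lambda},
\]
and therefore
\[
p_{1/\lambda}(x,x)\ \leq\ \frac{\lambda}{1-e^{-1}}\,g_\lambda(x,x).
\]

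Now I set $\lambda=t_{\boldsymbol{i}}^{-1}$ and invoke the preceding lemma, which gives $g_{t_{\boldsymbol{i}}^{-1}}(x,x)\leq C\rho_{\boldsymbol{i}}^{-1}$ for $x\in\mathrm{int}(K_{\boldsymbol{i}})$. Using the defining identity $t_{\boldsymbol{i}}=r_{\boldsymbol{i}}\mu_{\boldsymbol{i}}=\rho_{\boldsymbol{i}}^{-1}\mu_{\boldsymbol{i}}$ from \eqref{dfti} and \eqref{dfrsf}, so that $(t_{\boldsymbol{i}}\rho_{\boldsymbol{i}})^{-1}=\mu_{\boldsymbol{i}}^{-1}$, we obtain
\[
p_{t_{\boldsymbol{i}}}(x,x)\ \leq\ \frac{1}{t_{\boldsymbol{i}}(1-e^{-1})}\,g_{t_{\boldsymbol{i}}^{-1}}(x,x)
\ \leq\ \frac{C}{(1-e^{-1})\,t_{\boldsymbol{i}}\rho_{\boldsymbol{i}}}
\ =\ c\,\mu_{\boldsymbol{i}}^{-1},
\]
with $c=C/(1-e^{-1})$, valid for $x\in\mathrm{int}(K_{\boldsymbol{i}})$.

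The only small technicality is extending the bound from $\mathrm{int}(K_{\boldsymbol{i}})$ to the closed cell $K_{\boldsymbol{i}}$, which is needed to match the statement. This is not a real obstacle: joint continuity of $(t,x,y)\mapsto p_t(x,y)$ on $(0,\infty)\times K\times K$ together with the fact that $\mathrm{int}(K_{\boldsymbol{i}})$ is dense in $K_{\boldsymbol{i}}$ (boundary points of $K_{\boldsymbol{i}}$ being the finitely many images of $V_0$) allows us to pass the estimate to the boundary by continuity, since the bound on the right is independent of the particular $x\in K_{\boldsymbol{i}}$. No additional probabilistic input is required; the estimate holds pathwise for every $\omega\in\Omega$ and every $\boldsymbol{i}\in\Lambda_k$, uniformly in $k$, with a nonrandom constant $c$.
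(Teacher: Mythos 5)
Your proof is correct and follows essentially the same route as the paper: monotonicity of $p_t(x,x)$ plus the Laplace transform identity to pass from the preceding Green-kernel bound $g_{t_{\bfi}^{-1}}(x,x)\leq C\rho_{\bfi}^{-1}$ to the heat kernel, then using $t_{\bfi}=\rho_{\bfi}^{-1}\mu_{\bfi}$. You additionally address the minor point of extending from $\mathrm{int}(K_{\bfi})$ to the closed cell via joint continuity of the heat kernel, which the paper implicitly elides; this is a small but genuine improvement in rigour.
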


\begin{proof} 
As 
\[ g_{\lambda}(x,x) = \int_0^{\infty} e^{-\lambda t} p_t(x,x) dt, \]
we have, by the monotonicity of $p_t(x,x)$ in $t$, that for all $u$
\[ g_{\lambda}(x,x) \geq p_u(x,x) \int_0^u e^{-\lambda t} dt = p_u(x,x) \frac{1-e^{-\lambda u}}{\lambda}. \]
Thus, setting $\lambda=t_{\bfi}^{-1}=1/u$, we have
\[ p_{t_{\bfi}}(x,x)(1-e^{-1})t_{\bfi} \leq g_{t_{\bfi}^{-1}}(x,x) \leq C \rho_{\bfi}^{-1}. \]
Rearranging and the definition of $t_{\bfi}$ then gives the result. 
\end{proof} 

%This would allow us to derive a multifractal spectrum for the spectral dimension as in \cite{HamKigKum}.

%%%%%%%%%%%%%%%%%%%%%%%%%%%%%%%%%%%%%%%%%%
\subsection{Lower Bound}

We follow a standard approach see for instance \cite{barham}, \cite{BarKum}. 
For this we require an estimate on the exit time distribution 
for balls. We start with some preliminary results. 

Let $\{X_t:t\geq 0\}$ be the diffusion with law $\bp$ associated with the Dirichlet
form $(\ce,\cf)$. We write $\bp^x$ for the law of the process with $X_0=x$ and $\be^x$ for the corresponding expectation.
%Let $W^{(0)}_{n(k)}:=inf\{t\geq 0:X_t \in F_{n(k)}\}$ and 
%$W^{(i+1)}_{n(k)}= \inf\{t>W^{(i)}_{n(k)}: X_t \in F_{n(k)}\}$ for $i=0,1,2,\dots$ be the first and successive
%hitting times of the vertices of the graph $G_{n(k)}$. 
We write $T_A=\inf\{t\geq 0:X_t\in A\}$ for the first hitting time of the set $A$.
For $\bfi\in \Lambda_k$ we write 
\[ D_{\bfi}=\bigcup_{\bfj\in\Lambda_k} \{K_{\bfj}:K_{\bfj}\cap K_{\bfi}\neq \emptyset\} \] 
for the union of the complex $K_{\bfi}$ and its neighbours. Let $\Lambda_k(z):=  \{\bfj \in \Lambda_k: z\in K_{\bfj}\}$.
For $z\in \tilde{V}_k$ we define 
\[ D_{\bfi}^z :=  \bigcup_{\bfj \in \Lambda_k(z)} K_{\bfj}, \;\; \partial D_k^z :=   \bigcup_{\bfj \in \Lambda_k(z)} \psi_j(V_0)\backslash \{z\},
\;\; \partial D_{\bfi}:= \bigcup_{\bfj\in\Lambda_k, K_{\bfj}\cap K_{\bfi}\neq \emptyset} \psi_{\bfj}(V_0) \backslash\psi_{\bfi}(V_0). \]
We will also use the notation $\partial K_{\bfi}:= \psi_{\bfi}(V_0)$. 

%We also use the notation
%$\triangle_{\Lambda_k}(x)$ and $\triangle_{\Lambda_k}(\bfi)$
%for $\triangle_{n(l)}(x)$ and $\triangle_{n(l)}(\bfi)$ where $x\in K_{\bfi}$ for $\bfi\in \lambda_k\cap T_{n(l)}$. 
%We write 
%\[ D_{\Lambda_k}(x) = \bigcup_{\bfj\in\Lambda_k:\triangle_{\Lambda_k}(j)\cap\triangle_{\Lambda_k}(x)\neq 
%\emptyset} \triangle_{\Lambda_k}(\bfj). \]
%and $T_{n(k)}^{(i)} = W^{(i)}_{n(k)}-W^{(i-1)}_{n(k)}$.

Recalling \eqref{Nbd}, \eqref{west} and  \eqref{dfyk} we let $y_k^{n_0}=\sum_{i=k}^{k+n_0} y_i$, $\hat{\eta}=N_{\inf}w_{\inf}/N_{\sup}w_{\sup}$ and write $\chi(k,n_0)= (\eta/\heta)^{y_k}\heta^{y_k^{n_0}}$.

\begin{lem}\label{lem:resbd}
There exist constants $c_i$ and $n_0$ such that
\[ c_1 \chi(k,n_0) e^{-k} \leq \be^xT_{\partial D_{\bfi}} \leq 
\sup_{z\in K_{\bfi}} \be^z T_{\partial D_{\bfi}} \leq c_2 e^{-k},\;\;\forall x\in K_{\bfi},\forall \bfi\in\Lambda_k. \]
\end{lem}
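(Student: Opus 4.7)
The plan is to prove the three inequalities separately. The middle inequality is immediate since $x \in K_{\bfi}$ is one admissible choice of $z$.

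For the upper bound $\sup_{z \in K_{\bfi}} \be^z T_{\partial D_{\bfi}} \leq c_2 e^{-k}$, I would first establish the single-complex bound $\be^y T_{\partial K_{\bfj}} \leq C t_{\bfj}$ uniformly in $y \in K_{\bfj}$. This follows from the Green function bound $g^{D,\sigma^{\bfj}}_1 \leq C$ of Lemma~\ref{lem:brk} applied via the representation $\be^y T_{\partial K_{\bfj}} = \int g^{D,\bfj}(y,z)\,d\mu(z)$, together with the scaling of Lemma~\ref{lem:greenscale}, which produces the factor $\mu_{\bfj}/\rho_{\bfj} = t_{\bfj}$. Since $D_{\bfi}$ is covered by $K_{\bfi}$ together with at most $M(d{+}1)$ neighbouring complexes from $\Lambda_k$ (the uniform bound in Lemma~\ref{lem:cnprop} and \eqref{cibd}), each with $t_{\bfj} \leq e^{-k}$ by definition of $\Lambda_k$, strong-Markov chaining of exit times across the (uniformly bounded) number of crossings yields $\sup_{z \in K_{\bfi}} \be^z T_{\partial D_{\bfi}} \leq c_2 e^{-k}$.

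The lower bound is the main obstacle, because when $x$ lies on $\partial K_{\bfi}$ the process can escape very rapidly through a neighbouring complex of small $\mu$-mass, so a naive resistance-times-measure estimate fails. The strategy is to locate, for some integer $n_0$ depending only on the geometry of $\boldsymbol{F}$, a subcomplex $K_{\bfi\bfj}$ with $\bfi\bfj \in \Lambda_{k+n_0}$ whose closure lies strictly interior to $D_{\bfi}$ at resistance distance at least $cr_{\bfi}$ from $\partial D_{\bfi}$. The existence of such an interior $K_{\bfi\bfj}$ with $n_0$ independent of $k$ and $\bfi$ follows from the nesting axiom (Lemma~\ref{lem:cnprop}), the uniform bounds \eqref{Nbd} on branching numbers, and the fact that after descending enough necks the exponential proliferation of subcomplexes guarantees one separated from the boundary in the resistance metric (Lemma~\ref{lem:resb} and Corollary~\ref{cor:diambd}).

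Given such a $K_{\bfi\bfj}$, I would apply the general resistance-form inequality $\be^x T_{\partial D_{\bfi}} \geq c\,\mu(K_{\bfi\bfj})\,R(x,\partial D_{\bfi})$ (derived from the Green function representation $\be^x T_{\partial D_{\bfi}} = \int g^{D_{\bfi}}(x,y)\,d\mu(y)$ and the fact that $g^{D_{\bfi}}(x,y) \geq c R(x,\partial D_{\bfi})$ for $y \in K_{\bfi\bfj}$ via the reproducing kernel and Lemma~\ref{lem:resbd}-style arguments), together with $R(x,\partial D_{\bfi}) \geq c r_{\bfi}$ from Lemma~\ref{lem:resb}. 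It remains to control the mass ratio $\mu(K_{\bfi\bfj})/\mu(K_{\bfi})$: iterating Lemma~\ref{hineq} across the $n_0$ necks between $|\bfi| = n(\ell)$ and $|\bfi\bfj| = n(\ell+n_0)$, with the gap at neck $i$ bounded by $y_i$, yields $\mu(K_{\bfi\bfj}) \geq c\,\heta^{y_{k+1}+\cdots+y_{k+n_0}}\mu_{\bfi}$. Combining with $t_{\bfi} = \mu_{\bfi} r_{\bfi} \geq \eta^{y_k} e^{-k}$ from \eqref{tbd} and the algebraic identity
\begin{equation*}
\chi(k,n_0) \;=\; (\eta/\heta)^{y_k}\,\heta^{y_k^{n_0}} \;=\; \eta^{y_k}\,\heta^{y_{k+1}+\cdots+y_{k+n_0}},
\end{equation*}
one obtains $\be^x T_{\partial D_{\bfi}} \geq c\,\heta^{y_{k+1}+\cdots+y_{k+n_0}}\,t_{\bfi} \geq c_1\,\chi(k,n_0)\,e^{-k}$ as required. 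The main technical care is in justifying the resistance-times-mass lower bound uniformly in $x$ (including boundary vertices), which is where the $n_0$-step interior descent is essential.
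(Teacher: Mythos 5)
Your middle inequality and the overall shape (Green-function representation for the upper bound, resistance-times-mass for the lower bound) are sound, but both the upper- and lower-bound halves have genuine gaps that the paper's proof works hard to close.

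\textbf{Upper bound.}  Chaining exit times across the ``uniformly bounded number of crossings'' is not justified: the number of \emph{distinct} complexes meeting $K_{\bfi}$ is bounded, but the walk can bounce between $\partial K_{\bfi}$ and the neighbouring cells arbitrarily many times before reaching $\partial D_{\bfi}$, so the number of excursions (steps of the skeleton chain $\hat X$ on $\widetilde V_k$) is a random variable, not a constant.  What is actually needed is $\be^y S < \infty$ uniformly in $k$, where $S$ is the number of skeleton steps until absorption in $\partial D_{\bfi}$.  The paper proves this by a geometric argument --- two of the three (resp.\ $d{+}1$) vertices of $\partial K_{\bfi}$ are interior to its parent cell, which forces the conductance from $\partial K_{\bfi}$ out to $\partial D_{\bfi}$ to be comparable to (or larger than) the conductance across $\Delta_{\bfi}$, whence the absorbing chain has bounded expected absorption time --- and then combines $\sup_y \be^y(U_i-U_{i-1}\,|\,\mathcal F_{U_{i-1}}) \leq c\,e^{-k}$ with Wald's identity.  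That conductance comparison is the heart of the upper bound and is missing from your sketch.

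\textbf{Lower bound.}  The choice of target cell is wrong, and the two supporting claims fail precisely in the worst case $x\in\partial K_{\bfi}$.  If $x$ is a vertex of $K_{\bfi}$ shared with a neighbouring complex $K_{\bfj^*}\in\Lambda_k$ with $r_{\bfj^*}\ll r_{\bfi}$ (which is perfectly possible, since $\Lambda_k$ equalises crossing \emph{times} $t_\bfj$, not resistances), then $R(x,\partial D_{\bfi})\asymp r_{\bfj^*}$, which can be much smaller than $r_{\bfi}$; Lemma~\ref{lem:resb} bounds resistances between two vertices of $\Delta_{\bfi}$, not the escape resistance to the exterior boundary, so it does not give $R(x,\partial D_{\bfi})\geq c\,r_{\bfi}$.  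Worse, for a subcomplex $K_{\bfi\bfj}$ ``strictly interior to $D_{\bfi}$'' one has $R(x,z)\asymp r_{\bfi}\gg R(x,\partial D_{\bfi})$ for $z\in K_{\bfi\bfj}$, so the resistance-form inequality $|g^{D_{\bfi}}(x,x)-g^{D_{\bfi}}(x,z)|\leq \sqrt{R(x,z)\,g^{D_{\bfi}}(x,x)}$ gives no useful lower bound on $g^{D_{\bfi}}(x,z)$; the claim $g^{D_{\bfi}}(x,z)\geq c\,R(x,\partial D_{\bfi})$ is simply false in this regime --- a walk started at $x$ will typically escape through $K_{\bfj^*}$ without ever visiting the interior of $K_{\bfi}$.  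The paper avoids this by first reducing to $\be^y T_{\partial D^y_{\bfi}}$ for $y\in\partial K_{\bfi}$, then choosing $\bfj^*\in\Lambda_k(y)$ minimising $r_{\bfj}$, so that $g_{D^y_{\bfi}}(y,y)=R(y,\partial D^y_{\bfi})\geq c\,r_{\bfj^*}$, and drilling $n_0$ levels \emph{into $K_{\bfj^*}$ towards $y$} to find the subcell $K_{\bfj^*\bfk}$ containing $y$ with resistance diameter $\ll r_{\bfj^*}$; the product then reads $r_{\bfj^*}\mu_{\bfj^*}\cdot(\text{mass ratio}) = t_{\bfj^*}\cdot(\text{mass ratio})$, with $t_{\bfj^*}\geq\eta^{y_k}e^{-k}$ since $\bfj^*\in\Lambda_k$.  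Your final algebra with $\chi(k,n_0)=\eta^{y_k}\heta^{y_{k+1}+\cdots+y_{k+n_0}}$ is the right endpoint, but the route through $\mu(K_{\bfi\bfj})\,r_{\bfi}$ does not get there: the $n_0$-step descent must go into the \emph{minimising neighbour} and must land on a small cell \emph{containing} $x$, not on one separated from $\partial D_{\bfi}$.
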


\begin{proof} 
%Let $X^D_t$ be the process killed on exiting the region $D_{\bfi}$. 

We begin by observing that 
\begin{equation}
 \be^x T_{\partial D_{\bfi}} = \be^x T_{\partial K_{\bfi}} + \sum_{y\in \partial K_{\bfi}} \bp^x(X_{T_{\partial K_{\bfi}}}=y) \be^y T_{\partial D_{\bfi}}. \label{eq:htdecomp}
 \end{equation}

To treat the first term we note that
the Dirichlet form restricted to $K_{\bfi}$ with Dirichlet boundary conditions is a reproducing kernel Hilbert space with the 
associated Green function $g_{K_{\bfi}}(x,.)$ as the kernel. Let $f(y)=g_{K_{\bfi}}(x,y)/g_{K_{\bfi}}(x,x)$. 
By the definition of $f$ and the reproducing kernel property we have $\ce(f,f) = 1/g_{K_{\bfi}}(x,x)$. 
By the definition of the effective resistance we also have that $g_{K_{\bfi}}(x,x) = R(x,\partial K_{\bfi})$.  
As $g_{K_{\bfi}}$ is harmonic away from $x$ and is 0 on $\partial K_{\bfi}$ we have that 
$0\leq f(y) \leq 1$ for all $y$. Hence, putting these observations together and using Corollary~\ref{cor:diambd},
we have that, for any $y \in K_{\bfi}$, 
%\begin{eqnarray*}
%\be^y T_{\partial K_{\bfi}} &=& \int_{K_{\bfi}} g_{K_{\bfi}}(y,z) \mu(dz) \\
%&\leq& R(y, \partial K_{\bfi}) \mu(K_{\bfi})\\
% &\leq & c r_{\bfi} \mu_{\bfi} \\
% &\leq & c e^{-k},
%\end{eqnarray*}
\[ \be^y T_{\partial K_{\bfi}} = \int_{K_{\bfi}} g_{K_{\bfi}}(y,z) \mu(dz) \leq R(y, \partial K_{\bfi}) \mu(K_{\bfi}) \leq  c r_{\bfi} \mu_{\bfi} \leq c e^{-k},
\]
as $\bfi\in\Lambda_k$.
%as there can only be a finite number of cells attached to a boundary point. 
%As this holds for all $y \in K_{\bfi}$ we have the upper bound.

We next consider the exit time from $D_{\bfi}$ started at a point $y\in \partial K_{\bfi}$.

Let $U_0=0$ and set $U_i=\inf\left\lbrace t>U_{i-1}:X_t \in \tilde{V}_k\backslash \{X_{U_{i-1}}\}\right\rbrace$. Then
$\hat{X}_i = X_{U_i}$ is a discrete time Markov chain on $\tilde{V}_k$. Let $S=\inf\{n: \hat{X}_n\in \partial 
D_{\bfi}\}$. By construction we see that $\{\hat{X}_n:n\leq S\}$ can be viewed as a $V_0+1$ state discrete
time Markov chain with $V_0$ states
as the vertices of $K_{\bfi}$ and an absorbing state given by amalgamating the vertices in $\partial D_{\bfi}$. 
By construction this Markov chain has transition probabilities given by the
conductances on $\tilde{G}_k$. As two of the vertices in $\partial K_{\bfi}$ must be internal to a triangle or 
$d$-dimensional tretrahedron in $K_{\bfi||\bfi|-1}$  the conductance between the edges across $\Delta_{\bfi}$ 
and at least one edge to $\partial D_{\bfi}$ are comparable or otherwise the conductances across $\Delta_{\bfi}$ 
are smaller and hence $E S<\infty$ independent of $k$. 

The time taken for the original process to exit is then $\be^y U_S$. 
We now compute the time for a step. 

The same argument as before for the first term in \eqref{eq:htdecomp}  but using $g_{D^y_{\bfi}}$ gives
\[ \be^y T_{\partial D_{\bfi}^y} = \int_{D_{\bfi}^y} g_{D_{\bfi}^y}(y,z) \mu(dz) \leq R(y, \partial D_{\bfi}^y) \mu(D_{\bfi}^y).
\]
Now observe that by the definition of resistance we have
\[ R(y,\partial D_{\bfi}^y) \leq R(y,z), \;\;\forall z\in \partial D_{\bfi}^y. \]
Thus we have $R(y,\partial D_{\bfi}^y) \leq \min_{z\in \partial D_{\bfi}^y} R(y,z)$. By our
estimate on the resistance in Lemma~\ref{lem:resbd} this gives $R(y,\partial D_{\bfi}^y) \leq  \min_{\bfj\in\Lambda_k(y)} r_{\bfj}$.
Hence, as the number of cells that meet at $y$ is bounded,
%\begin{eqnarray*}
%\be^y T_{\partial D_{\bfi}^y} &\leq & c \min_{\bfj\in\Lambda_k(y)} r_{\bfj} \sum_{\bfj\in\Lambda_k(y)} \mu_{\bfj} \\
% &\leq & c \max_{\bfj \in\Lambda_k(y)}r_{\bfj} \mu_{\bfj} \\
% &\leq & c e^{-k}.
%\end{eqnarray*}
\[ \be^y T_{\partial D_{\bfi}^y} \leq   \min_{\bfj\in\Lambda_k(y)} r_{\bfj} \sum_{\bfj\in\Lambda_k(y)} \mu_{\bfj} 
 \leq  c \max_{\bfj \in\Lambda_k(y)}r_{\bfj} \mu_{\bfj} \leq  c e^{-k}.
\]

We are now ready to show $\be^y T_{\partial D_{\bfi}} \leq C e^{-k}$. To see this we use
\[ \be^y T_{\partial D_{\bfi}}  = \be^y U_S = \be^y \sum_{i=1}^S \left( U_i-U_{i-1}\right). \]
Note that $S$ is a stopping time with respect to $\{\cf_{U_i}\}_{i=0}^{\infty}$, where $\{\cf_t\}_{t\geq 0}$ is the filtration generated by $X$.
%by the Strong Markov property of $X$, the increments $U_i-U_{i-1}$  are 
%independent, though not identically distributed. 
As $\be(U_i-U_{i-1}|\cf_{U_{i-1}})=\be^{X_{U_{i-1}}} 
T_{\tilde{V}_k \backslash \{X_{U_{i-1}}\}}$, a minor modification of Wald's identity shows that
\[ \be^y T_{\partial D_{\bfi}} \leq c e^{-k} \be^y S, \;\;\forall y \in \partial K_{\bfi}. \]
Putting this back into (\ref{eq:htdecomp}) gives the upper bound.

For the mean hitting time lower bound we return to (\ref{eq:htdecomp}) to see that
%\[ \be^x T_{\partial D_{\bfi}} = \be^x T_{\partial K_{\bfi}} + \sum_{y\in \partial K_{\bfi}} \bp^x(X_{T_{\partial %K_{\bfi}}}=y) \be^y T_{\partial D_{\bfi}}. \]
%Thus we have
\[ \be^x T_{\partial D_{\bfi}} \geq  \min_{y\in \partial K_{\bfi}} \be^y T_{\partial D_{\bfi}} \geq  
\min_{y\in \partial K_{\bfi}} \be^y T_{\partial D^y_{\bfi}}. \]
Using the properties of $g_{D^y_{\bfi}}$, and setting $f(z)=g_{D_{\bfi}^y}(y,z)/g_{D_{\bfi}^y}(y,y)$, 
we see that
\[ |f(y)-f(z)|^2 \leq R(y,z) \ce(f,f) = \frac{R(y,z)}{g_{D^y_{\bfi}}(y,y)} = \frac{R(y,z)}{R(y,\partial D^y_{\bfi})}. \]
Let 
\[ A^c_y :=\{ z: R(y,z) \leq c R(y,\partial D^y_{\bfi})\}. \]
Let $\bfj^*\in \Lambda_k(y)$ denote the index at which $\min_{\bfj \in \Lambda_k(y)} r_{\bfj}$ is attained. 
Thus, by the boundedness of $| \Lambda_k(y)|$, we have
 \begin{equation} R(y,\partial D^y_{\bfi}) \geq c_1 r_{\bfj^*}. \label{eq:rydy}
\end{equation}
We now show that $A^c_y$ must have measure comparable with $\mu_{j^*}$.

By decomposing the cell $K_{\bfj^*}$ we have
 \[ K_{\bfj^*} = \bigcup_{\bfj:|\bfj|=n} \psi_{\bfj^*\bfj}(K(\sigma^{\bfj^*\bfj}T)), \]
and we write $\bfk$ with $|\bfk|=n$ such that $y\in \psi_{\bfj^*\bfk}(K(\sigma^{\bfj^*\bfk}T))=
K_{\bfj^*\bfk}$. Then, by Corollary~\ref{cor:diambd}, for any $z\in K_{\bfj^*\bfk}$ we have a constant $c$ such that
\[ R(y,z) \leq c r_{\bfj^*\bfk} \leq c r_{\bfj^*} r_{\sup}^n, \]
and hence by (\ref{eq:rydy})
 \[ R(y,z) \leq \frac{c r_{\sup}^n}{c_1} R(y,\partial D^y_{\bfi}).\]
Thus, if we take $n_0=\inf\{n: r_{\sup}^n <c_1/c\}$ and set $c_2 =  \frac{c r_{\sup}^{n_0}}{c_1}$,
we have $K_{\bfj^*\bfk} \subset A^{c_2}_y$ where $c_2<1$.

Hence for $z\in K_{\bfj^*\bfk}$ we have $|f(y)-f(z)|^2 \leq c_2$. As $f(y)=1$ we see that 
we must have $f(z)\geq c'=1-\sqrt{c_2}$. Thus for any $y\in \partial K_{\bfi}$ we have,
writing $k_{n_0}+|\bfj^*|$ for the first neck after $n_0+|\bfj^*|$,
\begin{eqnarray*}
\be^y T_{\partial D^y_{\bfi}} &=& \int_{D^y_{\bfi}} g_{D^y_{\bfi}}(y,z) \mu(dz) 
     \geq  c' g_{D^y_{\bfi}}(y,y)\mu(K_{\bfj^*\bfk}) \\
&= & c' R(y,\partial D^y_{\bfi})  \mu_{\bfj^*} \frac{\sum \{w_{\bfk\bfi}:|\bfk\bfi|=k_{n_0},\bfi\in 
T^{\sigma^{\bfj^*\bfk}}\}}{\sum \{w_{\bfi}:|\bfi|=k_{n_0},\bfi\in T^{\sigma^{\bfj^*}}\}}.
\end{eqnarray*}
Now apply (\ref{eq:rydy}), the upper and lower bounds on the weights and Lemma~\ref{lem:spatial}, 
\begin{eqnarray*}
\be^y T_{\partial D_{\bfi}} &\geq & c_3 r_{\bfj^*} \mu_{\bfj^*}N_{\inf}^{k_{n_0}-n_0}
\left( \frac{w_{\inf}}{N_{\sup}w_{\sup}}\right)^{k_{n_0}} \\
&\geq & c e^{-k} \eta^{y_k} \heta^{y_{k+1}+\dots+ y_{k+n_0}}
\end{eqnarray*}
as required.
\end{proof} 

\begin{lem}\label{lem:hittail}
There exist constants $c_3,c_4$ such that for $x\in K_{\bfi}, \bfi\in\Lambda_k$
\[ P^x (T_{\partial D_{\bfi}} \leq t) \leq 1-c_3 \chi(k,n_0), \;\;\mbox{for } t\leq c_4\frac12 
\chi(k,n_0)^2 e^{-k}. \]
\end{lem}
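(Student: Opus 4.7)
The plan is to derive this tail bound from the two-sided expected hitting time estimate in Lemma~\ref{lem:resbd} by applying the strong Markov property at the deterministic time $t$. Write $T = T_{\partial D_{\bfi}}$ for brevity. Decomposing
\[
\be^x T = \be^x[T; T\leq t] + \be^x[T; T>t],
\]
and using the strong Markov property on the second term (note that on $\{T>t\}$ we have $X_t \in D_{\bfi}$, so the remaining time is $\be^{X_t} T$), one obtains the standard inequality
\[
\be^x T \;\leq\; t \;+\; \bp^x(T>t) \sup_{y \in D_{\bfi}} \be^y T,
\]
which rearranges to
\[
\bp^x(T>t) \;\geq\; \frac{\be^x T - t}{\sup_y \be^y T}.
\]

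First I would apply the two bounds from Lemma~\ref{lem:resbd}: $\be^x T \geq c_1 \chi(k,n_0)\, e^{-k}$ and $\sup_{y\in D_{\bfi}} \be^y T \leq c_2 e^{-k}$. Substituting,
\[
\bp^x(T>t) \;\geq\; \frac{c_1 \chi(k,n_0)\, e^{-k} - t}{c_2\, e^{-k}}.
\]
Next I would observe that $\chi(k,n_0) \leq 1$ (since $\eta/\hat\eta = r_{\inf}/N_{\inf}<1$ and $\hat\eta < 1$ under the standing assumptions \eqref{Nbd}, \eqref{rhoest}, \eqref{west}), so that if $t \leq \tfrac12 c_4 \chi(k,n_0)^2 e^{-k}$ with $c_4 \leq c_1$ chosen once and for all, we have
\[
c_1 \chi(k,n_0)\, e^{-k} - t \;\geq\; c_1 \chi(k,n_0)\, e^{-k} - \tfrac12 c_4 \chi(k,n_0)^2 e^{-k} \;\geq\; \tfrac12 c_1 \chi(k,n_0)\, e^{-k}.
\]
This yields $\bp^x(T>t) \geq c_3 \chi(k,n_0)$ with $c_3 := c_1/(2c_2)$, which is exactly the required bound on $\bp^x(T \leq t) = 1 - \bp^x(T>t)$.

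The argument is entirely short once the two-sided bound of Lemma~\ref{lem:resbd} is in hand; the main work was already done there, in particular securing the correct dependence of the lower bound on $\chi(k,n_0)$ through the fine resistance / Green-function analysis on the smallest subcell $K_{\bfj^*\bfk}$ reaching a later neck level $k_{n_0}+|\bfj^*|$. Here the only subtlety is to make sure that the strong Markov application is legitimate at the fixed time $t$, which uses continuity of paths and the fact that $D_{\bfi}$ is (relatively) open so $\{T>t\}$ forces $X_t \in D_{\bfi}$; this lets the uniform upper bound on $\be^y T$ be applied pointwise in $y = X_t$. No further probabilistic input (e.g.\ second moment estimates) is needed, and the factor $\chi^2$ in the constraint on $t$ is comfortably sharper than the $\chi$ that the argument actually consumes.
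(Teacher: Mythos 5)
Your argument is correct and is essentially the paper's own proof: the same decomposition of $\be^x T_{\partial D_{\bfi}}$ at the deterministic time $t$ via the Markov property, followed by substituting the two-sided mean exit-time bound from Lemma~\ref{lem:resbd} and rearranging. The only cosmetic difference is that you bound the denominator $\sup_y \be^y T_{\partial D_{\bfi}}$ from above by $c_2 e^{-k}$ in one place (and invoke $\chi(k,n_0)\leq 1$ to convert the stated $\chi^2$ constraint on $t$ into the $\chi$ constraint your algebra actually uses), whereas the paper instead bounds $\sup_y\be^y T_{\partial D_{\bfi}}$ from below by $c_1\chi e^{-k}$ for the first term, which produces the $\chi^2$ in the time constraint directly.
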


\begin{proof} 
We note that
\[ T_{\partial D_{\bfi}} \leq t + I_{\{T_{\partial D_{\bfi}}>t\}}(T_{\partial D_{\bfi}}-t). \]
Taking expectations
\begin{eqnarray*}
\be^x T_{\partial D_{\bfi}} &\leq&  t + \be^x \left(I_{\{T_{\partial D_{\bfi}}>t\}}\be^{X_t}
 T_{\partial D_{\bfi}}\right) \\
&\leq & t + \bp^x(T_{\partial D_{\bfi}}>t) \sup_{y\in D_{\bfi}}\be^y T_{\partial D_{\bfi}}.
\end{eqnarray*}
Rearranging and then applying our exit time estimates from Lemma~\ref{lem:resbd}
\begin{eqnarray*}
\bp^x(T_{\partial D_{\bfi}} \leq t) &\leq & \frac{t}{\sup_{y\in D_{\bfi}} \be^y T_{\partial D_{\bfi}}} + 
1- \frac{\be^x T_{\partial D_{\bfi}}}{\sup_{y\in D_{\bfi}} \be^y T_{\partial D_{\bfi}}} \\
&\leq & c_1 e^k t\chi(k,n_0)^{-1} + 1- c_2 \chi(k,n_0).
\end{eqnarray*}
Thus, if $t\leq \frac12 c_2 c_1^{-1} \chi(k,n_0)^2 e^{-k} $, we have
\[ \bp^x(T_{\partial D_{\bfi}} \leq t) \leq 1- \frac12 c_2 \chi(k,n_0), \]
as required. 
\end{proof} 

\begin{thm}\label{thm:hklb}
There are constants $c, \alpha'$ such that for $t\leq c_4 e^{-k} \chi(k,n_0)^2$
\[ p_{2t}(x,x) \geq c \chi(k,n_0)^2 \mu(D_{\bfi})^{-1}, \;\;\forall x\in K_{\bfi}, \bfi\in \Lambda_k. \] 
\end{thm}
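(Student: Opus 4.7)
The proof is a standard Cauchy–Schwarz/exit-time argument: all of the technical work (the resistance computation, the mean exit time estimate, and the tail bound) has already been done in Lemmas~\ref{lem:resbd} and~\ref{lem:hittail}, so what remains is to translate the lower bound on $\bp^x(T_{\partial D_{\bfi}} > t)$ into a pointwise lower bound on the heat kernel.

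The plan is as follows. First, invoke joint continuity of $p_t$ (mentioned in the Overview, via \cite{Cro,Kig4}) together with symmetry and Chapman–Kolmogorov to write
\begin{equation*}
p_{2t}(x,x) \;=\; \int_K p_t(x,y)^2\, \mu(dy) \;\geq\; \int_{D_{\bfi}} p_t(x,y)^2\, \mu(dy).
\end{equation*}
Applying the Cauchy–Schwarz inequality on $L^2(D_{\bfi},\mu)$ to the right-hand side gives
\begin{equation*}
\int_{D_{\bfi}} p_t(x,y)^2\, \mu(dy) \;\geq\; \frac{1}{\mu(D_{\bfi})} \left( \int_{D_{\bfi}} p_t(x,y)\, \mu(dy) \right)^{\!2} \;=\; \frac{\bp^x(X_t \in D_{\bfi})^2}{\mu(D_{\bfi})}.
\end{equation*}

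Next, observe that if the process has not yet exited $D_{\bfi}$ by time $t$, then $X_t \in D_{\bfi}$; hence $\bp^x(X_t \in D_{\bfi}) \geq \bp^x(T_{\partial D_{\bfi}} > t)$. For $x \in K_{\bfi}$ with $\bfi \in \Lambda_k$ and $t$ in the allowed range $t \leq \tfrac12 c_4\, \chi(k,n_0)^2 e^{-k}$, Lemma~\ref{lem:hittail} gives
\begin{equation*}
\bp^x(T_{\partial D_{\bfi}} > t) \;=\; 1 - \bp^x(T_{\partial D_{\bfi}} \leq t) \;\geq\; c_3\, \chi(k,n_0).
\end{equation*}

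Combining the three displays yields
\begin{equation*}
p_{2t}(x,x) \;\geq\; \frac{c_3^2\, \chi(k,n_0)^2}{\mu(D_{\bfi})},
\end{equation*}
which is the stated bound with $c = c_3^2$. I expect no real obstacle: the delicate work — controlling the resistance $R(x,\partial D_{\bfi}^y)$ in terms of $r_{\bfj^*}$, and converting the mean exit time into a tail estimate via Markov's inequality — is already in place. The only subtlety is to make sure that the range of $t$ for which the Cauchy–Schwarz step is effective coincides with the range for which Lemma~\ref{lem:hittail} is applicable; this is automatic because we simply use the same threshold $c_4\chi(k,n_0)^2 e^{-k}$ (up to the factor $1/2$ absorbed into $c_4$). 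The role of the $\alpha'$ mentioned in the statement presumably enters through the fact that $\chi(k,n_0)$ is $P_V$-a.s.\ bounded below by $k^{-\alpha'}$ for $k$ large, via Lemma~\ref{lem:spatial}(b)–(c) applied to $y_k$ and hence to the product $(\eta/\hat\eta)^{y_k}\hat\eta^{y_k^{n_0}}$; this polynomial-in-$k$ lower bound on $\chi(k,n_0)$ is what will feed into the global $P_V$-a.s.\ heat kernel fluctuations of Theorem~\ref{thm:flatfluc}.
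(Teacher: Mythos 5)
Your argument is exactly the paper's: lower-bound $\bp^x(X_t\in D_{\bfi})$ via Lemma~\ref{lem:hittail}, express it as $\int_{D_{\bfi}} p_t(x,y)\,\mu(dy)$, and conclude by Cauchy--Schwarz together with $\int_{D_{\bfi}}p_t(x,y)^2\,\mu(dy)\leq p_{2t}(x,x)$. The proposal is correct and follows the same route; your closing remark on $\chi(k,n_0)\succcurlyeq k^{-\alpha'}$ also correctly anticipates how this feeds into Theorem~\ref{thm:hklbk}.
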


\begin{proof} 
A standard argument gives the following. If $t\leq \frac12 c_4 \chi(k,n_0)^2 e^{-k}$, then by Lemma~\ref{lem:hittail}
%\begin{eqnarray*}
%(c_2 \heta^{y_k^{n_0}})^2 &\leq & P^x(X_t \in D_{\bfi})^2 \\
%&= & (\int_{D_{\bfi}} p_t(x,y) \mu(dy))^2 \\ 
%&\leq & \mu(D_{\bfi}) p_{2t}(x,x). 
%\end{eqnarray*}
\[ (c_2 \chi(k,n_0))^2 \leq P^x(X_t \in D_{\bfi})^2 = (\int_{D_{\bfi}} p_t(x,y) \mu(dy))^2  
\leq  \mu(D_{\bfi}) p_{2t}(x,x),
\]
as required. 
\end{proof} 

Finally we can use the estimates on $y_k$ to provide a $P_V$ a.s. estimate in terms of the scale factors.

\begin{thm}\label{thm:hklbk}
There are constants $c,\beta$ such that $P_V$ a.s.  for sufficiently large $k$, for $t\leq c e^{-k}k^{-2\beta}$
\[ p_t(x,x) \geq c \mu(D_{\bfi})^{-1} k^{-2\beta}, \forall x\in K_{\bfi}, \bfi\in\Lambda_k. \]
\end{thm}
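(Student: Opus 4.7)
The plan is to deduce this result from Theorem~\ref{thm:hklb} by replacing the spatially-dependent factor $\chi(k,n_0)^2$ with a deterministic, almost sure lower bound of the form $k^{-2\beta}$. The key ingredient is Lemma~\ref{lem:spatial}(b), which tells us that the ancestral-generation counts $y_k$ grow only logarithmically in $k$, $P_V$ almost surely.

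More precisely, I would first recall that $\chi(k,n_0) = (\eta/\hat\eta)^{y_k}\,\hat\eta^{y_k^{n_0}}$, where both $\eta/\hat\eta = r_{\inf}/N_{\inf}$ and $\hat\eta = N_{\inf}w_{\inf}/(N_{\sup}w_{\sup})$ lie in $(0,1)$. By Lemma~\ref{lem:spatial}(b), $P_V$ almost surely there exist a random $k_0(\omega)$ and a deterministic constant $c_3$ such that $y_k \leq 2c_3 \log k$ for all $k \geq k_0$. Consequently, for $k \geq k_0 - n_0$ (enlarging $k_0$ if necessary),
\[
y_k^{n_0} = \sum_{i=k}^{k+n_0} y_i \leq 2c_3(n_0+1)\log(k+n_0) \leq C_1\log k,
\]
with $C_1$ deterministic. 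Combining these,
\[
\chi(k,n_0) \geq \left(\frac{r_{\inf}}{N_{\inf}}\right)^{2c_3\log k}\hat\eta^{\,C_1\log k} = k^{-\beta},
\]
where $\beta := 2c_3 \log(N_{\inf}/r_{\inf}) + C_1 \log(1/\hat\eta)$ depends only on $V$, the family $\boldsymbol{F}$ and on $n_0$.

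Now I would plug this estimate into Theorem~\ref{thm:hklb}. Since $\chi(k,n_0)^2 \geq k^{-2\beta}$, any $t$ satisfying $t \leq \tfrac12 c_4 e^{-k} k^{-2\beta}$ also satisfies $2t \leq c_4 e^{-k}\chi(k,n_0)^2$, the hypothesis of Theorem~\ref{thm:hklb}. Hence for all $\bfi\in\Lambda_k$ and all $x\in K_{\bfi}$,
\[
p_{2t}(x,x) \geq c\,\chi(k,n_0)^2 \mu(D_{\bfi})^{-1} \geq c\,k^{-2\beta}\mu(D_{\bfi})^{-1}.
\]
Relabelling $2t$ as $t$ and absorbing the factor $\tfrac12$ into the constant $c$ in the condition on $t$, I obtain the claimed bound on $p_t(x,x)$ for all $t \leq ce^{-k}k^{-2\beta}$ and all sufficiently large $k$ (depending on $\omega$).

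The only real obstacle is bookkeeping: tracking how the almost-sure randomness from Lemma~\ref{lem:spatial}(b) propagates through the exponent, and checking that the two independent contributions from $y_k$ and $y_k^{n_0}$ can be handled with a single random threshold $k_0(\omega)$ and a single deterministic $\beta$. Once this has been set up, the result is simply Theorem~\ref{thm:hklb} with the uniform substitution $\chi(k,n_0) \mapsto k^{-\beta}$, so no new analytic input is required beyond the deterministic bounds on $y_k$ already established.
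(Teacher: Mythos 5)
Your argument is correct, and it supplies precisely the argument the paper leaves implicit: the paper states Theorem~\ref{thm:hklbk} with no proof, preceded only by the remark "Finally we can use the estimates on $y_k$ to provide a $P_V$ a.s. estimate in terms of the scale factors," so your job was to unpack that one-sentence hint. You do this correctly: from Lemma~\ref{lem:spatial}(b) you get a random threshold $k_0(\omega)$ beyond which $y_k \leq 2c_3\log k$ (and likewise $y_k^{n_0}\leq C_1\log k$), you observe that both $\eta/\hat\eta = r_{\inf}/N_{\inf}$ and $\hat\eta$ lie in $(0,1]$, and you deduce $\chi(k,n_0) \geq k^{-\beta}$ with $\beta = 2c_3\log(N_{\inf}/r_{\inf}) + C_1\log(1/\hat\eta)$ deterministic. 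Feeding this into Theorem~\ref{thm:hklb} (and relabelling $2t$ as $t$, absorbing factors of $2$ into the constant) yields the claim. The only minor nit is that $\hat\eta$ could equal $1$ (if $N$ and $w$ are constant over $\boldsymbol{F}$), in which case $\log(1/\hat\eta)=0$; this is harmless since the bound only improves. In short: correct, and it is the paper's intended (if unwritten) proof.
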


\begin{rem}{\rm In a different setting \cite{BarKum} obtained a finer estimate on the exit time from a 
complex which enables the derivation of a finer form of this on-diagonal estimate. We do not
derive such a result here though we expect that the same techniques could be applied to do so. Our result
is enough to enable us to compute the $\mu$-almost everywhere local spectral exponent}
\end{rem}

%%%%%%%%%%%%%%%%%%%%%%%%%%%%%%%%%%
\subsection{Local Spectral Exponent}

As in \cite{BarKum} we will see that the local spectral dimension obtained by considering the limit as $k\to\infty$
of  $p_{t_{\bfi}}(x,x)$ for $x\in K_{\bfi}, \;\; \bfi\in\Lambda_k$ will in general not coincide with the global spectral 
dimension.

We have the following preliminary result. Let $\bfi^x \in \partial T$ be such that $K_{\bfi^x|k}\to \{x\}$ as
$k\to\infty$. 
\begin{lem}\label{lem:muk}
There exists a constant $c$ such that $D_{\bfi^x|n(k+[c\log{k}])} \subset K_{\bfi^x|n(k)}$ for 
all sufficiently large $k$ for $\mu$-a.e. $x\in K$, $P_V$ a.s.
\end{lem}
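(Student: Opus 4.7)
The plan is to set
\[ A_k := \{x \in K : D_{\bfi^x | n(k + [c\log k])} \not\subset K_{\bfi^x | n(k)}\}, \]
show $\sum_k E_V[\mu(A_k)] < \infty$ for $c$ sufficiently large, and then conclude via Tonelli and the Borel--Cantelli lemma applied to the probability space $(K,\mu)$. First I would identify $A_k$ geometrically: writing $m = m_k := n(k + [c\log k])$, the failure $D_{\bfi^x | m} \not\subset K_{\bfi^x | n(k)}$ is equivalent to $K_{\bfi^x | m} \cap \psi_{\bfi^x | n(k)}(V_0) \neq \emptyset$, since $K_{\bfi^x | m}$ has a neighbour outside $K_{\bfi^x | n(k)}$ if and only if it shares a vertex of $\psi_{\bfi^x | n(k)}(V_0)$ with an adjacent $n(k)$-cell. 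By the nesting axiom (Lemma~\ref{lem:cnprop}) and the fact that in our affine nested Sierpinski gaskets each $v \in V_0$ is a corner of the ambient $d$-simplex and hence the fixed point of a unique map in every IFS in $\boldsymbol{F}$, every essential fixed point persists through refinement as the fixed point of a uniquely determined composition of maps and is therefore contained in a unique subcell at every level. Consequently, for each $\bfj$ with $|\bfj| = n(k)$ there is a collection $\mathcal{B}(\bfj)$ of at most $|V_0| = d+1$ "bad" suffixes $\bfj'$ of length $m - n(k)$ such that $K_{\bfj\bfj'}$ touches $\psi_\bfj(V_0)$, and $A_k \cap K_\bfj = \bigcup_{\bfj' \in \mathcal{B}(\bfj)} K_{\bfj\bfj'}$.

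Next I would compute $E_V[\mu(A_k)]$ using the neck decomposition together with the independence of environments across levels. Because $n(k)$ is a neck, all subtrees rooted at level $n(k)$ coincide with a common tree $T^{(k)}$, so $\mu(K_{\bfj\bfj'}) = \mu(K_\bfj)\, R^{(k)}_{\bfj'}$ where $R^{(k)}_{\bfj'}$ is determined only by the environments below level $n(k)$ and is therefore independent of the collection $\{\mu(K_\bfj) : |\bfj| = n(k)\}$ (determined by the environments up to level $n(k)$). Applying the argument from the proof of Lemma~\ref{noatoms} inside $T^{(k)}$ gives the pointwise bound
\[ R^{(k)}_{\bfj'} \;\leq\; \prod_{j=1}^{[c\log k]} \bigl(1+\zeta^{Y^{(k)}_j}\bigr)^{-1}, \qquad \zeta := w_{\inf}/w_{\sup}, \]
where the $Y^{(k)}_j$ are the iid inter-neck gaps of $T^{(k)}$, each distributed as $n(1)$. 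Using $\sum_{|\bfj|=n(k)}\mu(K_\bfj) = 1$, $|\mathcal{B}(\bfj)| \leq d+1$, the product factorisation under expectation, and Tonelli,
\[ E_V[\mu(A_k)] \;\leq\; (d+1)\,\bigl(E_V[(1+\zeta^{n(1)})^{-1}]\bigr)^{[c\log k]} \;=\; (d+1)\, q^{[c\log k]} \;=\; (d+1)\, k^{-c\log(1/q)}, \]
where $q := E_V[(1+\zeta^{n(1)})^{-1}] < 1$ (strict inequality follows as in Lemma~\ref{noatoms} since $E_V\zeta^{n(1)} > 0$).

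Finally, choosing $c > 1/\log(1/q)$ makes $\sum_k E_V[\mu(A_k)]$ finite, so $\sum_k \mu(A_k) < \infty$ $P_V$-a.s.\ by Tonelli. Applying Borel--Cantelli on the probability space $(K,\mu)$ then shows that $\mu$-a.e.\ $x$ lies in only finitely many $A_k$, equivalently $D_{\bfi^x | n(k+[c\log k])} \subset K_{\bfi^x | n(k)}$ for all $k$ sufficiently large (depending on $x$ and $\omega$), $P_V$-a.s. The main obstacle I anticipate is the geometric step identifying $\mathcal{B}(\bfj)$ and bounding its size uniformly in the random tree: the argument here rests on the specific Sierpinski-gasket structure assumed in~\eqref{V0ass}, which ensures each corner vertex $v \in V_0$ has a unique descent through the random refinement, and some care is required to formalise this through the random assignments of IFSs along the subtree $T^{(k)}$.
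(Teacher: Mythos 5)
Your argument follows the same route as the paper's: both characterise the bad set via cells meeting $\psi_{\bfi^x|n(k)}(V_0)$, decompose across necks to exploit the iid structure of inter-neck gaps, bound $E_V[\mu(A_k)]$ by $(d+1)\,q^m$ with $q<1$ and $m=[c\log k]$, and conclude by Tonelli and Borel--Cantelli on $(K,\mu)$. The only substantive difference is the constant $q$: you take the uniform bound $q=E_V[(1+\zeta^{n(1)})^{-1}]$ straight from the proof of Lemma~\ref{noatoms}, while the paper tracks the $d+1$ boundary trajectories separately and sets $a=E_V\max_{\bfj\in T_{n(1),b}}\mu_{\bfj}$, which in fact it then dominates by the same $(1+\zeta^{n(1)})^{-1}$ to see $a<1$. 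So $a\le q<1$ and either choice works; yours is a touch cruder but more directly justified. Two small cautions: (i) your claimed equivalence ``$D_{\bfi^x|m}\not\subset K_{\bfi^x|n(k)}$ iff $K_{\bfi^x|m}\cap\psi_{\bfi^x|n(k)}(V_0)\neq\emptyset$'' is only an implication in one direction --- if $\bfi^x|n(k)$ is a corner cell of $K$ then $K_{\bfi^x|m}$ can contain a vertex of $\psi_{\bfi^x|n(k)}(V_0)$ with no neighbour outside $K_{\bfi^x|n(k)}$ --- but since you only need the inclusion $A_k\subset\{x:K_{\bfi^x|m}\cap\psi_{\bfi^x|n(k)}(V_0)\neq\emptyset\}$ for the upper bound, this is harmless; (ii) the claimed independence of $R^{(k)}_{\bfj'}$ from $\{\mu(K_\bfj):|\bfj|=n(k)\}$ is true but not actually needed, since $\sum_{|\bfj|=n(k)}\mu(K_\bfj)=1$ holds deterministically, so the factorisation is immediate.
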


\begin{proof}
Let $T_{n(m),b}$ denote the addresses of the three boundary cells at the $m$-th neck.
By Lemma~\ref{hineq}
%We first observe that as $\mu_{\bfi}\leq 1$ and $\mu\neq 1, \;P_V$-a.s., 
we must have
\[ a:= E_V \max_{\bfj\in T_{n(1),b} } \mu_{\bfj} <1. \]
Now for $\bfi\in \{\bfj\in T:|\bfj|=n(k+m)\}$ we have $D_{\bfi} \subset K_{\bfi|n(k)}$ if $K_{\bfi}\cap\partial 
K_{\bfi|n(k)} = \emptyset$. Then, setting $A = \{K_{\bfi}: \bfi\in\{\bfj\in T:|\bfj|=n(k+m)\}, K_{\bfi}\cap \partial K_{\bfi|n(k)} \neq \emptyset\}$, we have
\begin{eqnarray*}
 E_V \mu(A)
 &=& E_V \sum_{\bfi\in\{\bfj\in T:|\bfj|=n(k+m)\}} \mu_{\bfi} I_{\{K_{\bfi}\cap \partial K_{\bfi|n(k)} \neq \emptyset \}} \\
 &=& E_V \sum_{\bfi\in\{\bfj\in T:|\bfj|=n(k)\}} \mu_{\bfi} \sum_{\bfj\in T_{n(k+m),b}} \frac{\mu_{\bfj}}{\mu_{\bfi}}.
\end{eqnarray*} 
By construction the terms $\mu^{(j)}_{\bfi}=\frac{\mu_{\bfi|n(j)}}{\mu_{\bfi|n(j-1)}}$ are independent and equal in 
distribution to $\mu_{\bfi|n(1)}$, allowing us to write
\begin{eqnarray*}
 E_V \mu(A)
 &=& E_V \sum_{\bfi\in\{\bfj\in T:|\bfj|=n(k)\}} \mu_{\bfi} E_V \sum_{\bfj\in T_{n(m),b}} \prod_{j=1}^m \mu^{(j)}_{\bfj}\\
&\leq & (d+1) a^m.
\end{eqnarray*}
Thus we have
\[ E_V \sum_{k=1}^{\infty} \mu(x\in K: D_{\bfi^x|n(k+[c\log{k}])} \not\subset K_{\bfi^x|n(k)})  
\leq c_1 \sum_{k=1}^{\infty} a^{c\log{k}} <\infty, \]
for large enough $c$. Hence $P_V$ a.s. we have
\[ \mu(x\in K: D_{\bfi^x|n(k+[c\log{k}])} \not\subset K_{\bfi^x|n(k)} \;\; i.o.) = 0, \]
as required.  
\end{proof}

For the rest of this section we write $T_{n(1)} = \{\bfj \in T: |\bfj|=n(1)\}$ for the tree up to the first neck.
Take another set of weights $\{\{\hat{w}_i^F\}_{i=1}^{|F|}\}_{F\in\mathbf{F}}$ satisfying the conditions of Section~3.3 and define the associated measure $\hmu$.

Observe that by (\ref{mibd}) and the definition of $\eta$ we have
\[ \mu_{\bfi} \geq (\frac{\eta}{r_{\inf}})^{n(1)}, \;\; t_{\bfi} \geq \eta^{n(1)}, \;\; \bfi \in T_{n(1)}. \]
Thus $\log \mu_{\bfi} \geq n(1)\log \frac{\eta}{r_{\inf}}$ and as 
\[ 0 > \sum_{\bfi\in T_{n(1)}} \hmu_{\bfi} \log \mu_{\bfi} \geq n(1) \log\frac{\eta}{r_{\inf}}, \]
we have
\[ E_V |\sum_{\bfi\in T_{n(1)}} \hmu_{\bfi} \log \mu_{\bfi}| \leq c E_V n(1) < \infty. \]
We can control $E_V |\sum_{\bfi\in T_{n(1)}} \hmu_{\bfi} \log t_{\bfi}| $ in the same way.

In the same way as \cite{BarKum} we can now determine the local spectral exponent for the heat kernel $p_t(x,x)$
defined with respect to the reference measure $\mu$ for $\hat{\mu}$ almost every  $x$.

\begin{thm}\label{thm:locspecd}
$P_V$ almost surely, for $\hmu$-almost every $x\in K$ we have
\[ \lim_{t\to 0} \frac{\log{p_t(x,x)}}{-\log{t}} = \frac{\widehat{d}_s(\hmu)}2 = \frac{E_V \sum_{\bfi\in T_{n(1)}} 
\hmu_{\bfi} \log \mu_{\bfi}}{E_V \sum_{\bfi\in T_{n(1)}} \hmu_{\bfi} \log t_{\bfi}}. \]
\end{thm}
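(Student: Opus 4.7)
The plan is to combine the heat-kernel bounds of Theorems~\ref{thm:hkub} and~\ref{thm:hklbk} with a strong law of large numbers for quantities sampled along the path $\bfi^x$ under the coupled measure $P_V\times\hmu$. Fix a small $t>0$, choose $k$ with $e^{-k}\leq t<e^{-k+1}$, and let $\bfi_x(k)\in\Lambda_k$ be the unique cell in $\Lambda_k$ containing $x$, so $\bfi_x(k)=\bfi^x|n(\ell)$ for $\ell=\ell(\bfi_x(k))$. Monotonicity of $p_\cdot(x,x)$ together with $t_{\bfi_x(k)}\leq e^{-k}\leq t$ and Theorem~\ref{thm:hkub} yield $p_t(x,x)\leq c\,\mu_{\bfi_x(k)}^{-1}$, while Theorem~\ref{thm:hklbk} (after choosing $k$ slightly larger to satisfy the constraint $t\leq c e^{-k} k^{-2\beta}$) gives $p_t(x,x)\geq c\,k^{-2\beta}\,\mu(D_{\bfi_x(k)})^{-1}$.

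The key step is a path SLLN. Under $P_V$ the common types $(v_k)_{k\geq 0}$ at successive neck levels are i.i.d.\ uniform on $\{1,\dots,V\}$, since a neck environment carries a single type distributed uniformly and independently of the other environments. Consequently the ``blocks'' consisting of the starting type $v_{k-1}$, the environments $E^{n(k-1)+1},\dots,E^{n(k)}$, and the $\hmu$-distributed choice of the next segment of $\bfi^x$ inside the resulting subtree form an i.i.d.\ sequence under $P_V\times\hmu$. The increments
\[
\Delta_k^\mu:=\log\frac{\mu_{\bfi^x|n(k)}}{\mu_{\bfi^x|n(k-1)}},\qquad \Delta_k^t:=\log\frac{t_{\bfi^x|n(k)}}{t_{\bfi^x|n(k-1)}}
\]
are measurable functions of the $k$-th block (for $\Delta_k^\mu$ this uses the neck decomposition of $\sum_{|\bfj|=n(k)}w_\bfj$ from Lemma~\ref{lem:sumprod}, and for $\Delta_k^t$ it uses that $r_{\bfi^x|n(k)}/r_{\bfi^x|n(k-1)}$ is a product along the block) and hence i.i.d. Integrability follows from \eqref{mibd}, \eqref{tbd} and $E_V n(1)<\infty$. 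The SLLN and Fubini give, $P_V$ a.s.\ for $\hmu$-a.e.\ $x$,
\[
\tfrac{1}{k}\log\mu_{\bfi^x|n(k)}\to A:=E_V\sum_{\bfi\in T_{n(1)}}\hmu_{\bfi}\log\mu_{\bfi},\qquad
\tfrac{1}{k}\log t_{\bfi^x|n(k)}\to B:=E_V\sum_{\bfi\in T_{n(1)}}\hmu_{\bfi}\log t_{\bfi},
\]
with $A,B<0$ and $A/B=\widehat{d}_s(\hmu)/2$ by definition.

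To pass from the SLLN to the heat-kernel asymptotic, Lemma~\ref{lem:spatial}(a)(c) gives $|\log t_{\bfi_x(k)}|=k+O(\log k)$ and $\ell|B|\sim k$, so $|\log\mu_{\bfi_x(k)}|\sim\ell|A|\sim k|A|/|B|$. The upper bound immediately yields $\log p_t(x,x)/(-\log t)\preccurlyeq A/B$. For the lower bound one applies the $\hmu$-analogue of Lemma~\ref{lem:muk}, whose proof transfers verbatim after replacing $\mu$ by $\hmu$ in the computation of $E_V\hmu(A)$ and using $\hat a:=E_V\max_{\bfj\in T_{n(1),b}}\hmu_{\bfj}<1$. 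This yields $D_{\bfi_x(k)}\subset K_{\bfi^x|n(\ell-[c\log\ell])}$ eventually, whence $\mu(D_{\bfi_x(k)})\leq\mu_{\bfi^x|n(\ell-[c\log\ell])}$. The $[c\log\ell]$ shift and the $k^{-2\beta}$ prefactor contribute only $O(\log k)$ to $-\log p_t(x,x)$ and are absorbed on dividing by $-\log t\sim k$, producing the matching asymptotic lower bound.

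The main obstacle is the rigorous verification of the i.i.d.\ block structure of the increments $(\Delta_k^\mu,\Delta_k^t)$ under $P_V\times\hmu$: one must check that the $(v_k)$ are i.i.d.\ uniform, that the $\hmu$-conditional choice of $\bfi^x$ within a block is a measurable function only of that block, and that these ingredients combine to give genuine independence across $k$ rather than merely stationarity. Once this is in place, integrability and the SLLN are routine, and the remaining steps (the $\hmu$-version of Lemma~\ref{lem:muk} and the polynomial-correction absorption) are standard.
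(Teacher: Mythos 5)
Your proposal follows essentially the same route as the paper's proof: monotonicity of $p_t(x,x)$ together with Theorems~\ref{thm:hkub} and~\ref{thm:hklbk}, an SLLN for the neck-increments $\log(\mu_{\bfi^x|n(k)}/\mu_{\bfi^x|n(k-1)})$ and $\log(t_{\bfi^x|n(k)}/t_{\bfi^x|n(k-1)})$ under the coupled measure $d\hmu\,dP_V$, and the $\hmu$-analogue of Lemma~\ref{lem:muk} to control $\mu(D_{\bfi})$ in the lower bound, with the polynomial corrections absorbed on dividing by $-\log t \sim k$. Your explicit note that one must use the $\hmu$-version of Lemma~\ref{lem:muk} (with $\hat a := E_V\max_{\bfj\in T_{n(1),b}}\hmu_{\bfj}<1$) rather than the stated $\mu$-version is a correct reading of what the paper implicitly does when it cites that lemma.
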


\begin{proof} 
For $x\in K$ we have a sequence $\bfi|n(k)$ for which
$D_{\bfi|n(k)} \to \{x\}$ as $k\to\infty$. By monotonicity of the diagonal heat kernel in time
for $t\in (t_{\bfi|n(k)}, t_{\bfi|n(k-1)}]$ we have $p_t(x,x) \leq p_{t_{\bfi|n(k)}}(x,x)$ and thus
\[ \limsup_{t\to 0} \frac{\log{p_t(x,x)}}{-\log{t}} \leq \limsup_{k\to\infty} \frac{\log{p_{t_{\bfi|n(k)}}(x,x)}}
{-\log{t_{\bfi|n(k-1)}}}, \;\;P_V \; a.s. \]
Now using Theorem~\ref{thm:hkub} we have
\[ \log{p_{t_{\bfi|n(k)}}(x,x)} \leq C-\log \mu_{\bfi|n(k)} = C-\sum_{j=1}^k 
\log \frac{\mu_{\bfi|n(j)}}{\mu_{\bfi|n(j-1)}}. \]
We now consider the probability measure $d\hmu dP_V$on $\{1,\dots,N_{\sup}\}\times \Omega_V$ 
(with the product $\sigma$-algebra).
If the point $x$ is chosen according to $\hmu$, then the terms $\mu^{(j)}_{\bfi}=\frac{\mu_{\bfi|n(j)}}
{\mu_{\bfi|n(j-1)}}$ are independent and equal in 
distribution to $\mu_{\bfi|n(1)}$ under $d\hmu dP_V$. We can also express $-\log{t_{\bfi|n(k)}}$ in terms of independent 
random variables $t^{(j)}_{\bfi}$ defined in the same way. It is easy to see that $\log{t_{\bfi|n(k)}}/
\log{t_{\bfi|n(k-1)}} \to 1$ for any $x\in K$, $P_V$-almost surely and hence
\begin{equation}
\limsup_{t\to 0} \frac{\log{p_t(x,x)}}{-\log{t}} \leq \limsup_{k\to\infty} \frac{\log{p_{t_{\bfi|n(k)}}(x,x)}}
{-\log{t_{\bfi|n(k)}}} \leq \limsup_{k\to\infty} \frac{\frac{1}{k} \sum_{j=1}^k 
\log \mu^{(j)}_{\bfi}} {\frac{1}{k} \sum_{j=1}^k \log t^{(j)}_{\bfi}}. \label{eq:hkds}
\end{equation}
As the mean of $\log \mu^{(j)}$ is finite we can apply the strong law of large numbers under $d\hmu dP_V$ 
to see that
\[ \lim_{k\to\infty} \frac{1}{k} \sum_{j=1}^k 
\log \mu^{(j)}  = E_V \sum_{\bfi\in T_{n(1)}} \hmu_{\bfi} \log \mu_{\bfi}, \;\;\hmu \;a.e. \; x\in K, \;\; P_V \; a.s. \]
Similarly we can find the limit for the denominator in (\ref{eq:hkds}). Thus we have
\[ \limsup_{k\to\infty} \frac{\log{p_{t_{\bfi|n(k)}}(x,x)}}
{-\log{t_{\bfi|n(k)}}} \leq \frac{E_V \sum_{\bfi\in T_{n(1)}} \hmu_{\bfi} \log \mu_{\bfi}}
{E_V \sum_{\bfi\in T_{n(1)}} \hmu_{\bfi} \log t_{\bfi}}. \]

For the lower bound we define $\ell(\bfi,k) = \ell$ if $\bfi |n(\ell) \in \Lambda_k$. Thus
\[ -\log t_{\bfi |n(\ell(\bfi,k)-1)} < k \leq -\log t_{\bfi|n(\ell(\bfi,k))}. \]
Hence, it is clear that, by the independence
\begin{equation} 
\lim_{k\to\infty} \frac{\ell(\bfi,k)}{k} = \lim_{\ell\to\infty} \frac{-\ell}{\log t_{\bfi|n(\ell)}} 
= \frac{-1}{E_V \sum_{\bfi\in T_{n(1)}} \hmu_{\bfi} \log t_{\bfi}},\;\;\hmu  \; a.e. \; x\in K, \;\;P_V\; a.s. \label{eq:lkk}
\end{equation}
Now, from Theorem~\ref{thm:hklbk}, we have that $P_V$ a.s. for
$ce^{-(k+1)} (k+1)^{-\beta} <t \leq ce^{-k} k^{-\beta}$
we have for $x\in K_{\bfi}, \bfi\in\Lambda_k$,
\begin{eqnarray*}
\frac{\log p_{2t}(x,x)}{-\log t} &\geq & \frac{\log{(c \mu(D_{\bfi})^{-1} k^{-2\beta})}}
{\log{(ce^{-(k+1)} (k+1)^{-2\beta})}} \\
&=& \frac{\log{c}-2\beta\log{k} -\log\mu(D_{\bfi})}{\log{c}-k-1-2\beta\log{(k+1)}}.
\end{eqnarray*}
Thus
\begin{eqnarray*} 
\lim_{t\to 0} \frac{\log p_t(x,x)}{-\log t} &=& \lim_{t\to 0} \frac{\log p_{2t}(x,x)}{-\log t} \\
&\geq & \lim_{k\to \infty} \frac{-\log\mu(D_{\bfi})}{k}.
\end{eqnarray*}

We now observe that by Lemma~\ref{lem:muk} we have a constant $c'$ such that
\[  -\log\mu(D_{\bfi|n(\ell(\bfi,k))}) \geq  -\log\mu_{\bfi|n(\ell(\bfi,k)-[c'\log \ell(\bfi,k)])}. \]
Using this, (\ref{eq:lkk}) and writing $\tilde{\ell}(\bfi,k) = \ell(\bfi,k)-[c'\log \ell(\bfi,k)]$, we have
\begin{eqnarray*}
 \lim_{k\to \infty} \frac{\log\mu_{\bfi|n(\tilde{\ell}(\bfi,k))}}{k} &=& -\lim_{k\to \infty} \frac{\tilde{\ell}(\bfi,k)}{k} \\
& & \qquad\qquad 
\lim_{k\to\infty} \frac{1}{\tilde{\ell}(\bfi,k)} \sum_{j=1}^{\tilde{\ell}(\bfi,k)} \log \mu_{\bfi}^{(j)} \\ 
&=& \frac{E_V \sum_{\bfi\in T_{n(1)}} 
\hmu_{\bfi} \log \mu_{\bfi}}{E_V \sum_{\bfi\in T_{n(1)}} \hmu_{\bfi} \log t_{\bfi}}, 
\end{eqnarray*}
for $\hmu$ a.e. $x\in K$, $P_V$ almost surely, as required. 
\end{proof} 

In the case where the reference measure $\mu$ is the flat measure $\nu$ in the resistance metric, 
the weights are proportional to $r_{\bfi}^{d_f^r}$ and $E_V \log \sum_{\bfi\in T_{n(1)}} r_{\bfi}^{d_f^r} =0$,  a simple calculation
shows that 
\begin{eqnarray*}
\frac{\widehat{d}_s}2 &=& \frac{E_V \sum_{\bfi\in T_{n(1)}} 
\hmu_{\bfi} \log \mu_{\bfi}}{E_V \sum_{\bfi\in T_{n(1)}} \hmu_{\bfi} \log t_{\bfi}} \\
&=& \frac{E_V \sum_{\bfi\in T_{n(1)}} \hmu_{\bfi} 
\log \frac{r_{\bfi}^{d_f^r}}{\sum_{\bfj\in T_{n(1)}} r_{\bfj}^{d_f^r}}}
{E_V \sum_{\bfi\in T_{n(1)}} \hmu_{\bfi} 
\log \frac{r_{\bfi}^{1+d_f^r}}{\sum_{\bfj\in T_{n(1)}} r_{\bfj}^{d_f^r}}} \\
&=& \frac{d_fE_V \sum_{\bfi\in T_{n(1)}} \hmu_{\bfi} \log r_{\bfi}}{(1+d_f^r)E_V \sum_{\bfi\in T_{n(1)}} \hmu_{\bfi}
\log r_{\bfi}} \\
&=& \frac{d_f^r}{d_f^r+1} = \frac{d_s}2.
\end{eqnarray*}
Indeed in this case we can go further and give a bound on the size of 
the scale fluctuations.

\begin{thm}\label{thm:flatfluc}
If $\nu$ is the flat measure in the resistance metric we have constants $c_1,c_2,c_3,c_4 \in (0,\infty)$ 
and a random variable $0<c_5$ such that $P_V$ a.s. for any $x\in K$
\[ c_1\phi(1/t)^{-c_2} t^{-d_s/2} \leq p_t(x,x) \leq c_3 \phi(1/t)^{c_4} t^{-d_s/2}, \;\; 0<t<c_5. \]
\end{thm}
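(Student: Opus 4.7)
The plan is to combine the deterministic upper bound in Theorem~\ref{thm:hkub} and the lower bound in Theorem~\ref{thm:hklbk} with the algebraic identity for the flat measure already exploited in the proof of Theorem~\ref{imprest}. Specifically, for $\boldsymbol{i}\in\Lambda_k$ with $|\boldsymbol{i}|=n(\ell)$, one has from \eqref{mttc} that
\[
\mu_{\boldsymbol{i}}^{-1}=\Bigl(\sum_{|\boldsymbol{j}|=n(\ell)}r_{\boldsymbol{j}}^{d_f^r}\Bigr)^{1/(1+d_f^r)}t_{\boldsymbol{i}}^{-d_s/2},
\]
so that the fluctuations of $\mu_{\boldsymbol{i}}^{-1}$ relative to $t_{\boldsymbol{i}}^{-d_s/2}$ are governed by $\sum r_{\boldsymbol{j}}^{d_f^r}$, which by the law of the iterated logarithm bound \eqref{4122} is trapped $P_V$-a.s.\ between $e^{\pm c\Phi(\ell)}$ for large $\ell$. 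Combined with Lemma~\ref{lem:spatial}(a) giving $\ell(\boldsymbol{i})\leq c_2 k$, this yields $\mu_{\boldsymbol{i}}^{-1}\leq e^{c\Phi(k)}t_{\boldsymbol{i}}^{-d_s/2}$ and similarly the reverse bound.

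For the upper bound on $p_t(x,x)$, given $t>0$ and $x\in K$, I would choose $k=\lfloor\log(1/t)\rfloor$ and let $\boldsymbol{i}=\boldsymbol{i}(x,k)\in\Lambda_k$ be the unique cell of $\Lambda_k$ containing $x$. Since $t_{\boldsymbol{i}}\leq e^{-k}\leq t$, monotonicity of $s\mapsto p_s(x,x)$ and Theorem~\ref{thm:hkub} give $p_t(x,x)\leq p_{t_{\boldsymbol{i}}}(x,x)\leq c\mu_{\boldsymbol{i}}^{-1}$. Lemma~\ref{lem:spatial}(c) provides $t_{\boldsymbol{i}}^{-d_s/2}\leq k^{c}t^{-d_s/2}$, and the polynomial factor can be absorbed into $e^{c\Phi(k)}$. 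Since $\Phi(k)\leq \Phi(1+\log(1/t))\leq c\,\Phi(\log(1/t))$, this produces the required upper bound $p_t(x,x)\leq c_3\phi(1/t)^{c_4}t^{-d_s/2}$ for all $t$ sufficiently small (how small depending on when the a.s.\ LIL bound takes hold, giving the random $c_5$).

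For the lower bound I would apply Theorem~\ref{thm:hklbk}: for $t\leq ce^{-k}k^{-2\beta}$ and $x\in K_{\boldsymbol{i}}$, $\boldsymbol{i}\in\Lambda_k$, one has $p_t(x,x)\geq ck^{-2\beta}\mu(D_{\boldsymbol{i}})^{-1}$. Since the number of neighbouring cells in $\Lambda_k$ is uniformly bounded (by the nested property in Lemma~\ref{lem:cnprop} and a geometric argument as in \eqref{cibd}), it suffices to bound $\mu_{\boldsymbol{j}}$ from above for each neighbour $\boldsymbol{j}\in\Lambda_k$. Applying the identity above to each neighbour, together with $t_{\boldsymbol{j}}\leq e^{-k}$ and the LIL lower bound $\sum r_{\boldsymbol{i}'}^{d_f^r}\geq e^{-c\Phi(\ell_j)}\geq e^{-c\Phi(k)}$, gives $\mu(D_{\boldsymbol{i}})\leq Ce^{c\Phi(k)}e^{-kd_s/2}$. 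Choosing $k$ so that $ce^{-k-1}(k+1)^{-2\beta}<t\leq ce^{-k}k^{-2\beta}$ (so $k\sim\log(1/t)$) and absorbing the polynomial factor $k^{-2\beta}$ into $e^{-c\Phi(k)}=\phi(1/t)^{-c}$, one obtains $p_t(x,x)\geq c_1\phi(1/t)^{-c_2}t^{-d_s/2}$.

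The main technical point is ensuring that the two almost sure statements (the LIL estimate \eqref{4122} and the bounds from Lemma~\ref{lem:spatial}) hold \emph{simultaneously and uniformly in $x\in K$} for a fixed $\omega$, so that the random threshold $c_5$ depends only on $\omega$ and not on $x$. This is automatic because both estimates are bounds on quantities indexed by the cut-sets $\Lambda_k$ themselves rather than by individual points $x$: once $k$ is large enough (depending only on $\omega$) every $\boldsymbol{i}\in\Lambda_k$ and every neighbour inherits the correct bound, and the dependence on $x$ reduces to choosing which cell of $\Lambda_k$ contains $x$. The remainder is bookkeeping analogous to Theorem~\ref{imprest}: absorbing polynomial-in-$k$ factors into $\phi(1/t)^{\pm c}$ and translating the discrete parameter $k$ back to the continuous time $t$ via $e^{-k}\asymp t$.
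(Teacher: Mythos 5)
Your proposal is correct and follows essentially the same route as the paper's proof: combine Theorems~\ref{thm:hkub} and~\ref{thm:hklbk} with the LIL estimate~\eqref{4122} and Lemma~\ref{lem:spatial} to control $\nu_{\bfi}^{-1}$ (resp.\ $\nu(D_{\bfi})^{-1}$) in terms of $t^{-d_s/2}$ up to a factor $\phi(1/t)^{\pm c}$, then convert the discrete scale $k$ to continuous time via $e^{-k}\asymp t$ and absorb polynomial-in-$k$ factors into $\phi$; your clean identity $\nu_{\bfi}^{-1}=\bigl(\sum r_{\bfj}^{d_f^r}\bigr)^{1/(1+d_f^r)}t_{\bfi}^{-d_s/2}$ and the observation that the a.s.\ thresholds are indexed by $\Lambda_k$ rather than by $x$ are both exactly the right points. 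One small slip: with $k=\lfloor\log(1/t)\rfloor$ you get $e^{-k}\geq t$, not $e^{-k}\leq t$; choose $k$ so that $e^{-k}\leq t<e^{-k+1}$ (as in the paper) and the rest goes through unchanged.
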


\begin{proof} 
We begin by observing that for $\bfi\in\Lambda_k$ we have $t_{\bfi} \leq e^{-k}$ and 
thus substituting in the upper bound estimate from Theorem~\ref{thm:hkub}, for $x\in K_{\bfi}$
\begin{equation} p_{e^{-k}}(x,x) \leq p_{t_{\bfi}}(x,x) \leq c \nu_{\bfi}^{-1}. \label{hkflat}
\end{equation}
By (\ref{4122}) we have that $P_V$ almost surely for sufficiently large $k$, 
$\nu_{\bfi}\geq r_{\bfi}^{d_f^r} \exp(-c\Phi(\ell(\bfi)))$ and hence
$ r_{\bfi}^{1+d_f^r} \exp(-c \Phi(\ell(\bfi))) \leq t_{\bfi} \leq e^{-k}$. Thus, using Lemma~\ref{lem:spatial},
\[ r_{\bfi} \leq e^{-k/(1+d_f^r)} \exp(c' \Phi(\ell(\bfi))), \]
and
\[ p_{e^{-k}}(x,x) \leq e^{k\frac{d_f^r}{1+d_f^r}} \exp(c'' \Phi(\ell(\bfi))). \]
Thus, for $e^{-k} \leq t < e^{-k+1}$, and as $l\leq ck \leq -c\log{t}$, we have for any $x\in K$,
\[ p_t(x,x) \leq C t^{-d_s/2} \exp(c'\Phi(\log{(1/t)})) =  C t^{-d_s/2} \phi(1/t)^{c'}, \;\; P_V \; a.s. \]

For the lower bound we observe from Theorem~\ref{thm:hklbk} that $P_V$ almost surely for sufficiently large $k$,
for $t\leq c e^{-k}k^{-2\beta}$
\[ p_t(x,x) \geq c \mu(D_{\bfi})^{-1} k^{-2\beta}, \forall x\in K_{\bfi}, \bfi\in\Lambda_k. \]
As $e^{-k} \geq t_{\bfj} =r_{\bfj}\nu_{\bfj} \geq r_{\bfj}^{1+d_f^r}  \exp(-c \Phi(\ell(\bfj)))$ $P_V$ almost surely, 
as before we have 
\[ r_{\bfj} \leq e^{-k/(1+d_f^r)}\exp(c'\Phi(\ell(\bfj))). \]
Then as the number of cells in $D_{\bfi}$ is bounded and $\ell(\bfj) \leq ck$ by Lemma~\ref{lem:spatial}(a), 
we have
\begin{eqnarray*}
 \nu(D_{\bfi}) &=& \sum_{\bfj\in\Lambda_k} \frac{r_{\bfj}^{d_f^r}}{\sum_{j':|\bfj'|=n(\ell(j))} r_{\bfj'}^{d_f^r}} 
I_{\{K_{\bfi}\cap K_{\bfj} \neq \emptyset\}} \\
&\leq &  \sum_{\bfj\in\Lambda_k} r_{\bfj}^{d_f^r} I_{\{K_{\bfi}\cap K_{\bfj} \neq \emptyset\}} \exp(c \Phi(\ell(\bfj))) \\
&\leq & c e^{-kd_f^r/(1+d_f^r)}\exp(c''\Phi(\ell(\bfj))).
\end{eqnarray*}
Thus, $P_V$ a.s. for sufficiently large $k$ for $ t\leq ce^{-k} k^{-2\beta}$,
\[ p_{t}(x,x) \geq c  k^{-2\beta} e^{kd_f^r/(1+d_f^r)}\exp(-c''\Phi(k)), \;\;\forall x\in K_{\bfi}. \] 
For $ ce^{-(k+1)} (k+1)^{-2\beta} < t\leq ce^{-k} k^{-2\beta}$ we have
$c_1ee^{-k} k^{-2\beta} <t$ so that $e^{k} k^{2\beta} > c_2 t^{-1}$ and
\[ p_{t}(x,x) \geq c  k^{-2(2d_f^r+1)\beta/(d_f^r+1)} t^{-d_f^r/(1+d_f^r)}\exp(-c''\Phi(k)), \;\;\forall x\in K_{\bfi}. \]
Now as $k\leq \log c + \log{(1/t)}$ we have for sufficiently small $0<t$, for any $x\in K$
\[ p_t(x,x) \geq b' |\log{t}|^{-\beta'} t^{-d_s/2} \exp(-c'' \Phi(|\log{t}|). \]
By adjusting $c''$ we can absorb the logarithm into the exponential term and we have the result.
\end{proof} 

%%%%%%%%%%%%%%%%%%%%%%%%%%%%%%%%%%%%%%%%%
%%%%%%%%%%%%%%%%%%%%%%%%%%%%%%%%%%%%%%%%%
%%%%%%%%%%%%%%%%%%%%%%%%%%%%%%%%%%%%%%%%%


\begin{thebibliography}{10}
\bibitem{bar2}
M.T.~Barlow,
\newblock \emph{Diffusions on fractals}, in
\newblock Lectures in Probability Theory and Statistics:
Ecole d'\'et\'e de probabilit\'es de Saint-Flour XXV
(Lect. Notes Math., vol. $1690$), Springer, New York, 1998.

%\bibitem{baba1}
%M.T.~Barlow and R.F.~Bass, The construction of Brownian motion on the Sierpinski carpet. \emph{Ann. Inst. Henri Poincare}
%{\bf 25} (1989), 225--257.

\bibitem{barham}
M.T. Barlow and B.M. Hambly, Transition density estimates for Brownian
motion on scale irregular Sierpinski
gaskets. \emph{Ann. Inst. H. Poincar Probab. Statist.} {\bf 33}  (1997), 531--557. 

\bibitem{BJKS}
M.T. Barlow, A.A. Jrai, T. Kumagai and G. Slade, Random walk on the incipient infinite 
cluster for oriented percolation in high dimensions. \emph{Comm. Math. Phys.} {\bf 278}  (2008), 385--431. 

\bibitem{barkig}
M.T. Barlow and J. Kigami, Localized eigenfunctions of the
Laplacian on p.c.f. self-similar sets, {\it J. London Math. Soc.} {\bf 56}
(1997), 320--332.  

\bibitem{BarKum}
M.T. Barlow and T. Kumagai, Transition density asymptotics for some diffusion processes with 
multi-fractal structures. \emph{Electron. J. Probab.} {\bf 6} (2001), no. 9, 23 pp.

\bibitem{BHS0}
M. Barnsley, J.E. Hutchinson and \"{O}. Stenflo,   A fractal valued random iteration algorithm and fractal hierarchy.
\emph{Fractals}  {\bf 218}  (2005),  111--146.

\bibitem{BHS1}
M. Barnsley, J.E. Hutchinson and \"{O}. Stenflo,   $V$-variable fractals: fractals with partial self similarity.
\emph{Adv. Math.}  {\bf 18}  (2008),   2051--2088.

\bibitem{BHS2} M. Barnsley, J.E. Hutchinson and \"{O}. Stenflo,   $V$-variable fractals: dimension results.
\emph{Forum Math.} {\bf 24} (2012), 445-470.  

%\bibitem{CRRST}  A. K. Chandra, P. Raghavan, W. L. Ruzzo, R. Smolensky, P. Tiwari, The electrical resistance of a graph captures its commute and cover times.  \emph{Proc. Twenty-first Ann. ACM Syrup. Theor. Comput.}, Seattle, WA, 1989, 574-586.

%\bibitem{CRRST2}  A. K. Chandra, P. Raghavan, W. L. Ruzzo, R. Smolensky, P. Tiwari, The electrical resistance of a graph captures its commute and cover times.
% \emph{Comput. Complexity}  {\bf 6}  (1996/97),  no. 4, 312--340.


\bibitem{Cro} D.A.~Croydon,
Heat kernel fluctuations for a resistance form with non-uniform volume growth.
\emph{Proc. Lond. Math. Soc. (3)} {\bf 94} (2007), 672--694. 

\bibitem{CroHam}
D. Croydon and B.M.~Hambly, Self-similarity and spectral asymptotics for the 
continuum random tree. \emph{Stochastic Process. Appl.} {\bf 118}  (2008), 730--754.

\bibitem{DreStr}
S. Drenning and R.S. Strichartz, Spectral decimation on Hambly's homogeneous hierarchical gaskets. 
\emph{Illinois J. Math.} {\bf 53} (2009), 915--937.

\bibitem{Fal}
K.J.~Falconer, Random fractals.  \emph{Math. Proc. Cambridge Philos. Soc.}  {\bf 100}  (1986), 559--582. 

\bibitem{FHK}
P.J. Fitzsimmons, B.M. Hambly, T. Kumagai, Transition density estimates for Brownian motion 
on afline nested fractals . \emph{Comm. Math. Phys.} {\bf 165}  (1994), 595--620. 

\bibitem{Fuk}
M. Fukushima,  Dirichlet forms, diffusion processes and spectral dimensions for nested
fractals.
\emph{Ideas and methods in mathematical analysis, stochastics, and
applications (Oslo, 1988)} 
151--161, Cambridge Univ. Press, Cambridge,  1992. 


\bibitem{fukshi}
M. Fukushima and T. Shima, On a spectral analysis for the Sierpi\'nski
gasket, {\it Potential Anal.} {\bf 1}  (1992), 1--35. 

\bibitem{Gra}
S. Graf, Statistically self-similar fractals.  \emph{Probab. Theory Related Fields} {\bf 74}  (1987), 357--392. 

\bibitem{Ham1} B.M. Hambly, Brownian motion on a homogeneous random
fractal.  \emph{Probab. Theory Related Fields} {\bf 94}  (1992), 1--38. 

\bibitem{Ham2} B.M. Hambly, Brownian motion on a random recursive
Sierpinski gasket. \emph{Ann. Probab.} {\bf 25}  (1997), 1059--1102.  

\bibitem{Ham4} B.M. Hambly,   Heat kernels and spectral asymptotics for some random Sierpinski
gaskets.
\emph{Fractal geometry and stochastics, II }(Greifswald/Koserow, 1998), 
239--267, Progr. Probab., 46, Birkhuser, Basel,  2000.

\bibitem{Ham3} B.M. Hambly, On the asymptotics of the eigenvalue counting function for random 
recursive Sierpinski gaskets. \emph{Probab. Theory Related Fields} {\bf 117}  (2000), 221--247.

\bibitem{HamKigKum}
B.M. Hambly, J. Kigami and T. Kumagai, Multifractal formalisms for the local spectral and walk dimensions. 
\emph{Math. Proc. Cambridge Philos. Soc.} {\bf 132} (2002), 555--571.

\bibitem{HamKum}
B.M. Hambly and T. Kumagai, Fluctuation of the transition density for Brownian motion on random recursive Sierpinski 
gaskets.  \emph{Stochastic Process. Appl.} {\bf 92}  (2001), 61--85.

\bibitem{HamKum09}
B.M. Hambly and T. Kumagai, Diffusion on the scaling limit of the critical percolation cluster in the 
diamond hierarchical lattice. \emph{to appear Comm. Math. Phys.}

\bibitem{HKKZ} B.M. Hambly, T. Kumagai, S. Kusuoka and X.Y. Zhou,
Transition density estimates for diffusion processes on homogeneous
random Sierpinski carpets. \emph{J. Math. Soc. Japan} {\bf 52}
(2000), 373--408.

\bibitem{hn}
B.M. Hambly and S.O. Nyberg, Finitely ramified graph directed
fractals, spectral asymptotics and the multidimensional renewal
theorem, \emph{Proc. Edin. Math. Soc.}, {\bf 46} (2003), 1--34.

\bibitem{Hut} J.E. Hutchinson, Fractals and self-similarity, \emph{Ind. Univ Math J.} {\bf 30}
(1981), 713--747.

\bibitem{HR} J.E. Hutchinson and L. R\"{u}schendorf, Random fractals and probability metrics, \emph{Adv. in Appl. Probab.} {\bf 32}
(2000), 925-947.

\bibitem{Kig-1} J.~Kigami, Effective resistances for harmonic structures on p.c.f.\
   self-similar sets, \emph{Math. Proc. Cambridge Philos. Soc.}  {\bf 115} (1994), 291--303.

\bibitem{Kig-2} J.~Kigami, Hausdorff dimensions of self-similar sets and shortest path
   metrics, \emph{J. Math. Soc. Japan}  
   {\bf 47} (1995), 381--404.
		
\bibitem{Kig-3} J.~Kigami, Harmonic calculus on limits of networks and its application to
   dendrites, \emph{J. Funct. Anal.}  {\bf 128} (1995), 48--96.
 
\bibitem{Kig}
J.~Kigami,
\newblock \emph{Analysis on fractals},
\newblock Cambridge Univ. Press, Cambridge, 2001.

\bibitem{Kig1} J.~Kigami, Harmonic analysis for resistance forms, \emph{ J. Funct. Anal.} 
{\bf 204} (2003), 399--444.

\bibitem{Kig2}
J.~Kigami,
\newblock Local Nash inequality and inhomogeneity of heat kernels.
\emph{Proc. London Math. Soc. (3)} {\bf 89}  (2004), 525--544. 

\bibitem{Kig4}
J.~Kigami, Resistance forms, quasisymmetric maps and heat kernel estimates, \emph{Mem. Amer. Math. Soc.} 
{\bf 216} (2102), no 1015.

\bibitem{kiglap} 
J. Kigami and M.L. Lapidus,
\newblock Weyl's problem for the spectral distribution of the
Laplacian on P.C.F. self-similar fractals,
\newblock {\em Comm. Math. Phys.} {\bf 158} (1993) 93--125.

\bibitem{KozNac}
G. Kozma and A. Nachmias, 
\newblock The Alexander-Orbach conjecture holds in high dimensions. 
\newblock \emph{Invent. Math.} {\bf 178} (2009), 635--654. 

\bibitem{Lax} Peter D. Lax, \emph{Functional Analysis},    John
Wiley \& Sons, 2002.


\bibitem{MauWil}
R.D.~Mauldin and S.C. Williams, Random recursive constructions: asymptotic geometric and topological properties. 
\emph{Trans. Amer. Math. Soc.} {\bf 295} (1986), 325--346.

%\bibitem{RN} F.~Riesz and B.~Sz.-Nagy, \emph{Functional Analysis}, Dover, 1990.

\bibitem{Sce}
R. Scealy, $ V$-variable fractals and interpolation, \emph{PhD thesis}, Australian National University, 2008.

\end{thebibliography}
\end{document}